\newcommand{\G}{\mathcal G}
\newcommand{\chp}{\mathrm{Ch}}
\newcommand{\Pic}{\operatorname{Pic}} % Picard group 
\newcommand{\norm}[1]{{|{#1}|}}
\newcommand{\Div}{\operatorname{Div}} 
\newcommand{\lb}{\mathcal L}
\newcommand{\dlb}{\lb_G}
\newcommand{\charfn}{1} % characteristic fn on D-int points
\newcommand{\ccl}{N} % cocharacter lattice
\newcommand{\cl}{M} % character lattice
\newcommand{\ccs}{\mathfrak t} % real cocharacter space/Lie algebra
\newcommand{\cs}{\ccs^\vee} % real character space
\newcommand{\lt}{\ell} % Lie-theoretic logarithm
\newcommand{\Lt}{\ell} % Lie-theoretic logarithm on adelic pts
\newcommand{\MaximalCompact}[1]{K^{\mathrm{max}}_{#1}} 
\newcommand{\IntegralCompact}[1]{K^{D}_{#1}} 
\newcommand{\Places}[1]{P_{#1}} % places of a global field
\newcommand{\ToricIdeals}{\mathscr N}
\newcommand{\ToricPrinIdeals}{\mathscr P}
\newcommand{\ToricFracIdeals}{\mathscr Q}
\newcommand{\ToricUnits}{\Upsilon}
\newcommand{\ToricROU}{\mu}
\newcommand{\itres}{\operatorname{itres}}
\newcommand{\res}{\operatorname{res}}
\newcommand{\defn}[1]{\emph{#1}}
\newcommand{\ZZ}{\mathbb Z}
\newcommand{\QQ}{\mathbb Q}
\newcommand{\RR}{\mathbb R}
\newcommand{\CC}{\mathbb C}
\renewcommand{\AA}{\mathbb A} % \AA is taken by something.
\newcommand{\PP}{\mathbb P}
\newcommand{\GG}{\mathbb G}
\newcommand{\mcO}{\mathcal O} % ints
\newcommand{\Gal}{\operatorname{Gal}} % Galois group
\newcommand{\Hom}{\operatorname{Hom}} % hom-set
\newcommand{\im}{\operatorname{im}} % image
\newcommand{\Spec}{\operatorname{Spec}\,}
\renewcommand{\H}{\operatorname{H}}
\newcommand{\GL}{\operatorname{GL}}
\newcommand{\N}{\mathcal N}
\DeclareMathOperator{\lcm}{lcm}
\newcommand{\Mod}[1]{\ (\mathrm{mod}\ #1)}
\newcommand{\tik}{\[\begin{tikzcd}}
\newcommand{\zcd}{\end{tikzcd}\]}
\newtheorem{theorem}{Theorem}
\newtheorem*{theorem*}{Theorem}
\newtheorem{lemma}{Lemma}
\newtheorem{proposition}{Proposition}
\newtheorem*{proposition*}{Proposition}
\newtheorem{corollary}{Corollary}
\newtheorem*{prop*}{Proposition}
\theoremstyle{definition} % No italics in this environment
\newtheorem{definition}{Definition}
\newtheorem*{example*}{Example}
\newtheorem{example}{Example}
\newtheorem{warning}{Warning}
\newtheorem*{warning*}{Warning}
\theoremstyle{remark} % No italics in this environment
\newtheorem{remark}{Remark}
\DeclareFontFamily{U}{wncy}{}
\DeclareFontShape{U}{wncy}{m}{n}{<->wncyr10}{}
\DeclareSymbolFont{mcy}{U}{wncy}{m}{n}
\DeclareMathSymbol{\Sh}{\mathord}{mcy}{"58} 
\tikzset{double line with arrow/.style args={#1,#2}{decorate,decoration={markings,%
mark=at position 0 with {\coordinate (ta-base-1) at (0,1pt);
\coordinate (ta-base-2) at (0,-1pt);},
mark=at position 1 with {\draw[#1] (ta-base-1) -- (0,1pt);
\draw[#2] (ta-base-2) -- (0,-1pt);
}}}}
\newtheorem*{rep@theorem}{\rep@title}
\newcommand{\newreptheorem}[2]{%
\newenvironment{rep#1}[1]{%
 \def\rep@title{#2 \ref*{##1}}%
 \begin{rep@theorem}}%
 {\end{rep@theorem}}}
\title{Integral points on singular toric varieties 
and cyclic normal polynomials}
\author{Andrew O'Desky}
\address{
\parbox{0.7\linewidth}{
    Department of Mathematics\\
    Princeton University\\
    Princeton, NJ 08544-1000, USA\\[.1em]
}}
\email{andy.odesky@gmail.com}
\date{\today. Research supported by NSF grant DMS-2103361.}%\emph{Commit}: \texttt{a730647}.} %v2
\begin{document}

\begin{abstract}
We establish a formula for the height zeta function 
    for integral points on a 
    class of projective toric varieties. 
Our method builds on the harmonic analysis 
    approach of Batyrev--Tschinkel for rational points 
    and is applicable even when the toric variety 
    has cyclic quotient singularities. 
As an application, we determine the leading term 
    in the asymptotic number 
    of monic integral polynomials 
    of bounded height 
    with linearly independent roots 
    and a given cyclic Galois group. 
\end{abstract}

%\subjclass[2020]{11G50, 14D22, 14M17, 11G35}
%\keywords{heights, moduli spaces, normal elements.}

\maketitle

\setcounter{tocdepth}{1}
\tableofcontents

\section{Introduction}  %{*

%{* Integral points on singular toric varieties

Let $Y$ be a projective normal toric variety defined over $\QQ$ 
    with at worst cyclic quotient singularities. 
Let $T \subset Y$ be the open torus of $Y$. 
The problem of computing the asymptotic number 
    of rational points on $T$ 
    of bounded height was taken up in \cite{IMRN}. 
Their strategy was to extend the height function 
    to the space of adelic points $T(\AA)$ 
    and exploit the spectral decomposition of 
    the $T(\QQ)$-invariant function 
\begin{equation}\label{eqn:TQInvariantFn} %{* 
    g \mapsto 
    \sum_{t \in T(\QQ)}
    \frac{1}{H(gt,s)} 
\end{equation} %*} 
using the Poisson summation formula. 
The quantity $s$ denotes an element 
    of the complex vector space $\Pic(Y) \otimes \CC$. 
If the real part of $s$ is sufficiently positive 
    with respect to the effective cone 
    then the sum %\eqref{eqn:TQInvariantFn} 
    converges absolutely,  
    and the Poisson formula implies that 
\begin{equation} %{*
    \sum_{t \in T(\QQ)}
    \frac{1}{H(gt,s)} 
    =
    \int_{(T(\AA)/T(\QQ))^\vee}
    \widehat{H}(\chi,-s)
    \chi(g)
    \,d\chi
\end{equation} %*}
where $\widehat{H}(\chi,-s)$ is the Fourier transform 
    of $H(\cdot,-s) = H(\cdot,s)^{-1}\in L^2(T(\AA))$. 
When $g = 1$ and $s$ is proportional to 
    the class $[L]$ of a big line bundle, 
    this multiple Dirichlet series specializes to 
    the single Dirichlet series (height zeta function) 
\begin{equation} %{* 
    Z(s)=
    \sum_{t \in T(\QQ)}
    \frac{1}{H(t,s)} 
\end{equation} %*} 
for rational points on $T$ of bounded $L$-height. 
When $T(\AA)/T(\QQ)$ is compact and $Y$ is 
    smooth---as assumed in \cite{IMRN}---the 
    terms on the spectral side 
    of the Poisson formula are easily isolated 
    %(since $(T(\AA)/T(\QQ))^\vee$ is discrete) 
%This lets one determine meromorphic continuation of $Z(s)$ 
%    beyond the abscissa of convergence 
%    and the location of its poles. 
    and expressed using Hecke $L$-functions. 
From the principal terms one can deduce 
    the singularities of the height zeta function 
    using analytic properties of 
    Dedekind zeta functions. 

Difficulties arise 
    when $T(\AA)/T(\QQ)$ is not compact or $Y$ is not smooth. 
In the non-compact case, 
    the spectral side of the Poisson formula 
    involves an integral over the continuous part of 
    the automorphic spectrum of $T$, 
    and it is difficult to determine 
    where $Z(s)$ has singularities from 
    the Fourier transform $\widehat{H}(\chi,-s)$. 
In their follow-up papers \cite{HZF}, \cite{zbMATH01353487}  
    treating the smooth non-compact case, 
    this integral over the continuous spectrum was 
    approximated by relating 
    $\widehat{H}(\chi,-s)$ to simpler functions 
    constructed using certain cones 
    (``$\mathcal X$-functions''). 
%This let them extract enough information 
%    to infer the abscissa of convergence for $Z(s)$. 
By their method they inferred 
    the abscissa of convergence for $Z(s)$, 
    however their method has not been successfully applied 
    to compute either secondary terms 
    for the distribution of rational points, 
    or leading terms 
    for the distribution of integral points. 

Let $D$ be a $T$-stable divisor of $Y-T$ 
and let $U$ be an integral model of $Y-D$. 
A rational point of $T$ is called $D$-integral 
    if it extends to an integral point of $U$. 
The present paper is concerned with the height zeta function 
in the integral singular non-compact setting: 
\begin{equation} %{* 
    Z(s) = 
    \sum_{\substack{t \in T(\QQ)\\\text{$D$-integral}}}
    \frac{1}{H(t,s)}. 
\end{equation} %*} 
%Assuming $Y$ is smooth, 
The Poisson strategy was applied in 
    the integral smooth non-compact setting 
    to study $Z(s)$ in \cite{integral-chambert-tschinkel}. 
Unfortunately their application of the Poisson strategy is flawed 
    and has a published counterexample \cite{zbMATH07814403} 
    proven by a different method. 
According to \cite[Remark~3.2.2]{zbMATH07814403}, 
    there is a gap in the proof of 
    \cite[Lemma~3.11.4]{integral-chambert-tschinkel} 
    where it is assumed that certain $\mathcal X$-functions 
    are non-vanishing. 
    
%Universal torsors are a competing method for the study 
%    of rational points on toric varieties 
%    and have recently been applied 

In this paper we do not use $\mathcal X$-functions 
    and thus do not follow the strategy of 
    \cite{HZF}, 
    \cite{zbMATH01353487}, 
    \cite{integral-chambert-tschinkel} 
    in treating the integral over the continuous spectrum 
    of $T(\AA)/T(\QQ)$. 
The form to be integrated is $\widehat{H}(\chi,-s)\,d\chi$ 
    which has singularities along affine hyperplanes 
    depending on the fan of $Y$. 
This is a strong constraint, 
    and our strategy is to use 
    the multidimensional residue formula 
    from \cite{residue} 
    which applies to such integrals. 
Poles of meromorphic forms with hyperplane singularities 
    are indexed by \emph{flags}, 
    and our residue formula 
    shows that only a small subset of flags 
    contribute a residue to the integral 
    which does not cancel identically with 
    residues from other flags. 
Since this residue formula does not involve approximation, 
    we obtain an {exact} formula for $Z(s)$ 
    in terms of simpler multiple Dirichlet series 
    and rational functions that can be computed 
    from the fan of $Y$. 
Our residue formula requires 
    that the fan of $Y$ satisfies 
    a certain compatibility condition 
    (see \S\ref{sec:methodIterRes}). 

\begin{theorem}\label{thm:hzfformulaintro} %{*
Assume the residue method is applicable for the fan $\Sigma$, 
    i.e.~each $\Sigma$-convex singular flag is compatible 
    with the polyhedron $\Pi_\sigma$ 
    for every maximal cone $\sigma \in \Sigma_D(r)$. 
%Assume that each $\Sigma$-convex singular flag is compatible 
%    with the polyhedron $\Pi_\sigma$ 
%    for every $\sigma \in \Sigma_D(r)$. 
Then the height zeta function $Z(s)$ 
    for $D$-integral rational points 
    on the torus 
%\begin{equation} %{*
%    Z(s)=\sum_{\substack{t \in T(\QQ)\\\textrm{\emph{$D$-integral}}}}
%    \frac{1}{H(t,s)} 
%\end{equation} %*}
is equal to 
\begin{align}%\label{eqn:hzf2} %{*
    Z(s) =
    \frac{|\ToricROU|}{|C||R|P}
    \sum_{\xi\in R^\vee}
    \sum_{\psi\in C^\vee}
    \sum_{\tau \in \Sigma_D}
    \sum_{\gamma \in Z_{\sigma_\tau}}
    L(s+2\pi im_{\gamma \xi},(\xi\psi)^{-1},-\tau^\circ,D)
    R_\gamma(s+2\pi im_\xi)
\end{align} %*}
    in the region of $s\in\Pic(Y) \otimes \CC$ 
    %(\CC^{\Sigma(1)})^\Gamma$ where 
    where both sides converge. 
\end{theorem}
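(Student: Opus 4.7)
The plan is to derive the formula by applying Poisson summation to the $T(\QQ)$-invariant function on $T(\AA)$ that combines the height with the characteristic function of the $D$-integral adeles, and then extracting the spectral integral using the multidimensional residue formula of \cite{residue}.

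First I would set up the spectral decomposition. Write
$$Z(s) = \sum_{t\in T(\QQ)}\phi_D(t)\,H(t,s)^{-1},$$
where $\phi_D$ is the characteristic function of $D$-integral points, and assume $\Re(s)$ lies in the absolute convergence region. By Poisson summation on $T(\AA)/T(\QQ)$,
$$Z(s) = \frac{1}{\operatorname{vol}(T(\AA)/T(\QQ))}\int_{(T(\AA)/T(\QQ))^\vee}\widehat{\phi_D H^{-s}}(\chi)\,d\chi.$$
I would then decompose the character group. Since $T(\AA)/T(\QQ)$ is an extension of the continuous cocharacter space (a copy of $\cs\otimes\RR$) by a compact group that itself splits as the product of a finite class-group-type piece and a connected compact torus, the dual splits accordingly. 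This produces the finite sums over $\xi\in R^\vee$ (torsion/unit characters) and $\psi\in C^\vee$ (class-group characters) in the formula, together with a continuous integral over $\cs\otimes i\RR$; the prefactor $|\ToricROU|/(|C||R|P)$ is accounted for by the volumes of the relevant compact quotients and the cokernel appearing in the splitting.

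Next I would compute the Fourier transform place-by-place. At each finite place, $\widehat{H_v}$ is a local zeta-like integral that evaluates in closed form using the local fan; gathering local factors across places produces a Hecke $L$-function attached to the character $(\xi\psi)^{-1}$ multiplied by an explicit rational function in $s$ with simple poles along the hyperplanes $\langle m,e_\rho\rangle + s_\rho = 0$ indexed by rays $\rho$ of $\Sigma$. The $D$-integrality constraint cuts out exactly the rays belonging to $\Sigma_D$, and this is the mechanism that introduces the subfan $\Sigma_D$ (and subsequently the Dirichlet series $L(\,\cdot\,,\,\cdot\,,-\tau^\circ,D)$) into the output.

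I would then apply the multidimensional residue formula of \cite{residue} to the remaining continuous integral over $\cs\otimes i\RR$. The compatibility hypothesis between $\Sigma$-convex singular flags and the polyhedra $\Pi_\sigma$ for $\sigma\in\Sigma_D(r)$ is precisely what is required to invoke that formula. It expresses the integral as a sum over flags of iterated residues; the cancellation lemma in \cite{residue} shows that only flags of the form $\gamma\in Z_{\sigma_\tau}$ for $\tau\in\Sigma_D$ contribute, accounting for the double sum over $\tau$ and $\gamma$. Each surviving flag yields a rational iterated-residue operator $R_\gamma$ evaluated at an argument shifted by the imaginary translations $2\pi i m_\xi$ and $2\pi i m_{\gamma\xi}$ which record the intercepts of the flag's hyperplanes with the characters $\xi$.

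The main obstacle will be matching the output of the residue computation term-by-term with the $L$-functions $L(s+2\pi i m_{\gamma\xi},(\xi\psi)^{-1},-\tau^\circ,D)$. Concretely, one must verify (i) that the rays integrated out by the iterated residue along $\gamma$ are exactly those indexing the cone $\tau$, so that the surviving Dirichlet series is the one associated with the open face $-\tau^\circ$, and (ii) that the imaginary translations $m_{\gamma\xi}$ and $m_\xi$ produced by residue extraction combine correctly with the twisting of the Hecke $L$-function by $\xi\psi$. Once this bookkeeping is carried out and assembled with the finite sums over $\xi$, $\psi$, $\tau$, $\gamma$, the asserted formula falls out in the common region of convergence.
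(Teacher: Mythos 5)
Your proposal has the right overall skeleton — Poisson summation on $T(\QQ)\subset T(\AA)$, spectral decomposition of characters into $R^\vee \times C^\vee \times \cs$, and evaluation of the spectral integral by the residue formula of \cite{residue} — and this is indeed the route the paper takes. But there is a genuine gap in how you propose to produce the conical Dirichlet series and the subfan $\Sigma_D$, and the mechanism you describe does not work.

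You suggest that after taking local Fourier transforms one gets ``a Hecke $L$-function attached to $(\xi\psi)^{-1}$ multiplied by an explicit rational function in $s$,'' and that one then applies the residue formula to the full remaining integral over $\cs$, with $\Sigma_D$ arising because ``the rays integrated out by the iterated residue along $\gamma$ are exactly those indexing the cone $\tau$.'' This is not correct. The finite Fourier transform $\widehat{H}_{\mathrm{fin}}(\xi\psi\chi_m,-s,D)$ is not a rational function of $s$; as a function of $m$ it is a Dirichlet series over toric ideals $n\in\ToricIdeals$ containing exponential factors $\chi_m(n)^{-1}=e(-m(\Lt(n)))$ that are \emph{not} part of the hyperplane-singular meromorphic form to which the residue formula applies. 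The correct order of operations — and the one the paper carries out — is: expand $\widehat{H}_{\mathrm{fin}}$ as the absolutely convergent sum over $n\in\ToricIdeals$ (Corollary~\ref{cor:FiniteHeights}), interchange this sum with the integral over $\cs$, and apply the residue formula separately to each integral $\int_{\cs}\chi_m(n)^{-1}\widehat{H_\infty}(\xi\chi_m,-s)\,dm$, where for each $n$ one must choose a polyhedron $\Pi_{\sigma_n}$ (with $\sigma_n\in\Sigma_D(r)$ a cone containing $-\Lt(n)$) so that the oscillating factor $\chi_m(n)^{-1}$ stays bounded on $\Pi_{\sigma_n}$. This $n$-dependent choice of polyhedron is the source of the subfan $\Sigma_D$, defined as the smallest subfan whose support contains $-\Lt(\ToricIdeals)$. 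Only after the residues are taken are the toric ideals recollected according to which open face $-\tau^\circ$ of $\Sigma_D$ contains $\Lt(n)$; this recollection, not any cancellation among rays in the residue, is what produces the conical Dirichlet series $L(s+2\pi i m_{\gamma\xi},(\xi\psi)^{-1},-\tau^\circ,D)$, which — as the paper stresses — are genuinely conical series with a global support condition and are not Hecke $L$-functions in general.

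Two further points you gloss over. First, the Poisson formula must be justified: one needs $H(\cdot,-s,D)\in L^1(T(\AA))$ and $\widehat{H}(\cdot,-s,D)\in L^1(T(\QQ)^\perp)$, and one needs the a.e.~identity upgraded to an identity at $y=1$ by a continuity argument; the paper devotes careful attention to this. Second, the prefactor $\frac{|\ToricROU|}{|C||R|P}$ is not extracted by volume bookkeeping alone: the paper pins down the normalization constant $\kappa$ by letting $s=ts_{AC}\to\infty$, matching the limiting value of both sides to the number $|\ToricROU|$ of $D$-integral rational points of height one, and evaluating the limit on the spectral side via Lemma~\ref{lemma:rhoPeriod}. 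Without that limit argument the constant is not determined.
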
 %*}

The arithmetic and combinatorial invariants 
    appearing in the formula can be described as follows: 
\begin{itemize} %{*
    \item[--] $\mu$ is the torsion subgroup 
            of $T(\QQ) \cap \left(T(\RR) \times \prod_{v\text{ finite}} K_v\right)$ where $K_v$ is the open compact subgroup of 
            $T(\QQ_v)$ stabilizing $U(\ZZ_v)$ 
            (\S\ref{sec:regulatorGroup}), 
    \item[--] $|R|$ is the volume of 
        the level $K$ regulator group $R$ of the torus $T$, 
        i.e.~the identity component of 
        $T(\QQ)\backslash T(\AA)^1/K$ 
        where $T(\AA)^1$ is the norm-one subgroup of $T(\AA)$ 
        and $K = \MaximalCompact{\infty} \times 
        \prod_{v\text{ finite}}K_v$ 
            (\S\ref{sec:normOne}), 
    \item[--] $C$ is the level $K$ class group of the torus $T$, 
        i.e.~the quotient of $T(\QQ)\backslash T(\AA)^1/K$ 
        by $R$ (\S\ref{sec:normOne}), 
    \item[--] $(-)^\vee$ denotes Pontryagin dual, 
    \item[--] $\Sigma_D$ is a subfan of $\Sigma_\Gamma$ 
        determined by $T, D$ and $\Sigma$ 
        (\S\ref{sec:theMainTheorem}) where 
    \item[--] $\Sigma_\Gamma$ is the restricted fan of $\Sigma$ 
        with respect to the action of the Galois group 
        $\Gamma$ of the splitting field of $T$ 
        (\S\ref{sec:restrictedFan}), 
    \item[--] $\sigma_\tau$ is any maximal cone of $\Sigma_D$ 
        containing $\tau$, 
    \item[--] $Z_{\sigma}$ is 
            the subset of flags terminating in $\Pi_\sigma$ 
            cut out by a collection 
            of $\Pi_\sigma$-stable singular hyperplanes 
            of the meromorphic form 
            $\widehat{H_\infty}(\chi_m,-s)\,dm$
            for a cone $\sigma$ in $\Sigma_\Gamma(r)$ 
            (\S\ref{sec:methodIterRes}) where 
    \item[--] $\Pi_{\sigma} = \cs + i\sigma^\vee 
    \subset \cs \otimes \CC$  
    is a polyhedron with boundary $\cs$ 
        (\S\ref{sec:polyhedra}) 
    where 
    \item[--] $\cs$ is the Galois invariant subspace 
        of the real vector space $\cs_E= \RR \otimes \ccl_E$ 
        where $E$ is the splitting field of $T$ and 
        $\ccl_E$ is the cocharacter lattice 
        of $T_E$ (\S\ref{sec:preliminaries}), 
    \item[--]$m_{\gamma\xi}$ is the terminal point 
        of the flag $\gamma$ of 
        $\widehat{H_\infty}(\chi_m,-(s+2\pi im_\xi))\,dm$ 
        (\S\ref{sec:ConesForms}) 
        and $m_\xi$ is the local exponent of $\xi$, 
    \item[--] $\tau^\circ=\tau - \cup_{\tau' \subsetneq \tau} \tau'$ 
        is the interior of a cone $\tau$, 
    \item[--] $\Sigma(1)$ is the set of generators of $\Sigma$, 
    \item[--] $L(s,\xi,\sigma,D)$ 
            is a certain multiple Dirichlet series 
            in $s \in \CC^{\Sigma(1)}$ 
            (\S\ref{sec:MultipleDirSeries}) 
    \item[--] $R_\gamma(s)$ is a certain 
        rational function on $\Pic(Y)$ 
        defined by Definition~\ref{defn:ResidualFunction} 
        and given explicitly by 
        \eqref{eqn:ResidualFunctionGeneral} 
        (\S\ref{sec:residual}), 
    \item[--] and $P$ is a period determined 
        by the fan given by 
\begin{equation} %{*
    P=\sum_{\gamma \in Z_{\sigma_1}}
    \int_{\ccs_0^\vee}
    R_\gamma(s_{AC}+2\pi im)\,dm
\end{equation} %*}
where $s_{AC}$ is the element of $\ZZ^{\Sigma(1)}$ 
corresponding to the sum of the toric divisors of $Y$, 
$\sigma_1$ is an arbitrary maximal cone of $\Sigma_D$, 
and $\ccs_0$ is the trace-zero subspace of $\ccs$ 
            (\S\ref{sec:regulatorGroup}).  
\end{itemize} %*}

Our work is incomplete in the sense that 
    we do not make contact with Peyre's formula 
    for the leading constant in the rational case, 
    nor Batyrev--Manin's conjecture 
    on the shape of the leading term. 
Both conjectures have recently been formulated 
    in the integral and smooth setting 
    with a proof using universal torsors 
    for toric varieties 
    in a recent preprint \cite{santens}. 
One would like to verify that 
    the leading term predicted by our formula 
    in the case that $Y$ is smooth 
    agrees with \cite{santens} 
    though we have yet to carry out this important verification. 

\subsubsection{Explicit formulas for summatory functions} %{*

In favorable cases, 
    applying Perron's formula to our formula for $Z(s)$ 
    results in explicit formulas for 
    the sum of Dirichlet coefficients 
    in terms of automorphic data. 
These formulas are quite complicated 
    but can sometimes be analyzed to extract main terms 
    and secondary terms. 
The idea is that one expects 
    the ``conical'' multiple Dirichlet series 
    $L(s+2\pi im_{\gamma \xi},(\xi\psi)^{-1},-\tau^\circ,D)$ 
    in the formula for $Z(s)$ 
    to be nearly automorphic 
    and the rational functions $R_\gamma$ 
    to be nonvanishing. 
(We have been able to prove that $R_\gamma$ 
    is nonvanishing under a strong convexity assumption 
    (Proposition~\ref{prop:nonvanishingR}) 
    though we expect it to hold generally.) 

We illustrate our formula with 
    the real quadratic Severi--Brauer surface 
    associated to a real quadratic field $E$.\footnote{For imaginary quadratic Severi--Brauer surfaces the height zeta function is a specialization of a real analytic Eisenstein series for $\GL_2$ and the rational functions $R_\gamma$ all have degree zero, so it is ``not interesting'' as far as illustrating our formula. For quasi-split tori the function $R_\gamma$ is the reciprocal of a polynomial whose degree is the rank of the unit group, and real quadratic fields are the first interesting examples.} 
    See Figure~\ref{fig:quadraticSB} for the fan. 
For an appropriate boundary divisor, 
    the integral Diophantine problem has the nonzero 
    algebraic integers in $E$ as its solutions, 
    and the height zeta function is, by definition, 
\begin{equation} %{*
    Z(s) = 
    \sum_{\substack{a \in O_E\\ a \neq 0}}
    \frac{1}{\max(|a|,|a'|)^{s}}
\end{equation} %*}
where $a'$ is the conjugate of $a$. 
The regulator group $R$ is $\RR/\ZZ\log u$ 
where $u$ is a positive generator of $O_E^\times$. 
The regulator characters are $\xi^k$ 
for integers $k$ 
where $\xi(y) = |y|^{2\pi i/\log u}$. 
The fan $\Sigma_D$ consists of a single cone $\sigma$, 
and $Z_\sigma$ consists of a single flag. 
Our formula is 
\begin{equation} %{*
    Z(s)=
    \frac{4}{hR}
    \sum_{k \in \ZZ}
    \sum_{\psi \in C^\vee}
    L(\tfrac{s}{2},\psi\xi^k)
    \frac{s}{s^2+\left(\frac{2\pi k}{R}\right)^2}
\end{equation} %*}
where $h$ denotes the class number, $R$ the regulator,  
and $L(s,\chi)=\sum_{I \subset O_E}\chi(I)N(I)^{-s}=\sum_N b_N(\chi) N^{-s}$ is the Hecke $L$-function of $\chi$. 
Since $Y$ has Picard number one, 
the conical Dirichlet series reduce to 
one-variable Dirichlet series. 
This is a useful formula since we have related the distribution 
of integral points of bounded height---a global problem---to 
the distribution of ideals of bounded norm, 
and the latter can be described with Hecke characters. 
Now we apply Perron's formula 
to obtain an explicit formula for 
the number of algebraic integers in $E$ 
of bounded height in terms of automorphic data: 
\begin{align} %{*
    \#\{a \in O_E : 0< |a|,|a'|<X\}
    =
    &\frac{4}{hR}
    \sum_{\psi \in C^\vee}
    \sum_{N<X^2}
        b_N(\psi)
        \log(X/\sqrt{N})\\
    +&
    \frac{8\pi}{hR^2}
    \sum_{\substack{k \in \ZZ\\ k \neq 0}}
    \sum_{\psi \in C^\vee}
    \sum_{N<X^2}
        k
        b_N(\psi\xi^k)
        \sin\left(\frac{2\pi k}{R}\log(\sqrt{N}/X)\right).
\end{align} %*}
With the class number formula 
    we can verify that the first term is the principal term: 
\begin{equation} %{*
    \frac{4}{hR}
    \sum_{\psi \in C^\vee}
    \sum_{N<X^2}
        b_N(\psi)
        \log(X/\sqrt{N})
        =\frac{2}{hR} \res[\zeta_E] \cdot X^2
        +O(X^{2-\delta})
        =\frac{4}{\sqrt{d_E}} \cdot X^2
        +O(X^{2-\delta}),
\end{equation} %*}
which is the expected number of points 
of a lattice with covolume $\sqrt{d_E}$ 
in a box of sidelength $2X$. 
%The new aspect is the explicit formula for 
%the error term with Hecke characters of $E$. 
The contribution of the rational functions 
$R_\gamma=\frac{s}{s^2+\left(\frac{2\pi k}{R}\right)^2}$ 
    to this formula 
    are the $\log(X/\sqrt{N})$ 
    and $\sin\left(\frac{2\pi k}{R}\log(\sqrt{N}/X)\right)$ 
    terms (obtained from well-known inverse Mellin transforms). 

Other explicit formulas related to cubic abelian fields 
    have been derived in 
    \cite{CUBIC}, \cite{CF} by this method. 
In \cite{CUBIC} this was used 
    to obtain leading and secondary terms 
    for the number of integral monic trace-one 
    cubic abelian polynomials of bounded root height.\footnote{The \emph{root height} of a polynomial 
$f=t^n+a_1t^{n-1}+\cdots+a_n \in \ZZ[t]$ 
is $H(f) = \max(|a_1|,|a_2|^{1/2},\ldots,|a_n|^{1/n})$.} 

    %*}

%*}

\subsection{Cyclic normal polynomials} %{*

We apply our formula to a problem 
    in arithmetic statistics: 
    counting monic integral polynomials 
    %of bounded root height 
    with a given cyclic Galois group 
    and linearly independent roots in their splitting field. 
We call an irreducible polynomial \defn{normal} 
    if its roots are $\QQ$-linearly independent 
    in the splitting field. 
Normal polynomials 
    with a given Galois group 
    admit an orbit parametrization \cite{moge}. 
Let $G$ be a finite group. 
The orbit parametrization from \cite{moge} 
    is similar to a prehomogeneous space 
    in that it is a projective variety $Y=Y_G$ 
    with an action of a reductive group $\G$ 
    (the unit group of the group algebra of $G$) 
    and containing a dense open $\G$-orbit $T \subset Y$, 
    however the orbit $T$ is not generally birational 
    to projective space. 
The rational points of $T$ parametrize 
    data $(K/\QQ,x)$ where $K/\QQ$ is 
    an \'etale $\QQ$-algebra with Galois group $G$ 
    and $x$ is a normal element of $K$. 
(An element of $K$ is \defn{normal} if its $G$-conjugates 
    are $\QQ$-linearly independent.) 
We recall this construction in \S\ref{sec:orbitparam}. 
    %and also remove the trace-one condition that 
    %was imposed in \cite{moge}. 
When $G$ is abelian this orbit parametrization $Y$ 
    has a natural toric structure, 
    and we may apply the methods of this paper 
    to study the distribution of such polynomials. 

\begin{theorem}\label{thm:mainthm} %{*
%Let $G \subset S_n$ be a cyclic subgroup of cardinality $n \geq 3$. 
%Let $b$ be the number of cyclic subgroups of $G$. 
%Let $b$ equal 
%    $\max_{H}\varphi(\#H)$ 
%    where $H$ ranges over the cyclic subgroups of $G$ 
%    and $\varphi$ is the totient function. 
Let $n \geq 5$. 
Let $N(C_n,H)$ be the number of normal monic degree $n$ 
    integral polynomials with Galois group $C_n$ 
    and root height less than $H$. 
Then 
\begin{equation} %{*
    N(C_n,H) = 
    \varphi(n)^{-1}
    \frac{|\ToricROU||C \cap \ker \gamma^\ast|}{|C||R|P}
    \kappa_0
    \left(
    \sum_{d|n}
    \kappa_d
    \right)
    \tfrac{1}{n}
    H^{n}(1+o(1))
\end{equation} %*}
%\begin{equation}\label{eqn:asymptoticformula} %{*
%    N(C_n,H) = 
%    \begin{cases}
%        \kappa_n H^{n}\log^{n-\tau-1}(H)(1+o(1)) & 
%            \text{if $n \geq 4$},\\
%        \kappa_3 H^{3}\log(H)(1+o(1)) 
%            & \text{if $n = 3$},\\
%        \kappa_2 H^{3}(1+o(1)) 
%            & \text{if $n = 2$}.\\
%    \end{cases}
%\end{equation} %*}
where 
\begin{equation} %{*
    \kappa_0=
    \left(\prod_{d|n}\res[\zeta_{\QQ(\zeta_d)}]\right)
    \prod_p
    Q_p(p^{-\frac1{e_p}s_{AC}})
    %\prod_{p}
    %\sum_{n \in \im \lt_w \cap \Delta }
    %p^{\frac1{e_p} \varphi_{AC}(n)}
\end{equation} %*}
and 
\begin{equation} %{*
    \kappa_d=\left(\frac{\varphi(d)}{2n}\right)^{\frac{1}{2}(1-\varphi(n))}
    \sum_{\tau \in \Sigma_{\Gamma_\infty}(r_\infty)}
    \binom{\tau}{\sigma_{(d)}}
    \frac{[\ccl_\infty:\ZZ\tau(1)]}{[\ccl:\ZZ\sigma_{(d)}(1)]}.
\end{equation} %*}
\end{theorem}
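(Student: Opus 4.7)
The plan is to derive Theorem~\ref{thm:mainthm} as an application of the general formula in Theorem~\ref{thm:hzfformulaintro} specialized to the toric variety $Y = Y_{C_n}$ arising from the orbit parametrization of normal polynomials with Galois group $C_n$. First I would translate the counting problem into a geometric one using \S\ref{sec:orbitparam}: the rational points of the open torus $T \subset Y_{C_n}$ are in bijection with pairs $(K/\QQ, x)$ consisting of an étale $\QQ$-algebra $K$ with $C_n$-action and a normal element $x \in K$. Under the integral model $U$ cut out by a suitable boundary divisor $D$, the $D$-integral points correspond to normal integral polynomials, and the canonical toric height specializes to the root height. This parametrization overcounts each polynomial by $|\Aut(C_n)| = \varphi(n)$ many pairs (conjugate automorphisms of $C_n$ permuting the parametrizing data), which gives the prefactor $\varphi(n)^{-1}$.

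Second, I would verify that the fan $\Sigma$ of $Y_{C_n}$ satisfies the compatibility condition of Theorem~\ref{thm:hzfformulaintro}, so that the residue method applies and we obtain an exact formula for $Z(s)$. The stratification of $\Sigma$ is controlled by the subgroup lattice of $C_n$, and the sum over divisors $d \mid n$ in the formula for $\kappa_d$ reflects the maximal cones of $\Sigma_{\Gamma_\infty}$ indexed by the cyclic subgroups of order $d$. With this structural description in hand, Theorem~\ref{thm:hzfformulaintro} yields $Z(s)$ as a sum of shifted conical multiple Dirichlet series multiplied by the rational functions $R_\gamma$.

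Third, I would apply a Tauberian/Perron argument to pass from $Z(s)$ to the counting function $N(C_n, H)$. The dominant singularity of $Z(s)$ occurs at $s = s_{AC}$ with abscissa of convergence giving growth $H^n$, and only the trivial regulator character $\xi = 1$ contributes to the leading term (the oscillatory contributions $k \neq 0$ cancel in the mean), which selects the summand producing $\frac{1}{n}H^n$ via the inverse Mellin transform. The leading coefficient then factors into three pieces: the Euler product $\kappa_0$ coming from the $L$-function $L(s, \psi^{-1}, -\tau^\circ, D)$ at the trivial Hecke character (so the residues $\res[\zeta_{\QQ(\zeta_d)}]$ enter from the archimedean pole and the local factors $Q_p$ from the finite Euler factors); the combinatorial sum $\sum_{d\mid n} \kappa_d$ coming from summing over maximal cones $\tau \in \Sigma_D(r)$; and the global factor $|\mu|/(|C||R|P)$ inherited from Theorem~\ref{thm:hzfformulaintro}. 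The correction $|C \cap \ker \gamma^\ast|$ accounts for which idèle class characters $\psi$ pair trivially with the flag $\gamma$ and thus actually contribute to the residue rather than being averaged out.

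The main obstacle will be the combinatorial bookkeeping: matching the abstract indexing of Theorem~\ref{thm:hzfformulaintro} against the explicit arithmetic of cyclotomic data. Specifically, I need to (i) describe the fan $\Sigma_{D}$ of $Y_{C_n}$ and the set $Z_{\sigma_\tau}$ of flags explicitly for each maximal cone $\sigma_{(d)}$ associated to the divisor $d$, (ii) evaluate $R_\gamma(s_{AC})$ at the relevant flags and invoke Proposition~\ref{prop:nonvanishingR} to ensure nonvanishing, and (iii) show the nonprincipal terms ($\xi \neq 1$ or $\psi$ outside $\ker \gamma^\ast$) contribute only $o(H^n)$. The latter requires at minimum a convexity argument on the domain of absolute convergence of the conical Dirichlet series together with standard subconvex bounds for Hecke $L$-functions of the cyclotomic fields $\QQ(\zeta_d)$, so that Perron's formula can be shifted past $s = s_{AC}$ on a thin set.
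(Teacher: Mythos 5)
Your proposal follows the same overall strategy as the paper---apply the general height zeta function formula from Theorem~\ref{thm:hzfformulaintro} to the toric variety $\PP/C_n$, check the compatibility hypotheses for the residue method, isolate the leading pole on the anticanonical line by showing only $\xi = 1$ and $\psi \in \ker\gamma^\ast$ contribute, and then match the residual constants against the combinatorial data of the fan. That much is right.

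However, there is a genuine gap. You observe that the rational points of $T$ parametrize pairs $(K/\QQ, x)$ where $K$ is an \'etale $\QQ$-algebra with $C_n$-action, but you never remove the contribution from rational points whose monodromy group is a \emph{proper} subgroup of $C_n$ (i.e.~$K$ is not a $C_n$-field). These points produce characteristic polynomials that are not $C_n$-polynomials (or not irreducible), so passing directly from the summatory function $A(X)$ of $Z(us_{AC})$ to $N(C_n,H)$ is not justified. The paper closes this gap with a dedicated argument at the end of the proof: it identifies the reducible locus as a thin set arising from the map $\G/D \to \G/C_n$ for $D \subsetneq C_n$, constructs a $K$-invariant indicator function $\Phi$, reruns the Poisson/residue formula for $Z(us_{AC},\Phi 1_D)$, and then applies the mod-$p$ density estimate from Browning--Loughran (\cite[Lemma~3.8]{browning-loughran-2019}) to show the contribution of $\Upsilon_D$ to the leading Dirichlet coefficient is $o(N^{n-1})$. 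Without this step the asymptotic equality in the statement cannot be concluded.

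Two smaller points. First, the paper's bound on the secondary error terms does not rely on subconvexity for Hecke $L$-functions as you propose; it uses only known zero-free regions, which suffice to give a power saving $O_\delta(N^{n-1-\delta})$ in the Dirichlet coefficients of $L(s - s_\sigma,(\xi\psi\chi)^{-1},-\sigma,D)$. Second, you do not explain the role of the hypothesis $n \geq 5$. In the paper this enters via the inequality $\sum_k o_k\beta_k = n - 1 - 2\sum_{k\mid n,\, k>1}(1+\varphi(k))^{-1} \geq 2$, which controls the error accumulated from the $M$-sum over split squarefree integers; for small $n$ this bound degrades. Any complete proof along your lines would need to track this.
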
 %*}

The factors of the leading constant may be interpreted as follows: 
\begin{itemize} %{*
    \item[--] $\varphi(n)$ is Euler's totient function, 
        and also the number of twists of a rational point 
        $(K/\QQ,x)$ for a $C_n$-field $K$ 
        by an outer automorphism of $C_n$; 
        there are $\varphi(n)$ such rational points 
        of $T$ giving rise to any $C_n$-polynomial 
        so we divide by this factor 
        (cf. Corollary~\ref{cor:cyclicTwists} 
        or the examples in \cite[\S2]{CUBIC}), 
    \item[--] the open torus $T\subset Y$ is 
        the quotient torus $T=\mathcal G/C_n$ 
        where $\mathcal G$ is the group of units 
        in the group algebra of $C_n$, 
    \item[--]$\frac{|\ToricROU|}{|C||R|P}$ 
        comes from our formula for $Z(s)$, 
    \item[--] $\gamma \colon P=\GG_m \times \mathcal G 
        \to \mathcal G \to T$ where  $\mathcal G \to T$ 
        is the natural quotient homomorphism 
        and $\gamma^\ast \colon T(\AA)^\vee \to P(\AA)^\vee$ 
        is the pullback homomorphism on characters 
        (\S\ref{sec:weak}), 
    \item[--]$|C \cap \ker \gamma^\ast|$ is the number of 
        Hecke characters on $R \times C$ which contribute to 
        the principal terms, 
    \item[--] $\kappa_0$ is part of the leading constant 
        of the principal conical Dirichlet series,  
    \item[--] $Q_p$ is a polynomial 
        determined by the fan 
        which measures the singularities of $Y$;  
        it only depends on the decomposition group at $p$ 
        in the splitting field $E/\QQ$ 
        (Proposition~\ref{prop:localEulerFactor}), 
        %it is defined for a general toric variety, 
        %and equal to $1$ if $Y$ is smooth, 
    \item[--] $\sum_d\kappa_d$ is the 
        sum of the values of the rational functions $R_\gamma$ 
        at the abscissa of convergence, 
    \item[--] $N = N_E^\Gamma$ and $N_\infty = N_E^{\Gamma_\infty}$ 
        where $\Gamma_\infty$ is 
        the decomposition group at infinity, 
    \item[--] $r_\infty=\tfrac12(1+\sum_{d|n}\varphi(d))$ 
        is the rank of $\prod_{d | n}\ZZ[\zeta_d]^\times$, 
    \item[--] $\Sigma$ is the fan for $Y$ 
        constructed in \S\ref{sec:DiophProblem}, 
    \item[--] $\sigma_{(d)}$ is the cone of $\Sigma_D$ 
        corresponding to the unique Galois orbit 
        in $C_n^\vee \subset \Sigma(1)$ of size $\varphi(d)$ 
        (\S\ref{sec:SubfanDescrip}), 
    \item[--] $\binom{\tau}{\sigma}$ is 
    the number of unordered $r$-subsets 
    $E \subset \tau(1)$ such that $\pi E = \sigma(1)$ 
        where $\pi \colon \ccl_E \to \ccl$ denotes 
        restriction to $\cl_E^\Gamma$ and $\cl_E=\ccl_E^\vee$. 
\end{itemize} %*}

Our proofs do not make use of 
    any known cases of Malle's conjecture 
    nor class field theory, 
    though we do make use of known zero-free regions in 
    the critical strip 
    for $L$-functions of Hecke characters 
    on cyclotomic fields. 
We expect that the same asymptotic formula holds 
    when counting all $C_n$-polynomials 
    (see Remark~\ref{rmk:GeneralPolynms}).

%\begin{corollary}%\label{thm:mainthm} %{*
%If $\ell \geq 5$ is a prime, then 
%the number $N(C_\ell,H)$ of monic degree $\ell$ 
%    normal integral polynomials with Galois group $C_\ell$ 
%    and root height less than $H$ is 
%\begin{equation} %{*
%    N(C_\ell,H) = 
%    (\ell-1)^{-1}
%    \frac{|\ToricROU||C \cap \ker \gamma^\ast|}{|C||R|P}
%    \res[\zeta_{\QQ(\zeta_\ell)}]
%    \left(
%    \prod_{p \equiv 1\!\!\!\!\!\pmod \ell}
%    \left(1+(\ell-1)p^{-\frac{\ell-1}{2}{}}\right)
%    \right)
%    \left(
%    \kappa_1+\kappa_\ell
%    \right)
%    \tfrac{1}{\ell}
%    N^{\ell}(1+o(1)).
%\end{equation} %*}
%\end{corollary} %*}

\subsubsection{Malle's conjecture}\label{sec:malle} %{*

Our theorem on abelian polynomials 
    may be compared with 
    another asymptotic formula for $N(C_n,H)$ 
    implied by Malle's conjecture. 
Let $G$ be a regular permutation group of order $n\geq 4$, 
and let $D_G(s)$ be the ``discriminant series'' 
\begin{equation} %{*
    D_G(s) = 
    \sum_{\text{$K/\QQ$ $G$-field}} 
        |d_K|^{-s}.
\end{equation} %*}
Malle's conjecture predicts there is a positive constant 
    $a<\tfrac12$ 
such that the number $F(G,H)$ of $G$-fields $K/\QQ$ 
    with $|d_K| < X$ satisfies 
    $X^{a-\varepsilon} \ll_\varepsilon F(G,H)
    \ll_\varepsilon X^{a+\varepsilon}$ 
    for any $\varepsilon >0$ as $X$ goes to infinity; 
    in particular, $D_G(1/2)$ is finite. 
Let $f_x\in\ZZ[t]$ denote the characteristic polynomial 
of an element $x \in K$. 
As $H$ goes to infinity, we have that 
\begin{equation} %{*
    N(G,H)
    =
    \sum_{\text{$K/\QQ$ $G$-field}} 
        \tfrac1n
        \#\{x \in O_K : K=\QQ(x), H(f_x)<H\}
    =
    c_G
    D_G(1/2)
    H^n
        (1+o(1))
\end{equation} %*}
where $c_G$ is the scaling ratio 
    between root height and Minkowski norm. 
It would be interesting to compare 
    the formulas for $D_{C_p}(1/2)$ 
    when $p$ is prime 
    provided by our theorem 
    and \cite{zbMATH01765119}.\\ \par

%Let $K/\QQ$ be a field with Galois group $G$ 
%    and discriminant $d_K$. 
%The covolume of the quotient lattice 
%$O_K/\ZZ \subset K \otimes \RR / \RR$ 
%is $\sqrt{d_K}$,  
%so by Minkowski's second theorem 
%the successive minima satisfy 
%$\sqrt{d_K}\asymp \lambda_1\cdots\lambda_n$. 
%Since $K \otimes \RR / \RR$ is a simple $\QQ[C_p]$-module, 
%    the successive minima are all equal,  
%    so 
%$\sqrt{d_K}\asymp d_K^{1/(2n-2)}$. 
%The number of monic degree $n$ 
%    integral polynomials with Galois group $G$ 
%    and root height less than $H$ 

    %*}

%*}

%\section*{Acknowledgements} %{*

We are very grateful to 
Anshul Adve, 
Tim Browning, 
    Vesselin Dimitrov, 
    Peter Sarnak, 
    Victor Wang, and Shou-Wu Zhang for 
    helpful discussions. 
In particular, we would like to 
    thank Manjul Bhargava for many inspiring discussions 
    and also for showing us the argument in \S\ref{sec:malle}. 

    %*}

\subsubsection*{Notation} %{* 

The places of a number field $E$ are denoted $\Places{E}$ 
and the completion of $E$ at a place $w$ is denoted $E_w$. 
For a finite place $w$ of $E$ we set 
    $\norm{ x }_{w}
    \coloneqq  
    q_w^{-\mathrm{ord}_{w}(x)}$ 
where $\mathrm{ord}_w \colon E^\times \twoheadrightarrow \ZZ$ 
is the normalized additive valuation associated to $w$ 
and $q_w$ is the size of the residue field of 
the ring of integers of $E_w$. 
The norm $\norm{x}_w$ is also the multiplier for 
    the additive Haar measure on $E_w$ under $y \mapsto yx$. 
For an infinite place $w$ 
corresponding to an embedding 
$i \colon E \hookrightarrow \CC$, 
let 
    $\norm{ x }_{w}
    \coloneqq  
    |i x|_\CC^{d_{w}}$ 
where $d_{w} = 1$ (resp. $d_w=2$) if $w$ is real 
(resp. if $w$ is complex), 
and we set 
$\mathrm{ord}_w(x) = -\frac{\log \norm{x}_w}{\log q_w}$ 
where $q_w \coloneqq e^{d_w}$. 
With these norms the product formula takes the form 
$\prod_{w\in \Places{E}} \norm{x}_{w} = 1$ for any nonzero $x$. 
If $E$ is Galois over $\QQ$ and $v$ is a place of $\QQ$ 
we let $e_v$ (resp. $f_v$) 
denote the ramification (resp. inertia) index of 
any place of $E$ lying over $v$; 
by convention, $e_v=1$ if $v = \infty$. 
%If $P = [x_0 : \cdots : x_n]$ is a point of $\PP_n$ 
%defined over $E$,  
%then the absolute logarithmic height of $P$ is 
%    $h(P) = \frac{1}{[E:\QQ]}
%    \sum_{v \in \Places{E}}
%    {\max} \{ \log \norm{x_0}_{v},
%    \ldots,\log \norm{x_n}_v\}$. 
We set $e(x) = e^{2\pi ix}$. 
For a fan $\Sigma$ 
    we let $|\Sigma|$ denote the union of its cones 
    in the ambient space $\ccs$. 
A subset of $\ccs$ is called \defn{$\Sigma$-convex} 
    if it is contained in a cone of $\Sigma$. 

%*} 

%*} 

\section{Integral points on toric varieties} %{*

\subsection{Preliminaries on algebraic tori}\label{sec:preliminaries} %{*

Let $T$ be an $n$-dimensional algebraic torus defined over $\QQ$. 
Let $E/\QQ$ be the minimal Galois extension 
which splits $T$ and let $\Gamma = G(E/\QQ)$ be its Galois group. 
For a field $F$ containing $\QQ$ 
write $T_F = T\otimes F$ for the base-change of $T$ to $F$. 
Let $X^\ast T = \Hom(T,\GG_{m,\QQ})$ 
be the set of rational characters of $T$ 
and let $X_\ast T = \Hom(\GG_{m,\QQ},T)$ 
be the set of rational cocharacters of $T$; 
we will also need $X^\ast T_E = \Hom(T,\GG_{m,E})$ 
and $X_\ast T_E = \Hom(\GG_{m,E},T_E)$. 
These sets are finite free abelian groups 
%and let $N = (N_E)_\Gamma$. 
%Set $X^\ast(T) = X^\ast(T_E)^\Gamma$ 
%and $X_\ast(T) = X_\ast(T_E)^\Gamma$.\footnote{\note{different than CL-T who use coinvariants, but I don't think those are dual either on the level of lattices}} 
and there is a canonical pairing 
$X^\ast T \times X_\ast T \to \ZZ$ 
given by $(m,n) \mapsto \deg m(n)$. 
While basic examples show this pairing is 
generally {not} perfect, 
it does identify $X_\ast T$ with 
a finite index sublattice of the dual lattice to $X^\ast T$ 
so this pairing induces a canonical isomorphism 
on the level of real vector spaces: 
$$\RR \otimes X_\ast T \cong \RR \otimes X^\ast T^\vee.$$ 
The character lattice $X^\ast T_E$ has 
    a natural Galois action by post-composition 
    and we equip $X_\ast T_E$ with 
    the corresponding dual representation. 
%We will use $\chi \colon T \to \GG_m$ 
%    (resp. $\xi\colon \GG_m \to T$) 
%    to denote characters (resp. cocharacters) 
%    regarded as morphisms; 
%    we also use $m$ (resp. $n$) 
%    to denote characters (resp. cocharacters) 
%    regarded \emph{additively} --- 
%    letting $\chi_m$ and $\xi_n$ 
%    stand for the corresponding morphisms\footnote{\note{Do I really use this anywhere?}} --- 
It is however more convenient to use additive notation 
    for these lattices. 
We denote these with $M$ and $N$: 
\begin{equation} %{* 
    \cl_E \cong X^\ast T_E : u \leftrightarrow \chi^u, \quad
    \cl = (\cl_E)^\Gamma = X^\ast T, \quad
    \ccl_E = X_\ast T_E, \quad
    \ccl = \ccl_E^\Gamma = X_\ast T. 
\end{equation} %*} 
The associated real vector spaces will be denoted 
\begin{equation} %{*
    \cs_E = \RR \otimes \cl_E, \quad
    \cs = (\cs_E)^\Gamma, \quad
    \ccs_E = \RR \otimes \ccl_E = \mathrm{LieAlg}(T_E), \quad
    \ccs = \ccs_E^\Gamma 
\end{equation} %*}
%When there is no possibility of confusion 
%with base change, we will write 
%    $T_R$ in place of $T(R)$ 
%    for any $\QQ$-algebra $R$. 

%For each place $v$ of $\QQ$ 
%choose a place $w$ of $E$ lying over $v$. 
%Set 
%\begin{equation} %{*
%    N_v = (N_E)^{\Gamma_w}.
%\end{equation} %*}
%Strictly speaking, the lattice $N_v$ depends on $w$ 
%but different choices of $w$ lead to isomorphic copies of $N_v$. 

\begin{definition}
Let $w$ be a place of $E$. 
For any $t \in T(E_w)$ 
    the function 
    $\chi \mapsto \mathrm{ord}_w(\chi(t))$ 
    on characters $\chi \in X^\ast T_{E_w}$ 
    determines an element of 
    $\RR \otimes X_\ast T_{E_w}$. 
Let $$\lt_w(t) \in \ccs_E$$ 
    be the corresponding cocharacter 
    under the canonical isomorphism 
    $\RR \otimes X_\ast T_{E_w} \cong \RR \otimes X_\ast T_{E}$ 
    induced by base change of the split torus $T_E$ 
    along the inclusion $E \to E_w$. 
\end{definition}

Let $w$ be a place of $E$ and let $v$ be its restriction to $\QQ$. 
Let 
\begin{equation} %{*
\ccl_w = \ccl_{E}^{\Gamma_w},\quad
\ccs_w = \ccs_{E}^{\Gamma_w}
\end{equation} %*}
where $\Gamma_w$ is the decomposition group at $w$. 
Let $\MaximalCompact{v} \subset T(\QQ_v)$ 
be the maximal compact subgroup. 

\begin{proposition}[{\cite[(1.3), p.~449]{MR291099}}]\label{prop:basicfacts}\, %{*
Let $w$ be a place of $E$ and let $v$ be its restriction to $\QQ$. 
There is an exact sequence 
\begin{equation} %{*
    1 \longrightarrow \MaximalCompact{v} 
    \longrightarrow T(\QQ_v) 
    \xlongrightarrow{\lt_w}
    \ccs_w.
\end{equation} %*}
If $w$ is infinite then $\lt_w$ is surjective, 
    if $w$ is finite and unramified 
    then the image of $\lt_w$ is the lattice 
    $\ccl_w$, 
    and if $v$ is ramified then 
    the image of $\lt_w$ is 
    a finite index subgroup of $\ccl_w$. 
%\note{The proof in CUBIC of the next assertion isn't right.}
%Moreover, for any place $v$, 
%If $v$ is finite and $T_{\QQ_v}$ is 
%    quasisplit as a torus over $\QQ_v$, 
%    then the image of $\lt_w$ is 
%    $\ccl_w$ (even if $v$ is ramified).\footnote{\note{For proof of this part, can use the proof (or cite) Prop 2 of CUBIC.}} 
\end{proposition}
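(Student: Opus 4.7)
The plan is to derive the sequence by descending from the split torus $T_{E_w}$ via $\Gamma_w$-Galois cohomology. The basic observation is that $T$ splits over $E$ and hence over $E_w$, so that $T(E_w) \cong \Hom(\cl_E, E_w^\times)$ as $\Gamma_w$-modules (where $\Gamma_w$ acts on both the character lattice and on $E_w^\times$), and Galois descent identifies $T(\QQ_v)$ with $T(E_w)^{\Gamma_w}$. Under this identification, the map $\lt_w$ is just the $\Gamma_w$-equivariant composition of evaluation with $\mathrm{ord}_w$.

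First I would establish the split-torus analogue of the exact sequence over $E_w$. For a finite place $w$, the normalized valuation $\mathrm{ord}_w\colon E_w^\times \twoheadrightarrow \ZZ$ has kernel $\mcO_{E_w}^\times$, so applying $\Hom(\cl_E, -)$ produces the short exact sequence of $\Gamma_w$-modules
\begin{equation*}
1 \longrightarrow \MaximalCompact{w} \longrightarrow T(E_w) \xlongrightarrow{\lt_w} \ccl_E \longrightarrow 0,
\end{equation*}
where $\MaximalCompact{w} = \Hom(\cl_E, \mcO_{E_w}^\times)$ is the maximal compact subgroup. For an infinite place one replaces $\ccl_E$ by $\ccs_E$ and gets an analogous surjection using $\mathrm{ord}_w = -\log\norm{\cdot}_w / \log q_w$. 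In both cases the kernel is exactly the maximal compact, essentially because $\lt_w$ is continuous with discrete image and the kernel is the inverse image of a bounded set of valuations.

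Next I would apply $\Gamma_w$-invariants and use the resulting long exact sequence in cohomology. Since $\Gamma_w$-invariants preserve maximal compact subgroups, we get
\begin{equation*}
1 \longrightarrow \MaximalCompact{v} \longrightarrow T(\QQ_v) \xlongrightarrow{\lt_w} \ccl_E^{\Gamma_w} \longrightarrow \H^1(\Gamma_w, \MaximalCompact{w}) \longrightarrow \cdots,
\end{equation*}
which is the sought-for sequence since $\ccl_E^{\Gamma_w} = \ccl_w$. The image of $\lt_w$ thus coincides with the kernel of the connecting map into $\H^1(\Gamma_w, \MaximalCompact{w})$. This handles the core exactness claim.

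The main obstacle is controlling the image in each case. In the unramified finite case, $T$ admits a smooth model $\mathcal{T}$ over $\mcO_{v}$ whose base change to $\mcO_{E_w}$ has connected special fiber $\mathcal{T}_{\kappa_w} \cong \GG_{m,\kappa_w}^n$; Lang's theorem gives $\H^1(\Gamma_w, \mathcal{T}(\mcO_{E_w})) = 0$, so $\lt_w$ surjects onto $\ccl_w$. For ramified finite $w$, $\Gamma_w$ is finite and $\MaximalCompact{w}$ is compact, so $\H^1(\Gamma_w, \MaximalCompact{w})$ is torsion of bounded exponent; combined with the fact that $\ccl_w$ is a finitely generated abelian group, this forces the image to have finite index in $\ccl_w$. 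For infinite places, $\ccs_E$ is a $\QQ$-vector space, the image contains the image under $\lt_w$ of a neighborhood of the identity (where $\lt_w$ is a surjective Lie-group submersion onto its image), and a subgroup of $\ccs_w$ containing an open neighborhood of $0$ must equal $\ccs_w$. The most delicate step is verifying that $\lt_w$ is genuinely a continuous homomorphism with the claimed kernel in the ramified and archimedean settings, where one must check compatibility of the normalization conventions for $\mathrm{ord}_w$ with the base-change isomorphism $\RR \otimes X_\ast T_{E_w} \cong \RR \otimes X_\ast T_E$ used to define $\lt_w$ intrinsically.
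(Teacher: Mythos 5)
The paper gives no proof of this proposition; it cites Ono's original treatment, and your argument is the expected Galois-cohomological reconstruction. The core structure — split-torus exact sequence over $E_w$, take $\Gamma_w$-invariants, read off the image of $\lt_w$ as the kernel of the connecting map into $\H^1(\Gamma_w,\MaximalCompact{w})$ — is right, and the unramified and ramified finite-place cases are handled correctly (with the understanding that the Lang's theorem vanishing $\H^1(\Gamma_w,\mathcal T(\mcO_{E_w}))=0$ implicitly uses the standard filtration by congruence subgroups, reducing to the special fiber and to additive Hilbert 90 on the graded pieces).

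There is, however, a genuine gap in the archimedean surjectivity step. Saying that $\lt_w$ is a ``surjective Lie-group submersion onto its image'' only establishes that the image of a neighborhood of $1$ is open \emph{in the image}, which is a Lie subgroup of $\ccs_w$; it does not show the image contains an open neighborhood of $0$ in $\ccs_w$. For that you would need the differential of $\lt_w\colon T(\QQ_v)\to\ccs_w$ at the identity to be surjective onto $\ccs_w$, which is an unproved assertion essentially equivalent to the claim. The cleaner route — and the one consistent with the cohomological mechanism you already set up — is to observe that the connecting homomorphism $\ccs_w \to \H^1(\Gamma_w,\MaximalCompact{w})$ vanishes identically: its source is a real vector space, hence uniquely divisible, while its target is annihilated by $|\Gamma_w|$, so every element of $\ccs_w$ maps to zero. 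Surjectivity of $\lt_w$ onto $\ccs_w$ then follows from the long exact sequence exactly as in the nonarchimedean cases, and this also sidesteps the normalization worry you raise at the end.
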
 %*}

%If $T = R^E_\QQ \GG_m$, 
%    then for finite $w$ 
%    the $\Gamma$-module $X_\ast(T_{E})^{\Gamma_w} \cong \ZZ\langle \Gamma/\Gamma_w\rangle$ 
%    may be identified with the fractional ideals of $E$ 
%    supported on primes lying over $v$, 
%    and $\lt_w$ takes $x \in T(\QQ_v)=(E \otimes \QQ_v)^\times$ to 
%    the fractional ideal of $E$ generated by $x$. 
%\note{Could also just cite Ono's paper \S2.2}

%\begin{proof} %{*
%Let $\ZZ_w = \ZZ$ if $w$ is finite or $\RR$ if $w$ is infinite. 
%The short exact sequence 
%\begin{equation} %{*
%    1 \longrightarrow K_w 
%    \longrightarrow T(E_w) 
%    \xlongrightarrow{\lt_w}
%    X_\ast(T_{E}) \otimes \ZZ_w 
%    \longrightarrow 1
%\end{equation} %*}
%is easy to verify since $T_{E_w}$ is split. 
%Now pass to $\Gamma_w$-invariants. 
%The image of $\lt_w$ is finite-index \cite[(2.1.5)]{ono}. 
%There is a canonical isomorphism 
%    $K_w = \Hom(X^\ast(T_E),O_w^\times)$, 
%    and $H^1(\Gamma,K_w) = 1$ if $w/v$ is unramified. 
%\end{proof} %*}

%We will use these maps to define a canonical height function on 
%    the space of adelic points of $T$. 

We recall the definition of the toric height 
    on $Y$ following \cite{IMRN}. 

\begin{definition}
    Let $\varphi \colon \ccs_E \to \CC$ be a function. 
    For $t = (t_w)_w \in T(\AA_E)$ %and $s \in \CC^{\Sigma(1)}$ 
    let 
\begin{equation} %{*
    H(t,\varphi) 
    %\prod_{w \in M_E}
    %    H_w(x_w) 
    =\prod_{w \in \Places{E}}
    \left(q_w^{-\varphi(\lt_w(t_w))}\right)^{\frac{1}{[E:\QQ]}}
\end{equation} %*}
where $\varphi(\lt_w(t_w))$ is evaluated using 
    the canonical isomorphism $X_\ast T_E \cong X_\ast T_{E_w}$. 
\end{definition}

The reader may easily verify the following basic properties: 
\begin{enumerate} %{*
%\item $H$ is independent of 
%    the choice of splitting field $E/\QQ$. % TODO Decide if this is important; at the moment I'm defining E to be minimal 
\item $H(-,\varphi)$ is invariant under 
    the action of the maximal compact subgroup of $T(\AA_E)$. 
\item $H(t,\varphi) = 1$ if $t \in T(\QQ)$ and 
    $\varphi$ is linear. % Cf. NB26, pp. 99-100, NB40, p. 37 
%\item If $x \in T(\QQ)$, then 
%    $s \mapsto H(x,s)$ descends from $\mathrm{Div}_T Y$ 
%        to $\Pic Y$. 
\item If $t \in T(\QQ_v)$ where $w$ lies over $v$ 
    and $\varphi$ is $\Gamma$-invariant, then 
    $\varphi(\lt_w(t))$ is independent of $w$, 
        even though $\lt_w(t)$ depends on 
        the choice of $w$ over $v$ 
        in general. 
% Cf. NB26, p. 100 and NB40, p. 45.
\item If $t = (t_w)_w \in T(\AA)$ 
    embedded diagonally in $T(\AA_E)$
    and $\varphi$ is $\Gamma$-invariant, then 
\begin{equation}\label{eqn:heightforrationaladeles} %{* 
        H(t,\varphi)
        = \prod_{v \in \Places{\QQ}} 
        q_v^{-\frac{1}{e_v}\varphi(\lt_w(t_w))}.
\end{equation} %*} 
\end{enumerate} %*}

\begin{remark}
The toric height defined in \cite{IMRN} 
    differs slightly from ours. 
We have introduced normalization factors 
    that are needed for (2) to hold, 
    which is itself needed in order for 
    the height zeta function to descend 
    to a function on $\Pic Y$ 
    and not just $\Pic^T Y$. 
\end{remark}

\subsection{Preliminaries on toric varieties} %{*  

%Let $Y$ be a projective toric variety defined over $\QQ$ 
%    with open torus $T$. 
Let $\Sigma$ denote the fan of $Y_E$ in $\ccs_E$. 
We do not assume $Y$ is smooth and 
    permit cyclic quotient singularities. 
This means the fan $\Sigma$ is simplicial: 
    all the cones are simplices. 
Let $\Div_T Y$ denote the group of $T$-Cartier divisors on $Y$. 
See \cite{MR1234037} for more details. 
We let $\Sigma(k)$ denote 
the set of $k$-dimensional cones in $\Sigma$ 
and identify one-dimensional cones 
with their minimal lattice generators. 
The next proposition is well-known 
%For the commutative diagram 
    \cite[\S3.4]{MR1234037}.%\footnote{\note{justify Galois-equivariant statements}} % TODO

\begin{proposition}\label{prop:supportFunctions} %{* 
There is a commutative diagram with exact rows: 
\begin{equation} %{*
\begin{tikzcd}
0 \ar[r] &\cl_E \ar[-,double line with arrow={-,-}]{d} 
            \arrow{r}{\mathrm{div}} 
    & \Div_T Y_{E} \ar[r] \ar[hookrightarrow]{d} 
    &\Pic (Y_{E}) \ar[r] \ar[hookrightarrow]{d} & 0\\
0 \ar[r] &\cl_E \ar[r] 
    & \ZZ^{\Sigma(1)} \cong\oplus_{e\in {\Sigma}(1)}\ZZ D_e \ar[r] 
        &A_{n-1}(Y_{E}) \ar[r] &  0.
\end{tikzcd}
\end{equation} %*}
%If $\Sigma$ is simplicial then 
The two vertical inclusions 
    become isomorphisms 
    after tensoring with $\QQ$. 
%    \note{do I use the next sequence anywhere?}
%We have the exact sequence 
%\begin{equation} %{*
%    0 \longrightarrow \cl
%    \longrightarrow \Div_T Y 
%    \longrightarrow \Pic Y 
%    \longrightarrow H^1(\Gamma,\cl_E). 
%\end{equation} %*}
%The group $\Pic^T(Y_\CC)$ 
%            of isomorphism classes of $T$-linearized line bundles 
The canonical $\Gamma$-action on $\ccl_E$ 
    preserves the fan of $Y_E$. 
The group $\Div_T Y_{E}$ 
    is naturally isomorphic with 
    the group of $\Sigma$-linear\footnote{
    i.e.~continuous functions 
    $\varphi \colon \ccs_E \to \RR$ 
    such that the restriction of $\varphi$ 
    to each cone $\sigma \in \Sigma$ is equal to 
    the restriction of some linear form $u(\sigma) \in \cl_E$.} 
            functions 
            $\varphi \colon \ccs_E \to \RR$ 
            which are $\ZZ$-valued on $\ccl_E$. 
        %corresponding under the natural isomorphism 
        %$\mathrm{CaDiv}^T(Y) \cong \mathrm{Pic}^T(Y)$ 
        %to the $T$-linearized line bundle $\mathcal O_Y(D)$. 
The $\Sigma$-linear function $\varphi$ 
    corresponding to the line bundle 
    $\mathcal O(\sum_{e\in{\Sigma}(1)}s_eD_e)$ 
    satisfies $\varphi(e) = -s_e$ for each $e \in {\Sigma}(1)$. 
A $T$-Cartier divisor on $Y_E$ 
descends to a $T$-Cartier divisor on $Y$ if and only if 
    its associated $\Sigma$-linear function 
    is Galois invariant. 

    %Let $\delta_D = -\varphi_D$. 
    %Then $\{\delta_e : e \in \Sigma(1)\} \subset \Pic^T(Y)$ 
    %    is a $\ZZ$-basis of $\Pic^T(Y)$. 
    %We use coordinates $\mathrm{Pic}^T(Y) \to \ZZ^{\Sigma(1)}$ 
    %where $\varphi$ is identified with $(s_e)_e$ 
    %and $\varphi(e) = -s_e$ for every $e$. 
\end{proposition}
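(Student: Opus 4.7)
The plan is to derive the diagram and auxiliary claims from the standard toric dictionary over the splitting field $E$ (where $T_E$ is split), then show everything is Galois equivariant and descend. I would first recall that a $T$-Cartier divisor on $Y_E$ is given by compatible local data $(U_\sigma,\chi^{u(\sigma)})$ indexed by maximal cones $\sigma\in\Sigma$, where $u(\sigma)\in \cl_E$ is determined modulo the characters vanishing on $\sigma$; gluing these local trivializations produces a continuous piecewise-linear function $\varphi$ on $|\Sigma|=\ccs_E$ whose restriction to $\sigma$ agrees with $-u(\sigma)$ (the sign being dictated by the convention $\mathrm{div}(\chi^u)=\sum_e \langle u,e\rangle D_e$). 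Conversely, a $\Sigma$-linear function $\varphi$ that is $\ZZ$-valued on $\ccl_E$ yields an integral linear form $u(\sigma)$ on each maximal cone, and these assemble into a $T$-Cartier divisor; this gives the claimed identification $\Div_T Y_E \cong \{\Sigma\text{-linear, }\ZZ\text{-valued on }\ccl_E\}$ and the formula $\varphi(e)=-s_e$ when $D=\sum s_e D_e$.

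Next I would establish the two exact rows. For the top row, the map $\mathrm{div}\colon \cl_E\to\Div_T Y_E$ is injective because $Y_E$ is complete (so a global rational function with trivial divisor is constant and corresponds to the trivial character), and the cokernel is $\Pic(Y_E)$ by definition of $T$-linearized Picard group together with the fact that every line bundle on a toric variety admits a $T$-linearization. For the bottom row, $\ZZ^{\Sigma(1)}=\bigoplus \ZZ D_e$ is the group of $T$-Weil divisors, and $\cl_E\to \ZZ^{\Sigma(1)}$ sends $u\mapsto \sum_e \langle u,e\rangle D_e$; exactness on the right is the presentation of the class group of a toric variety recorded in Fulton. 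The vertical inclusion sends a $T$-Cartier divisor to its underlying $T$-Weil divisor, and becomes an isomorphism after $\otimes\QQ$ precisely because $\Sigma$ is simplicial: on each simplicial cone an arbitrary assignment of integers to the rays extends uniquely to a $\QQ$-linear form, so the piecewise-linear function exists rationally on every cone and thus globally. Commutativity of the left square is automatic, and the right square then commutes by the snake/five-lemma.

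Finally I would handle the Galois action. Since $Y$ is defined over $\QQ$, the base-change $Y_E=Y\otimes_\QQ E$ carries a semilinear $\Gamma$-action whose fixed points on $T_E\subset Y_E$ recover $T$; the induced $\Gamma$-action on the character lattice $\cl_E=X^\ast T_E$ dualizes to the canonical action on $\ccl_E$, and the fan $\Sigma$ of $Y_E$ is the combinatorial datum of the $\QQ$-form $Y$, hence is $\Gamma$-stable as a subset of $\ccs_E$. All maps in the diagram are manifestly $\Gamma$-equivariant (they are defined by purely combinatorial/formal constructions from $\Sigma$). For the descent statement: by faithfully flat descent along $\Spec E\to \Spec\QQ$, a divisor on $Y_E$ descends to $Y$ iff it is $\Gamma$-invariant, and the correspondence $\Div_T Y_E \cong \{\Sigma\text{-linear, }\ZZ\text{-valued on }\ccl_E\}$ is $\Gamma$-equivariant, so descent of the divisor is equivalent to $\Gamma$-invariance of the associated $\Sigma$-linear function $\varphi$.

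The content is essentially a compilation of standard facts from \cite{MR1234037}, so no step should be genuinely difficult; the only subtlety requiring care is keeping track of sign conventions for $\mathrm{div}(\chi^u)$ versus the piecewise-linear function $\varphi$, and ensuring that the failure of the vertical inclusions to be isomorphisms integrally (caused by the cyclic quotient singularities) is correctly captured by the fact that a $\Sigma$-linear function coming from a Weil divisor may fail to be integral on $\ccl_E$ even when it is integral on each ray.
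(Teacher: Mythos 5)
Your proposal is correct and gives the standard toric-dictionary argument; the paper does not actually prove this proposition but simply cites \cite[\S3.4]{MR1234037}, which is exactly the source your write-up compiles. The only cosmetic infelicities — saying the right square commutes "by the snake/five-lemma" (it is really the universal property of cokernels), and calling the non-integral piecewise-linear extensions "$\Sigma$-linear" when the paper's footnote reserves that term for the integral ones — do not affect the substance.
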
 %*} 

To each $s \in \CC^{\Sigma(1)}$ we associate 
    the unique $\Sigma$-linear function $\varphi$ 
    satisfying $$\varphi(e) = -s_e$$ for all $e \in \Sigma(1)$,  
and set 
\begin{equation} %{* 
    H(t,s) \coloneqq H(t,\varphi). 
\end{equation} %*} 

\begin{remark}
Our sign convention on $\varphi$ follows 
    \cite{MR1234037} 
    and not \cite{IMRN}. 
In particular, the linear function 
    $\varphi_{\mathrm{div}(m)}\colon \ccs_E \to \RR$ 
    is equal to $-m$ and not $m$. 
    %while $M_E \to \ZZ^{\Sigma(1)}$ is given by 
    %$m \mapsto (m(e))_e$. 
\end{remark}

\subsubsection{Integral points} %{* 

Let $D$ be an effective $T$-stable Weil divisor of $Y$ 
(``boundary divisor'') 
    and let $U$ be an integral model for $Y-D$. 
An adelic point $t = (t_v)_v$ of $T$ 
    is \defn{$D$-integral} 
    if $t_v \in U(\ZZ_v)$ for all finite places $v$. 
The indicator function on $D$-integral points in $T(\AA)$ 
    is a locally constant factorizable function 
    $$\charfn^D = \prod_{v\in \Places{\QQ}} \charfn^{D}_v$$ 
    where $\charfn^D_\infty$ 
    is identically one on $T(\RR)$. 
For finite places $v$ 
    let $\IntegralCompact{v} \subset T(\QQ_v)$ 
    be the (open compact) subgroup 
    under which $\charfn^{D}_v$ is invariant, 
    and let $\IntegralCompact{\infty}$ 
    denote the maximal compact subgroup of $T(\RR)$. 
Set $\IntegralCompact{} = \prod_v \IntegralCompact{v}$. 

\begin{warning}
The toric height $t\mapsto H(t,s)$ is 
    invariant under translation in $x$ 
    by elements of the maximal compact subgroup 
    $\MaximalCompact{}$ of $T(\AA)$, 
    but to count $D$-integral points we must use 
    $H(t,s)\charfn^D(t)$ 
    instead of $H(t,s)$ 
and the indicator function $1_D$ is 
    {generally not invariant under 
    the maximal compact subgroup $\MaximalCompact{}$}. 
(E.g. the subgroup $\IntegralCompact{} \subset K$ has index $6$ 
    for the Diophantine problem in \cite{CUBIC}, 
    see \cite[Lemma~2]{CUBIC}.) 
This means that in the Poisson-style approach 
    to counting integral points, 
    the integral \eqref{eqn:poissonae} 
    over the automorphic spectrum of the torus $T$ 
    is complicated by characters 
    with non-maximal level. 
    %which are not necessarily $\MaximalCompact{}$-invariant. 
\end{warning}

\begin{remark}[Cartier divisors]\label{rmk:CartierIntegrality} %{*
When the boundary divisor $D$ is 
    Cartier there is a canonical integral model 
    for $Y-D$ defined as follows. 
Let $\varphi$ be the $\Sigma$-linear function 
    corresponding to the $T$-Cartier divisor $D$. 
On each cone $\sigma \in \Sigma$ 
    the restriction of $\varphi$ to $\sigma$ 
    is equal to the restriction of some linear form 
    $u(\sigma) \in \cl_E$. 
Let $U_\sigma = \Spec \QQ[\sigma^\vee \cap \cl_E] \subset Y_E$ 
    denote the affine open subvariety 
    corresponding to a cone $\sigma$. 
The ideal $I \subset \mathcal O(U_\sigma)$ 
    corresponding to $D \cap U_\sigma$ is generated 
    by the character corresponding to $u(\sigma)$, i.e. 
\begin{equation} %{*
    I=\QQ[\sigma^\vee \cap \cl_E]\chi^{u(\sigma)}.
\end{equation} %*}
From this description it is clear that 
$U_\sigma-D \cap U_\sigma$ has the integral model 
\begin{equation} %{*
    \Spec \ZZ[\sigma^\vee \cap \cl_E][\chi^{-u(\sigma)}].
\end{equation} %*}
The affine opens $\{U_\sigma\}$ form an open cover of $Y_E$ 
and glue to an integral model $U$ for $Y - D$. 
We have the following integrality criterion: 
\begin{equation}\label{eqn:CartierIntegralityCondition} %{*
t \in T(\QQ_v) \cap U(\ZZ_v)
\iff
    \chi^{u(\sigma)}(t)  \in \ZZ_v^\times 
    \quad\text{whenever $t \in U_\sigma$.} 
\end{equation} %*}
Nonetheless, in applications 
    it is important to consider integral points 
    with respect to non-Cartier divisors, 
    e.g.~the boundary divisor in \cite{CUBIC} is not Cartier 
    and its notion of integrality corresponds to 
    the natural condition of being monic 
    for integral cubic abelian polynomials. 
\end{remark} %*}

%\begin{remark}
%The need to consider automorphic characters of the torus 
%    not of maximal level is related to the fact that  
%    the toric height restricted to $D$-integral points 
%    may always be coprime to a given prime $p$; 
%e.g.~for the integral Diophantine problem 
%    considered in \cite{CUBIC}, 
%    the height of any $D$-integral point 
%    is coprime to $3$. 
%This can only occur 
%    %when the Fourier transform  
%    %of the local height at $p$ is \emph{constant} 
%    when $\charfn^D_p$ 
%    has \emph{compact support}, cf. \cite[Prop.~4]{CUBIC}. 
%\end{remark}

%*}

\subsubsection{The height zeta function} %{*

Let $\Phi=\prod_{v \in \Places{\QQ}}\Phi_v \colon T(\AA) \to \CC$ be 
    a locally constant factorizable function 
    for which $\Phi_\infty$ 
    is identically one on $T(\RR)$. 

\begin{definition}
Define the multiple Dirichlet series 
\begin{equation} %{*
    Z(s,\Phi) = 
    \sum_{t \in T(\QQ)}
    \frac{\Phi(t)}{H(t,s)}.
\end{equation} %*}
The \defn{height zeta function 
(for $D$-integral points in $T$)} is 
\begin{equation} %{*
    Z(s) = Z(s,\charfn_D)=
    \sum_{\substack{t \in T(\QQ)\\\text{$D$-integral}}}
    \frac{1}{H(t,s)}.
\end{equation} %*}
This converges to a holomorphic function 
    of several complex variables on 
    the region of $\Pic(Y) \otimes \CC$. 
    whose real part is 
    sufficiently positive with respect to the effective cone. 
    %$s \in (\CC^{\Sigma(1)})^\Gamma$ 
    %whose coordinates $s_e$ have 
    %sufficiently large positive real part. 
\end{definition}

%In order to show that thin sets have zero density, 
%we will also need to consider Dirichlet series of the form 
%\begin{equation} %{*
%    Z(\charfn,s) = 
%    \sum_{x \in T(\QQ)}
%    \frac{\charfn(x)}{H(x,s)}
%\end{equation} %*}
%where $\charfn \colon T(\AA) \to \CC$ is a locally constant function. 

%*}

\subsubsection{Restricted fans}\label{sec:restrictedFan} %{*

We fix once and for all a choice of place $w$ of $E$ 
    lying over each place $v$ of $\QQ$. 
If $\Gamma_w \subset \Gamma$ is the decomposition group 
    at the chosen place $w$ of $E$ over $v$ 
    then we write $\Gamma_v$ for $\Gamma_w$. 

\begin{definition}[restricted fan]
%If $\Gamma$ is a finite group acting linearly on 
%    the lattice $\ccl_E$ for which $\gamma(\sigma) \in \Sigma$ 
%    for any $\gamma \in \Gamma$ and $\sigma \in \Sigma$, 
%Let $\pi \colon N_{E,\RR} \to \ccs$ denote 
%    the linear projection map 
%    given by ``restriction to $\RR X^\ast T$''. 
For any subgroup $\Gamma_v$ of $\Gamma$ 
    let $\Sigma_{\Gamma_v}$ denote 
    the fan in $\ccs_E^{\Gamma_v}$ 
    consisting of 
    the cones $\sigma \cap \ccs_E^{\Gamma_v}$ 
    for each cone $\sigma$ in $\Sigma$. 
\end{definition}

The next lemma describes the restricted fan 
    and its generators 
    for a simplicial fan $\Sigma$ 
    which is not necessarily regular; 
cf.~\cite[Prop.~1.3.2]{IMRN} for the regular case. 

\begin{lemma}\label{lemma:subfan}\, %{*
    \begin{enumerate}
        \item For each $\sigma \in \Sigma_\Gamma$, 
    there is a unique cone $\tau_\sigma \in \Sigma$ 
    which is minimal among cones in $\Sigma$ containing $\sigma$. 
The cone $\tau_\sigma$ 
    will be fixed by $\Gamma$ 
    (i.e.~$\gamma(\tau_\sigma) = \tau_\sigma$ 
    for all $\gamma\in\Gamma$) 
            and satisfies $\tau_\sigma \cap \ccs= \sigma$. 
The map $\sigma \to \tau_\sigma$ determines a bijection 
    %$\sigma \leftrightarrow \tau_\sigma$ 
    between $\Sigma_\Gamma$ 
    and the subset of cones of $\Sigma$ which are fixed by $\Gamma$. 
\item 
Let $e_0\in \Sigma_{\Gamma}(1)$ be a lattice generator 
    for a ray $\sigma$ in $\Sigma_\Gamma$. 
Then $\Gamma$ acts transitively on $\tau_\sigma(1)$ and 
\begin{equation}\label{eqn:rayGeneratorsDescription} %{*
    e_0=\frac{1}{k}\sum_{e \in \tau_\sigma(1)}e
\end{equation} %*}
    for a positive integer determined by $\sigma$.%\footnote{\note{Relation to index of $\sigma$?}} % TODO uncomment
If $\tau_\sigma$ is regular, then $k=1$. 
The bijection $\sigma \leftrightarrow \tau_\sigma$ 
    determines a bijection between 
    $\Sigma_\Gamma(1)$ 
    and the set of $\Gamma$-orbits in $\Sigma(1)$ 
    equal to $\tau(1)$ 
    for some cone $\tau \in \Sigma$. 
\item 
The fan $\Sigma_\Gamma$ is simplicial. 
If $\tau \in \Sigma$ is regular 
    then $\tau \cap \ccs \in \Sigma_\Gamma$ 
    is regular. 
%If $\varphi$ is $\Gamma$-invariant then 
%    $\varphi|_{N_{E,\RR}^{\Gamma}}$ is $\Sigma_{\Gamma}$-linear. 
    % Cf. pp. 151-152 of NB26 for pf. 
    \end{enumerate}
\end{lemma}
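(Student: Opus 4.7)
The plan is to handle (1), (2), (3) in order, exploiting the simpliciality of $\Sigma$ and the fact that the relative interiors $\{\tau^\circ : \tau \in \Sigma\}$ partition $|\Sigma|$.

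For (1), I would pick $x$ in the relative interior $\sigma^\circ$ (computed in $\ccs$) of a given $\sigma \in \Sigma_\Gamma$. Since $\sigma = \tau \cap \ccs \subset |\Sigma|$, the sets $\sigma^\circ \cap (\tau')^\circ$ for $\tau' \in \Sigma$ cover $\sigma^\circ$ by disjoint relatively open pieces, and connectedness of $\sigma^\circ$ singles out a unique $\tau_\sigma \in \Sigma$ with $\sigma^\circ \subset \tau_\sigma^\circ$. The pointwise $\Gamma$-invariance of $\sigma^\circ$ and uniqueness force $\gamma \tau_\sigma = \tau_\sigma$; the same connectedness argument applied to any $\tau' \supset \sigma$ shows $\tau_\sigma$ is a face of $\tau'$, giving minimality. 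Since $\tau_\sigma \cap \ccs \in \Sigma_\Gamma$ shares the relative interior $\sigma^\circ$ with $\sigma$, it equals $\sigma$, and the bijection with $\Gamma$-fixed cones of $\Sigma$ is then automatic.

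For (2), let $\sigma \in \Sigma_\Gamma(1)$. Transitivity of $\Gamma$ on $\tau_\sigma(1)$ comes by contradiction: two distinct orbits $O_1, O_2$ would produce $\Gamma$-fixed orbit sums $\sum_{e \in O_i} e \in \tau_\sigma \cap \ccs = \sigma$, which is one-dimensional; but the two sums live in disjoint proper faces of the simplicial cone $\tau_\sigma$ meeting only at the origin, contradicting collinearity. Hence $\sum_{e \in \tau_\sigma(1)} e$ lies in $\ccl \cap \sigma^\circ$ and equals $k\, e_0$ for a positive integer $k$. When $\tau_\sigma$ is regular, any $v \in \ccl_E$ with $kv = \sum_e e$ must take the form $\frac{1}{k}\sum_e e$ by $\Gamma$-equivariance together with $\RR$-linear independence of $\tau_\sigma(1)$, and lattice integrality of $v$ forces $k = 1$.

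For (3), let $\tau \in \Sigma$ be a $\Gamma$-fixed cone with orbit decomposition $\tau(1) = O_1 \sqcup \cdots \sqcup O_r$. By (2), each orbit sum $e_i = \sum_{e \in O_i} e$ generates a ray $\sigma_i \in \Sigma_\Gamma(1)$. A $\Gamma$-invariant element of $\tau$, written in the basis $\tau(1)$, has orbit-constant coefficients, so $\tau \cap \ccs = \sum_i \RR_{\geq 0}\, e_i$, and the $e_i$ are linearly independent since $\tau(1)$ is; this proves $\Sigma_\Gamma$ is simplicial. For the regularity claim, regularity of $\tau$ yields a split short exact sequence of abelian groups $0 \to \ccl_E^\tau \to \ccl_E \to L \to 0$ with $\ccl_E^\tau = \bigoplus_{e \in \tau(1)} \ZZ\, e$ $\Gamma$-stable and $L$ torsion-free. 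Taking $\Gamma$-invariants gives $0 \to \bigoplus_i \ZZ\, e_i \to \ccl \to L^\Gamma$, and torsion-freeness of $L^\Gamma$ shows $\bigoplus_i \ZZ\, e_i$ is a direct summand of $\ccl$, so $\tau \cap \ccs$ is regular.

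The most delicate step is the regularity assertion in (3): one needs to carry the direct-summand structure from $\ccl_E$ through the functor of $\Gamma$-invariants, which is elementary only because $L$ remains torsion-free on passage to invariants. The transitivity step in (2) is also the key combinatorial input, using the simpliciality of $\tau_\sigma$ in a concrete way rather than purely convex-geometric generalities.
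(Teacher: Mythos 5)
Your argument for the existence, uniqueness, minimality, and $\Gamma$-fixedness of $\tau_\sigma$ via relative interiors is sound (though the paper gets those facts more directly by taking $\tau_\sigma$ to be the intersection of all cones in $\Sigma$ containing $\sigma$), and your parts (2) and (3) are correct as written. Two points of genuine substance, however.

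First, a gap: you assert that the bijection of $\Sigma_\Gamma$ with the $\Gamma$-fixed cones of $\Sigma$ is ``automatic'' once you know $\tau_\sigma \cap \ccs = \sigma$, but what that gives you is only injectivity of $\sigma \mapsto \tau_\sigma$. Surjectivity---that every $\Gamma$-fixed $\tau \in \Sigma$ actually equals $\tau_{\tau\cap\ccs}$---requires an argument and is where the real content lies. The paper handles it by contradiction with orbit sums: if $\tau_\sigma \subsetneq \tau$ with both $\Gamma$-fixed and $\sigma = \tau\cap\ccs$, pick $e \in \tau(1)\setminus\tau_\sigma(1)$; then $\sum_{\gamma}\gamma e$ is $\Gamma$-fixed, lies in $\tau\cap\ccs = \sigma \subset \tau_\sigma$, yet is a positive combination of generators not in $\tau_\sigma(1)$, contradicting linear independence of $\tau(1)$. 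Within your relative-interior framework there is an even slicker route: averaging any $x\in\tau^\circ$ over $\Gamma$ produces a $\Gamma$-fixed point $\bar x\in\tau^\circ\cap\ccs$, and $\tau^\circ\cap\ccs$ is the relative interior of $\sigma = \tau\cap\ccs$; so $\bar x \in \sigma^\circ\cap\tau^\circ$, and since you have already shown $\sigma^\circ \subset \tau_\sigma^\circ$, disjointness of relative interiors in the fan $\Sigma$ forces $\tau = \tau_\sigma$. Either way, the step must be made explicit; your part (3) coefficient computation also supplies it (count $\Gamma$-orbits in $\tau(1)$ against $\dim(\tau\cap\ccs)$), but that computation appears after you invoke the bijection, so as ordered the argument is circular.

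Second, where you genuinely diverge from the paper: for the regularity claim in (3) the paper simply cites an external reference, while you give a self-contained argument via the split exact sequence $0\to\bigoplus_{e\in\tau(1)}\ZZ e\to\ccl_E\to L\to 0$, taking $\Gamma$-invariants and using torsion-freeness of $L^\Gamma$ to conclude $\bigoplus_i\ZZ e_i$ is a direct summand of $\ccl$. This is a nice elementary replacement, and it also yields primitivity of the $e_i$ without separately invoking (2). Your disjoint-faces proof of transitivity in (2) is likewise a valid alternative to the paper's appeal to minimality of $\tau_\sigma$.
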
 %*}

\begin{proof} %{*
Since $\Sigma$ is closed under intersection, 
    $\tau_\sigma$ is simply the intersection of all cones 
    in $\Sigma$ containing $\sigma$; 
if $\tau_\sigma$ were not fixed by $\Gamma$, 
    then it would not be minimal. 
By definition of $\Sigma_\Gamma$ there is 
    a cone $\tau \in \Sigma$ for which $\tau \cap \ccs = \sigma$. 
Since $\tau$ contains $\tau_\sigma$ 
    we see that $\tau_\sigma \cap \ccs = \sigma$. 
To see this is a bijection, 
    we will show that any $\Gamma$-fixed cone $\tau$ of $\Sigma$ 
    satisfies $\tau = \tau_\sigma$ 
    with $\sigma=\tau \cap \ccs$. 
Clearly $\tau$ contains $\tau_\sigma$, 
    so it suffices to show that if $\tau' \subset \tau$ is 
    another $\Gamma$-fixed cone 
    satisfying $\tau' \cap\ccs = \sigma$, 
    then $\tau' = \tau$. 
%The set $\tau(1)$ of ray generators in any cone $\tau$ fixed 
%    by $\Gamma$ is a finite set with $\Gamma$-action. 
%If $\tau', \tau \in \Sigma$ are distinct cones 
%    both fixed by $\Gamma$ 
%    and $\tau' \subset \tau$, 
%    then $\tau'(1)$ is a proper subset of $\tau(1)$. 
To the contrary, if $\tau' \neq \tau$ then there is some 
    $e \in \tau(1) \backslash \tau'(1)$.  
Since $\tau'$ is $\Gamma$-fixed, 
    $\tau'(1)$ is a proper $\Gamma$-stable subset of $\tau(1)$, 
    so the $\Gamma$-orbit of $e$ has 
    no intersection with $\tau'(1)$. 
This shows that 
    $\sum_\gamma \gamma (e) \not \in \tau'$, 
for otherwise we would have a nontrivial relation 
    among the linearly independent generators of $\tau$; 
thus $\tau' \cap \ccs \neq \sigma$ 
    since it fails to contain the point 
    $\sum_\gamma \gamma (e) \in \sigma$. 
   %and this shows that any cone $\tau$ of $\Sigma$ 
   %fixed by $\Gamma$ is the unique minimal $\Gamma$-fixed cone 
   %%$\tau = \tau_\sigma$ 
   %containing $\sigma=\tau \cap \ccs_E^\Gamma$. 

Let $e_0\in \Sigma_{\Gamma}(1)$ be a lattice generator 
    for a ray $\sigma$ in $\Sigma_\Gamma$. 
Then $\sum_{e \in \tau_\sigma(1)}e \in \sigma \cap \ccl$ 
    so it is equal to $ke_0$ for some integer $k \in \ZZ_{\geq 1}$. 
Since $\tau_\sigma$ is minimal the group $\Gamma$ 
    must act transitively on $\tau_\sigma(1)$. 
If $\tau_\sigma$ is regular, then 
    $\frac 1k\sum_{e \in \tau_\sigma(1)}e$ can only be 
    a lattice point if $k = 1$. 

Let $\tau \in \Sigma$ be $\Gamma$-fixed 
    %Let $\sigma,\sigma',\ldots$ be the rays in $\tau$ 
%and let $\tau_\sigma$ be the minimal cone of $\Sigma$ containing 
and let $\sigma=\tau \cap \ccs$. 
Each generator of $\sigma$ 
    corresponds via \eqref{eqn:rayGeneratorsDescription} 
    to a $\Gamma$-orbit in $\tau(1)$. 
Since $\tau(1)$ extends to a $\QQ$-basis of $\QQ \ccl_E$, 
%Then the subset $\tau_\sigma(1) \subset \tau(1)$ does also, 
    the generators of $\sigma$ 
    given by \eqref{eqn:rayGeneratorsDescription} 
    extend to a $\QQ$-basis of $\QQ \ccl$. 
For the implication that $\tau$ regular $\implies$ 
    $\tau \cap \ccs$ regular 
    see \cite[Prop.~1.3.2]{IMRN}. 
\end{proof} %*}

\begin{remark}
Example~\ref{example:antenna} 
    gives a regular cone $\sigma \in \Sigma_\Gamma$ 
    for which $\tau_\sigma \in \Sigma$ is not regular.  
\end{remark}

\subsection{Automorphic characters on the torus} %{*

Now we describe automorphic characters of the torus. 
We fix an arbitrary open compact subgroup $K_{\mathrm{fin}}$ 
    of $T(\AA_{\mathrm{fin}})$ and set 
    $K = K_{\mathrm{fin}} \times \MaximalCompact{\infty}$. 

\subsubsection{The norm-one subgroup of $T(\AA)$}\label{sec:normOne} %{*

We recall a logarithm map from the construction 
    of Langlands--Eisenstein series, cf.~\cite[\S6]{shahidi}: 
the homomorphism $\Lt \colon T(\AA) \to \ccs$ 
    defined by the formula 
    $$\Lt(t)(m) = \log |\chi^m(t)|_\AA^{-1}
    \qquad(m \in \cl).
    $$
%$$\Lt(t)= \frac{1}{[E:\QQ]}\sum_{w \in \Places{E}}
%\lt_w(t_w)\log q_w \in \ccs_E.$$ 
The \defn{norm-one subgroup} $T(\AA)^1$ is the kernel of $\Lt$. 
A construction from \cite{ono} shows that $\Lt$ admits a splitting. 

\begin{lemma}[{\cite[Lemma~3.8.1, p.~132]{ono}}]\label{lemma:ctssplittings}
The logarithm map $\Lt$ admits a continuous splitting 
    $s \colon \ccs \to T(\AA)$ which takes values in $T(\RR)$. 
In particular, $\Lt$ is surjective. 
\end{lemma}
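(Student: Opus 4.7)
The plan is to support the section entirely at the archimedean place, so that it factors through the inclusion $T(\RR) \hookrightarrow T(\AA)$, and to construct it by composing a linear splitting of a natural surjection $\ccs_\infty \twoheadrightarrow \ccs$ with the Lie-theoretic exponential on the identity component $T(\RR)^0$.

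First I would compare the global $\Lt$ with the archimedean local log $\lt_\infty$ on $T(\RR)$. Unpacking the definitions, for $t \in T(\RR)$ embedded diagonally in $T(\AA)$ (i.e.~trivial at all finite places) and $m \in \cl$, both $\Lt(t)(m)$ and the pairing of $\lt_\infty(t) \in \ccs_\infty$ with $m$ (viewed inside $\cl_E$) evaluate to $-\log|\chi^m(t)|$ up to the normalizing factors $\log q_w$ built into $\lt_\infty$. Therefore, on $T(\RR)$, the global log factors as $\Lt = \pi \circ \lt_\infty$, where $\pi \colon \ccs_\infty \twoheadrightarrow \ccs$ is the surjection dual to the inclusion $\cl \hookrightarrow \cl_E^{\Gamma_\infty}$. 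Since $\pi$ is a surjection of finite-dimensional real vector spaces it admits a continuous linear splitting $\iota \colon \ccs \to \ccs_\infty$.

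Next, by Proposition~\ref{prop:basicfacts} the map $\lt_\infty \colon T(\RR) \to \ccs_\infty$ is surjective with kernel $\MaximalCompact{\infty}$. The identity component $T(\RR)^0$ is a connected real Lie group whose Lie algebra is canonically identified with $\ccs_\infty$, because $T$ splits over $E$ and the Lie algebra of $T(\RR)^0$ is the $\Gamma_\infty$-invariant part of $\mathrm{LieAlg}(T_E) = \ccs_E$. The Lie-theoretic exponential $\exp \colon \ccs_\infty \to T(\RR)^0$ is continuous and, after absorbing signs and normalizations, satisfies $\lt_\infty \circ \exp = \mathrm{id}_{\ccs_\infty}$ (on each $\GG_m$-factor of the split torus this reduces to the one-variable identity $\log \circ \exp = \mathrm{id}$). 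I then set $s(\eta) \coloneqq \exp(\iota(\eta))$, interpreted in $T(\AA)$ via the diagonal embedding of $T(\RR)$. This map is continuous, takes values in $T(\RR)$, and satisfies
\[
\Lt(s(\eta)) = \pi\bigl(\lt_\infty(\exp(\iota(\eta)))\bigr) = \pi(\iota(\eta)) = \eta,
\]
so $s$ is a continuous section of $\Lt$; the surjectivity of $\Lt$ follows immediately.

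The main obstacle is bookkeeping rather than anything conceptual: one must carefully distinguish $\ccs$ from the larger $\ccs_\infty$, track the normalizations $\log q_w$ and the local factor $d_w = 2$ at complex places so that $\exp$ inverts $\lt_\infty$ on the nose, and verify that the factorization $\Lt|_{T(\RR)} = \pi \circ \lt_\infty$ indeed holds with $\pi$ the natural projection.
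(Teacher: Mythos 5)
The paper does not give its own proof of this lemma; it is cited directly from Ono. Your proposed construction has the right overall shape (work at the archimedean place, factor $\Lt|_{T(\RR)}$ as $\pi\circ\lt_\infty$, split each factor), but the step identifying the Lie algebra contains a genuine error. You claim $\mathrm{Lie}(T(\RR)^0)$ is ``the $\Gamma_\infty$-invariant part of $\ccs_E$,'' i.e.\ $\ccs_\infty = \ccl_E^{\Gamma_\infty}\otimes\RR$. That is false whenever $\Gamma_\infty$ acts nontrivially on $\ccl_E$ (equivalently whenever $T_\RR$ is not split): the Lie algebra of $T(\RR)^0$ is $(\ccl_E\otimes_\ZZ\CC)^{\Gamma_\infty}$, where $\Gamma_\infty$ also acts by complex conjugation on the coefficient $\CC$, and this $\RR$-vector space has dimension $\dim T$ while $\ccs_\infty$ can be strictly smaller. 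Concretely, take $T = R^{\QQ(i)}_\QQ\GG_m$: then $T(\RR)=\CC^\times$ has a $2$-dimensional Lie algebra, but $\ccs_\infty$ is the $1$-dimensional diagonal line in $\ccs_E=\RR^2$. So the stated map $\exp\colon\ccs_\infty\to T(\RR)^0$, interpreted literally as the Lie-theoretic exponential on $\ccs_\infty$, does not make sense; its domain is a proper subspace of the actual Lie algebra.

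The fix is small, and your argument survives once you replace the identification by the canonical inclusion $j\colon\ccs_\infty = (\ccl_E\otimes\RR)^{\Gamma_\infty}\hookrightarrow(\ccl_E\otimes\CC)^{\Gamma_\infty} = \mathrm{Lie}(T(\RR)^0)$, and then restrict $\exp$ along $j$. One checks directly from the definitions (using the cocharacter normalization $\lt_w$ and $q_w = e^{d_w}$) that $\lt_\infty(\exp(j(n))) = -n$ for $n\in\ccs_\infty$, so $\eta\mapsto\exp\bigl(j(-\iota(\eta))\bigr)$ is a continuous section of $\Lt$ with values in $T(\RR)^0$. Alternatively, and more robustly, note that $\lt_\infty\circ\exp\colon\mathrm{Lie}(T(\RR)^0)\to\ccs_\infty$ is a surjective linear map (surjectivity holds because $T(\RR)/\MaximalCompact{\infty}\cong\ccs_\infty$ is connected, so the image of the open subgroup $T(\RR)^0$ fills it), choose any linear section $\epsilon$ of it, and set $s=\exp\circ\,\epsilon\circ\iota$. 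Either way the rest of your computation is correct, including the factorization $\Lt|_{T(\RR)}=\pi\circ\lt_\infty$, which does check out with the normalizations $d_w$ and $\log q_w$ built into $\lt_w$.
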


Fix such a splitting $s \colon \ccs \to T(\AA)$. 
This determines isomorphisms in the usual manner 
\begin{equation}\label{eqn:trivialsplitting} %{* 
T(\AA) \cong_s \ccs  \times T(\AA)^1 \colon 
    t\mapsto (\Lt(t),ts(\Lt(t))^{-1}),
    \qquad
T(\QQ)\backslash T(\AA)/K
    \cong_s
    \ccs
    \times 
    T(\QQ)\backslash T(\AA)^1/K.
\end{equation} %*} 
In particular, any character on 
$T(\QQ)\backslash T(\AA)^1/K$ extends to a character on 
$T(\QQ)\backslash T(\AA)/K$. 

\begin{example}[restriction tori]\label{example:Restriction1} %{*
Take $T = R^E_\QQ \GG_m$ where $E$ is 
    a finite Galois extension of $\QQ$. 
There are $n$ characters $\chi_1,\ldots,\chi_n$ 
    generating $X^\ast T_E$ 
    which realize the $n$ embeddings 
    $T(\QQ)=E^\times \to \CC$ on the $\QQ$-points of $T$. 
The rational character lattice $\cl \cong X^\ast T$ 
    is generated by the norm character 
    $m_\mathrm{N} \leftrightarrow \mathrm{N}=\prod \chi_i$ 
    where 
    $\mathrm{N} \colon T(\AA)=\AA_E^\times 
    \to \GG_m(\AA)=\AA^\times$. 
%The logarithm map $\ell$ is determined by the identity 
%\begin{equation}\label{eqn:LAsNorm} %{* 
%    \ell(x)(m_{\mathrm{N}}) = \log |\mathrm{N}(x)|_{\AA}^{-1}
%    \qquad
%    (x \in T(\AA) = \AA_E^\times).
%\end{equation} %*} 
If $(e_i)_i \subset \ccl_E$ is the dual basis 
    to $\chi_1,\ldots,\chi_n$ and 
    $e_\infty \coloneqq-\sum e_i$, then 
\begin{equation}\label{eqn:LtRestrTori} %{*
    \Lt(x) = 
    \frac{1}{n}\log |\mathrm{N}(x)|_{\AA}e_\infty.
\end{equation} %*}
A natural choice of splitting is given by 
\begin{align} %{*
    s \colon \ccs = \RR e_\infty &\to 
    T(\AA) = T(\RR) \times T(\AA_{\mathrm{fin}})\\
    e_\infty &\mapsto s(e_\infty) = (q_\infty1_\infty\,;\,1_{\mathrm{fin}})
\end{align} %*}
    where $1_\infty$ (resp.~$1_{\mathrm{fin}}$) is 
    the identity of the group 
    $T(\RR) = E_\RR^\times$ (resp.~$T(\AA_{\mathrm{fin}})$) and 
    $q_\infty=e^{d_\infty} = e$ (resp.~$e^2$) 
    if $E$ is totally real (resp.~not totally real). 
\end{example} %*}

%For a character $\psi \in (T(\QQ)\backslash T(\AA)^1/K)^\vee$ 
%let $\widetilde{\psi}$ be 
%its lift to $T(\QQ)\backslash T(\AA)^1/K$ 
%(depending on $s$). 
%This determines an isomorphism 
%\begin{align}\label{eqn:Psidefinition} %{*
%(T(\QQ)\backslash T(\AA)^1/K)^\vee
%\times 
%    \cs &\cong_s 
%(T(\QQ)\backslash T(\AA)/K)^\vee\\
%    (\psi,\chi)&\mapsto \widetilde{\psi} \chi.
%\end{align} %*}
% Explicitly, for $t \in T(\QQ)\backslash T(\AA)/K$ let 
%\begin{equation} %{*
%    \widetilde{\psi}(t) = \psi(t s(L(t))^{-1}). 
%\end{equation} %*}
%By duality, we have the short exact sequence\footnote{\note{Do I have a reference for exactness of Pontryagin duality?}}
%\begin{equation} %{*
%    1 \longrightarrow X^\ast(T)_\RR
%    \xlongrightarrow{m \mapsto \chi_m} T(\AA)^\vee 
%    \longrightarrow (T(\AA)^1)^\vee
%    \longrightarrow 1
%\end{equation} %*}
%where $\chi_m$ is the character defined by 
%    $$\chi_m(x) = e(\langle L(x),m\rangle).$$ 

Next we turn to characters of $T(\QQ)\backslash T(\AA)^1/K$. 
We will use non-standard but equivalent definitions 
    for the regulator and class groups 
    that are more convenient for the Poisson formula. 

\begin{definition}
%Let $K$ be an open compact subgroup of $T(\AA)$. 
Let $R$ denote the identity component %\footnote{the connected component containing the identity.} 
    of $T(\QQ)\backslash T(\AA)^1/K$ 
    and let $C$ denote 
    the quotient of $T(\QQ)\backslash T(\AA)^1/K$ by $R$. 
We call $R$ the \defn{(level $K$) regulator group} 
    and $C$ the \defn{(level $K$) class group}. 
\end{definition}

It is a fundamental fact 
    \cite[Theorem 3.1.1]{ono} 
    that $R$ is compact and $C$ is finite. 
The terminology reflects the fact that 
    if $T = R^E_\QQ \GG_m$ and $K$ is maximal, then $R$ is 
    the quotient in the trace-zero hyperplane of $E_\RR$ 
    by the lattice of logarithms of units 
    and $C$ is the ideal class group of $E$. 
    % Note: C is not the narrow class group (when T=R^E_Q G_m) 
    % which follows from the discussion in Example 2.

%Let $\chi \colon T(\QQ)\backslash T(\AA) \to \CC^\times$ 
%be an automorphic character 
%and let $\chi_v$ be the local component of $\chi$, 
%    i.e. the composite homomorphism 
%    $T(\QQ_v) \to T(\QQ)\backslash T(\AA) 
%        \xrightarrow{\chi} \CC^\times$. 

    % note a splitting of L exists on the level of abelian groups 
    % because X_\ast(T)_\RR is a free Z-module, hence projective.  
    % why does there exist a continuous splitting though? 
    %
    % also cf. Milne's Intro to Shimura Varieties, 2017, Lemma 5.13.
\begin{lemma}\label{lemma:normonesubgroup} %{*
    %Let $K$ be an open compact subgroup of $T(\AA)$. 
%\begin{equation}\label{eqn:cdexact} %{*
%    1 \longrightarrow R
%    \longrightarrow T(\QQ)\backslash T(\AA)^1/K
%    \longrightarrow C
%    \longrightarrow 1.
%\end{equation} %*}
Assume that $K_\infty = \MaximalCompact{\infty}$. 
The class group $C$ is isomorphic to 
    $T(\QQ)\backslash T(\AA)/(T(\RR)K)$ 
    (so our definition agrees with the standard definition 
    of the class group of an algebraic group). 
If $\psi \in (T(\QQ)\backslash T(\AA)^1/K)^\vee$ 
    factors through $C$, 
    then the local archimedean component $\psi_\infty$ of $\psi$, 
    given by 
    $T(\RR) \to T(\QQ)\backslash T(\AA) 
    \to_s T(\QQ)\backslash T(\AA)^1/K \to C \to \CC^\times$, 
     is trivial. 
%\begin{equation} %{* 
%    \chi = \chi_m \times  \chi_C \times \chi_D.
%\end{equation} %*} 
%If $v$ is infinite then $\chi_{C,v} = 1$. 
\end{lemma}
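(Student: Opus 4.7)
The plan is to prove both parts by showing that, for the natural surjection
\[
\varphi \colon T(\QQ)\backslash T(\AA)^1/K \twoheadrightarrow T(\QQ)\backslash T(\AA)/(T(\RR)K),
\]
the kernel coincides with the identity component $R$, and that $\ker\varphi$ is also the image of $T(\RR)^1 \coloneqq T(\RR) \cap T(\AA)^1$ in $T(\QQ)\backslash T(\AA)^1/K$. First I would verify $K \subset T(\AA)^1$: the image $\Lt(\MaximalCompact{\infty})$ is a compact subgroup of the vector space $\ccs$ and hence trivial, while at a finite place Proposition~\ref{prop:basicfacts} shows the image of $K_v$ under $\lt_w$ lies in the lattice $\ccl_w$, so being compact it must be finite and hence zero in $\ccs$. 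Surjectivity of $\Lt|_{T(\RR)}$ from Lemma~\ref{lemma:ctssplittings} then gives surjectivity of $\varphi$. To compute $\ker\varphi$, if $t \in T(\AA)^1$ equals $\gamma t_\infty k$ with $\gamma \in T(\QQ)$, $t_\infty \in T(\RR)$, $k \in K$, then applying $\Lt$ and using $\Lt(\gamma) = 0 = \Lt(k)$ forces $t_\infty \in T(\RR)^1$, so $\ker\varphi$ is precisely the image of $T(\RR)^1$.

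The next step is to show this image equals $R$. The quotient by $\ker\varphi$ is the classical class group of $T$, which is finite by Ono's theorem, so $\ker\varphi$ is open. For connectedness I would invoke the structural fact that for any real algebraic torus, the maximal compact subgroup $\MaximalCompact{\infty}$ surjects onto the component group $T(\RR)/T(\RR)^0$; intersecting with $T(\AA)^1$ yields $T(\RR)^1 = T(\RR)^{1,0}\cdot\MaximalCompact{\infty}$. Since $\MaximalCompact{\infty} \subset K$, the image of $T(\RR)^1$ in $T(\QQ)\backslash T(\AA)^1/K$ equals the image of the connected group $T(\RR)^{1,0}$, hence is connected. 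A connected open subgroup of a topological group is its identity component, establishing $\ker\varphi = R$ and proving the first statement.

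For the second statement, the splitting $s$ sends $t_\infty \in T(\RR)$ to the representative $t_\infty s(\Lt(t_\infty))^{-1} \in T(\AA)^1$, which in fact lies in $T(\RR)^1$ since $\Lt$ annihilates it. Thus $\psi_\infty$ factors through the image of $T(\RR)^1$ in $T(\QQ)\backslash T(\AA)^1/K$, which by the first part is $R$. Any $\psi$ that factors through $C = T(\QQ)\backslash T(\AA)^1/K \big/ R$ is by definition trivial on $R$, so $\psi_\infty$ is trivial.

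The main obstacle I anticipate is the connectedness step: since $T(\RR)^1$ is typically disconnected, the argument depends on the structural fact about real tori to see that this disconnectedness is absorbed once we quotient by $\MaximalCompact{\infty} \subset K$. The rest of the proof is formal manipulation with the exact sequence determined by $\Lt$ and the splitting.
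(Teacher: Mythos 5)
Your proof is correct and relies on the same structural inputs as the paper (surjectivity of $\lt_w$ at the infinite place so that $T(\RR)/\MaximalCompact{\infty}$ is connected, the splitting $s$ valued in $T(\RR)$, Ono's finiteness), but the route differs in detail. For the first assertion, the paper stays in the larger quotient $T(\QQ)\backslash T(\AA)/K$: it shows the kernel of the map to $C'=T(\QQ)\backslash T(\AA)/(T(\RR)K)$ is connected because it is the image of $T(\RR)K/K\cong\ccs_w$, identifies it as the identity component, and then invokes the product decomposition \eqref{eqn:trivialsplitting} (restricting to $\{0\}\times T(\QQ)\backslash T(\AA)^1/K$) to conclude $C\cong C'$. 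You instead construct the surjection $\varphi$ directly out of $T(\QQ)\backslash T(\AA)^1/K$, compute its kernel as the image of $T(\RR)^1$, and check that image is connected and open; this avoids the product decomposition but requires the small extra lemmas that $K\subset T(\AA)^1$ and that $T(\RR)^1=T(\RR)^{1,0}\cdot\MaximalCompact{\infty}$ (the latter is a short computation you indicate but do not fully write out: intersect $T(\RR)=T(\RR)^0\cdot\MaximalCompact{\infty}$ with $T(\AA)^1$ and note $T(\RR)^0\cap T(\AA)^1$ is the kernel of a continuous homomorphism from a connected abelian Lie group to a vector space, hence connected). For the second assertion the proofs genuinely diverge: the paper factors $\psi_\infty$ through the vector space $\ccs_w$ and observes any homomorphism from a vector space to a finite group is trivial, whereas you observe that the $T(\AA)^1$-component $t_\infty s(\Lt(t_\infty))^{-1}$ lies in $T(\RR)^1$, so $\psi_\infty$ factors through the image of $T(\RR)^1$, which by your first part is exactly $R$. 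Your version is a little more informative, as it pins down precisely which subgroup $\psi_\infty$ factors through rather than just a connected one; the paper's is slightly shorter because it avoids appealing to the first part.
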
 %*}

%This decomposition, 
%    especially the separation of 
%    local components of $\chi_C$ and $\chi_D$ 
%    over finite and infinite places, 
%    will later help us neatly express 
%    the height zeta function as an integral over characters. 

\begin{proof} %{* 
Let $C' = T(\QQ)\backslash T(\AA)/(T(\RR)K)$. 
The kernel of $T(\QQ)\backslash T(\AA)/K \to C'$ 
    is $T(\QQ)\backslash (T(\QQ)T(\RR)K)/K$. 
Observe that $T(\RR)K/K = T(\RR)/K_\infty$ is connected 
    (Proposition~\ref{prop:basicfacts}), 
    and since the natural map 
    $$T(\RR)K/K \to T(\QQ)\backslash (T(\QQ)T(\RR)K)/K$$ 
    is clearly surjective, 
    we see that $T(\QQ)\backslash (T(\QQ)T(\RR)K)/K$ 
    is also connected. 
It is also closed and open since it is the preimage of 
    the identity in $C'$, so the above kernel 
    is the identity component of $T(\QQ)\backslash T(\AA)/K$. 
The isomorphism \eqref{eqn:trivialsplitting} 
    identifies the identity component of 
    $T(\QQ)\backslash T(\AA)/K$ with $\ccs \times R$ 
    so we have the exact sequence 
\begin{equation}%\label{eqn:cdexact} %{*
    1 \longrightarrow \ccs \times R
    \longrightarrow 
        \ccs \times T(\QQ)\backslash T(\AA)^1/K
            %T(\QQ)\backslash T(\AA)/K
    \longrightarrow_s C'
    \longrightarrow 1.
\end{equation} %*}
Restricting this short exact sequence to 
    the subgroup $\{0\} \times T(\QQ)\backslash T(\AA)^1/K$ 
    shows that $C\cong_s C'$. 
%Suppose $v$ is finite. The local component $\chi_{D,v}$ 
%is given by 
%\begin{equation} %{* 
%    T(\QQ_v) \to T(\QQ)\backslash T(\AA)/K \to D_T 
%    \xrightarrow{\chi_D} \CC^\times.
%\end{equation} %*} 
%This is the trivial character since the first map 
%factors through $T(\QQ)\backslash T(\AA_{\mathrm{fin}})/K^f$. 

Let $w$ be an infinite place of $E$. 
The local component $\psi_\infty$ 
    factors through the map 
    $$T(\RR) \to T(\QQ)\backslash T(\AA) 
    \to_s T(\QQ)\backslash T(\AA)^1/K \to C$$ 
    which itself factors through 
    $T(\RR) \xrightarrow{\lt_w} \ccs_w$ 
    (Proposition~\ref{prop:basicfacts}), 
    but every homomorphism $\ccs_w \to C$ 
    is constant so $\psi_\infty$ is trivial. 
% What follows is wrong, since there is no canonical map 
% T(\AA_{\mathrm{fin}}) -> T(AA) that is T(\QQ) K^f-equivariant. (230523)
%
%$C$ may also be expressed as $T(\QQ)\backslash T(\AA_{\mathrm{fin}})/K^f$, 
%    the composite homomorphism 
%\begin{equation} %{* 
%    T(\QQ)\backslash T(\AA_{\mathrm{fin}})/K^f
%    \to T(\QQ)\backslash T(\AA)/K
%    \to C
%\end{equation} %*} 
%is an isomorphism. 
%This determines a canonical splitting of 
%    \eqref{eqn:cdexact}. 
\end{proof} %*} 

\subsubsection{The regulator group of level $K$}\label{sec:regulatorGroup} %{*

Let $r_\infty$ be the rank of $\ccl_w$ 
for any infinite place $w$ of $E$ 
and let $r$ be the rank of $\ccl$. 

\begin{definition}
The subspace $\ccs \subset \ccs_w$ 
    of $\Gamma$-invariants has a canonical subspace complement 
    $$\ccs_0 = \bigoplus_{\psi \neq 1}((\ccs_E)_\psi)^{\Gamma_w}$$ 
    coming from the $\Gamma$-isotypic decomposition of $\ccs_E$. 
    We call $\ccs_0$ the 
    \defn{trace-zero subspace} of $\ccs_w$. 
    %It has rank $r_\infty-r$. 
\end{definition}

%    ($\ccs_0$ is the usual trace-zero hyperplane 
%    in Minkowski space when $T = R^E_\QQ \GG_m$).  

%For any place $v$ let $K_v$ denote the $v$-component of $K$. 
The abelian subgroup 
\begin{equation} %{*
    T(\QQ) \cap 
    \left(T(\RR) \times 
    \prod_{v\text{ finite}} K_v\right) 
\end{equation} %*}
of $T(\AA)$ is finitely generated with rank 
    $r_\infty-r$ \cite[Thm.~4, p.~285]{MR114817} 
and canonically splits as 
\begin{equation} %{*
\ToricUnits \times \ToricROU = 
    T(\QQ) \cap 
    \left(T(\RR) \times 
    \prod_{v\text{ finite}} K_v\right) 
\end{equation} %*}
where $\ToricROU = T(\QQ) \cap K$ is its torsion subgroup. 
It is well-known that the natural map 
    $$\ToricUnits \to T(\RR) \xrightarrow{\lt{w}} \ccs_w$$ 
    embeds $\ToricUnits$ as a (full-rank) lattice of 
    the trace-zero subspace $\ccs_0$. 

\begin{remark}
If $D$ is Cartier and the canonical integral model 
    is used to define $D$-integrality 
    (cf.~Remark~\ref{rmk:CartierIntegrality}), 
    then $\mu$ is independent of $D$ 
    and equal to $T(\QQ) \cap \MaximalCompact{}$. 
%This is easily seen from the integrality criterion 
%   \eqref{eqn:CartierIntegralityCondition} 
    %since every torsion point of $\QQ_v^\times$ 
    %is in $\ZZ_v^\times$. 
\end{remark}

% lemma:RAsTorus used to go here

%\note{Give example for multiplicative group of totally real field, cf. Bump p.~111.} % TODO uncomment

%*}

%*}

\subsection{Fourier transform of the height zeta function} %{*

If $t \in T(\QQ_v)$ and $s \in (\CC^{\Sigma(1)})^\Gamma$, 
    then we may write 
    $$H_v(t,s) %= \prod_{w/v}H_w(x,s) 
    = q_v^{-\frac{1}{e_v}\varphi(\lt_w(t))}$$ 
    where $w$ is any place of $E$ lying over $v$ 
    (cf.~\eqref{eqn:heightforrationaladeles}); 
although $\lt_w(t)$ may depend on the choice of $w$ over $v$, 
    the quantity $H_v(t,s)$ only depends on $v$. 

\subsubsection{Haar measures} %{*

%Our later use of the Poisson formula 
%    requires a choice of Haar measure on $T(\AA)$. 
%Instead of using local Haar measures induced by 
%    a $T$-invariant volume form 
%    which would require renormalizing by 
%    ``correcting factors'' 
%    to ensure convergence of the product measure 
%    on the restricted product, 
%    it is simpler to define the local Haar measures as follows. 
For each place $v$ of $\QQ$ let a place $w$ of $E$ lying over $v$ 
    be chosen and fixed. 
For each finite place $v$ 
    let $d\mu_v$ be the Haar measure on $T(\QQ_v)$ 
    for which the maximal compact subgroup 
    %(which has positive measure!) 
    has measure one. 
At the infinite place choose 
    the Haar measure $d\mu_\infty$ on $T(\RR)$ 
    for which $\ccl_w \subset \ccs_w$ is a unimodular lattice 
    for the pushforward of $d\mu_\infty$ 
    along $\lt_w \colon T(\RR) \to \ccs_w$. 
The product measure $d\mu = \prod_v d\mu_v$ 
    is a Haar measure on the restricted product $T(\AA)$ 
    and the Fourier transform of 
    any factorizable integrable function 
    $f = \otimes_v f_v \in L^1(T(\AA))$ is 
    defined by 
\begin{equation} %{*
    \widehat{f}(\chi) = 
    \int_{T(\AA)}
    f(x)\chi(x)^{-1}\,d\mu(x)
    =
    \prod_v
    \int_{T(\QQ_v)}
    f_v(x)\chi_v(x)^{-1}\,d\mu_v(x).
\end{equation} %*}

\subsubsection{Local Fourier transforms: nonarchimedean case}\label{sec:fourierna} %{*
%The next lemma is a modification of 
%\cite[Lemma 3.2.1]{integral-chambert-tschinkel}. 
%\note{Is there an a priori relationship between exceptional places where $H_v$ has compact support and the ramified places of the torus?}}
Let $v$ be a finite place of $\QQ$. 
Write $$H_v(t,s,D)=H_v(t,s)\charfn^D_v(t).$$ 
%Choose a place $w$ of the splitting field $E$ lying over $v$. 
%If $E/\QQ$ is unramified at $v$, then 
%the map $T(\QQ_v)/K_v \to X_\ast(T_{\QQ_v})$ induced by $\lt_w$ 
%is an isomorphism (Proposition~\ref{prop:basicfacts}). 
%In particular, 
%the set of unramified unitary characters of $T(\QQ_v)$ 
%(unramified means trivial on $K_v$) can be 
%identified with the group 
%$$\Hom(X_\ast(T_{\QQ_v}),\RR/\ZZ)= 
%X_\ast(T_{\QQ_v})_\RR^\vee/X_\ast(T_{\QQ_v})^\vee.$$ 
%If $m \in X_\ast(T_{\QQ_v})_\RR^\vee$ we define a character 
%$\chi$ by 
%$$\chi(x) = e(\langle \lt_w(x),m \rangle)
%= e^{-2\pi i \log |\!|m(x)|\!|_w}
%= q_v^{-2\pi i \log |\!|m(x)|\!|_w}
%$$ 
%where $\log |m_{\chi_v}(x)|_v$ is defined by 
%$\RR$-linearly extending $\log |m(x)|_v$. 
%Let $\Sigma_{U} \subset \Sigma$ be the subfan 
%    corresponding to the toric subvariety $U = Y\backslash D$. 
%    \note{Do I still use this subfan anywhere?}
% This is a valid description of $\Sigma_U = \{\sigma \in \Sigma: \varphi_D(\sigma)=0\}$, cf. NB29, p. 20. 
Let $\IntegralCompact{v}$ denote the subgroup of 
    $\MaximalCompact{v}$ 
    fixing the indicator function $\charfn^D_v$. 

\begin{lemma}\label{lemma:finiteheightfouriertrfm}
Let $s \in (\CC^{\Sigma(1)})^{\Gamma}$
    and let $\varphi \colon \ccs \to \CC$ 
    be the corresponding $\Sigma$-linear function 
    (Proposition~\ref{prop:supportFunctions}). 
Let $w$ be any fixed place of $E$ lying over $v$ 
    and set $\Sigma_v = \Sigma_{\Gamma_w}$. 
For any character $\chi\in T(\QQ_v)^\vee$ 
    the integral 
    $$
    \widehat{H_v}(\chi,-s,D)
    =
    \int_{T(\QQ_v)}
    H_v(x,-s,D)\chi(x)^{-1}\, d\mu_v (x)
    $$ 
    converges absolutely to a holomorphic function of $s$ 
    in the region 
    $$\{s \in (\CC^{\Sigma(1)})^\Gamma : \mathrm{Re}(s_e)>0 
    \text{ for every $e \in \Sigma(1)$}\}.$$ 
This holomorphic function admits 
    a meromorphic continuation to a rational function 
    in the variables 
    $\{q_v^{\varphi(e)}:e \in \Sigma_v(1)\}$. 
%For any $e \in \Sigma_v(1)$ 
%    let $s_e$ denote $-\varphi(e)$. 
Furthermore $\widehat{H_v}(\chi,-s,D)$ 
    is identically zero 
    if $\chi$ is nontrivial on $\IntegralCompact{v}$, 
    and otherwise is given by 
\begin{equation} %{*
    \widehat{H_v}(\chi,-s,D)
    =
    %[\MaximalCompact{v}:\IntegralCompact{v}]^{-1} % Removed this factor 240831
    \sum_{n \in T(\QQ_v)/\IntegralCompact{v}} 
    \chi(n)^{-1} \charfn^D_v(n)
    q_v^{\frac{1}{e_v}\varphi(\lt_w(n))} . 
\end{equation} %*}
\end{lemma}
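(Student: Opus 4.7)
The plan is to unfold the integral along cosets of $\IntegralCompact{v}$. Both $H_v(\cdot,-s)$ (by property (1) of the toric height) and $\charfn^D_v$ (by definition of $\IntegralCompact{v}$) are $\IntegralCompact{v}$-invariant. Writing $T(\QQ_v) = \bigsqcup_n n\IntegralCompact{v}$ and pulling the integrand's value at $n$ out of each coset integral reduces the computation to $\int_{\IntegralCompact{v}}\chi(k)^{-1}\,d\mu_v(k)$, which by character orthogonality vanishes when $\chi|_{\IntegralCompact{v}}$ is nontrivial and otherwise equals $\mu_v(\IntegralCompact{v})$. Absorbing this normalization yields the explicit coset sum in the statement.

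For convergence and meromorphic continuation, I would push the discrete sum forward along $\lt_w\colon T(\QQ_v)\to \ccs_w$, whose kernel is $\MaximalCompact{v}$ and whose image is a finite-index subgroup $\Lambda\subseteq\ccl_w$ by Proposition~\ref{prop:basicfacts}. Since $\MaximalCompact{v}/\IntegralCompact{v}$ is finite, the sum over $n\in T(\QQ_v)/\IntegralCompact{v}$ splits into finitely many lattice sums over $\Lambda$, each weighted by a character value times $q_v^{\varphi(\lt_w(n))/e_v}$ and supported on the image in $\Lambda$ of the support of $\charfn^D_v$.

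The key input is that this projected support is a finite union of translates of the lattice cones $\sigma\cap\Lambda$ for cones $\sigma\in\Sigma_v$. On a simplicial cone $\sigma\in\Sigma_v$ with generators $e_1,\ldots,e_k$, the $\Sigma$-linear function $\varphi$ restricts to a linear form whose values at $e_i$ are $\Gamma_v$-averages of the $-s_e$ (using the description of $\Sigma_v(1)$ in Lemma~\ref{lemma:subfan}). Parameterizing $\sigma^\circ\cap\Lambda$ by nonnegative integer combinations of the generators (up to a finite index correction), the partial sum becomes a multidimensional geometric series
\begin{equation}
\prod_{i=1}^k \sum_{a_i\geq 1} \chi_i^{a_i}\, q_v^{a_i\varphi(e_i)/e_v}
= \prod_{i=1}^k \frac{\chi_i\, q_v^{\varphi(e_i)/e_v}}{1 - \chi_i\, q_v^{\varphi(e_i)/e_v}},
\end{equation}
where $\chi_i$ is the value of $\chi$ on a lift of $e_i$. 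This converges absolutely precisely when $\mathrm{Re}(s_e)>0$ for every $e$ in the associated Galois orbit, and extends to a rational function in $\{q_v^{\varphi(e)}:e\in\Sigma_v(1)\}$. Partitioning $|\Sigma_v|$ into the disjoint union of relative interiors and summing the finitely many contributions yields the claimed expression.

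The main obstacle I expect is the combinatorial bookkeeping of the $\charfn^D_v$-support when $D$ is not Cartier: the integrality condition determined by the integral model $U$ need not respect $\Sigma_v$ exactly but instead differs from a union of cones by a finite union of lattice translates coming from the specific description of $U$. A parallel finite complication arises in the ramified case, where $\Lambda$ is a proper finite-index sublattice of $\ccl_w$ and the Hilbert basis of $\sigma\cap\Lambda$ need not be $\{e_1,\ldots,e_k\}$. Both issues contribute only finite sums of shifted geometric series and do not change the rational-function conclusion.
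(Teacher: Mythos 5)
Your core computation---$\IntegralCompact{v}$-coset unfolding plus Schur orthogonality for the vanishing and the explicit sum, then the finite index $[\MaximalCompact{v}:\IntegralCompact{v}]$ to compare against a lattice sum for convergence---is exactly what the paper's proof does. You go beyond the paper, though, in attempting to actually establish the meromorphic continuation to a rational function in $\{q_v^{\varphi(e)}\}$ by decomposing $\Lambda = \lt_w(T(\QQ_v))$ according to cones of $\Sigma_v$ and evaluating geometric series; the paper's proof is silent on that part of the statement. Your added argument is the right idea, but two steps need care. First, the ``key input'' that the projected support of $\charfn^D_v$ in $\Lambda$ is a finite union of translates of lattice cones is exactly where the difficulty sits for non-Cartier $D$: the lemma's hypotheses give no structural control over an arbitrary integral model $U$, so this assertion is not automatic, and you acknowledge but do not discharge it. Second, your parameterization $\prod_i\sum_{a_i\geq 1}$ of $\sigma^\circ\cap\Lambda$ is only valid when $\{e_1,\ldots,e_k\}$ generates $\sigma\cap\Lambda$ as a monoid; that already fails for non-regular simplicial cones at unramified places, so the Hilbert basis/fundamental parallelepiped correction you reserve for the ramified case is in fact needed generically here.

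Separately, the coset unfolding produces the factor $\int_{\IntegralCompact{v}}\chi^{-1}\,d\mu_v = \mu_v(\IntegralCompact{v}) = [\MaximalCompact{v}:\IntegralCompact{v}]^{-1}$ (recall $\mu_v$ is normalized so $\MaximalCompact{v}$ has measure one), yet no such factor appears in the displayed formula. You gloss this as ``absorbing this normalization,'' but the same tension is present in the paper's own proof, so it is worth resolving explicitly rather than waving away; either the formula carries a hidden factor or the measure normalization in this lemma differs from the global convention.
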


\begin{remark} %{*
The local Fourier transforms 
    must be computed \emph{before} restricting to 
    the line in $(\Pic^T Y) \otimes \CC$ 
    spanned by the line bundle determining the height of interest 
    since $x \mapsto H_v(x,-s,D)$ will generally not 
    be integrable once some of 
    the coordinates of $s$ %in the $D$-integral region 
    vanish, 
    no matter how large and positive $\mathrm{Re}(s_e)$ is 
    for the other coordinates. 
\end{remark} %*}

\begin{remark}
    One aspect where the Poisson-style 
    analysis of integral points differs from 
    the analysis of rational points \cite{IMRN} 
    is that $\charfn^D_v$ and thus 
    ${H_v}(\chi,-s,D)$  
    may have compact support, 
    cf.~\cite[Prop.~4]{CUBIC}. 
\end{remark}

\begin{proof} %{*
%Let $\ccl_w = \im \lt_w$. 
If $\chi$ is nontrivial on $\IntegralCompact{v}$  
    then the Fourier transform of 
    $H_v(x,s)^{-1}\charfn^D_v(x)$ 
    vanishes by Schur's lemma. 
Now suppose $\chi|_{\IntegralCompact{v}} = 1$. 
Then the integrand is $\IntegralCompact{v}$-invariant and 
\begin{equation}\label{eqn:localfourierassumovern}
\int_{T(\QQ_v)} H_v(x,-s,D)\chi(x)^{-1}\,d\mu_v(x) 
=
%[\MaximalCompact{v}:\IntegralCompact{v}]^{-1}
\sum_{n \in T(\QQ_v)/\IntegralCompact{v}}
    q_v^{\frac{1}{e_v}\varphi(\lt_w(n))}
\charfn^D_v(n) 
    \chi(n)^{-1}.
\end{equation}
The number of $n$ in the sum which are equivalent 
    modulo $\MaximalCompact{v}$ is bounded by 
    the index of $\IntegralCompact{v}$ in $\MaximalCompact{v}$ 
    which is finite, 
    so the sum is absolutely convergent 
    since $\mathrm{Re}(s_e)>0$ 
    for all $e$. 
\end{proof} %*}

%*}

\subsubsection{Toric ideals}\label{sec:ToricIdeals} %{*

We amplify the last computation by considering 
    all finite places at once. 
For this purpose we introduce some notation. 
Let $K_{\mathrm{fin}}$ be an arbitrary open compact subgroup  
    of $T(\AA_{\mathrm{fin}})$ and set 
    $K = K_{\mathrm{fin}} \times \MaximalCompact{\infty}$. 

\begin{definition}[toric ideals]
The \defn{group of fractional ideals of $T$ (of 
    level $K$)} is 
\begin{equation} %{*
    \ToricFracIdeals = 
    T(\AA_{\mathrm{fin}})/K_{\mathrm{fin}}.
\end{equation} %*}
Now assume $K$ contains $\IntegralCompact{}$. 
The \defn{set of $D$-integral ideals of $T$ (of 
    level $K$)} is 
\begin{equation} %{*
    \ToricIdeals = 
    \{n \in \ToricFracIdeals : \charfn^D(n) = 1\}. 
\end{equation} %*}
A $D$-integral ideal $n \in \ToricIdeals$ is \defn{principal} 
    if $\psi(n) = 1$ 
    for any class group character $\psi \in C^\vee$. 
The set of $D$-integral principal ideals is denoted 
    $$
    \ToricPrinIdeals=
    \{n \in \ToricIdeals : \psi(n) = 1 
    \text{ for all $\psi \in C^\vee$}\}. 
    $$ 
For any $n \in \ToricFracIdeals$ and 
$\Gamma$-invariant $\Sigma$-linear function $\varphi 
    \leftrightarrow s\in (\CC^{\Sigma(1)})^{\Gamma}$ 
    we set 
\begin{equation} %{*
    %\chi(n) = 
    %\prod_{p \in \Places{\QQ}^{\mathrm{fin}}}
    %    \chi_p(n_p)
    %,\qquad
    n^{-s}\coloneqq 
    H(n,-s)=
    \prod_{p \in \Places{\QQ}^{\mathrm{fin}}}
        p^{\frac{1}{e_p}\varphi(\lt_w(n_p))} 
        \in \CC^\times.
\end{equation} %*}
\end{definition}

% TODO
%\begin{remark}
%\note{add remark about $\mathscr N$ sometimes being coprime to certain places when $1_v^D$ has compact support. this happens when (iff?) the toric height is coprime to certain primes on $D$-integral points}
%\end{remark}

\begin{corollary}\label{cor:FiniteHeights}
Let $\chi \in (T(\AA)/\IntegralCompact{})^\vee$ be a character. 
The finite part 
    $\widehat{H}_{\mathrm{fin}}(\chi,-s,D)$ of $\widehat{H}$ 
    is equal to 
\begin{equation}\label{eqn:HFin} %{*
    \widehat{H}_{\mathrm{fin}}(\chi,-s,D) 
    %=\prod_{p \in \Places{\QQ}^{\mathrm{fin}}}
    %\sum_{\substack{n_p \in T(\QQ_p)/K_p 
    %\lt_w(n_p) \in |\Sigma_U|} }
    %\chi_p(n_p)^{-1} 
    %p^{\frac{1}{e_p}\varphi(\lt_w(n_p))} 
    =
%    \tfrac{1}{[\MaximalCompact{}:\IntegralCompact{}]}
    \sum_{n \in \ToricIdeals}
    \chi(n)^{-1}
    n^{-s}
\end{equation} %*}
    where $\ToricIdeals$ is the set of $D$-integral ideals of $T$ of 
    level $\IntegralCompact{}$. 
There is a constant $\sigma>0$ such that the right-hand side 
    converges absolutely to a holomorphic function of $s$ 
    in the region 
    $\{s \in (\CC^{\Sigma(1)})^\Gamma : \mathrm{Re}(s_e)>\sigma 
    \text{ for every $e \in \Sigma(1)$}\}$. 
\end{corollary}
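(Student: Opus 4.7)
The plan is to deduce the corollary directly from the local computation in Lemma~\ref{lemma:finiteheightfouriertrfm} by using that both the height function and the character factorize over finite places.

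First I would note that $H_{\mathrm{fin}}(x,-s,D) = \prod_{v\,\mathrm{finite}} H_v(x_v,-s,D)$ and $\chi = \prod_v \chi_v$ are factorizable, and that the chosen Haar measure $d\mu_{\mathrm{fin}} = \prod_v d\mu_v$ is the product measure. This gives $\widehat{H}_{\mathrm{fin}}(\chi,-s,D) = \prod_{v\,\mathrm{finite}}\widehat{H_v}(\chi_v,-s,D)$ as a formal identity, valid as an identity of absolutely convergent integrals once convergence is established. The hypothesis $\chi\in(T(\AA)/\IntegralCompact{})^\vee$ ensures each $\chi_v$ is trivial on $\IntegralCompact{v}$, so Lemma~\ref{lemma:finiteheightfouriertrfm} applies at every finite place and expresses $\widehat{H_v}(\chi_v,-s,D)$ as a sum over $T(\QQ_v)/\IntegralCompact{v}$.

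Second, I would interchange the product and the local sums. Provided the resulting double-indexed sum converges absolutely, the product of local sums equals a single sum indexed by $n = (n_v)_v \in \prod_v T(\QQ_v)/\IntegralCompact{v}$, and this restricted product is, by definition, $\ToricFracIdeals = T(\AA_{\mathrm{fin}})/\IntegralCompact{}$. The factor $\prod_v \charfn^D_v(n_v) = \charfn^D(n)$ vanishes outside the subset $\ToricIdeals \subset \ToricFracIdeals$ of $D$-integral ideals, and the remaining exponential factors combine to $\prod_v q_v^{\frac{1}{e_v}\varphi(\lt_w(n_v))} = n^{-s}$ by the definition of $n^{-s}$ in \S\ref{sec:ToricIdeals}. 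This yields \eqref{eqn:HFin}.

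Third, for the convergence statement, I would reduce to the trivial character case via $|\chi(n)^{-1}n^{-s}| = n^{-\mathrm{Re}(s)}$ and majorize term-by-term. The goal is to exhibit $\sigma>0$ such that $\sum_{n \in \ToricIdeals} n^{-\mathrm{Re}(s)}$ converges whenever $\mathrm{Re}(s_e)>\sigma$ for every $e\in\Sigma(1)$. Here I would revisit the local computation: at almost all finite places $v$, the subgroup $\IntegralCompact{v}$ equals $\MaximalCompact{v}$ and the local sum reduces to a sum over the image lattice $\ccl_v$ inside $\ccs_v$, giving a rational function in the variables $q_v^{\varphi(e)}$ that admits a uniform bound of the form $1 + O(q_v^{-(\mathrm{Re}(s_e)-\sigma_0)})$ for some $\sigma_0>0$ depending only on $\Sigma$. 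Combining this estimate with the finitely many ramified local factors (each a rational function in the same variables) and passing to the Euler product then produces the desired region of absolute convergence.

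The main obstacle is pinning down a single constant $\sigma$ that simultaneously dominates the ramified local contributions \emph{and} controls the unramified Euler product, uniformly in the Galois-orbit structure of $\Sigma(1)$. Once the local factors are recognized as reciprocals of products of toric local zeta factors indexed by $\Sigma_v(1)$, however, this reduces to the standard observation that a product of finitely many shifted Dedekind-type factors converges in a common translate of the effective cone, so any $\sigma$ exceeding the abscissa of the relevant Dedekind series will work.
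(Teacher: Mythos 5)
Your proposal is correct and follows essentially the same route as the paper's proof: factorize $\widehat{H}_{\mathrm{fin}}$ over finite places, apply Lemma~\ref{lemma:finiteheightfouriertrfm} at each place, and then bound the unramified local factors by $1 + O(q_v^{-c})$ to establish convergence of the Euler product and justify multiplying it out into the absolutely convergent sum over $\ToricIdeals$. The only stylistic difference is that the paper makes the unramified estimate concrete (as $1 + O(p^{-\rho t\norm{n_0}})$ with $\rho$ depending only on $\Sigma$), whereas you leave the constant $\sigma$ a little more implicit; the substance is the same.
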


\begin{proof} %{*
For each place $v$ choose a place $w$ of $E$ lying over $v$. 
The finite part of the Fourier transform of the height is 
an infinite product 
\begin{equation} %{*
    \prod_{p \in \Places{\QQ}^{\mathrm{fin}}}
    \sum_{n \in T(\QQ_p)/\IntegralCompact{p}}
    \chi_p(n)^{-1} 
    \charfn^D_v(n)
    p^{\frac{1}{e_p}\varphi(\lt_w(n))} .
\end{equation} %*}
For all but finitely many $p$ 
    we have $\IntegralCompact{p} = \MaximalCompact{p}$, 
    $e_p = 1$, and $p$-factor 
    (Proposition~\ref{prop:basicfacts}) 
\begin{equation} %{*
    \sum_{n \in \ccl_w}
    \chi_p(n)^{-1} 
    \charfn^D_p(n)
    p^{\varphi(n)} .
\end{equation} %*}
Let $\norm{\cdot}$ be any norm on $\ccs_E$. 
Then 
$$
    \left|
    \chi_p(n)^{-1}
    \charfn^D_p(n)
    p^{\varphi(n)}
    \right| 
\leq 
    p^{-\rho t\norm{n}}
$$ 
where $t = \min\{\mathrm{Re}(s_e) : e \in \Sigma(1)\}$ 
    and $\rho>0$ is some constant independent of $p$ and $n$. 
Then 
\begin{equation}\label{eqn:EstimateForLocalFactor} %{*
    \sum_{n \in \ccl_w}
    \chi_p(n)^{-1} 
    \charfn^D_p(n)
    p^{\varphi(n)} 
    =
    1+O(p^{-\rho t|n_0|})
\end{equation} %*}
for a minimally chosen $n_0$ and 
    an implied constant independent of $p$. 
This shows that the infinite product defining 
    $\widehat{H}_{\mathrm{fin}}(\chi,-s,D)$ 
    converges to a holomorphic function of $s$ 
    and may be multiplied out into 
    an absolutely convergent infinite sum, 
    for some constant $\sigma>0$. 
\end{proof} %*}

\begin{remark}\label{rmk:temptingIdentification}
It is tempting to think of $n^{-s}$ as $e^{\varphi(\Lt(n))}$ 
    but in general these are different quantities 
    due to nonlinearity of $\varphi$. 
However if $\varphi$ is linear on the cone generated by 
    every element 
    $$
    \lt_w(n_p)\log q_w 
    $$
    where $n_p \in T(\QQ_p)$ is any element 
    such that 
    $\lt_w(n_p)$ is $D$-integral and 
    $w \in \Places{E}^{\mathrm{fin}}$ lies over $v$, 
    then $n^{-s}=e^{\varphi(\Lt(n))}$. 
This occurs notably when $\varphi$ is linear 
    or when the $D$-integral region of $\ccs_E$ is 
    $\Sigma$-convex. 
\end{remark}

% Cf. NB41, p. 9
\begin{example}[toric ideals for restriction tori]\label{example:Restriction2} %{*
We verify that our notions on toric ideals 
    agree with the usual ones for restriction tori. 
Take $T = R^E_\QQ \GG_m$ where $E$ is 
    a finite Galois extension of $\QQ$. 
There is a short exact sequence with finite kernel 
\begin{equation} %{*
    1 \longrightarrow
    \MaximalCompact{}/(\ToricROU \cdot \IntegralCompact{}) 
    \longrightarrow 
    \ToricFracIdeals
    \longrightarrow
    \{\text{fractional ideals of $E$}\}
    \longrightarrow 1.
\end{equation} %*}
We may compactify $T$ according to 
    the usual fan of projective space $\PP_n$ 
    to obtain a Severi--Brauer compactification\footnote{ 
This compactification is isomorphic to 
    $\PP_n$ over $\QQ$ as a variety 
    but not as a toric variety: any such isomorphism 
    will not be equivariant for the toric actions.} 
    of $T$. 
Now take the boundary divisor to be $D=D_\infty$ 
    corresponding to the unique Galois-fixed generator 
    $e_\infty$ of the fan. 
This divisor is Cartier and 
    we use the canonical integral model 
    for the complement of $D_\infty$ 
    (Remark~\ref{rmk:CartierIntegrality}). 
Then $\IntegralCompact{}=\MaximalCompact{}$ 
    and the $D_\infty$-integral ideals of $T$ 
    of level $K = \MaximalCompact{}$ 
    are precisely the (nonzero) ideals of $O_E$. 
An element $n \in \ToricIdeals$ is principal by our definition 
    if $\psi(n) = 1$ for every class group character $\psi$. 
This is equivalent to requiring that the image $[n]$ of $n$ under 
    $\ToricIdeals \to T(\AA_{\mathrm{fin}}) \to T(\AA) \to_s T(\AA)^1 \to T(\QQ)\backslash T(\AA)^1/K$ is in the identity component $R$. 
With the help of Example~\ref{example:Restriction1} 
    one finds that 
\begin{equation} %{*
    [n] = [N(I)^{\frac{d_\infty}{[E:\QQ]}}1_{\infty}\,; 
    \,t_{\mathrm{fin}}]
    %s(e) = ((q_w)_{w|\infty};\,(1 \in T(\QQ_v)_v)
\end{equation} %*}
where $I$ is the ideal of $O_E$ corresponding to $n$ 
    and $t_{\mathrm{fin}} \in T(\AA_{\mathrm{fin}})$ is 
    any element mapping to $n$. 
Regarding $[n]$ as an element of $T(\QQ) \backslash T(\AA)/K$ 
    via the natural inclusion $T(\AA)^1 \subset T(\AA)$ 
    and using the description of the identity component of 
    $T(\QQ) \backslash T(\AA)/K$ 
    from the proof of Lemma~\ref{lemma:normonesubgroup} 
    shows that 
\begin{equation} %{*
[N(I)^{\frac{d_\infty}{[E:\QQ]}}1_{\infty}\,; 
    \,t_{\mathrm{fin}}]
\in 
    T(\QQ)\backslash (T(\QQ)T(\RR)K)/K 
    \subset 
    T(\QQ) \backslash T(\AA)/K
\end{equation} %*}
which is equivalent to 
    $[N(I)^{\frac{d_\infty}{[E:\QQ]}}1_{\infty}\,; 
    \,t_{\mathrm{fin}}]
    \in 
    T(\QQ)T(\RR)K$. 
This shows that 
    $t_{\mathrm{fin}} \in T(\QQ)_{\mathrm{fin}}K_{\mathrm{fin}}$ 
    which is the condition that $I$ is principal 
    in the usual sense. 
We may also evaluate $n^{-s}$ for $n \in \ToricIdeals$. 
The $D$-integral region is $\Sigma$-convex 
    so $n^{-s} = e^{\varphi(\Lt(n))}$ 
    (Remark~\ref{rmk:temptingIdentification}). 
Any $s \in (\CC^{\Sigma(1)})^{\Gamma}$ 
    satisfies $s_1 = \cdots = s_{[E:\QQ]}$ and so 
%$\varphi(\ell(n)) = 
%\varphi(\tfrac{1}{[E:\QQ]}e\log N(I) )=-\log N(I)s_1$ 
    $n^{-s} = N(I)^{-s_1}$ by integrality of $I$ 
    and \eqref{eqn:LtRestrTori}. 
The general quasi-split case is similar: 
if $T = R^{E_1}_\QQ \GG_m \times \cdots \times R^{E_r}_\QQ \GG_m$ 
then the elements $n$ of $\ToricIdeals$ 
    correspond to ideals 
    $I_1\times \cdots \times I_r$ 
    of $O_{E_1}\times \cdots \times O_{E_r}$ and 
$n^{-s} = N(I_1)^{-s_{O_1}}
\cdots
N(I_r)^{-s_{O_r}}$ 
where $s_{O_j}$ is the common value of $s_e$ 
on the $j$th Galois orbit $O_j$ of $\Sigma(1)$ 
corresponding to $E_j$. 
\end{example} %*}

\subsubsection{Local Fourier transforms: archimedean case}\label{sec:fouriera} %{*

In the nonarchimedean setting, the local Fourier transform 
    has contributions from cones of every dimension 
%This was because the integrand factored through 
%    $\lt_w \colon T(\QQ_v) \to X_\ast(T_{E})^{\Gamma_w}$ 
%    and every cone has positive measure 
%    since $X_\ast(T_{E})^{\Gamma_w}$ is discrete. 
whereas in the archimedean setting, the image of $\lt_w$ is $\ccs_w$ 
and only cones of maximal dimension have positive measure. 
%Let $\mu$ be the Haar measure on $\ccs_w$ 
%    for which $\ccl_w$ is unimodular. 
%Recall $\im \lt_w = N_{E,\RR}^{\Gamma(w/\infty)}$ 
%    (Proposition~\ref{prop:basicfacts}). 
%Let $d^\times x$ be the Haar measure on $T(\RR)$ 
%    for which $\mu$ is the quotient measure 
%    on $\ccs_w$ when the maximal compact subgroup 
%    of $T(\RR)$ is equipped with its normalized Haar measure. 
    %for which the maximal compact subgroup has measure one. 
Let $w$ be an infinite place of $E$. 
Let $r_\infty$ be the rank of $\ccs_w$. 
Write $\Sigma_\infty$ for the restricted fan 
    $\Sigma_{\Gamma_w}$. 

\begin{proposition}\label{prop:infiniteheightfouriertrfm}
%Assume that $\Sigma$ is simplicial. 
Let $s \in (\CC^{\Sigma(1)})^{\Gamma_w}$
    and let $\varphi \colon \ccs_w \to \CC$ 
    be the corresponding $\Sigma$-linear function 
    (Proposition~\ref{prop:supportFunctions}). 
For any $e \in \Sigma_\infty(1)$ let $s_e=-\varphi(e)$. 
Assume $\mathrm{Re}(s_e)>0$ for all $e \in \Sigma_\infty(1)$. 
Let $\chi\in T(\RR)^\vee$ be a character. 
If $\chi$ is $K_\infty$-ramified then 
    $\widehat{H_\infty}(\chi,-s,D)$ is identically zero. 
If $\chi$ is $K_\infty$-unramified, then 
there is a unique $m \in \cs_w$ 
    such that $\chi(t)= e(\langle \lt_w(t) ,m\rangle)$ 
    for all $t \in T(\RR)$, 
    and we have that 
    \begin{equation}\label{eqn:nonarchfouriertransform} % {{{
    \widehat{H_\infty}(\chi,-s,D) = 
        \left(\frac{1}{2\pi i}\right)^{r_\infty}
        \sum_{\tau \in \Sigma_\infty(r_\infty)}
        [\ccl_w:\ZZ \tau(1)] 
    \prod_{e \in \tau(1)}
        \left(m(e) + \frac{s_e}{2\pi i}\right)^{-1}. 
\end{equation} % }}}
%a rational function of $s \in (\CC^{\Sigma(1)})^{\Gamma_w}$. 
    %where $\mathrm{mult}(\sigma)$ is the index in $\ccl_w$ of 
    %the sublattice generated by $\sigma(1)$. 
%where $z$ is the unique element of $M_\CC$ 
%satisfying 
%$\chi = e^{-2 \pi i \langle\, \cdot\, ,z\rangle}$. 
    %the Fourier transform $\widehat{H}(\chi,-\varphi)$ of the archimedean height $H(x,-\varphi)$ is 
    %a rational function in the variables $s_1,\ldots,s_n$. 
\end{proposition}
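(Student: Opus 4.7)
The plan is to reduce the archimedean Fourier integral to a sum of one-dimensional Laplace integrals indexed by the maximal cones of $\Sigma_\infty$. First, since $\charfn^D_\infty \equiv 1$ on $T(\RR)$ the boundary divisor $D$ plays no role at infinity, and the integral in question is $\int_{T(\RR)} e^{\varphi(\lt_w(t))}\chi(t)^{-1}\,d\mu_\infty(t)$. By Proposition~\ref{prop:basicfacts}, the logarithm $\lt_w \colon T(\RR) \to \ccs_w$ is surjective with kernel $\MaximalCompact{\infty}$, so the integrand is $\MaximalCompact{\infty}$-invariant precisely when $\chi$ is $\MaximalCompact{\infty}$-unramified. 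If $\chi$ is ramified, fiber-wise integration over each $\MaximalCompact{\infty}$-coset vanishes by orthogonality of characters and $\widehat{H_\infty}(\chi,-s,D) = 0$. If $\chi$ is unramified it descends to a unitary character of $\ccs_w$, and Pontryagin duality (via the canonical pairing of $\ccs_w$ with $\cs_w$) furnishes a unique $m \in \cs_w$ with $\chi(t) = e(\langle \lt_w(t), m\rangle)$. With the normalization of $d\mu_\infty$ fixed in \S\ref{sec:preliminaries}, pushing the integral forward under $\lt_w$ yields
\[
\widehat{H_\infty}(\chi,-s,D) = \int_{\ccs_w} e^{\varphi(x)}\,e(-\langle x, m\rangle)\,dx.
\]

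Next I would decompose the integral over the maximal cones of the restricted fan. Since $Y$ is projective, $\Sigma$ is complete, so $\Sigma_\infty$ covers $\ccs_w$; by Lemma~\ref{lemma:subfan} it is simplicial, so its $r_\infty$-dimensional cones $\tau \in \Sigma_\infty(r_\infty)$ partition $\ccs_w$ up to a null set. On each maximal cone $\tau$ the function $\varphi$ is linear with $\varphi(e) = -s_e$ for $e \in \tau(1)$. Parameterizing $x = \sum_{e \in \tau(1)} a_e e$ with $a_e \geq 0$ introduces the Jacobian $[\ccl_w : \ZZ\tau(1)]$ (Lemma~\ref{lemma:subfan}). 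The cone integral then factors as
\[
[\ccl_w:\ZZ\tau(1)]\prod_{e\in\tau(1)}\int_0^\infty e^{-a(s_e + 2\pi i\, m(e))}\,da = [\ccl_w:\ZZ\tau(1)]\prod_{e\in\tau(1)}\frac{1}{s_e + 2\pi i\, m(e)},
\]
with absolute convergence assured by $\mathrm{Re}(s_e) > 0$ (this same computation also certifies absolute convergence of the original integral, since $|\chi|=1$ reduces the bound to the case $\chi=1$ and $s$ replaced by $\mathrm{Re}(s)$). Factoring one copy of $(2\pi i)^{-1}$ from each factor and summing over all $\tau$ produces the stated formula.

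The main subtlety is the Jacobian $[\ccl_w : \ZZ\tau(1)]$, which reflects the possible non-regularity of the cones of $\Sigma_\infty$: even when $\Sigma$ is regular the restricted fan $\Sigma_\infty$ need not be (cf.~the remark following Lemma~\ref{lemma:subfan}), and the index of $\ZZ\tau(1)$ in $\ccl_w$ can exceed $1$. This is the one place in the archimedean computation where cyclic quotient singularities visibly enter. Some care is also needed to pin down the measure normalization and justify the application of Fubini, but no new ideas beyond elementary Fourier analysis on $\ccs_w$ are required once the pushforward measure is correctly identified.
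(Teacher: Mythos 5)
Your proof is correct and takes essentially the same route as the paper: vanishing for ramified $\chi$ by Schur/orthogonality, pushing the integral forward along $\lt_w$ to $\ccs_w$, splitting over the (complete, simplicial) maximal cones of $\Sigma_\infty$, and computing each cone contribution as a product of one-dimensional Laplace integrals with the Jacobian $[\ccl_w:\ZZ\tau(1)]$. One small correction to your closing remark: Lemma~\ref{lemma:subfan}(3) asserts that if $\tau\in\Sigma$ is regular then $\tau\cap\ccs$ is regular, so a regular $\Sigma$ forces a regular $\Sigma_\infty$; the example cited after that lemma illustrates the opposite situation (a regular cone of $\Sigma_\Gamma$ whose minimal overcone $\tau_\sigma\in\Sigma$ is singular). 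The Jacobian index can exceed $1$ only when $\Sigma$ itself has singular cones.
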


%Independence of $D$ is to be expected 
%since the condition of $D$-integrality 
%is only imposed at finite places. 

% TODO Decide what to do with this remark: 
%\begin{remark}
%\note{The formula in Lemma 3.4.1 of 
%    \cite{integral-chambert-tschinkel} 
%for has a sum over \emph{all} faces of the analytic Clemens complex, 
%    which would correspond to summing over \emph{all} nonempty cones 
%    in $\Sigma$, 
%    but it seems the only summands that actually contribute 
%    correspond to faces. 
%    Maybe their partition of unity is making things more complicated 
%    and the non-maximal cones end up cancelling out?}
%\end{remark}

\begin{proof} %{*
Note that $\charfn^D_\infty$ is identically one on $T(\RR)$ 
    and $H_\infty$ is $K_\infty=\MaximalCompact{\infty}$-invariant. 
If $\chi$ is ramified, 
then $\widehat{H_\infty}(\chi,-s,D)$ vanishes by Schur's lemma 
    for $\chi$ on $\MaximalCompact{\infty}$. 
If $\chi$ is unramified, then 
\begin{equation}\label{eqn:intermediateFormulaH} % {{{
\widehat{H_\infty}(\chi,-s,D)
=
\int_{\ccs_w}
    H_\infty(y,-s) e(-\langle y,m\rangle)
    \,d\mu_\infty(y)
=
\int_{\ccs_w}
    e^{\varphi(y)} e(-\langle y,m\rangle)
    \,d\mu_\infty(y)
\end{equation} % }}}
Since $Y$ is projective, 
the restricted fan $\Sigma_\infty$ is complete so 
    %meaning that 
    %$\ccs_w = 
    %\cup_{\tau \in \Sigma_\infty(\mathrm{rk}\,\ccl_w)} \tau$, 
\begin{equation} % {{{
    \widehat{H_\infty}(\chi,-s,D)
%=
%\int_{\ccs_w}
%    e^{\varphi(y)} e(\langle y,m\rangle)
%    \,d\mu(y)
=
\sum_{\tau \in \Sigma_\infty(r_\infty)}
    \int_{\tau} 
    e^{\varphi(y)} e(-\langle y,m\rangle)
    \,d\mu_\infty(y).
\end{equation} % }}}
%The character $\chi$ can be expressed as 
%    $\chi(y) = e^{-2 \pi i \langle y,z\rangle}$ 
%for a unique $z \in \RR X^\ast T$. 
%Let $dm$ be a volume form on $\ccs_w$ 
%    which induces the measure $d\mu$. 
Since $\Sigma_\infty$ is simplicial, 
    the generators 
    $\tau(1)=\{e_{j_1},\ldots,e_{j_{r_\infty}}\}$ of $\tau$ 
    form a basis of $\ccs_w$. 
%    which we may assume to be ordered so that 
%    the basis $\tau(1)$ is 
%    positively oriented with respect to $dm$. 
    Write $y = x_1e_{j_1}+\cdots+x_{r_\infty} e_{j_{r_\infty}}$. 
Under the isomorphism $\RR^{r_\infty} \to \ccs_w$ 
    determined by $\tau(1)$, 
    the measure $\mu_\infty$ pulls back to 
    $[\ccl_w:\ZZ\tau(1)]\mu_{\text{Lebesgue}}$. 
%    Proof: Let $\mu$ be the pullback measure on $\RR^{r_\infty}$. 
%    Let $'$ denote the inverse to the above isomorphism. 
%    By definition of the pullback, $\mu(F') = \mu_\infty(F)$. 
%    Since $\mu_\infty$ was normalized in its definition 
%    so that $\ccl_w$ is unimodular, 
%    $\mu(F') = \mu_\infty(F) = 1$. 
%    A fundamental domain $F_\tau$ 
%    for the lattice generated by $\tau(1)$ 
%    is a union of $[\ccl_w:\tau(1)]$ 
%    translates of $F$. 
%    The lattice generated by $\tau(1)$ pulls back 
%    to the standard lattice $\ZZ^{r_\infty}$ so 
%    $\mu_L(F_\tau') = 1$. 
%    We know that $\mu = c \mu_L$ for a positive constant $c$, 
%    and we can determine $c$ by computing $\mu_L(F')$. 
%    Then 
%\begin{equation} %{*
%    \mu(F') = 1= [\ccl_w:\tau(1)]^{-1}\mu(F_\tau') 
%    =[\ccl_w:\tau(1)]^{-1}c\mu_{\text{Lebesgue}}(F_\tau') 
%    = [\ccl_w:\tau(1)]^{-1}c. %= c\mu_{\text{Lebesgue}}(F'). 
%\end{equation} %*}
%So $c =[\ccl_w:\tau(1)]$ and $\mu_\infty$ pulls back to  
%$[\ccl_w:\tau(1)] \mu_L$. 
Then 
\begin{multline} % {{{
\int_{\tau} 
    e^{\varphi(y)} e(-\langle y,m\rangle)
    \,d\mu_\infty(y)
    = 
    [\ccl_w:\ZZ\tau(1)] 
    \int_{\RR_{\geq 0}^{r_\infty}}
    e^{-x_1s_{j_1}-\cdots-x_{r_\infty} s_{j_{r_\infty}}} 
    e(-\langle y,m\rangle) 
    \,
    dx_1 \wedge \cdots \wedge dx_{r_\infty}\\
    = 
    [\ccl_w:\ZZ\tau(1)]
\prod_{e \in \tau(1)}
\int_{0}^\infty
    e^{-x (s_e+2\pi i \langle e, m\rangle ) }
    \, dx
    =
    [\ccl_w:\ZZ\tau(1)]
    \prod_{e \in \tau(1)}
    (s_e + 2 \pi i \langle e,m \rangle )^{-1}.
\end{multline} % }}}
The integral converges 
if and only if 
$\mathrm{Re}(s_e)>0$ for all $e \in \Sigma_\infty(1)$. 
\end{proof} %*}

%*}

% Uncomment this later TODO
%\begin{example}[Continuation of Example~\ref{example:projectiveline3}]\label{example:projectiveline4} %{*
%If $\chi \colon \AA^\times/\QQ^\times \to \CC^\times$ 
%    is an unramified unitary character, then 
%    $\chi(x) = |x|_\AA^{-2\pi i m}$ 
%    for a unique $m \in M\otimes \RR = \RR$. 
%From this one finds 
%    $\chi_\infty(x_\infty)= |x_\infty|_\infty^{-2\pi i m}$ 
%    so that $m = m_\infty$, and 
%    $\chi_p(x_p) = |x_p|_p^{-2\pi i m} 
%    = e(\langle n_{x_p},m_p\rangle)$ 
%    where $m_p = m \log p$. 
%Then 
%    $$
%    \widehat{H}(\chi,-s,\infty) = 
%    {\zeta(s_0-2\pi i m)}
%    \left(\frac{-1}{2\pi i}\right)
%    \left[
%        \left(m-\frac{s_0}{2\pi i}\right)^{-1}
%        -
%        \left(m+\frac{s_\infty}{2\pi i}\right)^{-1}
%        \right] . 
%    $$ 
%%Note that $\widehat{H}(\chi,-s,M)$ 
%%    is not invariant under translation by $M$ in the $s$ argument. 
%\end{example} %*}

%*}

\subsection{Characters contributing to the expected principal term}\label{sec:weak} %{*

Let $P$ denote the quasisplit torus 
    whose character lattice is $\Pic^T(Y_E)\cong\ZZ^{\Sigma(1)}$. 
The natural map $\cl_E \to \ZZ^{\Sigma(1)}$ 
    (Proposition~\ref{prop:supportFunctions}) 
    induces an isogeny 
    $$\gamma \colon P \to T$$ 
    of algebraic tori. 
The summands of our formula for $Z(s)$ 
    are indexed by automorphic characters of $T$ 
    on $R \times C$, 
    and the summands which contribute to 
    the expected principal term are 
    those characters which 
    become trivial after pulling back by $\gamma^\ast$ to $P$. 
The following essentially trivial lemma 
    will ensure that every such character 
    is trivial on the regulator group $R$. 

\begin{lemma}
    Let $\gamma \colon T' \to T$ 
    be a surjective homomorphism of algebraic tori 
    and let $\gamma^\ast \colon T(\AA)^\vee \to T'(\AA)^\vee$ 
    denote the pullback homomorphism. 
    %\begin{enumerate}
    %    \item $A(T)$ is finite with cardinality 
    %        $|H^1(\Gamma,\Pic(Y_E))|/|\Sh(T)|$. 
    %        \note{Voskresenskii Theorem 6}
    %    \item $A(T)^\vee$ is canonically isomorphic with 
    %        the kernel of $\gamma^\ast$. 
    %\end{enumerate}
    Then there is a positive integer $C(\gamma)$ annihilating 
    the kernel of $\gamma^\ast$. 
    %$|\mathrm{coker}(\gamma)(\overline{\QQ})|$. 
    %$(P(\AA)/P(\QQ))^\vee$. 
\end{lemma}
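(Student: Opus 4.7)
The plan is to recast the statement as a bound on the exponent of the quotient $T(\AA)/\gamma(T'(\AA))$. A character $\chi$ of $T(\AA)$ lies in $\ker \gamma^\ast$ precisely when $\chi \circ \gamma = 1$, i.e., when $\chi$ vanishes on $\gamma(T'(\AA))$. If an integer $C(\gamma)$ annihilates every element of $T(\AA)/\gamma(T'(\AA))$, then every such $\chi$ satisfies $\chi^{C(\gamma)} = 1$, and so $C(\gamma)$ annihilates $\ker \gamma^\ast$ as required.

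Set $K = \ker \gamma$. The short exact sequence $1 \to K \to T' \to T \to 1$ of commutative algebraic $\QQ$-groups induces, at each place $v$ of $\QQ$, a long exact sequence of Galois cohomology containing
\begin{equation}
T'(\QQ_v) \xrightarrow{\gamma_v} T(\QQ_v) \to H^1(\QQ_v, K),
\end{equation}
and hence an injection $T(\QQ_v)/\gamma(T'(\QQ_v)) \hookrightarrow H^1(\QQ_v, K)$. Consequently the exponent of $T(\AA)/\gamma(T'(\AA))$ is bounded above by the supremum over $v$ of the exponents of the local $H^1(\QQ_v, K)$.

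It then suffices to bound this supremum uniformly in $v$. In the setting of the paper, $\gamma \colon P \to T$ is an isogeny, so $K$ is a finite group scheme of some order $n = |K|$, and $H^1(\QQ_v, K)$ is annihilated by $n$ at every place; we may take $C(\gamma) = n$. In the general surjective case, filter $K$ by its identity component via $1 \to K^0 \to K \to K/K^0 \to 1$, where $K^0$ is a subtorus and $K/K^0$ is finite: the finite quotient contributes a bound of $|K/K^0|$, while $K^0$ contributes a uniform bound on $\exp H^1(\QQ_v, K^0)$ coming from Tate--Nakayama and the order of the Galois group of a splitting field of $K^0$. The only real technical point is the last uniform bound; the isogeny case needed in the paper is essentially immediate, which justifies the ``essentially trivial'' label.
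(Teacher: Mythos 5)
Your proof is correct and takes a different route from the paper's. The paper argues categorically: it invokes the anti-equivalence between diagonalizable groups splitting over $E$ and finite-type $\ZZ\Gamma$-modules, asserts that $\mathrm{coker}(\gamma)$ is a finite group scheme, and sets $C(\gamma) = |\mathrm{coker}(\gamma)(\overline{\QQ})|$. Read literally this is puzzling: a surjective $\gamma$ has trivial cokernel in that abelian category, yet $\ker\gamma^\ast$ can be nontrivial (for $\gamma = [2]\colon\GG_m\to\GG_m$ it contains every quadratic character of $\AA^\times$), and the quantity that actually controls matters is $\ker\gamma$, not $\mathrm{coker}\,\gamma$. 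Your argument is the one that makes this precise: the local cohomology sequences $T'(\QQ_v) \to T(\QQ_v) \to H^1(\QQ_v, \ker\gamma)$ give $C(\gamma) = |\ker\gamma|$ when $\gamma$ is an isogeny (the case the paper actually needs), and your filtration of $\ker\gamma$ by its identity component together with the Tate--Nakayama bound on $H^1(\QQ_v, K^0)$ covers the general surjective case. One step worth spelling out is the passage from the place-by-place statement to $T(\AA)/\gamma(T'(\AA))$: to lift $(t_v^{C(\gamma)})_v$ to an element of the restricted product $T'(\AA)$ rather than merely $\prod_v T'(\QQ_v)$, use that for almost all $v$ both tori have smooth models over $\ZZ_v$ on which $\gamma$ is surjective on $\ZZ_v$-points (smoothness plus Hensel), so the local preimages may be chosen in $T'(\ZZ_v)$.
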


\begin{proof} %{*
Recall the category of diagonalizable algebraic groups 
    which split over $E/\QQ$ 
    is equivalent to the category of finite-type 
    $\ZZ\Gamma$-modules. 
Thus the cokernel of $\gamma$ is representable 
    as a finite group scheme. 
If $\chi$ is a unitary character of $T(\AA)$ 
    for which $\chi \circ \gamma = 1$,  
    then $\chi$ factors through the cokernel. 
Thus $\chi$ is annihilated by 
    $C(\gamma)=|\mathrm{coker}(\gamma)(\overline{\QQ})|$. 
\end{proof} %*}

\begin{remark} %{*
\cite[Theorem 3.1.1]{IMRN}, 
\cite[Proposition~6.4]{batyrev1995maninsconjecturetoricvarieties}, 
and \cite[p.~19]{HZF} 
    assert that 
    $\ker \gamma^\ast$ is dual 
    to the obstruction group to weak approximation\footnote{See \cite[Theorem~6]{zbMATH03401075} for a definition 
    of this group.} of $T$ and cite \cite{MR291099}. 
% In Draxl's paper, this map is denoted f^\ast (cf. p. 458). 
In the later paper \cite{integral-chambert-tschinkel} 
    this assertion appears as Lemma 3.8.1, 
    and their explanation 
    cites the proof of \cite[Theorem~6]{zbMATH03401075}. 
We were unable to see how their assertion follows from 
    either \cite{MR291099} or 
    the proof of \cite[Theorem~6]{zbMATH03401075}, 
    so we will not make use of 
    this description of $\ker \gamma^\ast$. 
For our purposes, it is enough to know that 
    $\ker \gamma^\ast$ is annihilated by $C(\gamma)$. 
\end{remark} %*}

%*}

%\subsection{Obstruction to weak approximation}\label{sec:weak} %{*
%
%Let $\overline{T(\QQ)}$ be the closure of $T(\QQ)$ 
%    in $T(\AA)$ for the product topology. 
%The \defn{obstruction to weak approximation} 
%    is the group defined by $A(T) = T(\AA)/\overline{T(\QQ)}$. 
%Let $P$ denote the quasisplit torus 
%    whose character lattice is $\Pic^T(Y_E)\cong\ZZ^{\Sigma(1)}$. 
%The natural map $\cl_E \to \ZZ^{\Sigma(1)}$ 
%    (Proposition~\ref{prop:supportFunctions}) 
%    induces a morphism 
%    $$\gamma \colon P \to T$$ 
%    of algebraic tori. 
%We will need to consider those automorphic characters for $T$ 
%which become trivial after pulling back by $\gamma^\ast$ to $P$. 
%%namely the kernel of 
%%    $\gamma^\ast \colon (T(\AA)/T(\QQ))^\vee 
%%    \to (P(\AA)/P(\QQ))^\vee$. 
%
%%We will need two facts about $A(T)$. 
%
%\begin{proposition}\,
%    \begin{enumerate}
%        \item $A(T)$ is finite with cardinality 
%            $|H^1(\Gamma,\Pic(Y_E))|/|\Sh(T)|$. 
%            \note{Voskresenskii Theorem 6}
%        \item $A(T)^\vee$ is canonically isomorphic with 
%            the kernel of $\gamma^\ast$. 
%    \end{enumerate}
%\end{proposition}
%
%\begin{proof} %{*
%\end{proof} %*}
%
%%*}

%*}

\section{Iterated residues of multiple Dirichlet series} %{*

% BELOW FORMULA ISN'T RIGHT, \chi_D has an effect on 
% the infinite part of the height
%The aim of this section is to prove that 
%
%\begin{theorem} %{*
%%Let $\sigma$ be a face and 
%%let $\gamma_\sigma$ be the unique integrable path 
%%    which terminates at the maximal singularity $m_\sigma$. 
%%If $n \in \ToricIdeals_U $ and 
%%Let $\chi = \chi_m \chi_C \chi_D$ be an unramified automorphic character of $T$ 
%Let $n \in \ToricIdeals$. 
%If the logarithm of $n$ 
%    is in the cone $-\sigma$, then 
%\begin{equation} %{*
%    \int_{\RR X^\ast T}
%    \chi(n)n^{-s}
%    \widehat{H_\infty}(\chi,-s)\,dm
%    =
%    \chi_{C,fin}(n)
%    \chi_{D,\infty}(n)^{\pm 1}
%    n^{-(s-s_\sigma)}  . 
%\end{equation} %*}
%\end{theorem} %*}

\subsection{The Poisson summation formula}\label{sec:Poisson} %{*

Let $B$ be a locally compact abelian group with Haar measure $db$. 
Let $f \in L^1(B)$. 
The Fourier transform of $f$ given by 
\begin{equation} %{*
    \widehat{f}(\chi) = \int_B f(b) \chi(b)^{-1} \, db
\end{equation} %*}
    converges and defines a continuous function on $B^\vee$. 

Now let $A$ be a closed subgroup of $B$ 
and let $A^\perp \subset B^\vee$ denote 
the subgroup of characters on $B$ that are trivial on $A$. 
The general Poisson summation formula --- 
whose proof is essentially the classical proof for $\ZZ \subset \RR$ --- 
says that if $\widehat{f}|_{A^\perp} \in L^1(A^\perp)$ then 
\begin{equation}\label{eqn:poissonae} %{*
    \int_A f(ab)\, da = 
    \int_{A^\perp}
    \widehat{f}(\chi)
    \chi(b)
    \,d\chi 
\end{equation} %*}
for a suitably normalized Haar measure on $A^\perp$ 
and all $b \in B$ away from a subset of measure zero 
\cite[Theorem~4.4.2, p.~105]{MR1397028}. 

We will apply the Poisson formula for 
    the discrete subgroup of rational points 
    in the adelic space of the torus: 
\begin{equation} %{*
    A = T(\QQ)  \subset B = T(\AA).
\end{equation} %*}
This leads to a difficult integral over 
the automorphic spectrum 
$T(\QQ)^\perp = (T(\AA)/T(\QQ))^\vee$ of $T$ 
that was first studied in general in \cite{zbMATH01353487}. 
Whenever the torus $T$ admits nonconstant characters 
    defined over $\QQ$, 
    the domain of integration is noncompact; 
    this has an interesting effect 
    on the integral which is not limited to 
    expected difficulties concerning convergence. 
We will apply the method in \cite{residue} 
    to obtain an exact expression for 
    this integral in terms of higher-dimensional residues. 
This improves on %\note{corrects?} 
    the analysis in \cite{zbMATH01353487} 
    which only approximated this integral. 

\begin{remark} %{*
This use of Poisson summation is different from Tate's thesis 
    where Poisson summation is applied 
    for the subgroup $E \subset \AA_E$ 
    in the \emph{additive} adeles; 
unlike the additive group $\AA_E$, 
    the multiplicative group $T(\AA)$ is not self-dual. 
On the spectral side, 
    the Poisson formula in the multiplicative setting 
    involves an integral over 
    Hecke characters on $T$ of general level. 
It is better regarded as a special case of 
    the Arthur--Selberg trace formula. 
    %(Recall that class field theory 
    %seeks to describe the automorphic characters of $T(\AA)$ 
    %when $T = R^E_\QQ \GG_m$.)
\end{remark} %*}

%*}

\subsection{The method of iterated residues}\label{sec:methodIterRes} %{* 

The Poisson formula will lead to the following integral: 
\begin{equation}\label{eqn:MotivatingIntegral} %{* 
    \int_{\cs}
    \chi_m(n)^{-1}
    \widehat{H_\infty}(\xi \chi_m,-s)
    \,dm
\end{equation} %*} 
where $n \in \ToricIdeals$ is a toric ideal, 
    $\chi_m \colon \cs \to \CC^\times$ 
    is the unitary character $\chi_m(n) = e(m(n))$, 
    and $\xi \in (T(\QQ)\backslash T(\AA)/K)^\vee$ 
    is an automorphic character of level $K$. 
We will evaluate this integral 
    using the residue formula from 
    \cite[Theorem~1]{residue}. 
This formula proceeds by choosing a polyhedron $\Pi$ 
    and iteratively applying Cauchy's residue formula 
    by deforming along the rays of 
    the imaginary part of $\Pi$ one at a time. 
This expresses the integral as a sum over 
    \emph{iterated residues} 
    which are indexed by \emph{affine flags} 
    cut out by the affine hyperplanes where 
    $\widehat{H_\infty}(\chi_m,-s)$ is singular, 
    and the problem is to determine 
    which flags contribute a residue. 
%With this elementary approach  
%    it is not necessary to determine 
%    a system of divisors at each pole. 
This is answered in \cite[Theorem~1]{residue} 
    using sign conditions on the minors of 
    the defining matrices for each flag. 

\subsubsection{Minors} %{* 

For an $r\times r$ real matrix $J = (a_{ij})$, 
we consider the $k$th leading principal minor 
\begin{equation} %{*
   p_k= 
   \det
    \begin{bmatrix}
        a_{11}&\cdots&a_{1k}\\ 
        \vdots& \ddots&\vdots\\
        a_{k1}&\cdots&a_{kk}\\ 
    \end{bmatrix}
    \qquad(k \in \{1,\ldots,r\}), 
\end{equation} %*}
the $k\times k$ minor 
\begin{equation} %{*
    q_{k\ell}=\det 
    \begin{bmatrix}
        a_{11}&\cdots&a_{1,k-1}&a_{1\ell}\\ 
        \vdots& \ddots&\vdots&\vdots\\
        a_{k1}&\cdots&a_{k,k-1}&a_{k\ell}\\ 
    \end{bmatrix}
    \qquad(\ell \in\{ k+1,\ldots,r\}), 
\end{equation} %*}
and the $(k-1)\times(k-1)$ minor 
\begin{equation} %{*
    r_{jk}= \det 
    \begin{bmatrix}
        a_{11}&\cdots&a_{1,k-1}\\
        \vdots&\ddots&\vdots\\
        a_{j-1,1}&\cdots&a_{j-1,k-1}\\
        a_{j+1,1}&\cdots&a_{j+1,k-1}\\
        \vdots&\ddots&\vdots\\
        a_{k1}&\cdots&a_{k,k-1}\\ 
    \end{bmatrix}
    \qquad
    (j \in \{1,\ldots,k-1\}). 
\end{equation} %*}
Let $J$ be a $k \times r$ real matrix. 
We say $J$ is \defn{stable} if 
    $$\text{$p_1,\ldots,p_k>0$ 
    and $(-1)^{\ell-j}r_{j\ell} \geq 0$ 
    for all $1\leq j < \ell \leq k$.}$$ 
We say $J$ is \defn{compatible} if 
    either it is not stable or 
\begin{equation} %{*
    q_{j\ell} \leq 0
        \,\,\,\text{for all $1 \leq j < \ell \leq k$.}
\end{equation} %*}

%*}

\subsubsection{Polyhedra}\label{sec:polyhedra} %{*

By a \defn{polyhedron (with boundary $\cs$)} we mean 
    a closed subset $\Pi \subset \cs_\CC$ and 
    a linear isomorphism $\cs \xrightarrow{\sim} \RR^r$ 
    such that $\Pi$ is identified with 
    the product of the closed upper half-planes in $\CC^r$ 
    under the induced linear isomorphism $\cs_\CC \xrightarrow{\sim} \CC^r$. 
We consider 
    the defining map $\cs \xrightarrow{\sim} \RR^r$ 
    to be part of the data of $\Pi$. 
Let $\Pi =\cs + i\Theta$ be a polyhedron. 
Its imaginary cone 
    $\Theta=\RR_{\geq 0}\langle v_1,\ldots,v_r\rangle$ 
    is generated by 
    the distinguished basis $(v_1,\ldots,v_r)$ of $\cs$ 
    determined by the defining map.     
Let $z$ be the dual basis to $(v_1,\ldots,v_r)$. 
Let
\begin{equation} %{*
\gamma \,\,\,: \,\,\,
\cs_\CC \supset 
H_{1} \supset 
H_{1} \cap H_{2} \supset \cdots \supset
H_{1} \cap \cdots \cap H_{r}
\end{equation} %*}
    be a flag determined by a collection 
    $H=(H_1,\ldots,H_r)$ 
    of linearly independent singular hyperplanes 
    with respective defining forms $e_1,\ldots,e_r \in E$. 
Let $J=J_{H,\Pi}$ be the Jacobian matrix 
\begin{equation} %{*
    J = \frac{\partial e}{\partial z}= \begin{bmatrix}
        e_1(v_1)&\cdots&e_1(v_r)\\ 
        \vdots& \ddots&\vdots\\
        e_r(v_1)&\cdots&e_r(v_r)\\ 
    \end{bmatrix}.
\end{equation} %*}
%Let $\partial f/\partial z$ be the Jacobian matrix of 
%    $(f_1,\ldots,f_k) \colon V \to \RR^k$ 
%    with respect to $z$. 
We say $H$ is \defn{$\Pi$-stable} 
    (resp.~\defn{$\Pi$-compatible}) 
    if $J$ is stable (resp.~compatible). 

%*}

\subsubsection{The residue formula} %{*

Let 
$\gamma : 
\cs_\CC \supset H_{1} \supset 
H_{1} \cap H_{2} \supset \cdots \supset
H_{1} \cap \cdots \cap H_{k}$ 
be a flag. 
We write $\itres_z[\omega,\gamma]$ for the iterated residue 
    of the form $\omega$ along the poles 
    prescribed by the flag $\gamma$, cf.~\cite[\S1]{residue}. 
This is a meromorphic top-degree form on $\gamma(k)$ 
defined as follows. 
We write 
\begin{equation} %{*
    \res_{z_1}[g\,dz_1,\gamma(1)]
    =
    \res_{z_1}[g\,dz_1,\gamma(1)](z_2,\ldots,z_r)
\end{equation} %*}
for the residue of $g\,dz_1$ with respect to $z_1$ if 
there exists $z_1^\ast \in \CC$ 
such that the vector $v \in V_\CC$ 
with coordinates $z(v) =(z_1^\ast,z_2(v),\ldots,z_r(v))$ 
is in $\gamma(1)$. 
If no such $z_1$ exists, 
%$\gamma(1)$ is not parallel to the line 
%$\{z_2=\cdots=z_r=0\}$, 
then we set this equal to zero. 
%This residue is a meromorphic form on $H_1$. 
This recursively defines the 
\defn{iterated residue of $\omega$ along $\gamma$}: 
$$\itres_z[\omega,\gamma](z_{k+1},\ldots,z_r)
=
\res_{z_k}[\cdots\res_{z_2}[\res_{z_1}[g\,dz_1,\gamma(1)]\,dz_2,\gamma(2)]\cdots 
\,dz_{k}, \gamma(k)],
$$ 
Let $\omega$ be a meromorphic $r$-form on $\cs_\CC$ 
    which is regular on the complement of 
    an affine hyperplane arrangement 
    $\cs_\CC - \cup_{e \in E} H_e$ 
    where $E\subset \cs$ 
    is a finite set of nonzero real linear forms and 
    $H_e = \{m \in \cs_\CC : e(m) + \frac{s_e}{2\pi i} = 0\} 
    \subset \cs_\CC$ 
for some parameters $s_e$ with positive real part. 

\begin{theorem}[{\cite[Theorem~1]{residue}}]\label{thm:ResidueFormula}
    If $\omega$ admits an iterated residue expansion 
    along the polyhedron $\Pi$ 
    and every singular flag of $\omega$ is $\Pi$-compatible, then 
\begin{equation} %{* 
    \frac{1}{(2\pi i)^{r}}
    \int_{V} \frac{h(z)\,dz}{g_1(z)\cdots g_R(z)}%\qquad 
    = 
    \sum_{\gamma \in Z_\ast}
        \itres_z[\omega,\gamma] 
\end{equation}   %*}
where $Z_\ast \subset \{\gamma \in Z: \gamma(r) \in \Pi\}$ 
    is the subset of flags cut out by 
    a collection of $\Pi$-stable singular hyperplanes. 
\end{theorem}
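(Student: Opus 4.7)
The plan is to prove the formula by iterating one-dimensional Cauchy residue calculations one variable at a time in the order $z_1,z_2,\ldots,z_r$ determined by the distinguished basis $(v_1,\ldots,v_r)$ of the polyhedron $\Pi$. First I would fix the coordinates $z_i$ dual to $v_i$ so that the original integration cycle $\cs$ is the real locus $\{\mathrm{Im}(z)=0\}$ and $\Pi$ is the product of closed upper half-planes. The strategy is to shift the $z_1$-contour upward into $\mathrm{Im}(z_1)>0$, collecting residues at those singular hyperplanes $H_{e}$ that are crossed, then iterate inside each resulting hyperplane.

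The key steps are as follows. In Step~1, using the hypothesis that $\omega$ admits an iterated residue expansion along $\Pi$, establish enough decay of $\omega$ at infinity along each coordinate direction to justify pushing the $z_k$-contour to $+i\infty$; this reduces the integral to a finite sum of residues at those hyperplanes $H_e$ satisfying $e(v_k)>0$ relative to the current flag. In Step~2, identify the leading principal minor condition $p_k>0$ as exactly the statement that the flag's $k$th defining hyperplane is encountered (with positive crossing sign) during the $z_k$-shift, and identify the signed minors $(-1)^{\ell-j}r_{j\ell}\ge 0$ as the consistency condition that the relative positions of later hyperplanes above the contour do not reverse. In Step~3, show that precisely the $\Pi$-stable flags $\gamma$ with $\gamma(r)\in\Pi$ arise from the iterated Cauchy calculation. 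In Step~4, argue that a $\Pi$-stable flag whose minors $q_{j\ell}$ fail the compatibility sign condition contributes a residue that cancels in pairs with another $\Pi$-stable but incompatible flag: a minor-swapping involution exchanging the $j$th and $\ell$th defining hyperplanes at the offending index negates the iterated residue but leaves the integrand invariant, so these terms cancel identically. What remains is the sum over $Z_\ast$ of $\Pi$-stable and $\Pi$-compatible flags, which is the right-hand side.

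The main obstacle is Step~4, the cancellation of stable-but-incompatible flags. A naive iteration of one-dimensional Cauchy produces a sum indexed by all stable flags, not just the compatible ones, and showing the incompatible contributions vanish requires a genuinely multidimensional argument. The cleanest route is a continuous deformation: interpolate between $\Pi$ and a second polyhedron $\Pi'$ obtained by swapping the roles of the offending $v_j$ and $v_\ell$, apply the residue calculation for both, and compare; the difference is zero because the original integral over $\cs$ does not depend on $\Pi$, while on the residue side the difference is exactly the sum over incompatible flags (with opposite signs in the two computations). Combined with an equivariance property of $\itres_z$ under reordering of the basis (picking up sign $(-1)^{\ell-j}$ and the appropriate $q_{j\ell}$ minor ratios), this forces the incompatible terms to vanish.

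The remaining technical points—absolute convergence of the shifted contour integrals and the vanishing of boundary contributions at infinity—follow from the Fourier-type decay of $\omega=h(z)\,dz/(g_1\cdots g_R)$ that is built into the hypothesis of admitting an iterated residue expansion, together with the assumption $\mathrm{Re}(s_e)>0$ which guarantees each hyperplane $H_e$ lies strictly off the real locus. Once these analytic points are in hand, the combinatorial core is the sign/minor bookkeeping described above.
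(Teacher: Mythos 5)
This statement is not proved in the paper: it is imported verbatim as \cite[Theorem~1]{residue}, so there is no internal proof to compare against. The paper takes it as a black box. Judging the proposal on its own merits, the basic skeleton of Steps~1--3 (iterating the one-dimensional residue theorem along the coordinates $z_1,\dots,z_r$ dual to the distinguished basis, and matching the leading-principal-minor positivity conditions with the direction of contour shifts) is the natural approach for a result of this shape, and those sign identifications are indeed the content behind the word ``stable.''

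The difficulty is Step~4, which rests on a misreading of the hypothesis. You propose to show that $\Pi$-stable flags which fail the compatibility condition $q_{j\ell}\le 0$ cancel in pairs, arguing that this pruning is what cuts the naive residue sum down to the claimed right-hand side. But the theorem does \emph{not} claim a sum over stable-and-compatible flags: the set $Z_\ast$ in the conclusion is defined purely by stability and the condition $\gamma(r)\in\Pi$. The compatibility of every flag is a \emph{hypothesis}, so under it there are no stable-but-incompatible flags at all, and there is nothing to cancel; the sum is simply over all stable flags. The role of compatibility is to ensure that the iterated one-variable contour shifts are legitimate in the first place --- e.g.\ that the order in which a collection of hyperplanes is crossed is consistent with the flag ordering --- rather than to select a subset of the stable flags. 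Your deformation argument (interpolating between $\Pi$ and a polyhedron $\Pi'$ with $v_j$, $v_\ell$ swapped) is also suspect on its own terms: changing the defining basis of the polyhedron changes which flags are stable and changes the iterated residue of each term (the iterated residue is not invariant under reordering the flag's hyperplanes), so equating the two residue sums termwise does not obviously isolate the alleged cancelling pairs. To repair the proposal, drop Step~4 and instead show that, assuming compatibility of every singular flag, the contour shifts at each step cross each relevant hyperplane at most once and in the direction forced by the stability minors, so the iteration terminates with precisely the sum over $Z_\ast$.
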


\subsubsection{Transfer maps and poles}\label{sec:ConesForms} %{*

Let $\pi \colon \ccs_E \to \ccs$ denote 
    the linear projection map 
    given by restriction to $\cs$. 
Recall that $\ccs_w$ has a canonical subspace decomposition 
\begin{equation} %{*
    \ccs_w = \ccs \oplus \ccs_0.
\end{equation} %*}
If $E \subset \ccs_w$ is an $r$-subset 
    whose projection $\pi E$ is a basis of $\ccs$,  
    then it can be used to 
    transfer linear forms on $\ccs_w$ 
    to linear forms on $\ccs$. 
More precisely, for such a subset $E$ we define a linear map 
\begin{equation} %{*
    \mu_E \colon \cs_w \to \cs
\end{equation} %*}
by requiring that 
\begin{equation} %{*
    (\mu_Em)(\pi e) = m(e)\quad
    \text{for all $e \in E$}. 
\end{equation} %*}
Equivalently, the projection $\pi$ 
    restricts to a bijection of cones 
\begin{equation} %{*
    \ccs_w\supset
    \RR \langle E\rangle \xrightarrow{\pi} \RR \langle\pi E\rangle
    \subset \ccs,
\end{equation} %*}
and $\mu_E m$ is the unique linear map on $\ccs$ 
    corresponding to $m|_{\RR\langle E \rangle}$ 
    under this identification.%\footnote{\note{Can $\mu_Em$ be expressed as the solution to a least squares problem? If so, it would give me an explicit formula for $\mu_E m$.}} % TODO uncomment

\begin{remark}\label{rmk:TransferRestriction}
The transfer map $\mu_E$ equals 
    the natural restriction map 
    $\cs_w \to \cs$ 
    if and only if 
    $E \subset \ccs$. 
\end{remark}

%\begin{definition}
%Let $\xi\in (T(\QQ)\backslash T(\AA)/K)^\vee$ 
%    be an automorphic character of level $K$, 
%    and let $E \subset \ccs_w$ is an $r$-subset 
%    whose projection $\pi E$ is a basis of $\ccs$. 
%We define the \defn{modification of $\xi$ by $E$} 
%    to be 
%\end{definition}

Transfer maps help with analyzing 
    the poles of $\widehat{H_\infty}$. 
Let $s \in (\CC^{\Sigma(1)})^\Gamma$, 
    and let $\xi \in R^\vee$. 
Let $\varphi$ be the $\Sigma$-linear map corresponding to $s$. 
Let the exponent $m_\xi \in \cs_0$ 
    be determined by the identity 
    $\xi(t_\infty) = e(\langle m_\xi, \lt_w(t_\infty) \rangle)$ 
    for $t_\infty \in T(\RR) \subset T(\AA)$. 
%In the Poisson formula we will need to evaluate the integral 
%\begin{equation} %{*
%    \int_{\cs} 
%    \widehat{H_\infty}(\xi\chi_m,-s)\,dm. 
%\end{equation} %*}
%Following the method of \cite{residue}, 
%    we consider singular flags of the meromorphic form 
%    $\widehat{H_\infty}(\xi\chi_m,-s)\,dm$. 
%The set of terminal points of all singular flags 
%    (``poles'') of 
%    $\widehat{H_\infty}(\xi\chi_m,-s)\,dm$ 
%    is denoted by 
%    $$Z_{\xi} \subset \cs_\CC.$$ 
The next lemma determines the poles and their variation with $\xi$. 
We omit the easy proof. 

\begin{lemma}\label{lemma:VariationOfPoles}
Let $E \subset \Sigma_w(1)$ be any $\Sigma$-convex $r$-subset 
    whose projection $\pi E$ is a basis of $\ccs$. 
Say $\tau \in \Sigma_w(r_w)$ is a cone containing $E$. 
The form $\widehat{H_\infty}(\xi\chi_m,-s)\,dm$ 
    is singular along the hyperplanes 
    $H_e(\xi)=
    \{m \in \cs_\CC : e(m) + e(m_\xi) + \frac{s_e}{2\pi i} = 0\}$ 
    for $e \in E$ and we let 
    $m_{E\xi} = \cap_{e \in E} H_e(\xi)$ 
    denote their intersection. 
Then $m_{E1} \in \cs_\CC$ is equal to 
\begin{equation} %{*
    m_{E1}=(2\pi i)^{-1}\mu_E(\varphi|_\tau)
\end{equation} %*}
and 
\begin{equation} %{*
    m_{E\xi} = m_1-\mu_E(m_\xi)
\end{equation} %*}
    where $\varphi|_\tau \in \cs_w\otimes \CC$ 
    is the unique linear map 
    agreeing with $\varphi$ on $\tau$. 
\end{lemma}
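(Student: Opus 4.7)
The plan is to read off the singular hyperplanes directly from the explicit formula in Proposition~\ref{prop:infiniteheightfouriertrfm} and then apply the defining property of the transfer map $\mu_E$ to locate their common intersection. Throughout, I identify $m \in \cs$ with its image in $\cs_w$ under the inclusion induced by the decomposition $\ccs_w = \ccs \oplus \ccs_0$; in particular $e(m) = (\pi e)(m)$ for every $e \in \ccs_w$, since $\cs$ annihilates $\ccs_0$ under this decomposition.

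First, substitute $\chi = \xi\chi_m$ into \eqref{eqn:nonarchfouriertransform}. Since $\xi\chi_m$ is $K_\infty$-unramified with exponent $m + m_\xi$, the formula expresses $\widehat{H_\infty}(\xi\chi_m,-s,D)$ as a sum over maximal cones $\tau' \in \Sigma_\infty(r_\infty)$ of products $\prod_{e \in \tau'(1)}\bigl((m+m_\xi)(e) + s_e/(2\pi i)\bigr)^{-1}$. The term $\tau' = \tau$ contributes one factor for each $e \in E \subset \tau(1)$, and the vanishing locus of each such factor is precisely the hyperplane $H_e(\xi)$, establishing the singular locus.

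To locate $m_{E1}$ (the case $\xi = 1$), I would solve the system $e(m) = -s_e/(2\pi i) = \varphi(e)/(2\pi i)$ for $e \in E$. Because $\pi E$ is a basis of $\ccs$, the evaluation map $m \mapsto ((\pi e)(m))_{e \in E}$ is a linear isomorphism $\cs_\CC \to \CC^E$, so the system has a unique solution. By its defining property, $\mu_E(\varphi|_\tau)$ is the element of $\cs$ satisfying $(\mu_E(\varphi|_\tau))(\pi e) = (\varphi|_\tau)(e) = \varphi(e)$ for every $e \in E$; dividing by $2\pi i$ matches the system and yields $m_{E1} = (2\pi i)^{-1}\mu_E(\varphi|_\tau)$. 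For general $\xi$, the shifted system $e(m) = \varphi(e)/(2\pi i) - e(m_\xi)$ rewrites as $(\pi e)(m) = (\pi e)(m_{E1} - \mu_E(m_\xi))$ via the defining identity for $\mu_E$ applied to $m_\xi$, so the uniqueness of the solution forces $m_{E\xi} = m_{E1} - \mu_E(m_\xi)$. The only nontrivial point, and arguably the main obstacle, is the bookkeeping around the pairing $e(m)$ through the inclusion $\cs \hookrightarrow \cs_w$; once this is handled via $\cs_w = \cs \oplus \cs_0$, the computation is an immediate linear-algebraic unwinding of the definitions.
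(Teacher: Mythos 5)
Your argument is correct, and it is essentially the direct verification the authors had in mind when they wrote ``We omit the easy proof.'' There is no proof in the paper to compare against, so let me just confirm the key steps: the identification $e(m) = (\pi e)(m)$ for $m \in \cs \subset \cs_w$ follows from the fact that $\cs$ is the annihilator of $\ccs_0$ under the splitting $\ccs_w = \ccs \oplus \ccs_0$ (and, since $\pi$ is restriction to $\cs$, from the tautology $m(\pi e) = m(e)$); plugging $m' = m + m_\xi$ into Proposition~\ref{prop:infiniteheightfouriertrfm} gives the factors $\bigl((m+m_\xi)(e) + s_e/(2\pi i)\bigr)^{-1}$, whose zero loci are exactly the $H_e(\xi)$; using $s_e = -\varphi(e) = -(\varphi|_\tau)(e)$ for $e \in E \subset \tau(1)$ together with the defining relation $(\mu_E m')(\pi e) = m'(e)$ pins down $m_{E1} = (2\pi i)^{-1}\mu_E(\varphi|_\tau)$; and the shifted system for general $\xi$ subtracts off $\mu_E(m_\xi)$ by the same relation applied to $m_\xi$. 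The only caveat worth flagging is that you established that the individual summand for the cone $\tau$ has a pole along each $H_e(\xi)$, which identifies the candidate singular hyperplanes; whether each such pole survives cancellation with the other cones' summands is a separate question that the lemma does not ask and your proof, correctly, does not address.
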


%\begin{proof} %{*
%%The subset $E$ must be $\Sigma_w$-convex 
%%    and $\pi E$ is a basis of $\ccs$. 
%It is easy to check that 
%\begin{equation} %{*
%    z_{\xi} 
%    %= \mu_E((2\pi i)^{-1}(\varphi|_E) - m_\xi) 
%    = z_1 - \mu_E(m_\xi).
%\end{equation} %*}
%%where $\varphi|_E$ is any linear map on $\ccs_w$ 
%%    agreeing on $E$ with the $\Sigma$-linear function 
%%    $\varphi=\varphi_s$ corresponding to $s$. 
%%If $E'$ is another such $r$-subset satisfying 
%%    the same assumptions as $E$ 
%%    for which $z_\xi = \cap_{e \in E'} H_e(\xi)$, 
%%    then ....................
%\end{proof} %*}

%*}

\subsection{Residual functions}\label{sec:residual} %{* 

Fix an archimedean place $w$ of $E$. 
%Set $\Sigma_\infty = \Sigma_{\Gamma_w}$. 
Let $dm$ be a generator of $\wedge^r \cs$ 
    for which the lattice $\cl$ is unimodular. 
Let $z = (z_1,\ldots,z_r)$ be any basis of $\ccs$. 
For each singular flag $\gamma :
%V_\CC \supset 
H_{1} \supset 
H_{1} \cap H_{2} \supset \cdots \supset
H_{1} \cap \cdots \cap H_{r}$ 
with defining equations 
    $H_k = \{m \in \cs_\CC : e_k(m) + \frac{s_k}{2\pi i} = 0\}$, 
    let $J_\gamma = \partial e/\partial z$ be the Jacobian of 
    the map $(e_1,\ldots,e_r) \colon \cs \to \RR^r$ 
    with respect to $z$. 
Let $B \subset \GL_r(\CC)$ 
    denote the subgroup of upper-triangular matrices. 
Let $\CC^{\Sigma(1)}_+ \subset \CC^{\Sigma(1)}$ 
    denote the half-space with positive real parts. 

\begin{definition}\label{defn:ResidualFunction} %{*
Let $s \in (\CC^{\Sigma(1)}_+)^{\Gamma_w}$. 
Let $\omega(m) = \widehat{H_{\infty}}(\chi_m,-s)\,dm$, 
    a meromorphic $r$-form on $\cs$. 
For any singular flag $\gamma$ whose Jacobian 
    $J_\gamma$ is in $B^T B$ 
    we set 
\begin{equation} %{*
    R_{\gamma}(s) = 
    (2\pi i)^{r}
    \,
    \itres_z[\omega,\gamma].
\end{equation} %*}
%For any $\psi \in D^\vee$ let $m_{\psi,\infty} \in M_{w,\RR}$ 
%    denote the local exponent of $\Psi(\widetilde{\psi})$ 
%    at infinity. 
%    Let 
%\begin{equation} %{* 
%    \Xi_{\sigma}(s) = 
%    \sum_{\psi\in D^\vee}
%    R_\sigma(s-2\pi i m_{\psi,\infty}) . 
%\end{equation} %*} 
The condition that 
    $J_\gamma \in B^T B$ 
    ensures that $R_\gamma$ is 
    a rational function of 
    $s \in (\CC^{\Sigma(1)})^{\Gamma_w}$ 
    which is independent of $z$ 
    (cf.~\cite[Prop.~4]{residue}). 
Note that whether or not 
    $J_\gamma$ is in $B^T B$ 
    does not depend on $s$ nor 
    the particular choice of defining equations for $\gamma$. 
    %\note{redefine this in a way that makes clear it only depends on $\gamma$ (fix $\gamma$ then choose $z$ appropriately)} % TODO uncomment
\end{definition} %*}

%By Proposition~\ref{prop:infiniteheightfouriertrfm} 

\begin{remark} %{*
It is important to allow for 
    $s \in (\CC^{\Sigma(1)})^{\Gamma_w}$ 
    and not just $s \in (\CC^{\Sigma(1)})^{\Gamma}$ 
    even though in the end we are really only concerned with 
    the values of the height zeta function on 
    $(\CC^{\Sigma(1)})^{\Gamma}$. 
In the Poisson formula 
    each residual function is summed over 
    the lattice of local exponents $m_\xi$ of 
    regulator characters $\xi \in R^\vee$ in 
    $\cs_0 \subset (\CC^{\Sigma(1)})^{\Gamma_w}$ 
    which are not in $(\CC^{\Sigma(1)})^{\Gamma}$ 
    if $\xi\neq 1$. 
\end{remark} %*}

Fix an $s \in (\CC^{\Sigma(1)})^{\Gamma_w}$. 
Let $z$ be the terminal point of 
a singular flag of $\widehat{H_\infty}(\chi_m,-s)\,dm$. 
If there are distinct $r$-subsets $E,E' \subset \Sigma_w(1)$ 
such that 
    $\bigcap_{e \in E} H_e = \{z\} = \bigcap_{e' \in E'} H_{e'}$ 
    then $z$ is \defn{ambiguous}, 
    otherwise $z$ is \defn{simple}. 

\begin{lemma}\label{lemma:ResidualFunctionFormula}
There is a dense open subset 
    $U \subset (\CC^{\Sigma(1)}_+)^{\Gamma_w}$ 
    such that for any fixed $s \in U$, 
    the terminal point of each singular flag 
    of $\widehat{H_\infty}(\chi_m,-s)\,dm$ is simple. 
Thus, for any $s$ in this open set, 
    each singular flag $\gamma$ in $\cs \otimes \CC$ is cut out by 
    a unique ordered $r$-subset $E_\gamma \subset \Sigma_w(1)$. 
%Let $J_\gamma$ denote the Jacobian of 
%    the mapping $\cs \to \RR^r$ determined by $E_\gamma$. 
%Assume that $J_\gamma$ is stable. 
Let $z_\gamma$ denote the terminal point of $\gamma$ 
    and set $s_\gamma = -2\pi iz_\gamma$. 
The residual function $R_\gamma$ for such a flag $\gamma$ is given by 
    %\note{check this formula} % TODO uncomment
\begin{equation}\label{eqn:ResidualFnSimpleCase} %{*
    R_\gamma(s)
    =
    \sum_{\substack{\tau \in \Sigma_w(r_\infty)\\E_\gamma \subset \tau(1)}}
    % The next line has a formula which is valid if J_gamma 
    % has a positive determinant. Otherwise it's off by a sign. 
    \frac{[\ccl_w:\ZZ\tau(1)]}{[\ccl : \ZZ \pi E_\gamma]}
    %\frac{[\ccs_w:\ZZ\tau(1)]}{\det J_\gamma}
    \prod_{\substack{e \in \tau(1)\\e \not \in E_\gamma}}
    \frac{1}{(s -s_\gamma)_e}.
\end{equation} %*}
%If the Galois action is strongly convex then this simplifies to 
%\begin{equation} %{*
%    R_\gamma(s)
%    =
%    \sum_{\substack{\tau \in \Sigma_w(r_w)\\E \subset \tau(1)}}
%    \frac{[\ccs_w:\ZZ\tau(1)]}{[\ccs : \ZZ \pi E_\gamma]}
%    \prod_{\substack{e \in \tau(1)\\e \not \in E}}
%    \frac{1}{(s-s_{\langle\pi E\rangle})_e}.
%\end{equation} %*}
Now consider an arbitrary parameter $s \in (\CC^{\Sigma(1)}_+)^{\Gamma_w}$ 
    and a singular flag $\gamma$ of $\widehat{H_\infty}(\chi_m,-s)\,dm$. 
Let $F$ be the set of singular flags of $\widehat{H_\infty}(\chi_m,-s')\,dm$ 
    which converge to $\gamma$ when $s' \to s$ through values in $U$. 
Then 
\begin{equation}\label{eqn:ResidualFunctionGeneral} %{*
    R_\gamma(s)
    =
    \lim_{\substack{s' \to s\\ s' \in U}}
    \sum_{\gamma' \in F}
    R_{\gamma'}(s').
\end{equation} %*}
In the last expression it suffices to sum over those $\gamma' \in F$ 
    whose defining subset $E_{\gamma'}$ is $\Sigma$-convex. 
\end{lemma}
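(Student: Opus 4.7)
The plan is to establish the three assertions in turn. For the simplicity statement, note that by Lemma~\ref{lemma:VariationOfPoles} every $\Sigma$-convex $r$-subset $E \subset \Sigma_w(1)$ with $\pi E$ a basis of $\ccs$ determines a terminal point $m_{E1} = (2\pi i)^{-1}\mu_E(\varphi|_\tau)$ which depends linearly on $s$. Two distinct subsets $E \neq E'$ yield coincident terminal points precisely on a proper affine subspace of $(\CC^{\Sigma(1)}_+)^{\Gamma_w}$, so the finite union of these bad subspaces has open dense complement $U$, on which every terminal point is simple and hence cut out by a unique ordered $r$-subset $E_\gamma$.

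To derive the formula \eqref{eqn:ResidualFnSimpleCase} for $s \in U$, I would expand the integrand using Proposition~\ref{prop:infiniteheightfouriertrfm},
\[
\widehat{H_\infty}(\chi_m,-s)\,dm = (2\pi i)^{-r_\infty}\sum_{\tau \in \Sigma_w(r_\infty)} [\ccl_w:\ZZ\tau(1)]\prod_{e \in \tau(1)}\bigl((\pi e)(m) + s_e/(2\pi i)\bigr)^{-1}\,dm,
\]
and observe that only summands with $E_\gamma \subset \tau(1)$ can support a nonzero iterated residue along $\gamma$. Computing in coordinates on $\cs$ dual to $\pi E_\gamma$, the Jacobian of the change of variables accounts for the factor $[\ccl : \ZZ\pi E_\gamma]^{-1}$ coming from the unimodular normalization of $dm$ against $\cl$; each of the $r$ iterated residues at the simple pole hyperplanes contributes a factor of $2\pi i$; and the remaining factors indexed by $\tau(1)\setminus E_\gamma$ are evaluated at the terminal point $z_\gamma$. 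The identity $(\pi e)(z_\gamma) + s_e/(2\pi i) = (s-s_\gamma)_e/(2\pi i)$, which follows by coordinatizing $s_\gamma = -2\pi i z_\gamma$ over $\Sigma(1)$, converts these leftover factors into $2\pi i/(s-s_\gamma)_e$. Collecting the various $(2\pi i)^\bullet$ against the prefactor $(2\pi i)^r$ in Definition~\ref{defn:ResidualFunction} yields \eqref{eqn:ResidualFnSimpleCase}.

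For a general parameter $s \in (\CC^{\Sigma(1)}_+)^{\Gamma_w}$, I would approach $s$ along a family of generic parameters $s' \in U$. A singular flag $\gamma$ of $\widehat{H_\infty}(\chi_m,-s)\,dm$ arises as the common limit of a finite set $F$ of simple flags $\gamma'$ at each nearby $s'$. Although the individual rational functions $R_{\gamma'}(s')$ can acquire poles as $s' \to s$---reflecting the collision of previously transverse hyperplanes---the sum $\sum_{\gamma' \in F} R_{\gamma'}(s')$ is regular at $s$: it computes $(2\pi i)^r \sum_{\gamma' \in F}\itres_z[\omega_{s'},\gamma']$, which is a local contribution at the collapsed terminal point depending rationally on $s'$ without a pole at $s$. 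This is exactly what allows the definition \eqref{eqn:ResidualFunctionGeneral} to make sense. Finally, the assertion that one may restrict to $\Sigma$-convex $\gamma' \in F$ is immediate from the formula \eqref{eqn:ResidualFnSimpleCase}: if $E_{\gamma'}$ is not $\Sigma$-convex, then no cone $\tau \in \Sigma_w(r_\infty)$ contains $E_{\gamma'}$, the sum in \eqref{eqn:ResidualFnSimpleCase} is empty, and $R_{\gamma'}$ vanishes identically.

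The main obstacle will be the regularity of $\sum_{\gamma' \in F} R_{\gamma'}(s')$ at $s$; even for small collapses a careful partial-fraction identity is required to see that the apparent poles in $s'$ cancel. I would address this by reducing locally to a calculation near the collapsed terminal point, where both sides of the putative identity are rational in a small number of parameters, and verify cancellation by interpreting the sum as a single iterated residue of a rational form whose Laurent expansion at the collision point is invariant under reordering of the defining hyperplanes.
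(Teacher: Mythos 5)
Your proposal follows essentially the same route as the paper: generic position of the terminal points via independence of the parameters $s_e$, term-by-term iterated residue of the explicit archimedean formula from Proposition~\ref{prop:infiniteheightfouriertrfm} to obtain \eqref{eqn:ResidualFnSimpleCase}, and a degeneration argument for \eqref{eqn:ResidualFunctionGeneral}. The one place you diverge is the last step: you propose to verify by hand that the apparent poles of $\sum_{\gamma' \in F} R_{\gamma'}(s')$ cancel as $s' \to s$, whereas the paper simply invokes continuity of iterated residues in the parameters $s$ (citing \cite[\S II.5.4]{zbMATH00107779} and \cite[Prop.~4]{residue}); note that $R_\gamma(s)$ is already well-defined for every $s$ by Definition~\ref{defn:ResidualFunction} as $(2\pi i)^r \itres_z[\omega,\gamma]$, so the content of \eqref{eqn:ResidualFunctionGeneral} is only that this quantity equals the limit of the generic sums, and continuity delivers that directly without requiring an explicit partial-fraction cancellation. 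Your sketch of that cancellation is plausible but left incomplete in the proposal, so if you pursue that route you would need to supply the local Laurent-expansion argument you gesture at; the cleaner path is the one the paper takes.
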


\begin{proof} %{*
Each singular hyperplane 
    $H_e = \{e(m) + \frac{s_e}{2\pi i} = 0\}$ 
    is associated to an independent parameter 
    $s_e$ ($e \in \Sigma_w(1)$) 
    as $s$ varies in $(\CC^{\Sigma(1)})^{\Gamma_w}$ 
    so the first assertion is clear. 
Now fix $s \in U$ and let 
    $\gamma$ be a singular flag 
    of $\widehat{H_\infty}(\chi_m,-s)\,dm$. 
Since the pole $z_\gamma$ is simple, 
    each step of the iterated residue 
    (starting with $\widehat{H_\infty}(\chi_m,-s)\,dm$) 
    can be computed by eliminating 
    the unique singular factor 
    and restricting the remaining quantities 
    to the next step of $\gamma$. 
    %(if the pole is not simple 
    %then higher-order Taylor coefficients become involved). 
This process results in the first formula. 
The general case follows from continuity of 
    iterated residues in the parameters $s$ 
    (cf.~\cite[\S II.5.4]{zbMATH00107779} 
    and \cite[Prop.~4]{residue}). 
For the last statement, if $E_{\gamma'}$ is not $\Sigma$-convex 
    then $R_{\gamma'} \equiv 0$ 
    since the indexing set for the sum in 
    \eqref{eqn:ResidualFnSimpleCase} is empty. 
\end{proof} %*}

\begin{lemma}\label{lemma:ResidualPicard} %{*
Each residual function is invariant 
    under translation by $s \mapsto s+2\pi im$ 
    for any linear form $m \in \cs \otimes \CC$. 
Thus, each residual function factors through the quotient 
$(\CC^{\Sigma(1)})^{\Gamma} \twoheadrightarrow  
    (\CC^{\Sigma(1)})^{\Gamma}/(\cs \otimes \CC )
    = (\Pic Y) \otimes \CC \dashrightarrow \CC$ 
to a rational function on the Picard group. 
\end{lemma}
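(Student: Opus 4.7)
The plan is to verify translation invariance directly from the explicit formula of Lemma~\ref{lemma:ResidualFunctionFormula} on a dense open set of simple parameters, extend the identity by continuity, and then descend to the Picard group via the exact sequence of Proposition~\ref{prop:supportFunctions}.

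First I would work on the dense open subset $U \subset (\CC^{\Sigma(1)}_+)^{\Gamma_w}$ of simple parameters, where the explicit formula \eqref{eqn:ResidualFnSimpleCase} applies. Fix $m_0 \in \cs \otimes \CC$. The translation $s \mapsto s + 2\pi i m_0$ sends $s_e$ to $s_e + 2\pi i m_0(e)$ for each $e \in \Sigma(1)$, and by $\Sigma$-linear extension for each $e \in \Sigma_w(1)$ as well: by Lemma~\ref{lemma:subfan} each such ray generator has the form $e_0 = \tfrac{1}{k}\sum_{f \in \tau(1)} f$ for a $\Gamma_w$-fixed cone $\tau \in \Sigma$, and linearity of $m_0$ on $\ccs_E$ gives $m_0(e_0) = \tfrac{1}{k}\sum_f m_0(f)$, matching the shift. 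Next, the terminal point $z_\gamma \in \cs \otimes \CC$ is determined by the linear system $(\pi e)(z_\gamma) + s_e/(2\pi i) = 0$ for $e \in E_\gamma$, since $\pi E_\gamma$ is a basis of $\ccs$. Since $m_0 \in \cs \otimes \CC = (\cs_E \otimes \CC)^\Gamma$ is orthogonal to the non-invariant isotypic components of $\ccs_E$, we have $(\pi e)(m_0) = e(m_0) = m_0(e)$ for all $e \in \ccs_w$, so the translated system is solved by $z_\gamma - m_0$; hence $s_\gamma = -2\pi i z_\gamma$ is replaced by $s_\gamma + 2\pi i m_0$. Both $s$ and $s_\gamma$ shift by the same amount, so each factor $(s - s_\gamma)_e$ in \eqref{eqn:ResidualFnSimpleCase} is invariant, while the multiplicities $[\ccl_w:\ZZ\tau(1)]$ and $[\ccl:\ZZ\pi E_\gamma]$ are independent of $s$. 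This yields $R_\gamma(s + 2\pi i m_0) = R_\gamma(s)$ for $s \in U$, and I would then extend the identity to all $s$ by continuity via \eqref{eqn:ResidualFunctionGeneral}.

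For the factorization through $\Pic Y$, I would tensor the bottom row of the diagram in Proposition~\ref{prop:supportFunctions} with $\CC$ and take $\Gamma$-invariants. Since $\CC[\Gamma]$ is semisimple, $(-)^\Gamma$ is exact on $\CC$-vector spaces, yielding a short exact sequence
\begin{equation}
    0 \longrightarrow \cs \otimes \CC \longrightarrow (\CC^{\Sigma(1)})^\Gamma \longrightarrow (\Pic Y) \otimes \CC \longrightarrow 0,
\end{equation}
where the identification of the rightmost term with $(A_{n-1}(Y_E) \otimes \CC)^\Gamma$ uses the vertical $\QQ$-linear isomorphism in the diagram. The invariance established above then shows that $R_\gamma$, restricted to $(\CC^{\Sigma(1)})^\Gamma$, descends along this quotient to a rational function on $(\Pic Y) \otimes \CC$.

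The main technical point is the consistency of the shift as $e$ ranges over both $\Sigma(1)$ and $\Sigma_w(1)$ under $\Sigma$-linear extension, which reduces the invariance to a direct substitution in \eqref{eqn:ResidualFnSimpleCase}. Once this compatibility is checked, the remainder of the argument is a standard semisimple representation theory computation.
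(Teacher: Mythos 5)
Your proof is correct and follows essentially the same approach as the paper: the key observation that the terminal point transforms by $z_\gamma \mapsto z_\gamma - m$ under the translation $s \mapsto s + 2\pi i m$, so that $s - s_\gamma$ is invariant, matches the paper's argument, as does the use of the diagram in Proposition~\ref{prop:supportFunctions} to identify $(\CC^{\Sigma(1)})^\Gamma/(\cs\otimes\CC)$ with $\Pic Y \otimes \CC$. The only difference is cosmetic: for the descent to $\Pic Y$ the paper takes $\Gamma$-invariants of the top row over $\ZZ$ and invokes finiteness of $H^1(\Gamma,\cl_E)$, whereas you tensor the bottom row with $\CC$ first and invoke semisimplicity of $\CC[\Gamma]$; these are interchangeable standard arguments leading to the same short exact sequence.
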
 %*}

\begin{proof} %{*
We claim $s-s_\gamma$ is invariant 
    under the given translation. 
Let $\pi \colon \ccs_E \to \ccs$ denote 
    the linear projection map 
    given by restriction to $\cs$. 
The pole $z_\gamma$ is the intersection of 
    the hyperplanes $H_e \subset \cs_\CC$ for $e \in E_\gamma$ 
    so it is the unique linear map on $\ccs_\CC$ 
    satisfying 
    $z_\gamma(\pi e) = e(z_\gamma)= -\frac{s_e}{2\pi i}$ 
    for each $e \in E_\gamma$. 
From this one easily finds that, 
    if $z_\gamma'$ denotes the terminal point of 
    the translation of $\gamma$, then 
    $z_\gamma' = z_\gamma - m$ 
    and $s_\gamma' = s_\gamma + 2\pi im$. 
This proves invariance of $R_\gamma$. 

Now pass to $\Gamma$-invariants of the first row 
    from the diagram in Proposition~\ref{prop:supportFunctions} 
    to obtain the long exact sequence 
    $0 \longrightarrow \cl
    \longrightarrow \Div_T Y 
    \longrightarrow \Pic Y 
    \longrightarrow H^1(\Gamma,\cl_E) 
    \longrightarrow \cdots$. 
Since $H^1(\Gamma,\cl_E)$ is finite 
    the complexified Picard group is isomorphic to 
    $(\CC^{\Sigma(1)})^{\Gamma}/(\cs \otimes \CC)$. 
\end{proof} %*}

\subsubsection{Nonvanishing of residual forms} %{*

We next consider a class of $\PP_n$-like toric varieties 
    whose Galois action satisfies a convexity condition. 

\begin{definition}
    The Galois action on the fan $\Sigma$ is \defn{convex} 
    if the $\Gamma$-orbit $\Gamma e$ is $\Sigma$-convex 
    for every ray generator $e \in \Sigma_w(1)$. 
Consider a $\Sigma$-convex $r$-subset $E \subset \Sigma_w(1)$ 
    whose restriction to $\cs$ is linearly independent. 
If the Galois action on $\Sigma$ is convex and 
    the set $\pi E=\{\pi e : e \in E\}$ is $\Sigma$-convex 
    for every such $E$ then we say 
    the Galois action on $\Sigma$ is \defn{strongly convex}. 
\end{definition}

\begin{remark}
Twists of projective space, 
and more generally their quotients 
    by finite subgroups of the torus, 
    have fans with strongly convex Galois action. 
\end{remark}

\begin{example}\label{example:antenna} %{*
The depicted fan with Galois action 
    determines a complete toric surface over $\QQ$ 
    with cyclic quotient singularities 
    whose open torus splits over a quadratic extension. 
The Galois action is not convex. 
\begin{figure}[h!]
    \begin{tikzpicture}[scale=2] %{*

%\draw[loosely dotted, thin] (4,0) -- (4,8);
%\draw[dashed] (0,0) -- (1,0);

\draw (0,0) -- (0,1);
\draw (0,0) -- (-1,0);
\draw (0,0) -- (0,-1);
\draw (0,0) -- (1,1);
\draw (0,0) -- (1,-1);

\filldraw[black] (0,1) circle (.75pt) 
        node[anchor=west]{$e_1$};
\filldraw[black] (0,-1) circle (.75pt) 
        node[anchor=west]{$e_1'$};
\filldraw[black] (1,1) circle (.75pt) 
        node[anchor=west]{$e_2$};
\filldraw[black] (1,-1) circle (.75pt) 
        node[anchor=west]{$e_2'$};
%\filldraw[black] (1,0) circle (.75pt) 
%        node[anchor=west]{$e_0$};
\filldraw[black] (-1,0) circle (.75pt) 
        node[anchor=east]{$e_{0}$};

\node (a) at (-1.5,.4) {};
\node (b) at (-1.5,-.4) {};

\node at (.6,0) {$\tau$};
\node at (.25,.6) {$\sigma$};
\node at (.25,-.6) {$\sigma'$};

\draw [<->] (b) to [bend left=40] node [left] {$\gamma$} (a) ;

%\draw[dashed] (-4,0) -- (-1,0);

%\filldraw [black] (-4,0) circle (1pt);
%\filldraw [black] (-1,0) circle (1pt);
%\filldraw [black] (2,3) circle (1pt);
%\filldraw [black] (3,5) circle (1pt);
%\filldraw [black] (4,8) circle (1pt);

% Background grid
%\fill[pattern={mydots}]
%    (0,-1) rectangle +(1,2.01);

\end{tikzpicture} %*}
\end{figure}
\end{example} %*}

\begin{proposition}\label{prop:nonvanishingR}
    Assume the Galois action is strongly convex. 
%Let $s \in (\CC^{\Sigma(1)}_+)^{\Gamma_w}$ and let $\gamma$ be 
%    a singular flag of $\widehat{H_\infty}(\chi_m,-s)\,dm$. 
%Let $F$ be the set of singular flags of $\widehat{H_\infty}(\chi_m,-s')\,dm$ 
%    which converge to $\gamma$ when $s' \to s$ through values in $U$ 
%    and whose defining subset $E_{\gamma'}$ is $\Sigma$-convex. 
%If $F$ is empty then $R_\gamma \equiv 0$, so  
%    suppose that $F$ is not empty. 
    Let $\sigma \in \Sigma(r)$ and let $\gamma \in Z_\sigma$.  
Then $$R_\gamma(s)>0$$ for any $s \in \Pic Y \otimes \RR$ 
    in the interior of the cone of effective divisors. 
In particular, if $L$ is a big line bundle on $Y$, then 
    $R_\gamma(t[L]) >0$ for any $t >0$. 
\end{proposition}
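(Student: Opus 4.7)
The plan is to reduce to a dense open set of parameters where the simple-pole formula~\eqref{eqn:ResidualFnSimpleCase} applies, and then identify the resulting sum with a positive integral. By Lemma~\ref{lemma:ResidualPicard} I may lift the given class $s \in \Pic Y \otimes \RR$ to an element of $(\RR^{\Sigma(1)})^\Gamma$ with every $s_e>0$, and by the limit formula~\eqref{eqn:ResidualFunctionGeneral} it suffices to analyze $\sum_{\gamma' \in F} R_{\gamma'}(s')$ as $s' \to s$ through the generic locus $U$, where each summand is given by the simple-pole expression. Under strong convexity, $\pi E_{\gamma'}$ is $\Sigma$-convex, so it spans the rays of some maximal cone $\sigma' \in \Sigma(r)$; the $\Gamma$-fixed preimage $\tau_{\sigma'}$ furnished by Lemma~\ref{lemma:subfan} supplies at least one $\tau \in \Sigma_w(r_\infty)$ with $E_{\gamma'} \subseteq \tau(1)$, so the indexing set for~\eqref{eqn:ResidualFnSimpleCase} is non-empty.

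For positivity of the full sum the key is an integral interpretation. The function $\widehat{H_\infty}(\chi_m,-s)$ is the $m$-Fourier transform of the strictly positive function $y \mapsto e^{\varphi(y)}$ on $\ccs_w$, and iterated residues of this Fourier transform along the flag $\gamma$ correspond---via Fourier inversion---to integration of $e^{\varphi(y)}$ against the indicator function of a subcone $C_\gamma \subseteq \ccs_w$ cut out by the half-spaces associated to the defining hyperplanes of $\gamma$ (exactly as the elementary identity $(s_e)^{-1} = \int_0^\infty e^{-s_e t}\,dt$ recovers the contribution of a single ray). Summing over $\tau$ in~\eqref{eqn:ResidualFnSimpleCase} and passing to the limit $s'\to s$ should yield
\[
R_\gamma(s) = \int_{C_\gamma} e^{\varphi(y)}\,d\mu(y).
\]
This is manifestly positive: when $s$ lies in the interior of the effective cone, $\varphi \leq 0$ on $|\Sigma|$, so the integrand is positive and integrable on $C_\gamma$, and strong convexity ensures $C_\gamma$ has positive measure. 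The second assertion, $R_\gamma(t[L])>0$ for a big line bundle $L$ and $t>0$, then follows because big line bundles lie in the interior of the effective cone.

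The main obstacle is the integral representation itself: tracking how the iterated residue interacts with the Fourier transform on a restricted fan, identifying the subcone $C_\gamma$ precisely, and arguing that the $\Pi_\sigma$-stability/compatibility sign conditions defining $Z_\sigma$ (positivity of leading principal minors and alternating signs of antiminors of $J_\gamma$) are exactly what is needed to guarantee that the relevant half-spaces are consistently oriented, so that $C_\gamma$ has non-empty interior rather than collapsing. Strong convexity plays the role of ensuring that the projected rays $\pi E_\gamma$ fit inside a single cone of $\Sigma$, which should translate into non-degeneracy of $C_\gamma$; once this is established, positivity is immediate from the integral.
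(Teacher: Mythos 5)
Your proposal correctly identifies the reduction to the simple-pole case via \eqref{eqn:ResidualFunctionGeneral}, but the central step --- the integral representation
\[
R_\gamma(s) \;=\; \int_{C_\gamma} e^{\varphi(y)}\,d\mu(y)
\]
for some subcone $C_\gamma\subseteq\ccs_w$ --- is asserted, not proved, and as stated it cannot be literally correct. The iterated residue is an $r$-step operation in the $m$-variables, so the resulting rational function \eqref{eqn:ResidualFnSimpleCase} is a sum over cones $\tau\in\Sigma_w(r_\infty)$ of products of only $r_\infty - r$ factors $\frac{1}{(s-s_\gamma)_e}$; each such product can be recovered via $\frac{1}{(s-s_\gamma)_e}=\int_0^\infty e^{-(s-s_\gamma)_e t}\,dt$ only \emph{if one already knows $(s-s_\gamma)_e>0$}, which is precisely the nontrivial content of the proposition. (In the degenerate case $r=r_\infty$ the product is empty and $R_\gamma$ is a constant, so there is no nontrivial ``subcone of $\ccs_w$'' to integrate over.) The sign conditions defining $Z_\sigma$ (stability and compatibility of the Jacobian) do not by themselves imply $(s-s_\gamma)_e>0$, so the claimed consistent orientation of $C_\gamma$ is exactly what needs to be shown, and the paper's argument supplies this input from a different direction. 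You acknowledge this is ``the main obstacle,'' but it is in fact the whole proof.

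The paper takes a more concrete route. It first observes that $R_\gamma$ is regular on $(\CC^{\Sigma(1)}_+)^{\Gamma_w}$ and nonvanishing (because $\gamma$ is cut out by a $\Pi_\sigma$-stable Jacobian), and then that the sign of the real-valued function $R_\gamma$ on the effective cone can only change across a singularity; so it suffices to verify $R_\gamma(s)>0$ at a single $s$. It picks $s$ corresponding to an ample $T$-linearized line bundle $L$. Projective toric varieties have globally generated ample bundles, hence $\varphi$ is upper convex, giving $(\varphi-\varphi|_\tau)(v)\le 0$ for $v\notin\tau$. Strong convexity (via Lemma~\ref{lemma:VariationOfPoles}) identifies $s_\gamma$ as $-2\pi i z_\gamma$ with $z_\gamma$ determined by $\varphi|_\sigma$, simplicity of the terminal pole shows $\pi e\notin\sigma$ for $e\in\tau(1)\smallsetminus E_\gamma$, and then $(s-s_\gamma)_e=-(\varphi-\varphi|_\sigma)(\pi e)>0$ follows from upper convexity together with regularity of $R_\gamma$. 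Each summand and every factor in \eqref{eqn:ResidualFnSimpleCase} is then strictly positive. In particular, the positivity of the factors is \emph{derived} from ampleness and convexity, not assumed; your argument would need to supply an independent proof of this, at which point the integral interpretation becomes a corollary rather than the engine. One further small slip: ``$\pi E_{\gamma'}$ spans the rays of some maximal cone $\sigma'\in\Sigma(r)$'' should read $\sigma'\in\Sigma_\Gamma(r)$ --- the ambient fan $\Sigma$ in $\ccs_E$ has cones of dimension up to $n$, whereas the strongly-convex hypothesis places $\pi E_{\gamma'}$ inside a cone of the restricted fan $\Sigma_\Gamma$ in $\ccs$.
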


\begin{proof} %{*  
The rational function $R_\gamma(s)$ is regular 
    for any $s \in (\CC^{\Sigma(1)}_+)^{\Gamma_w}$ 
    since the iterated residue is always well-defined. 
The iterated residue does not vanish identically 
    since $\gamma$ may be defined 
    by a $\Pi_\sigma$-stable Jacobian. 
In view of \eqref{eqn:ResidualFunctionGeneral} 
    it suffices to consider the case 
    that the terminal point of $\gamma$ is simple. 
It is clear from \eqref{eqn:ResidualFnSimpleCase} 
    that the sign of the real number $R_\gamma(s)$ can only 
    change at a singularity of $R_\gamma$, 
    so it suffices to show that $R_\gamma(s)>0$ 
    for a single $s$ in the interior of the effective cone. 
Since $Y$ is projective, it admits an ample line bundle $L$. 
Replacing $L$ with a positive multiple of itself, 
    we may assume it admits a $T$-linearization 
    since $H^1(\Gamma,\cl_E)$ is finite. 
Let $s$ be the element of 
    $(\ZZ^{\Sigma(1)})^\Gamma \subset (\CC^{\Sigma(1)})^\Gamma$ 
    corresponding to $L$ with its $T$-linearization 
    (Proposition~\ref{prop:supportFunctions}). 
Let $\varphi$ be the corresponding $\Sigma$-linear function. 
Since $L$ is ample it is globally generated 
    (not true in general for proper varieties 
    but true for proper toric varieties \cite[p.~70]{MR1234037}). 
It is well-known that $L$ is globally generated if and only if 
    $\varphi$ is upper convex \cite[p.~68]{MR1234037}, so 
    \begin{equation}\label{eqn:UpperConvex} %{*
    (\varphi-\varphi|_\tau)(v) \leq 0
    \end{equation} %*}
    for any cone $\tau \in \Sigma$ and $v \not \in \tau$ 
    where $\varphi|_\tau$ is a linear map 
    on $\ccs_E$ agreeing with $\varphi$. 
%It suffices to show that $R_{\gamma'}(s') > 0$. 
The terminal point of $\gamma=\gamma(s)$ is 
    $z_{\gamma}=(2\pi i)^{-1}\mu_{E_{\gamma}}(\varphi|_\tau)$ 
    (Lemma~\ref{lemma:VariationOfPoles}). 
Since the Galois action is convex we have 
    $\mu_{E_{\gamma}}(\varphi|_\tau) = \varphi|_{\pi E_{\gamma}}$ 
    where $\varphi|_{\pi E_{\gamma}}\in \cs\otimes \CC$ 
    is the unique linear map agreeing with $\varphi$ 
    on the basis $\pi E_{\gamma}$. 
Since the Galois action is strongly convex, 
    $\varphi|_{\pi E_{\gamma}}=\varphi|_{\sigma}$ 
    where $\sigma=\langle \pi E_\gamma \rangle\in\Sigma_\Gamma(r)$. 
In particular, $s_\sigma=s_\gamma$ only depends on $\sigma$. 
Now let $\tau \in \Sigma_w(r_w)$ be a cone containing $E_{\gamma}$ 
and let $e \in \tau(1) - E_{\gamma}$. 
Since the Galois action is convex, 
    the ray $\tau_e=\RR_{\geq 0}\pi e$ 
    is a cone in $\Sigma_\Gamma$ 
    by Lemma~\ref{lemma:subfan}. 
Suppose the ray $\RR_{\geq 0}\pi e$ were contained in $\sigma$. 
Then it would be an edge of $\sigma$, 
    and so $\pi e = \lambda \pi e'$ for some $e' \in E_\gamma$ 
    and $\lambda>0$. 
This would imply $e$ and $e'$ are in the same Galois orbit. 
Since $s$ is $\Gamma$-invariant, this would mean that 
    $e+\frac{s_e}{2\pi i}$ and $e'+\frac{s_{e'}}{2\pi i}$ 
    define the same hyperplane in $\cs \otimes \CC$, 
    which contradicts simplicity of $\gamma$. 
We conclude that $\pi e$ is 
    not contained in $\sigma$ by simplicity of $\gamma$. 
Now the desired positivity 
\begin{equation} %{*
    (s-s_\gamma)_e = 
    -(\varphi-\varphi|_\sigma)(e) =
    -(\varphi-\varphi|_\sigma)(\pi e) >0
\end{equation} %*}
for $e$ follows from \eqref{eqn:UpperConvex} 
and regularity of $R_\gamma$. 
\end{proof} %*} 

%*}

\subsubsection{Estimates on residual forms} %{*

In this section we show that the form 
    $\omega = \widehat{H_\infty}(\chi_m,-s)\,dm$ 
    admits an iterated residue expansion 
    for any fixed $s \in (\CC^{\Sigma(1)})^{\Gamma_w}$ 
    with positive real parts. 

%\note{This estimate fails for the projective plane.}
%\begin{lemma}[{\cite[Prop.~4.12]{batyrevtschinkel1996}}]\label{lemma:upperbound}
%Let $\norm{\cdot}$ be a norm on $\cs_w$. 
%Then 
%    $\widehat{H_\infty}(\chi_m,-s)=O_s(|m|^{-(r_w+1)})$ 
%    as $m \in \cs_w$ tends to infinity. 
%\end{lemma}

\begin{lemma}\label{lemma:residualFormUpperBound} %{*
Let 
    $\gamma : \cs_{\CC} \supset H_1 \supset \cdots 
    \supset H_1 \cap \cdots \cap H_k$ 
    be a $k$-step singular flag in $\cs_{\CC}$. 
%Let $\norm{\cdot}$ be any norm on $\cs_{w,\CC}$. 
Then 
\begin{equation} %{*
    |\!\itres_z[\omega,\gamma]| \ll 
    %\frac{1}{\max_j|v_j(m)|}
    %\sum_{\tau}
    %\prod_{e \in \tau(1) \cap S}
    %    \left|m(e) + \frac{s_e}{2\pi i}\right|^{-1}
    %    \ll
    \frac{1}{\norm{m}}
    \sum_{\tau\in\Sigma_\infty(r_\infty)}
    \prod_{\substack{e \in \tau(1)\\H_e \not \in \{H_1,\ldots,H_k\}}}
        \left|m(e) + \frac{s_e}{2\pi i}\right|^{-1}
        \qquad
    (m \in \cs_{\CC}). 
\end{equation} %*}
\end{lemma}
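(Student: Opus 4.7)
The approach is to substitute the explicit formula for $\widehat{H_\infty}$ from Proposition~\ref{prop:infiniteheightfouriertrfm} and compute the iterated residue termwise. I write
\[
\omega = \left(\frac{1}{2\pi i}\right)^{r_\infty}\sum_{\tau \in \Sigma_\infty(r_\infty)} [\ccl_w : \ZZ\tau(1)] \prod_{e \in \tau(1)}\left(m(e)+\frac{s_e}{2\pi i}\right)^{-1}dm,
\]
and use $\CC$-linearity of $\itres_z[-,\gamma]$ to reduce to bounding each $\tau$-summand individually. For a fixed cone $\tau$, the summand is singular only along $\{H_e : e \in \tau(1)\}$, so the iterated residue vanishes unless every hyperplane $H_i$ of the flag $\gamma$ coincides with some $H_e$.

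When this compatibility condition holds, let $E_\gamma(\tau) := \{e \in \tau(1) : H_e \in \{H_1,\ldots,H_k\}\}$ and factor the product into the singular part indexed by $E_\gamma(\tau)$ (killed by the residue operations) and the regular part indexed by $\tau(1) \setminus E_\gamma(\tau)$ (surviving as a function on the flag). A Jacobian change of coordinates from the basis $z$ to the linear forms defining $E_\gamma(\tau)$ then yields
\[
\itres_z\!\left[\prod_{e\in\tau(1)}\!\left(m(e)+\tfrac{s_e}{2\pi i}\right)^{-1}\!dm,\, \gamma\right] = \frac{1}{J(\tau,\gamma)}\!\prod_{e\in\tau(1)\setminus E_\gamma(\tau)}\!\left(m(e)+\tfrac{s_e}{2\pi i}\right)^{-1}
\]
for a nonzero Jacobian constant $J(\tau,\gamma)$ depending only on the finite combinatorial data of $(\tau,\gamma)$. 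Taking absolute values, summing over $\tau$, and absorbing the finitely many scalars $[\ccl_w:\ZZ\tau(1)]/|J(\tau,\gamma)|$ into the implied constant produces the preliminary estimate
\[
|\itres_z[\omega,\gamma]| \ll \sum_{\tau \in \Sigma_\infty(r_\infty)} \prod_{\substack{e \in \tau(1)\\ H_e \notin \{H_1,\ldots,H_k\}}}\left|m(e)+\tfrac{s_e}{2\pi i}\right|^{-1}.
\]

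The main obstacle is then the improvement by the extra factor of $1/\|m\|$. Since each $\tau$-summand decays only like $\|m\|^{-(r_\infty-k)}$, this improvement requires exploiting cancellation across the sum over $\tau$ --- a consequence of the $C^0$-continuity of $e^\varphi$ on $\cs_w$ across the walls of adjacent cones. My preferred approach is integration by parts in the original Fourier integral $\widehat{H_\infty}(\chi_m,-s)=\int_{\cs_w}e^{\varphi(y)}e(-m(y))\,d\mu_\infty(y)$: differentiating along a direction $v\in\cs_w$ with $|m(v)|\gtrsim\|m\|$ (and with no boundary contribution, since $e^\varphi$ decays at infinity in every direction of every cone) yields the identity
\[
2\pi i m(v)\widehat{H_\infty}(\chi_m,-s) = -(2\pi i)^{-r_\infty}\!\sum_{\tau\in\Sigma_\infty(r_\infty)} u(\tau)(v)\,[\ccl_w:\ZZ\tau(1)]\!\prod_{e\in\tau(1)}\!\left(m(e)+\tfrac{s_e}{2\pi i}\right)^{-1},
\]
where $u(\tau)\in\cs_w$ denotes the linear extension of $\varphi|_\tau$. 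Dividing by $2\pi i m(v)$ and rerunning the iterated-residue analysis above produces the desired $1/\|m\|$ factor, with $|u(\tau)(v)|$ absorbed into the constant. The technical point is arranging that $v$ is chosen so $1/m(v)$ remains regular when restricted to the flag $\gamma$; this is always possible because $\gamma$ is cut out by finitely many linear conditions on $m$ while $v$ ranges over all of $\cs_w$, and a generic choice of $v$ suffices.
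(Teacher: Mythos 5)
Your proof is essentially correct and follows the paper's own route: the crux is the integration-by-parts identity, which introduces the factor $1/m(v)$ and gives the $1/\norm{m}$ gain after the iterated-residue analysis (your preliminary termwise bound is a correct but unnecessary detour). Two small clarifications worth noting: for the implied constant to be uniform in $m$, take $v$ from a fixed basis of $\ccs_w$ and optimize over basis vectors for each $m$ (as the paper does), rather than invoking an $m$-dependent generic $v$; and the precise condition you need is that the linear hyperplane $\{m(v)=0\}$ contains no step $\gamma(i)$ of the flag (so the factor $1/m(v)$ passes harmlessly through each residue operation), not that $1/m(v)$ is ``regular on the flag'' --- it generally does have poles along $\gamma(k)\cap\{m(v)=0\}$ --- though your genericity argument does establish the correct condition.
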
 %*}

\begin{proof} %{*
We use part of the proof of \cite[Prop.~2.3.2]{IMRN}. 
Recall the earlier formula 
    \eqref{eqn:intermediateFormulaH} 
    for $\widehat{H_\infty}$: 
\begin{equation} %{*
    \widehat{H_\infty}(\chi_m,-s) = 
\int_{\ccs_w}
    e^{\varphi(y)} e(-\langle y,m\rangle)
    \,d\mu_\infty(y).
\end{equation} %*}
Let $(v_1,\ldots,v_{r_w})$ be any basis for $\ccs_w$ 
and write a general element of $\ccs_w$ as 
    $y = y_1v_1+\cdots +y_{r_w}v_{r_w}$. 
Integrating by parts shows that for any $j \in \{1,\ldots,r_w\}$ 
and $m \not \in \ker v_j$, 
\begin{equation} %{*
\widehat{H_\infty}(\chi_m,-s) =
\int_{\ccs_w}
    e^{\varphi(y)} e(-\langle y,m\rangle)
    \,d\mu_\infty(y)
    =
    \frac{1}{-2\pi i v_j(m)}
\int_{\ccs_w}
    \frac{\partial}{\partial y_j}
    (e^{\varphi(y)})
    e(-\langle y,m\rangle)
    \,d\mu_\infty(y).
\end{equation} %*}
%\note{What's stopping me from integrating by parts indefinitely and concluding $\widehat{H_\infty}$ is rapid decay? (Which it's not.)} % TODO uncomment
The function $\varphi(y)$ is linear on each cone so the formula 
    $\frac{\partial}{\partial y_j}(e^{\varphi(y)})
    =\frac{\partial \varphi}{\partial y_j}e^{\varphi(y)}$ 
    is valid and well-defined almost everywhere. 
Splitting up the integral over cones obtains 
\begin{align} % {{{
    \widehat{H_\infty}(\chi_m,-s,D)
    &=
\frac{1}{-2\pi i v_j(m)}
\sum_{\tau \in \Sigma_\infty(r_\infty)}
    \frac{\partial \varphi|_\tau}{\partial y_j}
    \int_{\tau} 
    e^{\varphi(y)} e(-\langle y,m\rangle)
    \,d\mu_\infty(y)\\
    &=
\frac{1}{-2\pi i v_j(m)}
        \left(\frac{1}{2\pi i}\right)^{r_w}
        \sum_{\tau \in \Sigma_\infty(r_\infty)}
        [\ccl_w:\ZZ \tau(1)] 
    \frac{\partial \varphi|_\tau}{\partial y_j}
    \prod_{e \in \tau(1)}
        \left(m(e) + \frac{s_e}{2\pi i}\right)^{-1}. 
\end{align} % }}}
The nonzero form $v_j$ cannot vanish identically on 
    any of the affine hyperplanes $H_1,\ldots,H_k$ 
    so we may evaluate the residual form 
    with this expression. 
Thus $\itres_z[\omega,\gamma]$ 
is a linear combination of forms 
(with coefficients depending only on $\Sigma$) 
\begin{equation} %{*
\frac{1}{v_j(m)}
\prod_{\substack{e \in \tau(1)\\H_e \not \in \{H_1,\ldots,H_k\}}}
        \left(m(e) + \frac{s_e}{2\pi i}\right)^{-1}\, dm, 
\end{equation} %*}
%Note that $\norm{m} \gg\ll \max(|v_1(m)|,\ldots,|v_{r}(m)|)$. 
%On the subset of $\gamma(j)$ where 
%$\max(|v_1(m)|,\ldots,|v_{r}(m)|)=|v_k(m)|$, 
    %in the proof of \cite[Prop.~2.3.2]{MR1369408} 
    %(derived for $m \in \cs$ but valid for all $m \in \cs_\CC$ 
    %by analytic continuation): 
    %implies the following bound: 
    %\note{Should probably check their argument with integration by parts and add it here since I am doing the singular case after all, and the coefficients are quite important to achieve cancellation} 
and 
\begin{equation} %{*
    |\!\itres_z[\omega,\gamma]| \ll 
    \frac{1}{|v_j(m)|}
    \sum_{\tau\in\Sigma_\infty(r_\infty)}
    \prod_{\substack{e \in \tau(1)\\H_e \not \in \{H_1,\ldots,H_k\}}}
        \left|m(e) + \frac{s_e}{2\pi i}\right|^{-1}.
\end{equation} %*}
This holds for any $j \in \{1,\ldots,r_w\}$ which proves 
    the claimed formula. 
\end{proof} %*}

\begin{proposition}\label{prop:upperbound3} %{*
    The $r$-form $\omega = \widehat{H_\infty}(\chi_m,-s)\,dm$ 
    admits an iterated residue expansion 
    with respect to any polyhedron 
    $\Pi$ with boundary $\cs$. 
    %$\Pi_\sigma = \cs+i\sigma^\vee$ 
    %for any cone $\sigma\in\Sigma_\Gamma(r)$. 
    In particular, $\chi_m(n)^{-1}\omega$ 
    admits an iterated residue expansion 
    with respect to $\Pi$ if $\chi_m(n)^{-1}$ 
    is bounded on $\Pi$. 
\end{proposition}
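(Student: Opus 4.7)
The plan is to verify the two hypotheses defining an iterated residue expansion in the sense of \cite{residue}: (a) the singularities of $\omega$ lie on a locally finite affine hyperplane arrangement, and (b) at each step of the iterative contour shift along the imaginary cone of $\Pi$, the residual form decays rapidly enough to justify Cauchy's theorem without contribution from infinity. Both hypotheses will follow from the explicit formula in Proposition~\ref{prop:infiniteheightfouriertrfm} together with the estimate in Lemma~\ref{lemma:residualFormUpperBound}.

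Condition (a) is immediate: Proposition~\ref{prop:infiniteheightfouriertrfm} expresses $\widehat{H_\infty}(\chi_m,-s)$ as a finite sum over $\tau \in \Sigma_\infty(r_\infty)$ of products of simple factors $(m(e)+s_e/(2\pi i))^{-1}$ with $e \in \tau(1)$, so the singular set is the union of the finitely many hyperplanes $H_e$ for $e \in \Sigma_\infty(1)$. For (b), let $\Theta = \RR_{\geq 0}\langle v_1,\ldots,v_r\rangle$ be the imaginary cone of $\Pi$. At the $k$-th step of the iteration we have a $k$-step singular flag $\gamma$ cut out by hyperplanes $H_1,\ldots,H_k$, and we wish to shift the contour in direction $v_{k+1}$. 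The proof of Lemma~\ref{lemma:residualFormUpperBound} establishes the pointwise bound
\begin{equation*}
|\itres_z[\omega,\gamma]| \ll
\frac{1}{|v_{k+1}(m)|}
\sum_{\tau \in \Sigma_\infty(r_\infty)}
\prod_{\substack{e \in \tau(1)\\ H_e \notin \{H_1,\ldots,H_k\}}}
\left|m(e) + \tfrac{s_e}{2\pi i}\right|^{-1}.
\end{equation*}
Each cone $\tau$ has $r_\infty$ generators while at most $k \leq r$ hyperplanes $H_e$ appear in $\gamma$, so the product contains at least $r_\infty - k$ factors. Completeness of $\Sigma_\infty$ ensures that for every direction $v_{k+1}$ at least one of the remaining generators $e \in \tau(1)$ satisfies $e(v_{k+1}) \neq 0$, contributing additional $|v_{k+1}|$-decay beyond the prefactor $|v_{k+1}(m)|^{-1}$. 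The resulting total decay of order at least $2$ in the $v_{k+1}$-direction suffices to shift the contour to infinity along $v_{k+1}$ without boundary contribution and to preserve absolute convergence of the residual integral on the shifted subspace $\gamma(k)$.

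The main obstacle is ensuring that at each step the prefactor $|v_{k+1}(m)|^{-1}$ is genuinely supplemented by at least one more $|v_{k+1}|$-dependent factor in the product, i.e.~that $v_{k+1}$ does not lie in the common kernel of all generators of some maximal cone $\tau$ lying outside $\{H_1,\ldots,H_k\}$. This is an elementary consequence of $\tau(1)$ spanning $\ccs_w \supset \cs$ together with $|\tau(1) \setminus \{e: H_e \in \gamma\}| \geq r_\infty - k \geq 1$, which forces $v_{k+1}$ to pair nontrivially with at least one such generator for every $\tau$. The second assertion is then immediate: if $\chi_m(n)^{-1}$ is bounded on $\Pi$, then the pointwise estimates on $\omega$ and each of its iterated residues transfer with the same rate of decay to $\chi_m(n)^{-1}\omega$, and the argument above applies unchanged to yield an iterated residue expansion of $\chi_m(n)^{-1}\omega$ along $\Pi$.
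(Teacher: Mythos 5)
There is a genuine gap in your argument. The assertion that ``completeness of $\Sigma_\infty$ ensures that for every direction $v_{k+1}$ at least one of the remaining generators $e\in\tau(1)$ satisfies $e(v_{k+1})\neq0$'' is not an elementary consequence of $\tau(1)$ spanning $\ccs_w$: the set $\tau(1)\cap S$ has fewer elements than $\tau(1)$, and nothing in the spanning hypothesis prevents all of its projections to $\ccs$ from annihilating $v_{k+1}$. A concrete obstruction: take a split torus with $\ccs=\RR^2$, $v_2=(0,1)$, a $1$-step flag cut out by the hyperplane of $e_1=(1,1)$, and a cone $\tau$ with $\tau(1)=\{(1,1),(a,0)\}$ for some $a\neq0$; then $\tau(1)\cap S=\{(a,0)\}$ pairs trivially with $v_2$. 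The factor $(a,0)(m)+\tfrac{s}{2\pi i}$ still diverges as $z_2\to\infty$ along $\gamma(1)$, but only because the flag condition forces $z_1$ to move too, not because it pairs nontrivially with $v_2$. So the decay you need does not come from directional pairing with the next polyhedron basis vector; your argument conflates the direction of the contour shift (which lies in $\gamma(k)$, a tilted affine subspace) with the ambient direction $v_{k+1}\in\cs$, and also applies the prefactor of Lemma~\ref{lemma:residualFormUpperBound} with $v_{k+1}$ from the polyhedron's defining basis in $\cs$, whereas that lemma's auxiliary vectors $v_j$ run over a basis of $\ccs_w$, a different space.

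The paper's proof fixes both issues. It restricts to cones $\tau$ whose summand genuinely contributes to $\itres_z[\omega,\gamma]$; for those, $\pi(\tau(1)\setminus S)\subset\langle e_1,\ldots,e_k\rangle$, which is exactly what you need (and do not verify) to conclude that $\pi(\tau(1)\cap S)$ spans the quotient $\ccs_\CC/\langle e_1,\ldots,e_k\rangle$. From this it deduces that $\max_{e\in\tau(1)\cap S}\left|e(m)+\tfrac{s_e}{2\pi i}\right|\gg_s\norm{m}$ as $m\to\infty$ within $\gamma(k)$ --- a statement about the maximum factor, not about any single factor in a chosen ambient direction --- and pairs it with a restriction to the region $T_\delta$ of $\gamma(k)$ at $\delta$-distance from all remaining poles to control the other factors. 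Combined with the $\norm{m}^{-1}$ prefactor of Lemma~\ref{lemma:residualFormUpperBound}, this yields the uniform $O_{\delta,s}(\norm{m}^{-2})$ bound on $T_\delta$, which is what justifies the $1$-dimensional contour shift at each step. Your proposal omits both the contributing-cone restriction and the $T_\delta$ localization, and its per-direction spanning claim is false in general.
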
 %*}

Note that $\chi_m(n)^{-1}$ 
    is bounded on $\Pi$
    if and only if $-\Lt(n) \in \Theta^\vee$ 
    where $\Theta = \mathrm{Im}(\Pi)$. 

\begin{proof} %{*
Let 
    $\gamma : \cs_\CC \supset H_1 \supset \cdots 
    \supset H_1 \cap \cdots \cap H_k$ $(1 \leq k < r)$ 
    be a $k$-step singular flag. 
Let $S$ denote the subset of generators $e \in \Sigma_w(1)$ 
    whose corresponding hyperplane $H_e$ is not in 
    $\{H_1,\ldots,H_k\}$. 
For any $\delta > 0$ let 
    $T_\delta \subset \gamma(k)$ 
    denote the open subset of elements $m\in\gamma(k)$ satisfying 
\begin{equation}\label{eqn:TDeltaDefn} %{*
    \left|e(m) + \frac{s_{e}}{2\pi i}\right| > \delta
\end{equation} %*}
    for all $e \in S$. 

We claim that 
    $$\itres_z[\omega,\gamma]=O_{\delta,s}(\norm{m}^{-2})$$ 
    as $m \in T_\delta \subset \gamma(k)$ tends to infinity. 
The convergence of the iterated residue expansion 
    follows immediately from this estimate. 
Let $\pi_k \colon \ccs_\CC \twoheadrightarrow 
    \ccs_\CC/\langle e_1,\ldots,e_k\rangle$ 
    be the natural quotient map. 
%Let $\pi \colon \ccs_E \to \ccs$ denote 
%    the linear projection map 
%    given by restriction to $\cs$. 
%Recall that $\widehat{H_\infty}$ 
%    is a sum indexed by cones $\tau$ in $\Sigma_\infty(r_\infty)$ 
%    (Proposition~\ref{prop:infiniteheightfouriertrfm}). 
Suppose a cone $\tau$ in $\Sigma_\infty(r_\infty)$ 
    has a summand in $\widehat{H_\infty}$ which 
    contributes to $\itres_z[\omega,\gamma]$. 
The generators in $\tau(1)$ which are not in $S$ 
    are in the subspace $\langle e_1,\ldots,e_k\rangle$. 
Since $\tau$ is simplicial, the image of 
    $\tau(1)\cap S$ 
    under $\pi_k$ will span the quotient space 
    $\ccs_\CC/\langle e_1,\ldots,e_k\rangle$. 
Thus 
\begin{equation}\label{eqn:emsLowerBound} %{*
\max_{e \in \tau(1)\cap S}
\left|e(m) + \frac{s_e}{2\pi i}\right| 
\gg_s
\norm{m}
\end{equation} %*}
    for $m \in \gamma(k)$. 
%The $\tau$ contribution to 
%    $\itres[\omega,\gamma]$ 
%    is proportional to 
%\begin{equation} %{*
%   % \left(\frac{1}{2\pi i}\right)^{r_w}
%   % \sum_{\substack{e_1,\ldots,e_j \in \tau\\\text{no multiplicity}}}
%   % [\ccl_w:\ZZ \tau(1)]
%    \prod_{e \in \tau(1) \cap S}
%    \left(e(m) + \frac{s_e}{2\pi i}\right)^{-1}
%    \,dm.
%\end{equation} %*}
With the help of 
    \eqref{eqn:TDeltaDefn} and \eqref{eqn:emsLowerBound} 
    we have 
\begin{equation}\label{eqn:boundOnTDelta} %{*
    \prod_{e \in \tau(1) \cap S}
    \left|e(m) + \frac{s_e}{2\pi i}\right|^{-1} 
    \ll_{\delta,s}
    \norm{m}^{-1} \qquad
    (m \in T_\delta \subset \gamma(k)). 
\end{equation} %*}
Now use Lemma~\ref{lemma:residualFormUpperBound}. 
It is easy to verify 
    $\chi_m(n)^{-1}$ is bounded on $\Pi$ 
    if and only if $-\Lt(n) \in \Theta^\vee$. 
\end{proof} %*}

\begin{remark} %{* % BT mistake
The following estimate 
    is proven in \cite[Prop.~2.3.2]{IMRN} 
    for fixed $s \in (\CC^{\Sigma(1)})^{\Gamma_w}$ 
    and $m \in \cs_w$ tending to infinity: 
\begin{equation}\label{eqn:CorrectEstimate} %{*
    \widehat{H_\infty}(\chi_m,-s)\ll_s
    \max_{\tau \in \Sigma_w(r_w)}
    \prod_{e \in \tau(1)}
    \left(\frac{1}{1 + |m(e)|}\right)^{1+1/r_w}.
\end{equation} %*}
In a later paper 
    \cite[Lemma~4.5]{batyrev1995maninsconjecturetoricvarieties} 
    this estimate is cited to justify the weaker estimate  
    % It is weaker because, e.g., (1+x)(1+y) ~ x^2 if x = y 
    % while 1+sqrt{x^2+y^2} ~ x if x = y, and so 
    % the first estimate is stronger in general. 
    $$
    \widehat{H_\infty}(\chi_m,-s)\ll_s
    \left(\frac{1}{1 + \norm{m}}\right)^{1+1/r_w}. 
    $$ 
Unlike \eqref{eqn:CorrectEstimate} 
    % See NB41, p. 30 for pf that eqn:CorrectEstimate 
    % proves integral converges. (Use \tau(1) as a basis 
    % and separate the integral into a product, then use p-test.) 
    this does not suffice to prove that the integral 
    $\int_{\cs}\widehat{H_\infty}(\chi_m,-s)\,dm$ 
    converges, which is what is claimed in 
    \cite[Theorem~4.3]{batyrev1995maninsconjecturetoricvarieties} 
    and needed for the Poisson formula. 
    % It does not suffice because, e.g., the integral of 
    % (1+|x|+|y|)^{-(1+1/2)} over 0<y<inf is 
    % 2/sqrt{1+|x|}, and the integral of that over 0 < x < inf 
    % does not converge, so the product integral does not 
    % converge by Fubini. 
In the subsequent paper 
    \cite[Prop.~4.12]{HZF}
    %and \cite[Lemma~4.5]{batyrev1995maninsconjecturetoricvarieties} 
    this is apparently corrected and \eqref{eqn:CorrectEstimate} 
    is now cited to justify the stronger estimate 
\begin{equation}\label{eqn:incorrectUpperBound} %{*
    \widehat{H_\infty}(\chi_m,-s)\ll_s
    \left(\frac{1}{1 + \norm{m}}\right)^{r_w+1} 
\end{equation} %*}
however this is false in general. 
Already for the projective plane over $\QQ$ 
    it is possible for $\widehat{H_\infty}(\chi_m,-s)$ 
    to be as large as $\gg_s |m|^{-2}$ 
    along certain lines in $\cs_w$: 
    the Fourier transform $\widehat{H_\infty}$ is given by 
\begin{equation}\label{eqn:HInfProjPlane} %{*
\widehat{H_\infty}(\chi_m,-s)
=
    \left(\frac{1}{2\pi i}\right)^2
    \frac{(s_0+s_1+s_2)/2\pi i}{(x+\frac{s_1}{2\pi i})(y+\frac{s_2}{2\pi i})(-x-y+\frac{s_0}{2\pi i})}
\end{equation} %*}
    which is $\gg \norm{m}^{-2}$ when $m$ tends to infinity 
    along $x+y=0$. 
%Nonetheless \eqref{eqn:incorrectUpperBound} 
%    is the correct upper bound for ``most'' lines. 
\end{remark} %*}

% Uncomment this later TODO
%\begin{example}[Continuation of Example~\ref{example:projectiveline2}]\label{example:projectiveline3} %{*
%    \note{Formula is not right now...}
%    If $\chi \colon \RR^\times \to \CC^\times
%    \colon x \mapsto |x|^{-2\pi im}$ 
%    is an unramified unitary character 
%    (determined by $m \in M\otimes \RR = \RR$), then 
%\begin{align} %{*
%    \widehat{H_\infty}(\chi,-s,\infty) 
%    &= \left(\frac{-1}{2\pi i}\right)
%    \left[
%        \left(e_0(m)-\frac{s_0}{2\pi i}\right)^{-1}
%        +
%        \left(e_\infty(m)-\frac{s_\infty}{2\pi i}\right)^{-1}
%        \right] \\
%    &= \left(\frac{-1}{2\pi i}\right)
%    \left[
%        \left(m-\frac{s_0}{2\pi i}\right)^{-1}
%        +
%        \left(-m-\frac{s_\infty}{2\pi i}\right)^{-1}
%        \right] .
%\end{align} %*}
%\end{example} %*}

%*}

\subsubsection{Periods} %{*

Let $\gamma$ be a singular flag. 
%    a defining Jacobian $J_\gamma$ in $B^T B$, 
%    and let $R_\gamma(s)$ be its associated residual function. 
%Fix $s \in (\CC_+^{\Sigma(1)})^{\Gamma}$. 
We will need to evaluate the following constants: 
\begin{equation} %{*
    \rho_\gamma =
    \lim_{\substack{s=(t,\ldots,t)\\ t\to \infty}}
    \sum_{\xi\in R^\vee}
    R_\gamma(s+2\pi im_\xi).
\end{equation} %*}
We will see that this equals 
    the product of the level $K$ regulator of $T$ 
    with a period obtained by integrating $R_\gamma$. 
Recall that $R_\gamma$ factors through the quotient 
$(\CC^{\Sigma(1)})^{\Gamma} \twoheadrightarrow  
    (\CC^{\Sigma(1)})^{\Gamma}/(\cs \otimes \CC )
    = (\Pic Y) \otimes \CC \dashrightarrow \CC$ 
to a rational function on the Picard group. 
In particular, it defines a rational function on 
the subspace $\cs_w/\cs \subset (\Pic Y) \otimes \CC$; 
this subspace is isomorphic to 
the dual $\cs_0$ of the trace-zero subspace. 

\begin{lemma}\label{lemma:rhoPeriod}
    The constant $\rho_\gamma$ is equal to 
\begin{equation} %{*
    |R_K|
    \int_{\ccs_0^\vee}
    R_\gamma(s_{AC}+2\pi im)\,dm
\end{equation} %*}
where $|R_K|$ is 
    the volume of the level $K$ regulator group of $T$. 
\end{lemma}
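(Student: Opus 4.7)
My plan is to apply Poisson summation on the compact regulator torus $R_K$ to dualize $\sum_{\xi \in R^\vee}R_\gamma(s+2\pi im_\xi)$ into a sum over a lattice in $\ccs_0$; by homogeneity of $R_\gamma$ and the Riemann--Lebesgue lemma, only the trivial character survives in the limit $s = ts_{AC}$, $t\to\infty$, yielding the integral.

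The setup is the following. The regulator group is a compact torus, so $R_K = \ccs_0/L$ for some lattice $L \subset \ccs_0$, and Pontryagin duality identifies $R^\vee$ with the dual lattice $L^\vee \subset \cs_0$ via $\xi \leftrightarrow m_\xi$. With dually normalized Haar measures, $|R_K| = \mathrm{covol}(L) = \mathrm{covol}(L^\vee)^{-1}$. Next, $R_\gamma$ is homogeneous of degree $-d$ where $d = \dim\cs_0 = r_\infty - r$: by Lemma~\ref{lemma:VariationOfPoles} the pole $s_\gamma = -\mu_{E_\gamma}(\varphi|_\tau)$ is linear in $s$, so each of the $d$ denominator factors $(s-s_\gamma)_e$ in \eqref{eqn:ResidualFnSimpleCase} is linear in $s$ (the general case follows by continuity via \eqref{eqn:ResidualFunctionGeneral}). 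A routine substitution $m \mapsto tm$ then shows $\int R_\gamma(ts_{AC}+2\pi im)\,dm$ is independent of $t$ and equals $I \coloneqq \int_{\cs_0} R_\gamma(s_{AC}+2\pi im)\,dm$.

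Setting $\phi(m) = R_\gamma(ts_{AC}+2\pi im)$ and applying Poisson summation along $L^\vee$ yields
\[
\sum_{\xi \in R^\vee} R_\gamma(ts_{AC}+2\pi im_\xi) = |R_K|\sum_{\mu \in L}\widehat\phi(\mu),\qquad \widehat\phi(\mu) = \int_{\cs_0}R_\gamma(ts_{AC}+2\pi im)e^{-2\pi im(\mu)}\,dm.
\]
The $\mu=0$ term is $|R_K|\cdot I$ for every $t$. For $\mu\neq 0$, substituting $m = tm'$ and using homogeneity gives $\widehat\phi(\mu)=\int R_\gamma(s_{AC}+2\pi im')e^{-2\pi itm'(\mu)}\,dm'$, which tends to zero as $t\to\infty$ by Riemann--Lebesgue. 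Summing over $\mu\in L\setminus\{0\}$ and passing to the limit then completes the proof.

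The main obstacle is verifying the hypotheses of Poisson summation and controlling the tail over $L\setminus\{0\}$ uniformly in $t$. The integrand $R_\gamma(s_{AC}+2\pi im)$ on $\cs_0$ decays only like $\norm{m}^{-d}$ (matching the dimension), so integrability is borderline; the paper tacitly assumes $I$ exists in the definition of the period $P$. To obtain polynomial decay of $\widehat\phi$ in $\mu$ summable over $L\setminus\{0\}$, I would appeal to the sharper envelope from Lemma~\ref{lemma:residualFormUpperBound} (based on \eqref{eqn:CorrectEstimate}) together with repeated integration-by-parts in $m$ against $e^{-2\pi itm(\mu)}$, producing a uniformly integrable dominating function to which dominated convergence applies. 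An alternative, elementary route that avoids Poisson altogether is to recognize the rescaled sum $t^{-d}\sum_{\xi}R_\gamma(s_{AC}+2\pi im_\xi/t)$ as a Riemann sum for $I$ along the rescaled lattice $L^\vee/t$ of covolume $t^{-d}|R_K|^{-1}$, and then invoke the same envelope to justify passage to the limit.
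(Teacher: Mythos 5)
Your primary argument (Poisson summation along the lattice $L^\vee\subset \cs_0$ plus Riemann--Lebesgue to kill the nonzero frequencies) is a genuinely different route from the paper's. The paper's proof is exactly the ``alternative, elementary route'' you mention at the end: rescale by $t^{-(r_w-r)}$ using the degree-$-(r_w-r)$ homogeneity of $R_\gamma$, recognize $t^{-(r_w-r)}\sum_{\xi}R_\gamma(s_{AC}+2\pi im_\xi/t)$ as a Riemann sum for $\int_{\ccs_0^\vee}R_\gamma(s_{AC}+2\pi im)\,dm$ along the rescaled dual lattice of mesh $\sim 1/t$ and covolume $V/t^{r_w-r}$, and identify $V^{-1}$ with $|R_K|$ by lattice--dual-lattice covolume duality (the lattice of $m_\xi$ being dual to $\lt_w\Upsilon\subset\ccs_0$, whose covolume defines $|R_K|$). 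Both routes rest on the same two inputs — homogeneity of $R_\gamma$ and the identification $R^\vee\cong(\lt_w\Upsilon)^\vee$ — and both tacitly assume integrability of $R_\gamma(s_{AC}+2\pi im)$ over $\ccs_0^\vee$, which the paper also leaves unaddressed in defining the period $P$. The Poisson route buys nothing here beyond what the Riemann sum gives and imports extra hypotheses (pointwise validity of Poisson, tail control uniform in $t$), so the Riemann sum is the cleaner choice. One remark on your sketch: since $m\mapsto R_\gamma(s_{AC}+2\pi im)$ is a rational function whose pole locus is bounded away from the real slice $\ccs_0^\vee$ for $s_{AC}$ in the effective cone, $\widehat\phi$ in fact decays \emph{exponentially} in $\norm{\mu}$ (uniformly in $t\geq 1$ after your substitution $m\mapsto tm'$, which places a factor $e^{-2\pi i t m'(\mu)}$ against a fixed real-analytic integrand); this makes the dominated-convergence step immediate and renders the integration-by-parts machinery unnecessary. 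With that observation both of your routes close the gap you flag, modulo the shared integrability assumption on the period.
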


\begin{proof} %{*
Let $\ToricUnits$ denote 
    the free part of the finitely generated abelian group 
\begin{equation} %{*
    T(\QQ) \cap 
    \left(T(\RR) \times 
    \prod_{v\text{ finite}} K_v\right) .
\end{equation} %*}
The group $\ToricUnits$ has rank $r_w-r$ 
    \cite[Thm.~4, p.~285]{MR114817}. 
%\S\ref{sec:regulatorGroup} 
It is well-known that the natural map 
    $$\ToricUnits \to T(\RR) \xrightarrow{\lt{w}} \ccs_w$$ 
    embeds $\ToricUnits$ as a full-rank lattice of 
    the trace-zero subspace $\ccs_0$,  
    and the dual group $R^\vee$ 
    is canonically isomorphic to 
    the dual lattice to $\lt_w \ToricUnits$ in $\ccs_0^\vee$. 
By the definition of the Riemann integral, 
\begin{equation} %{*
    \lim_{\substack{t\to \infty}}
    \sum_{\xi\in R^\vee}
    R_\gamma(ts_{AC}+2\pi im_\xi)
    =
    \lim_{\substack{t\to \infty}}
    t^{-(r_w-r)}\sum_{\xi\in R^\vee}
    R_\gamma(s_{AC}+2\pi im_\xi/t)
    =
    V^{-1}
    \int_{\ccs_0^\vee}
    R_\gamma(s_{AC}+2\pi im)\,dm
\end{equation} %*}
where $V$ is the volume of a fundamental domain 
    for the dual lattice to $\lt_w \ToricUnits$ in $\ccs_0^\vee$. 
Since this is isomorphic to $R^\vee$ we have $V^{-1} = |R_K|$. 
\end{proof} %*}

\begin{example}[real quadratic Severi--Brauer surfaces]
The depicted fan 
    determines a complete smooth toric surface $Y$ over $\QQ$ 
    whose open torus $T=R^E_\QQ \GG_m$ 
    splits over a real quadratic extension $E$ 
    with Galois group $\Gamma = \langle \gamma \rangle$. 
    (This surface is isomorphic to $\PP_2$ over $\QQ$ 
    as a variety but not as a toric variety.) 
\begin{figure}[h!]\label{fig:quadraticSB}
    \begin{tikzpicture}[scale=2] %{*

%\draw[loosely dotted, thin] (4,0) -- (4,8);
%\draw[dashed] (0,0) -- (1,0);

\draw (0,0) -- (0,1);
\draw (0,0) -- (1,0);
\draw (0,0) -- (-1,-1);

\filldraw[black] (0,1) circle (.75pt) 
        node[anchor=west]{$e_2$};
\filldraw[black] (1,0) circle (.75pt) 
        node[anchor=west]{$e_1$};
\filldraw[black] (-1,-1) circle (.75pt) 
        node[anchor=east]{$e_{0}$};

\node (a) at (-1.5,-1) {};
\node (b) at (-1,-1.5) {};

%\node at (.6,0) {$\tau$};
%\node at (.25,.6) {$\sigma$};
%\node at (.25,-.6) {$\sigma'$};

\draw [<->] (b) to [bend left=40] 
    node [left, xshift=-.3em, yshift=-.3em] {$\gamma$} (a) ;

%\draw[dashed] (-4,0) -- (-1,0);

%\filldraw [black] (-4,0) circle (1pt);
%\filldraw [black] (-1,0) circle (1pt);
%\filldraw [black] (2,3) circle (1pt);
%\filldraw [black] (3,5) circle (1pt);
%\filldraw [black] (4,8) circle (1pt);

% Background grid
\fill[pattern={mydots}]
    (0,0) rectangle +(1.01,1.01);
\end{tikzpicture} %*}
    \caption{The fan for a quadratic Severi--Brauer surface.}
\end{figure}
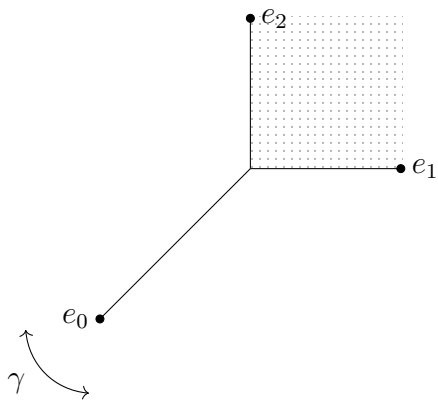
The boundary divisor $D$ is Cartier 
    and we use the canonical integral model for $Y-D$ 
    (Remark~\ref{rmk:CartierIntegrality}). 
The $D$-integral region for the divisor $D$ corresponding 
    to the ray $e_0$ is shaded in the figure. 
The $D$-integral rational points of $T$ are 
    $[1:a:\gamma a]$ for $a \in O_E \backslash \{0\}$. 
%Suppose that $E$ is real quadratic. 
With the help of \eqref{eqn:HInfProjPlane}, 
    one finds the residual function for the flag $\gamma$ 
    determined by the hyperplane $H_0$ is 
    (with respect to the coordinate $z=-(x+y)$ on $\ccs$) 
    %the Fourier transform $\widehat{H_\infty}$ is given by 
\begin{align} %{*
R_\gamma(s)=
    (2\pi i)\itres_z[\widehat{H_\infty}(\chi_m,-s)\,dm,\gamma]
    %&=
    %\left(\frac{1}{2\pi i}\right)^2
    %\res\left[
    %    \frac{s_0+s_1+s_2}{(-\frac{z}{2}+\frac{s_1}{2\pi i})(-\frac{z}{2}+\frac{s_2}{2\pi i})(z+\frac{s_0}{2\pi i})}, 
    %    z=-\frac{s_0}{2\pi i}
    %\right]\\
&=\frac{s_0+s_1+s_2}{(\frac{s_0}{2}+s_1)(\frac{s_0}{2}+s_2)}.
\end{align} %*}
The regulator group 
    $R\subset E^\times \backslash 
    \AA_E^{\times,1}/K$ 
    is isomorphic to 
    $\RR/(\ZZ \log u)$ where $u$ is 
    a positive generator of $O_E^\times$. 
(Indeed, if $E^\times x K$ 
is in $R$ then 
its image in the class group of $E$ is trivial, which means 
$x_{\mathrm{fin}}K_{\mathrm{fin}} 
\in E^\times K_{\mathrm{fin}}$. 
Thus we have an isomorphism 
    $R \to \RR/(\ZZ \log u):E^\times x K \mapsto \log y$ 
where we have translated by $E^\times K$ to ensure that 
$x = x_{\mathrm{fin}}x_\infty = (1,\ldots,1;y,1/y)$ 
and $y >0$.) 
A generating Hecke character is given by 
    $\xi(E^\times xK) = 
    |y|^{\frac{2\pi i}{\log u}}$. 
This character has local exponent 
$m_\xi = \frac{-1}{2\log u}(x-y)$. 
Thus for any $\Gamma$-invariant $s$ we have 
$s+2\pi im_\xi = \left(s_0,s_1-\frac{\pi i}{\log u},
s_1+\frac{\pi i}{\log u}\right)$. 
%and 
%\begin{equation} %{*
%R_\gamma(s+2\pi im_\xi)=
%\frac{s_0+s_1+s_2}{(\frac{s_0}{2}+s_1)(\frac{s_0}{2}+s_2)}.
%\end{equation} %*}
The constant $\rho_\gamma$ is 
\begin{align} %{*
    \rho_\gamma = 
    \lim_{s=(t,t,t)\to \infty}
      R_\gamma(s+2\pi im_\xi)
    = 
    \lim_{t\to \infty}
    \sum_{k \in \ZZ}
    \frac{4t}
    {t^2+\left(\frac{2\pi k}{\log u}\right)^2}
    =
    \lim_{t\to \infty}
    2 \log u  \coth \left(\tfrac 12t \log u 
   \right)=2\log u.
\end{align} %*}
Or using the lemma, we check that 
\begin{equation} %{*
    \rho_\gamma = 
    |R_K|
    \int_{-\infty}^\infty
    \frac{4}
    {1+\left({2\pi m}\right)^2}
    \,dm
    =2\log u.
\end{equation} %*}
\end{example} %*}

%*}

\subsubsection{Conical multiple Dirichlet series}\label{sec:MultipleDirSeries} %{*

Let $K_{\mathrm{fin}}$ be an arbitrary open compact subgroup  
    of $T(\AA_{\mathrm{fin}})$ and set 
    $K = K_{\mathrm{fin}} \times \MaximalCompact{\infty}$. 
Assume $K$ contains $\IntegralCompact{}$. 
Let $\Phi=\prod_{v \in \Places{\QQ}}\Phi_v \colon T(\AA) \to \CC$ be 
    a locally constant factorizable $K$-invariant function 
(e.g.~$\Phi=\xi \in (T(\QQ)\backslash T(\AA)/\IntegralCompact{})^\vee$ 
    may be an automorphic character of level $\IntegralCompact{}$). 
%Let $\sigma \subset \ccs$ 
%be a cone 
%    with interior $\sigma^\circ=
%    \sigma - \cup_{\tau \subsetneq \sigma} \tau$. 
Let $\sigma\subset \ccs$ be a subset 
(e.g.~an open or closed cone). 
The \defn{multiple Dirichlet series of $\Phi$ 
over $D$-integral ideals in $\sigma$} is 
\begin{equation} %{*
    L(s,\Phi,\sigma,D) =
    \sum_{\substack{n \in \ToricIdeals\\\Lt(n) \in \sigma}}
    \Phi(n)
    n^{-s}. 
\end{equation} %*}
The \defn{multiple Dirichlet series of $\Phi$ over principal $D$-integral ideals in $\sigma$} is 
\begin{equation} %{*
    L_1(s,\Phi,\sigma,D) =
    \sum_{\substack{n \in \ToricPrinIdeals\\\Lt(n) \in \sigma}}
    \Phi(n)
    n^{-s}
    =
    \frac{1}{|C|}
    \sum_{\psi \in C^\vee}
    L(s,\Phi\psi,\sigma,D).
\end{equation} %*}

The difference between 
    these conical series and the multiple Dirichlet series 
    $\widehat{H}_{\mathrm{fin}}(\chi,-s,D)$ 
    from Corollary~\ref{cor:FiniteHeights} 
    is that these series impose \emph{global conditions}. 
Accordingly, $Z(s,\xi,\sigma,D)$ and $Z_1(s,\xi,\sigma,D)$ 
    do not admit Euler products in general 
    even though $\widehat{H}_{\mathrm{fin}}(\chi,-s,D)$ always does, 
    and this is where the failure of local-to-global properties 
    begins for the associated integral Diophantine problem. 
More precisely, the condition $n \in \ToricPrinIdeals$ 
    is a global condition (at least when $C \neq 1$), 
    and the condition $\Lt(n) \in \sigma$ 
    is a global condition 
    (when $\sigma$ does not contain $\Lt(\ToricIdeals)$). 

\begin{example}[Comparison with Hecke $L$-series for restriction tori]
    %\note{would be good to also compare with Draxl} % TODO uncomment
Continue the context of Example~\ref{example:Restriction2} 
    with $T = R^E_\QQ$ and 
    $\IntegralCompact{} = \MaximalCompact{}$. 
The formula $n^{-s} = N(I)^{-s_1}$ shows that 
    if $\sigma$ is 
    the positive ($D_\infty$-integral) cone of $\ccs$, then 
    $$
    L(s_1,\xi) = L(s,\xi,\sigma,D_\infty)$$
    %L(s,\xi,\sigma_+,D_\infty) + 
    %L(s,\xi,\{0\},D_\infty) =L(s,\xi,\sigma_+,D_\infty) +1
    where $L(s_1,\xi)$ is the Hecke $L$-series of $\xi$. 
    % Note: the example for the Q(i) of the Hecke character 
    % $\chi((a)) = |a|^{2\pi it}(a/|a|)^{4k}$} never arises 
    % in our context since we are assuming the level 
    % K_{D} is maximal at infinity. 
This conical series does admit an Euler product, 
    and this is because $\sigma$ contains $\Lt(\ToricIdeals)$. 
\end{example}

%*}

\subsection{Proof of the main theorem on height zeta functions}\label{sec:theMainTheorem} %{* 

A flag in $Z$ is called \defn{$\Sigma$-convex} 
    if it is cut out by singular hyperplanes $H_1,\ldots,H_r$ 
    whose generators $e_1,\ldots,e_r$ form a $\Sigma$-convex set. 
For each cone $\sigma$ in $\Sigma_\Gamma(r)$ 
    let $$\Pi_{\sigma} = \cs + i\sigma^\vee 
    \subset \cs \otimes \CC,$$ 
    a polyhedron with boundary $\cs$ 
    and defining isomorphism determined by $\sigma(1)$, 
    %\note{ordering? should make this choice so all stable residual functions are positive somehow} % TODO uncomment
and let $Z_{\sigma}$ denote the subset of $\Sigma$-convex 
    singular flags in $Z$ 
    cut out by a $\Pi_\sigma$-stable 
    collection of singular hyperplanes. 
Let $\Sigma_D$ be the smallest subfan of $\Sigma_\Gamma$ 
    whose support $|\Sigma_D|$ contains 
    $-\Lt(\ToricIdeals)$. 
For each $\tau \in \Sigma_D$ choose 
    any maximal dimensional cone $\sigma_\tau\in\Sigma_D(r)$ 
    containing $\tau$. 
Let $\tau^\circ=
    \tau - \cup_{\tau' \subsetneq \tau} \tau'$ 
    denote the interior of a cone $\tau$. 

\begin{theorem}\label{thm:hzfformula} %{*
%Assume the residue method is applicable for the fan $\Sigma$, 
%    i.e.~each $\Sigma$-convex singular flag is compatible 
%    with the polyhedron $\Pi_\sigma$ 
%    for every $\sigma \in \Sigma_D(r)$. 
Assume that each $\Sigma$-convex singular flag is compatible 
    with the polyhedron $\Pi_\sigma$ 
    for every $\sigma \in \Sigma_D(r)$. 
Then the height zeta function 
    for $D$-integral rational points 
    on the torus 
\begin{equation} %{*
    Z(s)=\sum_{\substack{t \in T(\QQ)\\\textrm{\emph{$D$-integral}}}}
    \frac{1}{H(t,s)} 
\end{equation} %*}
is equal to 
\begin{align}%\label{eqn:hzf2} %{*
    Z(s) =
    \frac{|\ToricROU|}{|C||R|P}
    \sum_{\xi\in R^\vee}
    \sum_{\psi\in C^\vee}
    \sum_{\tau \in \Sigma_D}
    \sum_{\gamma \in Z_{\sigma_\tau}}
    L(s+2\pi im_{\gamma \xi},(\xi\psi)^{-1},-\tau^\circ,D)
    R_\gamma(s+2\pi im_\xi)
\end{align} %*}
where 
\begin{equation} %{*
    P=\sum_{\gamma \in Z_{\sigma_1}}
    \int_{\ccs_0^\vee}
    R_\gamma(s_{AC}+2\pi im)\,dm.  
\end{equation} %*}
in the region of $s\in(\CC^{\Sigma(1)})^\Gamma$ where 
    both sides converge. 
\end{theorem}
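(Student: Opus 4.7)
The plan is to combine Poisson summation for $T(\QQ) \subset T(\AA)$ with the iterated-residue formula from \cite{residue} to turn $Z(s)$ into the stated sum, partitioning the toric ideals by the cone of $\Sigma_D$ in whose interior the logarithm lies. The key insight is that each toric ideal $n$ selects a polyhedron $\Pi_{\sigma_\tau}$ along which the boundedness condition of Proposition~\ref{prop:upperbound3} holds, so the residue formula can be applied uniformly to all ideals in that class.

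First I would apply \eqref{eqn:poissonae} with $A = T(\QQ)$, $B=T(\AA)$, $K = \IntegralCompact{}$, and $f(t) = H(t,-s)\charfn^D(t)$, noting that $f$ is $K$-invariant and lies in $L^1$ in the region of convergence. Lemma~\ref{lemma:finiteheightfouriertrfm} and Proposition~\ref{prop:infiniteheightfouriertrfm} show $\widehat{H}(\chi,-s,D)$ vanishes on ramified $\chi$, so the integral runs over $K$-unramified automorphic characters. Using Lemma~\ref{lemma:ctssplittings} and the decomposition $T(\QQ)\backslash T(\AA)^1/K \cong R \times C$, every relevant $\chi$ factors as $\chi_m\xi\psi$ with $m \in \cs$, $\xi \in R^\vee$, $\psi \in C^\vee$, and Lemma~\ref{lemma:normonesubgroup} gives $\psi_\infty = 1$. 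The factor $|\ToricROU|/(|C||R|)$ will come out of matching the Plancherel measure on $R^\vee\times C^\vee \times \cs$ against the pushforward of Haar measure from $T(\AA)/T(\QQ)$, with $|\ToricROU|$ recording how many rational points in $T(\QQ) \cap K$ project to the same toric ideal.

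Next I would factor $\widehat{H}(\chi,-s,D)$ into its archimedean component $\widehat{H_\infty}(\xi\chi_m,-s)$ and its finite component, and use Corollary~\ref{cor:FiniteHeights} to expand the latter as the absolutely convergent sum $\sum_{n \in \ToricIdeals}(\xi\psi)(n)^{-1}\chi_m(n)^{-1}n^{-s}$. Swapping the $m$-integral past this sum leaves the motivating integral \eqref{eqn:MotivatingIntegral} for each $n$. Now I partition $\ToricIdeals$ according to the unique $\tau \in \Sigma_D$ with $-\Lt(n) \in \tau^\circ$ (which exists since $|\Sigma_D| \supset -\Lt(\ToricIdeals)$), and for each $\tau$ I fix a maximal $\sigma_\tau \in \Sigma_D(r)$ containing $\tau$. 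For such $n$, the support function $\varphi$ is linear on $-\tau^\circ$, so by Remark~\ref{rmk:temptingIdentification} we have $\chi_m(n)^{-1}n^{-s} = n^{-(s+2\pi im)}$; moreover $-\Lt(n) \in \sigma_\tau$ guarantees the boundedness of $\chi_m(n)^{-1}$ on $\Pi_{\sigma_\tau}$ required in Proposition~\ref{prop:upperbound3}. Under the compatibility hypothesis, Theorem~\ref{thm:ResidueFormula} now rewrites the integral as $\sum_{\gamma \in Z_{\sigma_\tau}}(2\pi i)^r\,\itres_z[\chi_m(n)^{-1}\widehat{H_\infty}(\xi\chi_m,-s)\,dm,\gamma]$. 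Lemma~\ref{lemma:VariationOfPoles} identifies the terminal point of $\gamma$ after shifting by $m_\xi$ as $m_{\gamma\xi}$, so the iterated residue splits as $R_\gamma(s+2\pi im_\xi)\cdot n^{-(s+2\pi im_{\gamma\xi})}$. Resumming over $n$ in each $\tau$-class produces $L(s+2\pi im_{\gamma\xi},(\xi\psi)^{-1},-\tau^\circ,D)$, and reassembling over $\tau$, $\gamma$, $\xi$, $\psi$ gives the claimed formula; the residual factor $1/P$ absorbs the Riemann-sum constant from Lemma~\ref{lemma:rhoPeriod} that compares the discrete sum over $R^\vee$ against the continuous Fourier inversion on $\cs$.

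The main obstacle will be Step 3: justifying the exchange of the $m$-integral with the $n$-sum, and verifying that the iterated residue expansion along $\Pi_{\sigma_\tau}$ can be carried out uniformly across all ideals in a given class. Absolute convergence requires combining the bound on $\widehat{H_\infty}(\chi_m,-s)$ used in Proposition~\ref{prop:upperbound3} with the Dirichlet bound from Corollary~\ref{cor:FiniteHeights}, and the region of joint convergence is delicate because the estimate of Lemma~\ref{lemma:residualFormUpperBound} just barely suffices in each variable. A secondary subtlety is bookkeeping: one must verify that the definition of $Z_{\sigma_\tau}$ and the compatibility hypothesis are strong enough to ensure every flag contributing to the iterated residue expansion is retained, and that the shifts $m_{\gamma\xi}$ picked up at each pole match the conical Dirichlet series' argument so that Lemma~\ref{lemma:ResidualPicard} confirms the result descends to a function on $\Pic(Y) \otimes \CC$.
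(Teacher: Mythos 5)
Your overall strategy matches the paper's: Poisson summation for $T(\QQ) \subset T(\AA)$, expansion of $\widehat{H}_{\mathrm{fin}}$ over toric ideals, an interchange of the $m$-integral with the ideal sum, application of the residue formula from \cite{residue} along $\Pi_{\sigma_\tau}$ for each ideal class, and recollection of ideals by the interior $\tau^\circ$ containing $-\Lt(n)$. All of that is right and is exactly how the paper proceeds.

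The genuine gap is your derivation of the normalization constant $\frac{|\ToricROU|}{|C||R|P}$. You propose to get $|\ToricROU|/(|C||R|)$ directly from a Plancherel-measure computation and then claim that $1/P$ separately ``absorbs the Riemann-sum constant from Lemma~\ref{lemma:rhoPeriod}.'' This is not how the constant actually arises, and the split you propose does not hold up: $P$ is not a normalization of the dual Haar measure but a period of the residual functions $R_\gamma$ built from the height, so it cannot be read off from Plancherel. The paper instead \emph{leaves the constant as an undetermined $\kappa > 0$ through the entire residue computation}, and only at the end determines it by specializing $s = ts_{AC}$ and letting $t\to+\infty$: the left side of the Poisson identity tends to the number $|\ToricROU|$ of $D$-integral rational points of height one, while on the right side orthogonality of $C^\vee$-characters on $\MaximalCompact{\mathrm{fin}}/\IntegralCompact{\mathrm{fin}}$ produces the factor $|C|$, and Lemma~\ref{lemma:rhoPeriod} (applied to $\sum_\gamma\rho_\gamma = |R|P$) produces the factor $|R|P$. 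Equating the two limits gives $\kappa = |\ToricROU|/(|C||R|P)$. Your sketch intermixes these two sources in a way that would not clearly yield the correct constant; in particular the factor $|C|$ arises from the injectivity of $\MaximalCompact{\mathrm{fin}}/\IntegralCompact{\mathrm{fin}} \to C$ and orthogonality, not from Plancherel on $T(\QQ)^\perp$.

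A smaller omission: the Poisson identity \eqref{eqn:poissonae} only holds for almost every $y\in T(\AA)$ a priori, and you need it at $y = 1$. The paper closes this by observing that the right-hand side is continuous in $y$ once $\widehat{H}(\chi,-s,D) \in L^1$, and the left-hand side is continuous because $H(\cdot\,y,-s,D)$ stays in $L^1(T(\QQ))$ (bounded above by replacing $s$ with a big $T$-linearized class). You should include this argument, as the equality at $y=1$ is exactly the case you are using.
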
 %*}

\begin{remark}
As is well-known, the left-hand side converges absolutely in 
    a half-space of $(\CC^{\Sigma(1)})^\Gamma$ where 
    $\mathrm{Re}(s_e) \gg_{\Sigma,T} 0$ 
    for every $e \in \Sigma(1)$. 
Meanwhile the right-hand side converges absolutely 
    for any $s\in (\CC^{\Sigma(1)})^\Gamma$ for which 
\begin{equation}\label{eqn:RSumConvergence} %{*
    \sum_{\xi\in R^\vee}
        R_{\gamma}(s+2\pi im_\xi)
\end{equation} %*}
    is absolutely convergent 
    for every cone $\sigma \in \Sigma_D(r)$ 
    and singular flag $\gamma \in Z_\sigma$. 
\end{remark}

\begin{proof} %{*
The formula for $Z$ will be obtained by 
    applying the Poisson formula 
    to the subgroup $T(\QQ) \subset T(\AA)$ 
    and the function $x \mapsto H(x,-s,D)$, 
    then evaluating the integral 
    using the residue formula \cite[Theorem~1]{residue}. 
We must check that $H(x,-s,D)$ is in $L^1(T(\AA))$ 
    and the restriction of $\widehat{H}(\chi,-s,D)$ 
    is in $L^1(T(\QQ)^\perp)$. 
%\note{then need some continuity to deduce Poisson for $b = 1$ 
%    and not just a.e.}
    The integral to be computed is 
\begin{equation}\label{eqn:hzf} %{*
    Z(s) = 
    \int_{(T(\QQ)\backslash T(\AA))^\vee} 
    \widehat{H}(\chi,-s,D)
    \,d\chi.
\end{equation} %*}

First we verify that 
    $x \mapsto f(x)=H(x,-s,D)$ is in $L^1(T(\AA))$. 
%Since $s \in (\CC^{\Sigma(1)})^\Gamma$ 
%    the formula \eqref{eqn:heightforrationaladeles} holds: 
%\begin{equation}%\label{eqn:heightforrationaladeles} %{* 
%    f(x)
%        = 
%        \prod_{v \in \Places{\QQ}} 
%        \charfn^D_v(x_w)
%        q_v^{\frac{1}{e_v}\varphi(\lt_w(x_w))}, 
%        \quad
%        x=(x_w)_w \in T(\AA) \subset T(\AA_E).
%\end{equation} %*} 
%For any finite set $S$ of places of $\QQ$ 
%    containing $v=\infty$ let $\AA_{S}$ 
%    denote the subring of adeles which are integral 
%    at places not in $S$. 
The chain of inequalities 
    %$T(\AA) = \cup_{S} T(\AA_{S})$ and 
    $$\int_{T(\AA)} f(x) \,d\mu(x)
    =
    \lim_{C \text{ compact}}
    \int_{C} f(x) \,d\mu(x) 
    \leq 
    \lim_{S \text{ finite}}
    \int_{T(\AA_{S})} f(x) \,d\mu(x) 
    \leq 
    \int_{T(\AA)} f(x) \,d\mu(x)$$ 
    in the limits of larger $C$ and $S$ 
    shows that 
$$\int_{T(\AA)} f(x) \,d\mu(x)
=
    \lim_{S}
\int_{T(\AA_{S})} f(x) \,d\mu(x) 
    \leq
    %d\mu(x)_3(\mathrm{supp}(1_{D_0,3}))
    %\cdot 
    \lim_{S}
    \prod_{\substack{v \in S}}
    \int_{T(\QQ_v)} f_v(x)\,d\mu_v(x).
$$ 
The convergence of the right-hand side 
follows from Proposition~\ref{prop:infiniteheightfouriertrfm} 
and Corollary~\ref{cor:FiniteHeights}. 
%For $v = \infty$ we have already seen that the integral 
%    $\int_{T(\RR)} H(x,-s,D)\,d\mu_\infty(x)$ is convergent 
%    once $\mathrm{Re}(s_e) >0$ for all $e \in \Sigma_w(1)$. 
%Since the generators in $\Sigma_w(1)$ 
%    are sums of generators in $\Sigma(1)$ 
%    which are contained in a cone of $\Sigma$ 
%    (Lemma~\ref{lemma:subfan}), 
%    this convergence condition follows from 
%    $\mathrm{Re}(s_e) > 0$ for $e \in \Sigma(1)$ 
%    by $\Sigma$-linearity. 
%For any finite set $S$ of finite places, 
%the estimate \eqref{eqn:EstimateForLocalFactor} 
%    shows that 
%    $\int_{T(\AA_{S})} H(x,-s,D) \,d\mu(x)$ 
%    converges, 
%    so taking the limit over $S$ shows 
%    $x\mapsto H(x,-s,D)$ is integrable. 

Next we check that 
    the restriction of $\widehat{H}(\chi,-s,D)$ to 
    $T(\QQ)^\perp\cong (T(\AA)/T(\QQ))^\vee$ is integrable. 
%There is an exact sequence 
%\begin{equation} %{*
%    %1 \longrightarrow K_3/(K_3' \cdot \mathrm{pr}_3(\mu))
%    1 \longrightarrow K/(K' \cdot \mu)
%    \longrightarrow T(\mathbb Q)\backslash T(\AA)/K'
%    \xlongrightarrow{L}
%    T(\mathbb Q)\backslash T(\AA)/K
%    \longrightarrow 0.
%\end{equation} %*}
Let $K = \IntegralCompact{}$ denote the subgroup of 
    the maximal compact subgroup fixing $\charfn^D$. 
We have the exact sequence 
\begin{equation} %{*
    %1 \longrightarrow K_3/(K_3' \cdot \mathrm{pr}_3(\mu))
    1 \longrightarrow C^\vee
    \longrightarrow (T(\mathbb Q)\backslash T(\AA)^1/K)^\vee
    \longrightarrow
    R^\vee
    \longrightarrow 1.
\end{equation} %*}
Since $R^\vee$ is a free abelian group 
    %(Lemma~\ref{lemma:RAsTorus})  % TODO uncomment
    this sequence splits, 
    so any character of $R$ lifts 
    to a character of $T(\QQ)\backslash T(\AA)^1/K$. 
Furthermore, any character of 
$T(\QQ)\backslash T(\AA)^1/K$ 
lifts to a character of $T(\QQ)\backslash T(\AA)/K$ 
(see \eqref{eqn:trivialsplitting}). 
Thus, any character of $T(\QQ)\backslash T(\AA)^1/K$ 
may be factored uniquely as a product of three characters 
\begin{equation} %{*
    \xi \psi \chi
\end{equation} %*}
where (identifying characters with their lifts 
to $T(\QQ)\backslash T(\AA)/K$) 
$\xi \in R^\vee$, $\psi \in C^\vee$ and $\chi = \chi_m$ 
is the character given by 
$\chi_m(t) = e(\langle \Lt(t),m\rangle)$ 
for $m \in \cs$. 
Recall that $T(\QQ)\backslash T(\AA)^1$ is compact 
    \cite[Theorem 3.1.1]{ono} 
    so $(T(\QQ)\backslash T(\AA)^1)^\vee$ 
    is discrete and equipped with the counting measure, 
    so the subset $(T(\QQ)\backslash T(\AA)^1/K)^\vee$ is also 
    equipped with the counting measure. 
Since $\widehat{H}$ is supported on 
$(T(\QQ)\backslash T(\AA)/K)^\vee$, 
the height zeta function is proportional to 
\begin{equation}%\label{eqn:poissonfactored} %{*
    \sum_{\xi\in R^\vee}
    \sum_{\psi\in C^\vee}
    \int_{m\in \ccs^\vee} 
    \widehat{H}( \xi \psi \chi_m ,-s,D)
    \,dm
\end{equation} %*}
The infinite component 
    $\widehat{H_\infty}
    (\xi \psi \chi_m  ,-s,D)$ 
    is independent of $\psi$ by the last assertion of 
    Lemma~\ref{lemma:normonesubgroup}. 
Recall the notation 
$\ToricFracIdeals=
T(\AA_{\mathrm{fin}})/\IntegralCompact{\mathrm{fin}}$ 
of \S\ref{sec:ToricIdeals}, 
where for each $n \in \ToricFracIdeals$ we have set 
\begin{equation} %{*
    %\chi(n) = 
    %\prod_{p \in \Places{\QQ}^{\mathrm{fin}}}
    %    \chi_p(n_p)
    %,\qquad
    n^{-s}\coloneqq 
    H(n,-s)=
    \prod_{p \in \Places{\QQ}^{\mathrm{fin}}}
        p^{\frac{1}{e_p}\varphi(\lt_w(n_p))} 
        \in \CC^\times.
\end{equation} %*}
%Observe that $\chi_m(n) = n^{2\pi im}$ 
%\begin{align} %{*
%\chi_m(n)&= 
%\prod_{p \in \Places{\QQ}^{\mathrm{fin}}}
%\prod_{w | p}
%\exp\left(
%\frac{2\pi i}{[E:\QQ]}
%\langle \lt_w(n_p),m\rangle\log q_w
%\right)\\
%&= 
%\prod_{p \in \Places{\QQ}^{\mathrm{fin}}}
%\prod_{w | p}
%\exp\left(
%\frac{2\pi i}{[E:\QQ]}
%\langle \lt_w(n_p),m\rangle\log q_w
%\right)\\
%&=
%\prod_{w \in \Places{E}}
%q_w^{
%\frac{2\pi i}{[E:\QQ]}
%\langle \lt_w(t_w),m\rangle}
%\end{align} %*}
%\begin{align} %{*
%\chi_m(n)&= 
%\prod_{p \in \Places{\QQ}^{\mathrm{fin}}}
%\prod_{w | p}
%\exp\left(
%\frac{2\pi i}{[E:\QQ]}
%\langle \lt_w(n_p),m\rangle\log q_w
%\right)\\
%&= 
%\prod_{p \in \Places{\QQ}^{\mathrm{fin}}}
%\prod_{w | p}
%\exp\left(
%\frac{2\pi i}{[E:\QQ]}
%\langle \lt_w(n_p),m\rangle\log q_w
%\right)\\
%&=
%\prod_{w \in \Places{E}}
%q_w^{
%\frac{2\pi i}{[E:\QQ]}
%\langle \lt_w(t_w),m\rangle}
%\end{align} %*}
Expand $\widehat{H}_{\mathrm{fin}}$ of $\widehat{H}$ 
    as an absolutely convergent multiple Dirichlet series 
    (Corollary~\ref{cor:FiniteHeights}) 
    and then substitute it into $Z(s)$ to obtain 
\begin{align}%\label{eqn:hzf2} %{*
    Z(s)&=
    \kappa
    \sum_{\xi\in R^\vee}
    \sum_{\psi\in C^\vee}
    \int_{m \in \cs} 
    \sum_{n \in \ToricFracIdeals}
    (\xi\psi\chi_m)^{-1}(n)
    \charfn^D(n)
    n^{-s}
    \widehat{H_\infty}(\xi \chi_m,-s)
    \,dm\\
    &=
    \kappa
    \sum_{\xi\in R^\vee}
    \sum_{\psi\in C^\vee}
    \sum_{n \in \ToricIdeals}
    (\xi\psi)^{-1}(n)
    n^{-s}
    \int_{m \in \cs} 
    \chi_m(n)^{-1}
    \widehat{H_\infty}(\xi \chi_m,-s)
    \,dm
\end{align} %*}
for some yet to be determined positive constant $\kappa$. 
%\begin{align} %{*
%    &= 
%    \sum_{\chi\in R^\vee}
%    \sum_{\psi\in C^\vee}
%    \int_{m\in \RR X^\ast T} 
%        \sum_{n \in \ToricIdeals}
%        \Psi(\widetilde{\chi}\psi)^{-1}(n)
%        n^{-(s+2\pi im)} 
%    \widehat{H_{\infty}}
%        (\Psi(\widetilde{\chi}) \chi_m,-s)
%    \,dm  \\
%    &= 
%    \sum_{\chi\in R^\vee}
%    \int_{m\in \RR X^\ast T} 
%    \sum_{n \in \ToricIdeals}
%    \sum_{\psi\in C^\vee}
%        \Psi(\widetilde{\chi}\psi)^{-1}(n)
%        n^{-(s+2\pi im)} 
%    \widehat{H_{\infty}}
%        (\Psi(\widetilde{\chi}) \chi_m,-s)
%    \,dm . 
%\end{align} %*}
%The evaluation of $\psi$ on $\ToricIdeals$ factors 
%    through the class group, 
%    so by orthogonality of characters 
%\begin{equation} %{*
%\sum_{\psi\in C^\vee}
%    \Psi(\widetilde{\chi}\psi)^{-1}(n)
%    =
%    \Psi(\widetilde{\chi})^{-1}(n)
%\sum_{\psi\in C^\vee}
%    \widetilde{\psi}^{-1}(n)
%    =\begin{cases}
%        \Psi(\widetilde{\chi})(n)^{-1}&\text{if $n \in \mathscr P$}\\
%        0&\text{otherwise.}
%    \end{cases}
%\end{equation} %*}
%(One cannot appeal to orthogonality for the sum over 
%unramified regulator characters $\chi\in R^\vee$ 
%since $\widehat{H_\infty}$ depends on $\chi$!) 

Now we evaluate the inner integral using 
    the residue formula from \cite{residue}. 
For each $n \in \ToricIdeals$ let $\sigma_n$ denote 
    any cone in $\Sigma_D(r)$ containing $-\Lt(n)$. 
By Proposition~\ref{prop:upperbound3} 
    the integral $\int_{m \in \cs} 
    \chi_m(n)^{-1}
    \widehat{H_\infty}(\xi \chi_m,-s)
    \,dm$ 
    admits an iterated residue expansion with respect to 
    the polyhedron $\Pi_{\sigma_n} =\cs + i\sigma_n^\vee$. 
%The compatibility condition and the minor hypothesis 
%    required to apply the residue formula \cite[Theorem~1]{residue} 
%    are satisfied by assumption. 
Let $z=\sigma(1)$ be the dual basis to $\sigma(1)^\vee$. 
The residue formula is 
\begin{equation} %{*
    \int_{m \in \cs} 
    \chi_m(n)^{-1}
    \widehat{H_\infty}(\xi \chi_m,-s)
    \,dm
    =
    {(2\pi i)^r}
    \sum_{\gamma \in Z_{\sigma_n}}
    \itres_z\left[\chi_m(n)^{-1}
    \widehat{H_\infty}(\xi \chi_m,-s)
    \,dm,\gamma\right].
\end{equation} %*}
%For each $\xi$ let $m_\xi \in \cs_0 \subset \cs_w$ be defined by 
%    $\xi = \chi_{m_\xi}$. 
%By $\cs\otimes\CC$-invariance of $R_{E}$ 
%(Lemma~\ref{lemma:easyLemma}), 
By definition of $R_\gamma$ we have that 
\begin{equation} %{*
    {(2\pi i)^r}
    \itres_z\left[\chi_m(n)^{-1}
    \widehat{H_\infty}(\chi_m,-s)
    \,dm,\gamma\right]
    =
    \chi_{m_\gamma}(n)^{-1} 
    R_\gamma(s)
\end{equation} %*}
for any $s \in (\CC^{\Sigma(1)})^{\Gamma_w}$ 
where $m_\gamma=m_\gamma(s)$ is the terminal point of $\gamma$. 
With our convention that $m$ is associated 
% Cf. NB36, p. 28
    to the element $(m(e))_e$ in $\CC^{\Sigma(1)}$ 
    one easily checks that 
    $\chi_{m}(n) = n^{2\pi im}$ 
    (Remark~\ref{rmk:temptingIdentification}) 
and 
$\widehat{H_\infty}(\xi \chi_m,-s)
=
    \widehat{H_\infty}(\chi_m,-(s+2\pi im_\xi))$ 
for any $\xi \in R^\vee$ 
where $m_\xi \in \cs_0$ is determined by the identity 
    $\xi(t_\infty) = e(\langle m_\xi, \lt_w(t_\infty) \rangle)$ 
    for $t_\infty \in T(\RR) \subset T(\AA)$. 
Write 
    $m_{\gamma\xi} \coloneqq m_{\gamma}(s+2\pi im_\xi) 
    \in \cs_\CC$. 
Then 
\begin{equation} %{*
    \int_{m \in \cs} 
    \chi_m(n)^{-1}
    \widehat{H_\infty}(\xi \chi_m,-s)
    \,dm
    =
    \sum_{\gamma \in Z_{\sigma_n}}
    \chi_{m_{\gamma\xi}}(n)^{-1} 
    R_\gamma(s+2\pi im_\xi)
    =
    \sum_{\gamma \in Z_{\sigma_n}}
    n^{-2\pi im_{\gamma\xi}} 
    R_\gamma(s+2\pi im_\xi).
\end{equation} %*}
Putting this back into the earlier expression for $Z$ obtains 
\begin{align}\label{eqn:hzf2} %{*
    Z(s)=
    %\int_{(T(\QQ)\backslash T(\AA))^\vee} 
    %\widehat{H}(\chi,-s,D)
    %\,d\chi
    %=
    \kappa
    \sum_{\xi\in R^\vee}
    \sum_{\psi\in C^\vee}
    \sum_{n \in \ToricIdeals}
    (\xi\psi)^{-1}(n)
    n^{-s}
    \sum_{\gamma \in Z_{\sigma_n}}
    n^{-2\pi im_{\gamma\xi}} 
    R_\gamma(s+2\pi im_\xi).
\end{align} %*}
%\begin{align} %{*
%    &= 
%    \sum_{\chi\in R^\vee}
%    \sum_{n \in \mathscr P}
%        \Psi(\widetilde{\chi})^{-1}(n)
%        n^{-(s+2\pi im_{\sigma_n})} 
%    R_{\sigma_n}(s+2\pi i(m_{\sigma_n}+m_{\chi,\infty}))\\
%    &=
%    \sum_{\chi\in R^\vee}
%    \sum_{n \in \mathscr P}
%        \Psi(\widetilde{\chi})^{-1}(n)
%        n^{-(s+2\pi im_{\sigma_n})} 
%    R_{\sigma_n}(s+2\pi im_{\chi,\infty}).
%\end{align} %*}
Collecting $n \in \ToricIdeals$ 
    into summands according to whichever cone 
    $\tau \in \Sigma_D$ 
    contains $-\Lt(n)$ rearranges to the claimed formula 
    up to the constant $\kappa$. 

To determine $\kappa$, specialize $s = (t,\ldots,t)=ts_{AC}$ 
    and let $t \to +\infty$. 
By the defining series for $Z(s)$, we see it approaches 
    the number of $D$-integral rational points of $T$ 
    with height one. 
This is also the order of 
    the torsion subgroup $\mu$ of 
\begin{equation} %{*
    T(\QQ) \cap 
    \left(T(\RR) \times 
    \prod_{v\text{ finite}} \IntegralCompact{v}\right). 
\end{equation} %*}
On the other hand 
$Z(s)$ approaches 
\begin{align}%\label{eqn:hzf4} %{*
    \kappa
    \sum_{\xi\in R^\vee}
    \sum_{\psi\in C^\vee}
    \sum_{\substack{n \in \ToricIdeals\\ n^{-s} = 1}}
    (\xi\psi)^{-1}(n)
    \sum_{\gamma \in Z_{\sigma_1}}
    R_\gamma(ts_{AC}+2\pi im_\xi)
    %\to
    %\kappa|C|
    %\sum_{\gamma \in Z_{\sigma_1}}
    %\rho_\gamma
    %=
    %\kappa|C|
    %R_{\MaximalCompact{}}
    %\sum_{\gamma \in Z_{\sigma_1}}
    %\int_{\ccs_0^\vee}
    %R_\gamma(s_{AC}+2\pi im)\,dm.
\end{align} %*}
The subset of $n \in \ToricIdeals$ satisfying $n^{-s} = 1$ 
    for all $s$ is 
    $\MaximalCompact{\mathrm{fin}}/\IntegralCompact{\mathrm{fin}}$. 
Recall that $(\xi\psi)^{-1}(n)$ 
    is evaluated using the maps 
\begin{equation} %{*
\MaximalCompact{\mathrm{fin}}/\IntegralCompact{\mathrm{fin}}
\hookrightarrow T(\QQ)\backslash T(\AA)/\IntegralCompact{} 
\xrightarrow{\sim} T(\QQ)\backslash T(\AA)^1/\IntegralCompact{} \times \cs 
\xrightarrow{\sim} R \times C \times \cs.
\end{equation} %*}
The identity component of 
$T(\QQ)\backslash T(\AA)/\IntegralCompact{}$ 
is 
$T(\QQ)\backslash (T(\QQ)T(\RR)\IntegralCompact{})/\IntegralCompact{}$ 
    (see the proof of Lemma~\ref{lemma:normonesubgroup}) 
    which shows that 
$$
(\MaximalCompact{\mathrm{fin}}/\IntegralCompact{\mathrm{fin}})
\cap (R \times 1 \times \cs) = 1.
$$ 
Thus 
$(\MaximalCompact{\mathrm{fin}}/\IntegralCompact{\mathrm{fin}}) \to C$ 
is injective, 
so by orthogonality of characters $Z(s)$ approaches 
\begin{align}%\label{eqn:hzf4} %{*
    \kappa
    |C|
    \sum_{\xi\in R^\vee}
    \sum_{\gamma \in Z_{\sigma_1}}
    R_\gamma(ts_{AC}+2\pi im_\xi).
\end{align} %*}
With the help of Lemma~\ref{lemma:rhoPeriod} this approaches 
\begin{equation} %{*
    \kappa|C|
    \sum_{\gamma \in Z_{\sigma_1}}
    \rho_\gamma
    =
    \kappa|C|
    |R_{\IntegralCompact{}}|
    \sum_{\gamma \in Z_{\sigma_1}}
    \int_{\ccs_0^\vee}
    R_\gamma(s_{AC}+2\pi im)\,dm = |\ToricROU|.
\end{equation} %*}

%\note{To verify the validity of our use of Poisson summation, 
%    we must verify that the above sum converges absolutely. 
%This follows from Lemma~\ref{lemma:RAbsolutelyConvergent}.} 

Now assume \eqref{eqn:RSumConvergence} 
    is absolutely convergent 
    for every cone $\sigma \in \Sigma_D(r)$ 
    and singular flag $\gamma \in Z_\sigma$. 
Then \eqref{eqn:hzf2} is absolutely convergent, 
    i.e.~$\widehat{H}(\chi,-s,D) 
    \in L^1((T(\QQ)\backslash T(\AA))^\vee)$. 
It follows from the Poisson formula (cf.~\S\ref{sec:Poisson}) that 
\begin{equation}\label{eqn:poissonAE} %{*
    \sum_{x\in T(\QQ)} H(xy,-s,D)
    =
    \int_{(T(\QQ)\backslash T(\AA))^\vee} 
    \widehat{H}(\chi,-s,D)
    \chi(y)
    \,d\chi
    %=
    %\sum_{\sigma \in \Sigma_{U,\Gamma}(r)}
    %    \mu(\sigma)
    %\sum_{\chi\in D^\vee}
    %R_{\sigma}(s-2\pi im_{\chi,\infty})
    %\sum_{\substack{n \in \mathscr P\\-\pi\Lt(n) \in \sigma}}
    %    \Psi(\widetilde{\chi})(n)
    %    n^{-(s-2\pi im_{\sigma})}  . 
\end{equation} %*}
holds for almost every $y \in T(\AA)$. 
Since 
    $\widehat{H}(\chi,-s,D) 
    \in L^1((T(\QQ)\backslash T(\AA))^\vee)$ 
    the right-hand side is continuous in $y$. 
The equality thus holds for every $y$ 
    if $x \mapsto H(xy,-s,D)$ is in $L^1(T(\QQ))$ 
    for any $y \in T(\AA)$, 
    as this implies continuity of the left-hand side. 
Since $y$ only affects the height at finitely many places 
    in a manner independent of $x$, 
    it suffices to consider $y=1$. 
The left-hand side can be bounded from above 
    by the same sum where $s$ is replaced by 
    a $\ZZ$-linear combination of toric divisors 
    representing a $T$-Cartier divisor 
    and satisfying $\mathrm{Re}(s_e) > 0$ for all $e \in \Sigma(1)$. 
Such an $s$ corresponds to 
    a big $T$-linearized line bundle $L$. 
Since $L$ is big, the number of rational points 
    of $L$-height $\leq H$ not in 
    the base locus of $L$ 
    is bounded by a polynomial in $H$. 
The base locus of $L$ is disjoint from $T$, 
    and this proves the left-hand side is absolutely convergent in 
    a half-space of $(\CC^{\Sigma(1)})^\Gamma$ where 
    $\mathrm{Re}(s_e) \gg_{\Sigma,T} 0$ 
    for every $e \in \Sigma(1)$. 
In particular, the formula holds when $y = 1$. 
\end{proof} %*}

\begin{remark} %{*
The proof shows 
    the {twisted} Poisson formula 
    \eqref{eqn:poissonAE} is valid 
    for any parameter $y \in T(\AA)$ 
    whenever both sides converge. 
This can be useful for applications \cite{CF}. 
\end{remark} %*}

%\begin{corollary}
%Suppose $T$ is split. 
%Then 
%\begin{align}%\label{eqn:hzf2} %{*
%    Z(s)=
%    {|\ToricROU|}
%    \sum_{n \in \ToricIdeals}
%    \sum_{\gamma \in Z_{\sigma_n}}
%    n^{-(s-s_\gamma)} 
%\end{align} %*}
%where $\ToricROU$ is the subgroup of 
%    $D$-integral points in 
%    $\{\pm 1\}^r \subset T(\QQ)$. 
%\end{corollary}
%
%\begin{proof}
%Since $r_\infty = r$ each residual function $R_\gamma$ is constant. 
%The constant $\rho$ can be computed relative 
%    to any rational cone 
%    $\sigma=\sigma_1$ in $\Sigma(r)$ 
%    (a consequence of the residue formula) 
%    since every cone contains $-\Lt(1)=0$. 
%It follows that the constant $R_\gamma$ is independent of $\gamma$ 
%    and the formula follows. 
%\end{proof}

% TODO Rewrite this using new formula
%\begin{corollary}
%Let $\Sigma_{U,\Gamma}$ be the subfan of $\Sigma_\Gamma$ 
%    corresponding to $U=Y\backslash D$. 
%Let $\mu$ denote the M\"obius function associated 
%    to the opposite of 
%    the poset of cones in $\Sigma_{U}$ under inclusion.\footnote{\note{Check this, should have $\mu(\sigma) = 1$ for maximal cones}} 
%Then 
%\begin{equation} %{*
%    Z(s) = 
%    \sum_{\sigma \in \Sigma_{U,\Gamma}(r)}
%        \mu(\sigma)
%    \sum_{\chi\in R^\vee}
%    R_{\sigma}(s+2\pi im_{\chi,\infty})
%    \sum_{\substack{n \in \mathscr P\\ \pi\Lt(n) \in -\sigma}}
%        \Psi(\widetilde{\chi})^{-1}(n)
%        n^{-(s+2\pi im_{\sigma})} .
%\end{equation} %*}
%\end{corollary}

%*}

%*}

\section{An orbit parametrization for polynomials with given Galois group}\label{sec:orbitparam} %{*

Let $G$ be a finite group. 
Consider the projective space of lines 
\begin{equation} %{*
\PP(\AA^1 \oplus \AA^n) 
    = \mathrm{Proj}\, \ZZ[Y,X_1,\ldots,X_n]
\end{equation} %*}
in the $G$-representation formed 
    from the trivial representation 
    $\AA^1 = \Spec \ZZ[Y]$ 
    and the regular representation 
    $\AA^n =\Spec \ZZ[X_1,\ldots,X_n]$ 
    where $X_1,\ldots,X_n$ is the dual basis to 
    $G = \{g_1=e,\ldots,g_n\}$. 
    %\note{do I actually use that $g_1 = e$?... Yes, in pf of Lemma 11. Probably can get rid of this} % TODO uncomment
%The $G$-action on coordinates is $^gY = Y$ and $^gX_h = X_{gh}$. 
%We regard $\mathrm{reg}$ as an $n$-dimensional affine space 
%in $\PP$ via $\mathrm{reg} \to \PP : v \mapsto (1:v)$. 
In this section we show that the quotient $\PP(\AA^1 \oplus \AA^n)/G$ 
    can be used to parametrize polynomials with Galois group $G$. 
This is a modification of the construction from \cite{moge} 
    where a trace condition was imposed. 
We will later see that $\PP(\AA^1 \oplus \AA^n)/G$ 
    is a toric variety when $G$ is abelian. 

%\note{In some sense, the results of 
%this section are not new at all. The idea that one-variable separable polynomials correspond to points of configuration space of $\CC$ is of course very well-known, at least for $S_n$. What we're doing is a version that works for any group $G$.}

\subsection{Normal bases for torsors of finite groups} %{* 

%Recall that morphisms $T \to \PP$ ($T$ a scheme) 
%can be described with global sections of line bundles on $T$. 
%Our orbit parametrization will be obtained by 
%formulating a $G$-equivariant version of this statement. 
Let $G$ act by $S$-automorphisms on a finite $S$-scheme 
$f\colon T \to S$. 
If $(f_\ast \mcO_T)^G = \mcO_S$ and the induced action of $G$ 
on the set $T(k)$ is free for any $S$-field $k$, 
then we say \defn{$T/S$ is Galois with Galois group $G$}, 
or that \defn{$T/S$ is a $G$-torsor}. 
A $G$-stable ordered list $(x_1,\ldots,x_n)$ of regular functions 
on a $G$-torsor $T/S$ is called a \defn{normal basis} 
if it forms a basis of $\mcO(T)$ as an $\mcO(S)$-module 
and is isomorphic as a $G$-set to $G$ itself with its left regular action. 
Let $\lb$ be a $G$-line bundle over $T$. 
Let $x \mapsto {^g}x$ 
denote the action of $G$ on sections given by 
$^g x(t) = g(x(g^{-1}t))$ for $t \in T$, 
where the outer $g$ on the right-hand side 
denotes the action of $g$ on the fibers 
$({g^{-1}t})^\ast \lb \to t^\ast\lb$ 
coming from the structure of $\lb$ as a $G$-line bundle. 

We will equip $G$-torsors $T/S$ with extra data: 
\begin{enumerate} %{*
    \item a $G$-line bundle $\lb$ over $T$, and 
    \item global sections $y$ and $x$ of $\lb$ 
\end{enumerate} %*}
subject to three conditions: 
\begin{enumerate} %{*
    \item $Gx \cup \{y\}$ generates $\lb$, 
    \item ${^gx}(t) \neq {^hx}(t)$ if $g \neq h$ and $t \in T$, and 
    \item $y$ is $G$-invariant. 
\end{enumerate} %*}
We regard data $(T/S,\lb,x,y)$ and $(T'/S,\lb',x',y')$ 
    as equivalent 
    if there is a $G$-equivariant isomorphism 
    $\phi \colon T \to T'$ over $S$ 
    and an isomorphism 
    $\psi \colon \phi^\ast \lb' \to \lb$ 
    of $G$-line bundles 
    such that 
    $(\psi\phi^\ast x',\psi\phi^\ast y')=(x,y)$. 
We call $(T/S,\lb,x,y)$ \defn{monic} 
    if $y$ is nowhere vanishing. 
If $(T/S,\lb,x,y)$ is monic then 
by rescaling $x$ and $y$ we may take $\lb = \mcO(T)$, 
and by rescaling $x$ by $\mcO(T)^\times$ 
we may take $y = 1$; 
thus monic points may be written more compactly as $(T/S,x)$. 
%is an isomorphism of $G$-line bundles 
%(for some $G$-linearization of $\mcO_T$). 
%    (and $\lb$ is therefore trivial as a line bundle). 
Let $A \subset \AA^n$ denote the complement of 
the union of the hyperplane divisors $\{X_i/Y = X_j/Y\}$ 
for all $1 \leq i < j \leq n$. 
Let $\G \subset A$ denote the open subscheme 
where the group determinant 
$\Delta_G = \det(X_{gh}/Y)_{g,h\in G}$ 
is invertible. 
The group algebra structure on $\AA^n$ 
gives $\G$ the structure of a group scheme. 

\begin{proposition}\label{prop:orbitParam} %{*
The $S$-points of the affine variety $A/G$ 
    are in bijection with 
    equivalence classes of $(T/S,\lb,x,y)$, 
    functorially in $S$. 
Any $S$-point of $\G/G$ is monic. 
Let $(T/S,x)$ be a monic $S$-point of $A/G$ 
    and write $(x_1,\ldots,x_n)$ 
    for the regular functions 
    $({^{g_1}x},\ldots,{^{g_n}x})$ on $T$. 
Then  
    $(T/S,x)$ is contained in $\G/G$ 
if and only if 
    $(x_1(t),\ldots,x_n(t))$ is a normal basis 
    for $\mcO(T) \otimes k(f(t))$ over $k(f(t))$ 
    for every $t \in T$. 
%Conversely, any $S$-point of $\PP/G$ 
%    whose image is disjoint from the ramification locus 
%    arises in this way. 
%    characterized by the property that \note{$T/S$ is isomorphic as 
%    an $G$-$S$-scheme to $S \times_{\PP/G} \PP$ 
%    for its canonical $G$-$S$-scheme structure}. 
%    \note{describe other direction as well, so we get a bijection. probably need to relax $T/S$ Galois to just generically Galois, and worry about zero divisors, potentially also connectedness.}
    %\note{add statement about $\G$-orbits, to justify ``orbit parametrizations''} % TODO uncomment
\end{proposition}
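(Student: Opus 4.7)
The plan is to construct mutually inverse maps between the $S$-points of $A/G$ and the equivalence classes of tuples $(T/S,\lb,x,y)$, and then identify $\G/G \subset A/G$ with the locus where the pullback of the group determinant is a unit.

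For the forward map, I would start with data $(T/S,\lb,x,y)$ and use the globally generating set $\{y,{}^{g_1}x,\ldots,{}^{g_n}x\}$ of $\lb$ to obtain a morphism $T \to \PP(\AA^1 \oplus \AA^n)$. The $G$-equivariance follows from the identity ${}^h({}^g x)={}^{hg}x$, and the separation condition ${}^g x \neq {}^h x$ forces the image to land in $A$. Descent along $T \to S$ then gives a morphism $S \to A/G$, and one checks that this depends only on the equivalence class of $(T/S,\lb,x,y)$. For the reverse map, I would exploit the fact that $A \to A/G$ is a $G$-torsor (since $G$ acts freely on $A$), pull this torsor back along any $S$-point of $A/G$ to produce $T \to S$ together with a $G$-equivariant map $T \to A$, and take $\lb$, $x$, $y$ to be the pullbacks of $\mcO(1)$ and its tautological sections $X_{g_1}$ and $Y$.

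For the characterization of $\G/G$, I would first observe that the pullback of $\Delta_G = \det(X_{gh}/Y)_{g,h}$ along the morphism $T \to \PP(\AA^1\oplus \AA^n)$ constructed above equals $\det({}^{gh}x/y)_{g,h}$, so its invertibility forces $y$ to be nowhere vanishing; this proves (2). Once we normalize to $\lb = \mcO_T$ and $y = 1$ in the monic case, part (3) reduces to the standard determinantal criterion for normal bases: the $G$-stable collection $({}^{g_1}x,\ldots,{}^{g_n}x)$ is a normal basis of $\mcO(T)\otimes k(f(t))$ over $k(f(t))$ exactly when $\det({}^{gh}x)_{g,h}$ is a unit in that fiber. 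The $G$-equivariance of the ordered tuple $({}^g x)_{g \in G}$ with the left regular action is already built into the construction.

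The main obstacle I anticipate is the bookkeeping in part (1): verifying that the forward and reverse constructions are mutually inverse on equivalence classes rather than on the nose. This amounts to showing that any two choices of descent data giving the same $S$-point of $A/G$ differ by a unique $G$-equivariant isomorphism of torsors together with a compatible isomorphism of $G$-line bundles identifying the sections, which is a standard faithfully flat descent argument applied to the $G$-torsor $A \to A/G$.
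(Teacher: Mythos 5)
Your proposal is correct and follows essentially the same strategy as the paper's proof. The one packaging difference: the paper invokes the standard fact that, since $G$ acts freely on $A$ (because $A$ avoids the isotropy locus of $\PP$), the scheme $A/G$ represents the stack-theoretic quotient $[A/G]$, so its $S$-points are by definition equivalence classes of pairs $(T/S,\phi)$ with $\phi\colon T\to A$ a $G$-equivariant morphism; translating $\phi$ into $(\lb,x,y)$ via $\lb=\phi^\ast\mcO(1)$, $y=\phi^\ast Y$, $x=\phi^\ast X_1$ is then automatic, and the mutual-inverse verification you defer to ``a standard faithfully flat descent argument'' is exactly what this abstract statement encapsulates. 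Your hands-on construction of both maps, together with the same observation about the $\G$-locus being contained in $\{Y\neq 0\}$ (which you phrase via well-definedness of $\phi^\ast\Delta_G=\det({}^{gh}x/y)_{g,h}$, while the paper states it directly) and the determinantal criterion for normal bases, is the same argument.
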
 %*}

\begin{proof} %{*
First observe that $A$ is contained 
    in the open locus of $\PP$ where $G$ has no isotropy, 
    and therefore $G$ acts freely on $A(k)$ 
    for any $S$-field $k$. 
It follows that the scheme $A/G$ represents the stack theoretic 
    quotient $[A/G]$. 
This means that the $S$-points of $A/G$ are in bijection with 
    equivalence classes of pairs $(T/S,\phi)$ 
    where $T/S$ is a $G$-torsor 
    and $\phi$ is a $G$-equivariant morphism $\phi \colon T \to A$. 
Thus $\phi$ is determined by 
    the line bundle $\lb = \phi^\ast \mathcal O(1)|_A$ 
    and the global sections 
    $y = \phi^\ast Y, 
    x=x_1=\phi^\ast X_1,\ldots,x_n=\phi^\ast X_n$. 
Since $\phi$ is $G$-equivariant, these global sections 
    satisfy the required conditions: 
    they clearly generate $\lb$, and 
    if ${^gx}(t) = {^hx}(t)$ 
    then $\phi(t) \in \{X_g/Y = X_h/Y\}$ which implies $g = h$. 
Meanwhile $\lb$ has a canonical $G$-line bundle structure 
    inherited from $\mathcal O(1)$ 
    for which $y$ is invariant since 
    $gy = g\phi^\ast Y = \phi^\ast gY = \phi^\ast Y = y$. 
In the other direction, from a datum $(T/S,\lb,x,y)$, 
we get a morphism $\phi \colon T \to A$ from $y$ 
    and the sections $x_g \coloneqq {^gx}$ 
    since $Gx \cup \{y\}$ is generating. 
This morphism will be $G$-equivariant since 
\begin{align} %{*
    \phi(gt) 
    = [y(gt):(x_h(gt))_h] 
    &= [g({^{g^{-1}}y})(t):(g({^{g^{-1}}x_h})(t))_h] \\
    &= [gy(t):(gx_{g^{-1}h}(t))_h] \\
    &= g\phi(t) . 
\end{align} %*}

An $S$-point of $\G/G$ is automatically monic since 
    $\G$ is contained in the open locus of $\PP$ 
    where $Y$ is invertible. 
A monic $S$-point $(T/S,x)$ will be contained in $\G/G$ 
    if and only if $\phi$ factors through $\G$, 
    which occurs if and only if $\phi^\ast \Delta_G$ 
    is invertible on $T$. 
    If $t \in T$, then 
    $\phi^\ast \Delta_G(t) = \det(x_{gh}(t))_{g,h}$ 
    is invertible if and only if $(x_1(t),\ldots,x_n(t))$ 
    forms a basis for $\mcO(T) \otimes k(f(t))$ over $k(f(t))$. 
\end{proof} %*} 

%Let $V$ be a representation of $G$ over a field $K$. 
%Then $V$ is naturally a $\ZZ G$-module. 
%Write $\rho \colon \ZZ G \to \End V$ 
%for the associated ring homomorphism. 
%
%\begin{definition}
%The \defn{$V$-determinant} is the polynomial 
%    in $K[\{X_g\}_g]$ given by 
%\begin{align} %{*
%    \Delta_V \colon \ZZ G & \to K\\
%    \sum_{g\in G}X_g g &\mapsto \Delta_V(X) 
%    = \det \left(\sum_{g \in G} X_g \rho(g)\right).
%\end{align} %*}
%Let $(T/S,L,x)$ be an $S$-point of $\PP/G$. 
%The \defn{$V$-discriminant of $(T/S,L,x)$} 
%is the image of $\Delta_V$ 
%We have the ring homomorphism
%\begin{equation} %{*
%    \ZZ[\{X_g:g \in G\}] \to \oplus_n H^0(T,L^n): X_e \to x . 
%\end{equation} %*}
%\end{definition}

%The $V$-determinant is multiplicative. 

%*} 

\subsection{Characteristic maps for torsors of finite groups}\label{sec:Characteristic} %{* 

    The \emph{root height} of a polynomial 
$f=t^n+a_1t^{n-1}+\cdots+a_n \in \ZZ[t]$ is 
$$H(f) = \max(|a_1|,|a_2|^{1/2},\ldots,|a_n|^{1/n}).$$ 
We now explain how to express the root height 
    in terms of a Weil height on $\PP/G$ in the usual sense, 
    i.e.~the height function obtained 
    by pulling back the standard height on projective space 
    via some morphism from $\PP/G$ to projective space. 
To do this, we consider some rational maps from $\PP/G$ 
    to projective space 
    built from the characteristic polynomial. 
The coefficients of the characteristic polynomial 
    will be valued in a certain line bundle on $\PP/G$. 
Write $\PP = \PP(\AA^1 \oplus \AA^n)$ 
and let $\pi \colon \PP \to \PP/G$ denote the quotient morphism. 
%The coherent sheaf $\pi_\ast\mcO(1)$ 
Let $l = \lcm(1,\ldots,n)$. 
Let $e_k(X) \in \mcO(k)$ denote 
    the degree $k$ elementary symmetric polynomial. 
%Then $Y^{n-k}e_k(X)$ may be canonically identified 
%with a global section of $\dlb$. 

\begin{lemma}
The coherent sheaf $\dlb \coloneqq (\pi_\ast \mcO(l))^G$ is an ample line bundle on $\PP/G$. 
The $l$th power of the root height 
    is a Weil height on $\PP/G$ for 
    the linear system of $\dlb$ 
    spanned by the generating sections 
    $Y^l$ and $e_k^{l/k}\in H^0(\PP/G,\dlb)$ for $1 \leq k \leq n$. 
\end{lemma}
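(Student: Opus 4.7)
The proof has three parts: showing $\dlb$ is a line bundle, showing it is ample, and identifying the resulting Weil height with the $l$th power of the root height.

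First I would establish that $\dlb$ is an invertible sheaf with $\pi^{\ast}\dlb\cong\mcO(l)$. The line bundle $\mcO(1)$ on $\PP$ has a natural $G$-equivariant structure coming from the linear action of $G$ on $\AA^1\oplus\AA^n$. For any geometric point $p\in\PP$ the stabilizer $G_p\subset G$ acts on the fiber $\mcO(1)|_p$ through a character whose order divides $|G_p|$, hence divides $n$, hence divides $l=\lcm(1,\ldots,n)$. Consequently $G_p$ acts trivially on the $l$th tensor power $\mcO(l)|_p$. By Kempf's descent lemma (applicable to finite group quotients once the stabilizer acts trivially on every fiber), $\dlb=(\pi_{\ast}\mcO(l))^G$ is invertible on $\PP/G$ with $\pi^{\ast}\dlb\cong\mcO(l)$. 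Since $\pi$ is finite and surjective and $\pi^{\ast}\dlb=\mcO(l)$ is ample on $\PP$, $\dlb$ is ample on $\PP/G$ by the standard descent of ampleness along finite surjective morphisms.

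Next I would identify the distinguished sections and verify generation. The regular function $Y$ is $G$-fixed, and the elementary symmetric polynomials $e_k(X_1,\ldots,X_n)\in H^0(\PP,\mcO(k))$ are $G$-invariant because $G$ permutes the variables $X_1,\ldots,X_n$. Since $k\mid l$ for each $1\leq k\leq n$, the powers $e_k^{l/k}$ are well-defined $G$-invariant sections of $\mcO(l)$ and, together with $Y^l$, descend to global sections of $\dlb$. To check that these $n+1$ sections generate $\dlb$, by surjectivity of $\pi$ it suffices to see that their pullbacks generate $\mcO(l)$ at every point $p=[Y:X_1:\ldots:X_n]\in\PP$: if $Y(p)\neq 0$ then $Y^l(p)\neq 0$; otherwise $Y(p)=0$ and some $X_i(p)\neq 0$, whence not all $e_k(p)$ can vanish because the $e_k$ are the signed coefficients of $\prod_i(t-X_i)$.

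Finally I would identify the Weil height. The generating sections yield a morphism $\varphi\colon\PP/G\to\PP^n$, and pulling back the standard height on $\PP^n$ gives by definition a Weil height on $\PP/G$ attached to $\dlb$ and the chosen linear system. For a point $p\in(\PP/G)(\QQ)$ with integral lift $[1:x_1:\ldots:x_n]$, the monic characteristic polynomial $f_p(t)=\prod_i(t-x_i)=t^n+\sum_{k=1}^n a_k t^{n-k}$ has integer coefficients $a_k=(-1)^k e_k(x)$, and the standard height of $\varphi(p)=[1:e_1(x)^l:e_2(x)^{l/2}:\cdots:e_n(x)^{l/n}]$ in $\PP^n$ is $\max(1,|a_1|^l,|a_2|^{l/2},\ldots,|a_n|^{l/n})=\max(1,H(f_p)^l)$, which equals $H(f_p)^l$ away from the trivial polynomial $t^n$. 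The main obstacle is the descent step, and the key arithmetic inputs are $n\mid l$ (to kill stabilizer characters so that $\mcO(l)$ descends) and $k\mid l$ for all $1\leq k\leq n$ (so that the $e_k^{l/k}$ are actual sections of $\mcO(l)$); these two conditions together force $l=\lcm(1,\ldots,n)$.
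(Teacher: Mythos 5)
Your proof is correct and follows essentially the same route as the paper's: descent of $\mcO(l)$ to $\PP/G$ because $l$ kills all stabilizer characters (the paper cites \cite[Prop.~3.2]{moge}, you invoke Kempf's descent lemma, which is the same mechanism), ampleness by finite pullback, and global generation because the $e_k$ cannot all vanish simultaneously on $\PP - \{Y = 0\}$. You are somewhat more explicit than the paper about why the stabilizer characters become trivial after the $l$th power and about the final identification of the pulled-back standard height with $\max(1,H(f)^l)$, but the underlying argument is the same.
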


\begin{proof} %{* 
By \cite[Prop.~3.2]{moge} 
    the $G$-line bundle $\mcO(l)$ on $\PP$ 
    descends to a line bundle on $\PP/G$ 
    (passing to the $l$th power of $\mcO(1)$ 
    trivializes any action of isotropy subgroups 
    on fibers over fixed points). 
Sections of this descended line bundle 
    on an open subset $U \subset \PP/G$ 
    are canonically identified 
    via pullback along $\pi$ 
    with $G$-invariant sections of $\mcO(l)$ 
    on $\pi^{-1}(U)$. 
Note $\dlb$ is ample 
    since $\pi$ is finite and 
    its pullback $\pi^\ast\dlb = \mcO(l)$ is ample. 
%\note{Actually I need to prove this for an arbitrary $G$-torsor to make the claim below where I'm evaluating this map on arbitrary monic points} % TODO uncomment

Consider the projective morphism 
    $\chp \colon \PP/G \to \PP_n$ 
    defined by the generating sections 
    $Y^l$ and $e_k^{l/k}\in H^0(\PP/G,\dlb)$ for $1 \leq k \leq n$. 
On monic $S$-points of $A/G$ it is given by 
\begin{equation}
    \chp(T/S,\mcO_T,x,y)=
    [y^l:e_1(Gx)^l:e_2(Gx)^{l/2}:\cdots:e_n(Gx)^{l/n}].
\end{equation}
These sections are globally generating for $\dlb$ since 
    $e_1(X) = e_2(X) = \cdots =e_n(X) = 0$ if and only if 
    $X = 0$, so the base locus of this linear system is empty. 
\end{proof} %*} 

%A priori this description of $\chp$ 
%    appears to not be defined on the level of $S$-points 
%    since each component $(-1)^ky^{n-k}e_k$ of $\chp$ 
%    is a section of the line bundle $\lb^{\otimes n}$ 
%    which is over $T$. 
%However the $G$-line bundle $\lb^{\otimes n}$ 
%descends to $S$ \cite[Prop.~3.2]{moge}, 
%regardless of whether $\lb$ itself descends to $S$, 
%and sections of this descended line bundle 
%on an open subset $U \subset S$ 
%can be canonically identified with 
%$G$-invariant sections of 
%$\lb^{\otimes n}$ on $f^{-1}(U)$. 
%Thus the components of $\chp$ are actually 
%    (canonically identified with) 
%    sections of a line bundle over $S$. 
%Then $\chp$ is a morphism on 
%the open subset $A \cap (\PP/G-B)$. 

%It can happen that $S$-points of $\PP/G$ associated  
%to non-isomorphic $G$-torsors have the same 
%characteristic polynomial, 
%even for $S = \Spec \QQ$. 
%\note{really? give example...}

%Let $R$ be a connected ring and 
%fix a geometric point $\overline{x} \to \Spec R$
%(i.e. $\mathcal O(\overline{x})$ is an algebraically closed field). 
%Let $F(S)$ denote the fiber of $\Spec S \to \Spec R$ 
%over $\overline{x}$. 
%
%\begin{definition}
%    A \defn{label} of an \'etale degree $n$ algebra $S/R$ 
%    is a bijection $F(S) = [n]$. 
%    The \defn{monodromy group} of 
%    a labelled \'etale degree $n$ algebra 
%    is the image of the permutation representation 
%    $\pi_1(R,\overline{x}) \to \Aut(F(S))$. 
%\end{definition}

\begin{remark}[coefficient height] %{*
    By contrast, the coefficient height 
    $\max(|a_1|,|a_2|,\ldots,|a_n|)$ 
    is not equivalent to a power of a Weil height on $\PP/G$. 
The proof of the lemma 
    also shows that $(\pi_\ast \mcO(n))^G$ is a line bundle. 
There is a closely related map with coordinates 
    $Y^{n-k}e_k \in H^0(\PP/G,(\pi_\ast \mcO(n))^G)$ 
    for $0 \leq k \leq n$. 
On monic $S$-points of $A/G$ this map is given by 
%the global sections 
%$y^n,-y^{n-1}e_1,y^{n-2}e_2,\ldots,(-1)^ne_n$ 
%    of $\lb^{\otimes n}$, i.e. 
\begin{equation}
    (T/S,\mcO_T,x,y) \dashrightarrow
    [y^n:y^{n-1}e_1(Gx):y^{n-2}e_2(Gx):\cdots:e_n(Gx)].
\end{equation}
The ``problem'' with this map 
    is that it has a base locus $B=\{Y = e_n(X) = 0\}$ 
    which is not $\G$-stable. 
The line bundle associated to this height 
    is not on $\PP/G$ 
    but on the blowup of $\PP/G$ along $B$, 
and when $G$ is abelian the blowup 
    no longer has a canonical toric variety structure 
    and the methods of this paper are not applicable. 
The ``correct'' compactification of $A/G$ 
    to consider the coefficient height is 
    $(\PP_1 \times \cdots \times \PP_1)/ G$ 
    where $G$ permutes the $G$ copies of the projective line, 
    however this alternative quotient 
    does not have a natural toric structure. 
\end{remark} %*}

\begin{remark}[anticanonical height] 
Let $D = \{Y=0\}$. 
Then $\dlb = \mathcal O(lD)$. 
The divisor $D$ is toric ($T$-stable)  
    and corresponds to the ray $e_0$ in the fan of $Y$. 
Let $s_D\in (\ZZ^{\Sigma(1)})^\Gamma$ be 
    the corresponding element 
    ($1$ in the $e_0$ coordinate and $0$ elsewhere). 
One computes that $s_{AC} \equiv (n+1) s_D \pmod {\cl}$, 
    so the root-height is equivalent to 
    $H_{AC}^{\frac{1}{n+1}}$. 
\end{remark}

\subsubsection{Fibers of the characteristic map} 

Any $G$-torsor over $\QQ$ has the form 
$T=\Spec K \sqcup \cdots \sqcup \Spec K$ 
for some field $K$ with Galois group $D \leq G$. 
We call $D$ the monodromy group of $T$. 

\begin{lemma}
Let $D \leq G$ and let $(K/\QQ,x)$ 
    be a rational point of $\G/G$ with monodromy group $D$. 
The number of elements of $(\G/G)(\QQ)$ 
    which have the same characteristic polynomial 
    as $(K/\QQ,x)$ is equal to $[N_{S_n}(D):N_{G}(D)]$. 
%\begin{equation} %{*
%    \frac{|\mathrm{im}\left(N_{S_n}(D) \to \mathrm{Out}(D)\right)|}
%    {|\mathrm{im}\left(N_{G}(D) \to \mathrm{Out}(D)\right)|}.
%\end{equation} %*}
\end{lemma}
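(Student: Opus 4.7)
The plan is to convert the problem into counting $\Gamma$-stable $G$-orbits in $\G(\overline{\QQ})$ via Proposition~\ref{prop:orbitParam}. I fix a geometric point $t_0 \in T(\overline{\QQ})$ of $T = \Spec K$ and set $P = (x_1(t_0),\ldots,x_n(t_0)) \in \G(\overline{\QQ})$; the rational point $(K/\QQ,x)$ then corresponds to the $\Gamma$-stable orbit $G \cdot P$, and the monodromy representation $\rho\colon \Gamma \to G$ determined by $\gamma \cdot t_0 = \rho(\gamma) t_0$ satisfies $\gamma \cdot P = \rho(\gamma) P$ with image equal to $D$ (after suitably adjusting $t_0$).

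The characteristic polynomial of $(K/\QQ,x)$ is $\prod_{i=1}^n (t-P_i)$, so rational points of $\G/G$ with the same characteristic polynomial correspond exactly to $\Gamma$-stable $G$-orbits inside the $S_n$-orbit $S_n \cdot P$, where $S_n$ acts by permuting the coordinates of $\AA^n$. For $s \in S_n$, the identity $\gamma \cdot sP = s\rho(\gamma)P$ together with the free action of $S_n$ on $P$ (the coordinates are distinct) implies that $G \cdot sP$ is $\Gamma$-stable iff $sDs^{-1} \subseteq G$, and in that case the monodromy of the new orbit is $sDs^{-1}$, well-defined up to $G$-conjugacy via base-point change. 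Requiring this monodromy class to equal $[D]_G$ translates into the condition $s \in G \cdot N_{S_n}(D)$.

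It remains to count such orbits. Two elements $s, s' \in GN_{S_n}(D)$ produce the same $G$-orbit iff $s' \in Gs$, again by the free $S_n$-action on $P$. Hence the count is
\begin{equation*}
    \frac{|GN_{S_n}(D)|}{|G|} = \frac{|N_{S_n}(D)|}{|G \cap N_{S_n}(D)|} = \frac{|N_{S_n}(D)|}{|N_G(D)|} = [N_{S_n}(D):N_G(D)],
\end{equation*}
where the middle identity uses $G \cap N_{S_n}(D) = N_G(D)$.

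The main technical point requiring care is to verify that every translate $sP$ with $s \in GN_{S_n}(D)$ indeed lies in $\G(\overline{\QQ})$ rather than just $A(\overline{\QQ})$, i.e., that $\Delta_G(sP) \neq 0$. This uses the $G$-invariance of $\Delta_G$ and its transformation behavior (up to sign) under the normalizer $N_{S_n}(G)$ via the induced outer automorphism of $G$. In the cyclic case $G = C_n$ that drives Theorem~\ref{thm:mainthm}, $D$ is the unique subgroup of its order in $G$, so every rational point with the given characteristic polynomial automatically shares the monodromy class $[D]_G$, the verification is uncomplicated, and the count $[N_{S_n}(D):N_G(D)]$ coincides with the total number of rational points sharing the characteristic polynomial (giving the $\varphi(n)^{-1}$ normalization in Theorem~\ref{thm:mainthm}).
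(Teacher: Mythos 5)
Your proof takes a genuinely different route from the paper's. The paper reduces (via the ``same $x$'' reduction) to counting $G$-algebra structures on the fixed \'etale algebra $K$ and then reformulates the count in terms of fibers of $H^1(\QQ,G)\to H^1(\QQ,S_n)$, whereas you work directly with $\Gamma$-stable $G$-orbits inside the $S_n$-orbit of a geometric point $P$ and count right cosets of $G$ in a subset of $S_n$. Your bookkeeping with cosets is cleaner: since a rational point of $\G/G$ is literally a $\Gamma$-stable $G$-orbit and two translates $sP,s'P$ give the same orbit iff $s'\in Gs$, you correctly obtain the count $|GN_{S_n}(D)|/|G|=[N_{S_n}(D):N_G(D)]$. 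The paper's final reformulation in terms of $H^1$ is slightly imprecise, since $H^1$ counts $G$-torsor structures only up to $G$-conjugacy, which is a coarser equivalence than equality of rational points of $\G/G$; your approach sidesteps this entirely.

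Two substantive remarks. First, you imposed the extra condition that the translated orbit $G\cdot sP$ have monodromy class equal to $[D]_G$, which is what forces $s\in G\cdot N_{S_n}(D)$; the lemma as literally stated only requires equal characteristic polynomials. These two counts agree exactly when every subgroup $D'\le G$ that is $S_n$-conjugate to $D$ and contained in $G$ is automatically $G$-conjugate to $D$. For cyclic $G$ this holds trivially (a cyclic group has a unique subgroup of each order), so your restriction is vacuous in the case used for Theorem~\ref{thm:mainthm}, as you note. For non-cyclic abelian $G$ it can fail (e.g.\ $G=(\ZZ/2)^2$, $D$ of order $2$: one finds six $G$-torsor structures on $K$ across three monodromy classes, but $[N_{S_4}(D):N_G(D)]=2$), so the ``same monodromy class'' hypothesis you added is in fact needed for the displayed formula to be correct; the paper's proof, which speaks of ``$G$-algebra structures with monodromy group $D$,'' implicitly makes the same restriction. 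Second, your treatment of the $\Delta_G\neq 0$ check is flagged but not completed, and the appeal to transformation under $N_{S_n}(G)$ does not directly apply when $s\in GN_{S_n}(D)$ with $D\subsetneq G$, since such $s$ need not normalize $G$. For the case $D=G$ (irreducible characteristic polynomial, the one feeding into Corollary~\ref{cor:cyclicTwists}), $N_{S_n}(G)$ is the holomorph $G\rtimes\Aut(G)$ and one verifies directly that $\Delta_G$ changes only by a sign under the holomorph, so the orbit stays inside $\G$; equivalently, all six structures in the example above produce points of $\G/G$ because $\Delta_G$ is $\Aut(G)$-invariant. The general case $D\subsetneq G$ would require a separate argument, but it is not used in the paper.
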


\begin{proof} %{*
Suppose two rational points $(K/\QQ,x)$, $(K'/\QQ,x')$ 
    have the same characteristic polynomial. 
Since $G$ acts transitively on the roots, 
    we may assume that $x = x'$. 
Now we are in the situation that $K$ and $K'$ 
    are isomorphic as $\QQ$-algebras  
    but not necessarily as $G$-algebras, 
    since an isomorphism of $G$-algebras 
    must be $G$-equivariant. 
A $G$-equivariant isomorphism 
    $K\to K'$ that fixes $x$ 
    must be the identity, 
    so we are computing the number of non-isomorphic $G$-algebra 
    structures on a given $G$-algebra with monodromy group $D$. 
A $G$-algebra (ring of functions on a $G$-torsor) 
    with monodromy group $D$ 
    is the same thing as a cocycle of $H^1(\QQ,G)$ 
    whose image is $G$-conjugate to $D \leq G$, 
    while 
    an \'etale $\QQ$-algebra of degree $n$ 
    with monodromy group $D$ 
    is the same thing as a cocycle of $H^1(\QQ,S_n)$ 
    whose image is $S_n$-conjugate to $D$. 
We must compute the number of cocycles in $H^1(\QQ,G)$ 
    lying over a given cocycle $[K/\QQ] \in H^1(\QQ,S_n)$ 
    whose image is $S_n$-conjugate to $D$ 
    under the natural homomorphism 
    $H^1(\QQ,G) \to H^1(\QQ,S_n)$. 
This is equal to the given quantity. 
%From the exact sequence $1 \to G \to \G \to T \to 1$ 
%    of sheaves on the \'etale site of $\Spec \QQ$ 
%    we obtain the exact sequence of pointed sets 
%\begin{equation} %{*
%    1 \longrightarrow G 
%      \longrightarrow \G(\QQ) 
%      \longrightarrow (\G/G)(\QQ) 
%      \xrightarrow{\delta} 
%      H^1(\QQ,G)
%      \longrightarrow H^1(\QQ,\G).
%\end{equation} %*}
%It is well-known that $\G$ is a product of twists of 
%inner forms of general linear groups, 
%and therefore $H^1(\QQ,\G)=1$ by Hilbert's theorem 90. 
\end{proof} %*}

One can check that $N_{S_n}(C_n) = n \varphi(n)$ 
which proves the following corollary. 

\begin{corollary}\label{cor:cyclicTwists}
Suppose $G = C_n$. 
The number of rational points of $T = \G/C_n$ 
    with the same irreducible characteristic polynomial 
    is $\varphi(n)=|\mathrm{Out}(C_n)|$ 
    where $\varphi$ is Euler's totient function. 
\end{corollary}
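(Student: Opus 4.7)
The plan is to apply the preceding lemma with $G = C_n$ and reduce the corollary to a computation of two normalizers inside the symmetric group $S_n$. First I would observe that an irreducible characteristic polynomial corresponds to a rational point $(K/\QQ,x)$ for which $K$ is a field (rather than a nontrivial product of fields), and therefore the monodromy group $D$ must be all of $C_n$: indeed, $D \leq C_n$ is determined by the factorization $K = \Spec L \sqcup \cdots \sqcup \Spec L$ with $[L:\QQ] = |D|$, and $L = K$ forces $D = C_n$. Thus by the preceding lemma the quantity in question is $[N_{S_n}(C_n) : N_{C_n}(C_n)]$.

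Next I would compute each normalizer. Since $C_n$ is abelian, $N_{C_n}(C_n) = C_n$ has order $n$. For the numerator, recall that the relevant embedding $C_n \hookrightarrow S_n$ is the regular representation, i.e.~$C_n$ acts on itself by left translation. The normalizer of a regular subgroup $H \leq S_{|H|}$ coincides with the holomorph $H \rtimes \mathrm{Aut}(H)$: any element of the normalizer is determined by a pair consisting of a translation and an automorphism of $H$ (this is a classical fact about the regular representation; a quick way to see it is that centralizer of the regular action is the right-regular action, giving another copy of $H$, and one combines this with conjugation by $\mathrm{Aut}(H)$). Applied to $H = C_n$ this gives
\begin{equation}
    N_{S_n}(C_n) \cong C_n \rtimes \mathrm{Aut}(C_n) \cong \AGL_1(\ZZ/n\ZZ),
\end{equation}
which has order $n \varphi(n)$. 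Therefore
\begin{equation}
    [N_{S_n}(C_n) : N_{C_n}(C_n)] = \frac{n\varphi(n)}{n} = \varphi(n).
\end{equation}

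Finally, since $C_n$ is abelian one has $\mathrm{Inn}(C_n) = 1$, so $\mathrm{Out}(C_n) = \mathrm{Aut}(C_n) \cong (\ZZ/n\ZZ)^\times$ has cardinality $\varphi(n)$, establishing the displayed equality $\varphi(n) = |\mathrm{Out}(C_n)|$. The only step requiring any real care is the normalizer computation; the rest is bookkeeping applied to the preceding lemma.
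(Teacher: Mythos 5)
Your proof is correct and follows the same route the paper intends: the paper simply asserts ``one can check that $N_{S_n}(C_n) = n\varphi(n)$'' before the corollary, and you've filled in the details (monodromy group must be all of $C_n$, the normalizer of the regular copy of $C_n$ is the holomorph of order $n\varphi(n)$, and the denominator normalizer is $C_n$ itself of order $n$).
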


%The next lemma describes the image of 
%the characteristic map 
%$\chp \colon \PP/G \dashrightarrow \PP_n$ 
%on $\QQ$-points of $\PP/G-\mathrm{Supp}(D)$. 
%Note the indeterminacy locus of $\chp$ is 
%contained in $\mathrm{Supp}(D)$. 
%We make the natural identification 
%\begin{align} %{*
%    \left\{\text{monic degree $n$ polynomials in $\QQ[t]$}\right\}
%    &\hookrightarrow 
%    \PP_n(\QQ)\\
%    t^n-e_1t^{n-1}+\cdots &\mapsto[1:e_1^
%\end{align} %*}
%by identifying coordinates on $\PP_n$ 
%with coefficients of polynomials. 

% fibers of characteristic map remark used to go here

%The indeterminacy locus of $\phi$ is cut out 
%by the $G$-invariant homogeneous sections $e_1$ and $e_n$ of $\PP$. 

%We will not blow up this locus to resolve $\phi$ 
%as it is not an intersection of toric divisors 
%($T$ intersects with $V(e_n)$). 
%Instead we will blow up $V(e_1)$, which is easily seen 
%to be a $T$-stable divisor. 
%Explicitly, let $\Gamma \subset \PP/G \times \PP_n$ 
%denote the graph of $\phi \colon \PP/G - V(e_1) \to \PP_n$ 
%and let $Y_s = \overline{\Gamma}$ denote its Zariski closure. 
%The projection $Y_s \to \PP_n$ extends $\phi$, 
%and it is the blowup of 

%The singularities of $Y_\Sigma$ are isolated and disjoint from $T$. 

%Let $M$ denote the $\QQ$-Cartier divisor on $\PP/G$ 
%by pulling $\{e_1 = 0\}$ back along $f$. 

%    \note{Show that $\mathrm{Supp}(D) = \mathrm{Supp}(-K_Y)$ 
%    using formula of Yasuda (or preferably the other paper \textit{All abelian quotient singularities...}, with a remark about Yasuda). Remark that normality follows from paper with Julian.}

%*} 

%*}

\section{The toric structure on \texorpdfstring{$\PP/G$}{P/G}}\label{sec:DiophProblem} %{*

Assume $G$ is abelian. 
Let $\G$ denote the group of units in 
the group algebra of $G$, 
an algebraic torus over $\QQ$ of rank $n$. 
The group $G$ is a constant subgroup of $\G$ 
and $T=\G/G$ is again an algebraic torus of rank $n$. 
The quotient variety $\PP/G$ inherits a $T$-action 
from the $\G$-action on $\QQ G$ 
making it into a (normal) toric variety. 
We now describe its fan. 

Let $G^\vee = \Hom(G,\CC^\times)$ denote 
    the set of characters of $G$. 
Let $E$ be the field over $\QQ$ 
    generated by the values of every character $\chi \in G^\vee$. 
The elementary idempotents $v_\chi$ of $\CC G$ 
    are in bijection with 
    characters $\chi\in G^\vee$ where 
    $v_\chi = \frac{1}{n}\sum_{g \in G}\chi(g^{-1}) g$. 
We define 
\begin{equation} %{* 
\ccl_E' =  \ZZ \langle v_\chi : \chi \in G^\vee \rangle \subset \CC G,
\qquad
\ccs_E = \ccl_E' \otimes \RR.
\end{equation} %*} 
%The lattice $\ccl_E'$ has a canonical permutation action of 
%the Galois group $\Gamma = \Gal(E/\QQ)$. 
Let $v_0 = -\sum_\chi v_\chi$. 
Let $\Sigma$ be the set of cones $\sigma_I$ 
where $I \subset \{v_0\} \cup \{v_\chi :\chi \in G^\vee\}$ is any proper subset 
and $\sigma_I \subset \ccs_E$ is the convex span of $I$. 
%Then $\PP_\CC = X_{\Sigma,N}$ and 
%$\G_\CC = U_{\{0\}} = \Spec \CC[M]$. 
For a root of unity $\zeta$, let 
    $\ell(\zeta) \in [0,1) \cap \QQ$ 
    be the unique rational number satisfying 
    $e(\ell(\zeta)) = \zeta$. 

\begin{proposition}\label{prop:quotienttoruscocharacterlattice}
%Let $C \subset G$ be a cyclic direct summand of $G$ 
%    with generator $c \in C$. 
Let $G = \langle c_1 \rangle \times \cdots \times \langle c_K\rangle$ 
    be a decomposition of $G$ into cyclic groups. 
%For any integer $k$ there is 
%    a unique character $\chi_{C,k}$ of $G$ 
%    sending $c$ to $e(-k/|C|)$ 
%    and the other direct factors to $1$. 
Define the elements 
\begin{equation}
    \omega_{j} = %\frac{1}{|C|}
    \sum_{\chi \in \langle c_j \rangle^\vee}
    \ell(\chi(c^{-1}_j))v_\chi 
    \in \ccl_E' \otimes \QQ
\end{equation}
and the lattice 
    $$\ccl_E = \ccl_E' + \sum_{j=1}^K\ZZ \omega_{j}.$$ 
The toric variety over $\QQ$ associated to 
    the fan $\Sigma$ and 
    the lattice $\ccl_E$ 
    with its canonical Galois action by 
    $\Gamma = \Gal(E/\QQ)$ 
    is the quotient variety $\PP/G$, 
    and its open torus is $T$. 
\end{proposition}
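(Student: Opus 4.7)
The plan is to identify $\PP$ over the splitting field $E$ as a toric variety for $\G_E$ with fan $\Sigma$ and lattice $\ccl_E'$, compute the cocharacter lattice of $T = \G/G$, and then descend the toric structure of $\PP$ through the finite quotient by $G$. Over $E$, the group algebra decomposes $EG = \prod_{\chi} E$ via characters, so $\G_E = \prod_{\chi \in G^\vee}\GG_m$ is split with cocharacter lattice $\ccl_E' = \bigoplus_\chi \ZZ v_\chi$ and character lattice $X^\ast\G_E$ dually spanned by $\{\chi\}_{\chi\in G^\vee}$ (viewing each $\chi$ as the character $\sum a_g g \mapsto \sum a_g\chi(g)$ of $\G_E$). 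The regular representation $\AA^n$ correspondingly splits into character lines, exhibiting $\PP_E = \PP(E \oplus \bigoplus_\chi E_\chi)$ as the standard projective-space toric variety with rays $v_0, v_\chi$ (with $v_0 = -\sum v_\chi$) and fan $\Sigma$. The Galois action on $\ccl_E'$ permutes the $v_\chi$ as $\Gamma$ permutes $G^\vee$, descending this picture to recover $\PP$ as a toric variety for $\G$ over $\QQ$.

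Next I would compute $X_\ast T_E$. The inclusion $G \hookrightarrow \G_E$, $g \mapsto (\chi(g))_\chi$, dualizes to the surjective restriction map $X^\ast\G_E \to G^\vee$, $\chi \mapsto \chi|_G$, with kernel $X^\ast T_E$. So $X_\ast T_E \supset \ccl_E'$ with quotient canonically $\Hom(G^\vee, \QQ/\ZZ) \cong G$. To show $\ccl_E = X_\ast T_E$, I would verify (a) each $\omega_j$ pairs integrally with every $\lambda = \sum a_\chi \chi \in X^\ast T_E$: the condition $\sum a_\chi \chi = 0$ in $G^\vee$ forces $\prod_\chi \chi(c_j^{-1})^{a_\chi} = 1$, whence $\sum a_\chi \ell(\chi(c_j^{-1})) \in \ZZ$; and (b) the residues $\omega_j \bmod \ccl_E'$ generate $X_\ast T_E/\ccl_E' \cong G$, which reduces to identifying $\omega_j$ with $c_j^{\pm 1}$ under Pontryagin duality and then invoking the fact that the $c_j$ generate $G$.

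Then comes the descent step. Since $\G \to T$ is an isogeny with kernel $G$, the $T$-action on $\PP/G$ inherited from the $\G$-action on $\PP$ has open orbit $T = \G/G$, and $\PP/G$ is normal because finite quotients of normal schemes are normal. Affine-locally, for each cone $\sigma \in \Sigma$ the $G$-action on $U_{\sigma, \PP_E} = \Spec E[\sigma^\vee \cap X^\ast\G_E]$ factors through the restriction $X^\ast\G_E \to G^\vee$, so its ring of invariants is $E[\sigma^\vee \cap X^\ast T_E]$, which is the affine toric variety for $(\sigma, X_\ast T_E)$. Gluing over $\Sigma$ and descending Galois-equivariantly to $\QQ$ identifies $\PP/G$ with the toric variety of $(\Sigma, \ccl_E)$ over $\QQ$ with open torus $T$.

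The main obstacle is part (b) of the lattice calculation: making the canonical isomorphism $X_\ast T_E/\ccl_E' \cong G$ explicit enough to see that $\omega_j$ maps to $c_j^{\pm 1}$, which requires careful tracking of Pontryagin-duality conventions across the cyclic factorization. Part (a) is a short exponential check, and the descent step is a standard application of invariant theory for toric varieties under a finite subgroup of the torus.
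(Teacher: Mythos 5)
Your proof is correct and reaches the conclusion by a genuinely different route from the paper. The paper works directly on the coordinate ring: it equips $\CC[\cl_E']$ with an action of the quotient group $\ccl_E/\ccl_E'$ via the pairing $\cl_E'/\ccl_E^\vee\times\ccl_E/\ccl_E'\to\QQ/\ZZ$, checks on the classes of the $\omega_j$ that this second action coincides with the $G$-action (so that being $G$-invariant is the same as lying in $\ccl_E^\vee$), and then reads off $\CC[\sigma^\vee\cap\cl_E']^G=\CC[\sigma^\vee\cap\ccl_E^\vee]$ for every cone at once. You instead first establish the lattice identity $\ccl_E=X_\ast T_E$ by dualizing $0\to X^\ast T_E\to X^\ast\G_E\to G^\vee\to 0$ and tracking the image of each $\omega_j$ in the cokernel $\Hom(G^\vee,\QQ/\ZZ)\cong G$, and only then run the descent through $G$-invariants, which at that point is a formality. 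Your route makes the identification of $\ccl_E$ with the cocharacter lattice fully explicit and transparent; the paper's route is shorter but leaves the exact sequence $0\to\ccl_E'\to\ccl_E\to G\to 0$ partly asserted.

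One caveat, which applies equally to the paper's own proof: in step (a) you compute $\sum_{\chi\in G^\vee}a_\chi\,\ell(\chi(c_j^{-1}))$ and identify it with $\langle\omega_j,\lambda\rangle$, which is correct only if $\omega_j=\sum_{\chi\in G^\vee}\ell(\chi(c_j^{-1}))v_\chi$, i.e.\ the sum runs over \emph{all} of $G^\vee$. As stated, the proposition restricts the sum to $\chi\in\langle c_j\rangle^\vee$, and for $K>1$ that restricted element need not lie in $X_\ast T_E$: for $G=C_2\times C_2$ one gets $\omega_1=\tfrac12 v_{\chi_{10}}$, which pairs to $\tfrac12\notin\ZZ$ with $\lambda=\sum_\chi v_\chi^\vee\in X^\ast T_E$. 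The paper's verification $\langle v_\chi^\vee,\omega_C\rangle=\ell(\chi(c^{-1}))$ for every $\chi$ also tacitly requires the unrestricted sum. So this is a notational slip in the proposition's statement rather than in either proof, and it is harmless for the cyclic $G=C_n$ used in the application; but if you want your argument to cover the proposition as written for general abelian $G$, you should note that the displayed formula for $\omega_j$ must be read with the sum over all of $G^\vee$.
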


\begin{proof} %{* 
    %\note{revise this proof} % TODO uncomment
Observe that the group $G$ acts by left translation on 
    $\ccl'_E\otimes \CC$, 
and the dual action on $\cl'_E\otimes\CC$ 
is given by 
\begin{equation}
g\cdot v_\chi^\vee = \chi(g^{-1}) v_\chi^\vee
\end{equation}
where $\{v_\chi^\vee\}$ is the dual basis 
    to $\{v_\chi\}$. 
This induces a $G$-action on 
$\CC[\cl_E'] = 
    \CC[\{x_\chi^{\pm 1}\}]$ 
    where $x_\chi = \chi^{v_\chi^\vee}$. 
%which corresponds to the canonical action of $G$ on $\G$. 

Now observe that the quotient group 
    $\ccl_E/\ccl_E'$ acts on $\CC[\cl_E']$ by 
\begin{equation}
v \cdot \chi^{u} = 
    e({\langle u,v\rangle})
\chi^u
\end{equation}
where 
$\langle\cdot, \cdot \rangle \colon 
    \cl_E'/(\ccl_E^\vee) \times \ccl_E/\ccl_E' \to \QQ/\ZZ$ 
is the canonical perfect pairing. 
We have the short exact sequence 
\begin{equation}
0 \longrightarrow \ccl_E' \longrightarrow \ccl_E 
    \longrightarrow G \longrightarrow 0
\end{equation}
where the quotient map takes 
    $\omega_{C_i}$ to the fixed generator of $C_i$. 
    For any character $\chi$, %\note{of $G$ or $C$?} % TODO uncomment
\begin{equation}
\omega_C \cdot x_\chi = 
e(\langle v_\chi^\vee,\omega_C\rangle)x_\chi =
    e(\ell(\chi(c^{-1})))x_\chi = 
    \chi(c^{-1})x_\chi = 
    c \cdot x_\chi.
\end{equation}
Thus the isomorphism $\ccl_E /\ccl_E'= G$ is compatible 
    with the action on $\G_\CC = \Spec \CC [\cl_E']$. 
As a $G$-representation, 
$\CC[\cl_E']$ is a direct sum over the weight vectors $\chi^u$ 
for all $u \in \cl_E'$ 
    which implies that 
    the subspace $\CC[\cl_E']^G$ of $G$-invariants 
    is a direct sum over the weight vectors $\chi^u$ 
    with trivial $G$-action. 
In view of the isomorphism $\ccl_E /\ccl_E'= G$, 
    we see $\chi^u$ is $G$-invariant if and only if 
    $u \in \ccl_E^\vee$. 
Thus $\CC[\cl_E']^G = \CC[\ccl_E^\vee]$, 
    and more generally 
    $$\CC[\sigma^\vee \cap \cl_E']^G 
    = \CC[\sigma^\vee \cap \ccl_E^\vee]$$ 
    for any cone $\sigma \in \Sigma$. 
The toric variety defined by $\Sigma$ and $\ccl_E'$ 
    is $\PP$, which is covered by 
    the set of affine toric varieties 
    $U_\sigma=\Spec \CC[\sigma^\vee \cap \cl_E']$ 
    for $\sigma\in\Sigma$. 
Each $U_\sigma$ is $\G$-stable hence also $G$-stable, 
so 
\begin{equation}
    (\PP/G)_\CC
    =
\bigcup_{\sigma \in \Sigma}
U_\sigma/G
=
\bigcup_{\sigma \in \Sigma}
    \Spec \CC[\sigma^\vee \cap \ccl_E^\vee].
\end{equation}
\end{proof} %*} 

% Don't think I need this anymore:
% 
%\begin{corollary}
%The torus fixed points of $(\PP/G)_\CC$ 
%    are in bijection with the $n+1$ faces of $\Sigma$. 
%%If $\tau$ is a face which does not include $e$
%\end{corollary}

%Any singular toric variety 
%admits a $T$-equivariant desingularization 
%obtained by subdivision of the fan defining $Y_\Sigma$. 

% a smooth projective toric variety $Y$ 
%with the same torus $T$ as the singular toric variety 
%and a $T$-equivariant proper birational map 
%$f \colon Y \to Y_\Sigma$. 

%the divisor $M$ is a toric $\QQ$-divisor of $Y$. 

%*}

\section{Monic cyclic polynomials of bounded height} %{*

In this section we prove Theorem~\ref{thm:mainthm} 
    by applying Theorem~\ref{thm:hzfformula} 
    to an integral Diophantine problem 
    for the toric variety constructed in 
    \S\ref{sec:DiophProblem}. 
To define the integrality condition, 
    we use the divisor $D = \{Y = 0\} \subset \PP/G$. 
This is a $\QQ$-Cartier toric divisor 
    which is not Cartier, 
    however the multiple $lD$ is Cartier 
    where $l = \lcm(1,\ldots,n)$ 
    and corresponds to 
    the line bundle $\dlb = (\pi_\ast\mcO(l))^G$ 
    from \S\ref{sec:Characteristic}. 
In fact $nD$ is already Cartier, 
    but we use $lD$ so that 
    the coefficients $Y^\ell,e_k^{\ell/k} \in H^0(Y,\dlb)$ 
    of the characteristic polynomial 
    are generators of the line bundle. 
The quotient $\PP/G = \mathrm{Proj}\, \ZZ[Y,X_1,\ldots,X_n]^G$ 
    is defined over $\ZZ$, as is 
    the boundary divisor $D = \{Y = 0\}$, 
    so for our integral model we simply take 
    $U$ to be the complement of $D$. 
For this integral model, the indicator function 
    on $D$-integral points in $T(\AA)$ 
    $$\charfn^D = \prod_{v\in \Places{\QQ}} \charfn^{D}_v$$ 
    is invariant under the maximal compact subgroup 
    $\MaximalCompact{}$ of $T(\AA)$. 
The preimage of $U$ in $\PP$ is 
    $\PP-D =\AA^n = \Spec \ZZ[X_1/Y,\ldots,X_n/Y]$. 
The group algebra structure on $\AA^n$ is defined over $\ZZ$, 
    so the action of the unit group $\G$ on $\PP-D$ 
    and therefore the action of the quotient group 
    $T=\G/G$ on $U = \AA^n/G$ 
    are defined over $\ZZ$. 
For finite places $v$ 
    the maximal compact subgroup is 
    $\MaximalCompact{v}= T(\ZZ_v) \subset T(\QQ_v)$, 
    which shows that if 
    $t \in U(\ZZ_v)$ and $k \in \MaximalCompact{v}$ 
    then $tk \in U(\ZZ_v)$. 
Thus $\IntegralCompact{v} = \MaximalCompact{v}$. 

\begin{remark}
For the integral Diophantine problem in \cite{CUBIC}, 
    the indicator function on $D$-integral points 
    was not invariant under the maximal compact 
    subgroup, cf.~\cite[Prop.~4]{CUBIC}. 
This occured because of the trace-one condition 
    that was imposed on polynomials. 
This condition corresponds to splitting off 
    the factor of $\GG_m$ from the unit group 
    $\G = \GG_m \times \G_1$ 
    corresponding to the trivial representation of $C_3$, 
    and using the torus $T_1 = \G_1 / C_3$ 
    instead of $\G/C_3$. 
However the splitting $\G = \GG_m \times \G_1$ 
    is only defined over $\ZZ[\tfrac13]$ 
    (the trivial idempotent is $\tfrac13(1+\sigma+\sigma^2)$), 
    so the action of $T_1$ on the integral model 
    is not defined over $\ZZ$ 
    and the argument given here does not apply. 
\end{remark}

\subsection{Describing the subfan $\Sigma_D$}\label{sec:SubfanDescrip} %{* 

We describe the smallest subfan $\Sigma_D \subset \Sigma$ 
    whose support $|\Sigma_D|$ contains $-\Lt(\ToricIdeals)$. 
The ray generators in $\Sigma_\Gamma$ correspond to 
Galois orbits in 
$\Sigma(1) = \{e_0\} \sqcup \{e_\chi\}_{\chi \in G^\vee}$ 
(Lemma~\ref{lemma:subfan}). 
The Galois action on $\Sigma(1)$ is identified 
with the cyclotomic action on $\mathrm{triv} \sqcup G^\vee$ 
and the $\widehat{\ZZ}^\times$-orbits of $G^\vee$ 
are naturally in bijection with the cyclic subgroups of $G^\vee$. 
The cones in $\Sigma_{\Gamma}$ containing $e_0$ 
    are in bijection with 
    proper subsets of the set $G^\vee/\widehat{\ZZ}^\times$ 
    of $\widehat{\ZZ}^\times$-orbits on $G^\vee$. 
Let $O_1,\ldots,O_r$ be the Galois orbits in 
    $\{e_\chi\}_{\chi \in G^\vee}$. 
We may assume that $O_1 = \{e_1\}$. 
Set $f_i = \sum_{e \in O_i} e$. 
Then $(f_1,\ldots,f_r)$ is a basis of $\ccs$. 
Let 
\begin{equation}\label{eqn:LambdaHyperplanes} %{*
    \Lambda_j = 
    -\RR_{\geq 0}\langle e_0,f_i : i \neq j\rangle 
    =\{b_1 f_1 + \cdots + b_r f_r : 
    \text{$b_j \geq 0$ and $b_j \geq b_i$ for all $i$}
    \}
    \subset \ccs. 
\end{equation} %*}
%Then the cones $-\Lambda_1,\ldots,-\Lambda_r$ 
%    are precisely the 
%    maximal dimensional cones in $\Sigma_\Gamma$ 
%    containing $e_0$. 
Any element of $\Lt(\ToricIdeals)$ may be expressed as 
    $b_1f_1+\cdots + b_r f_r$ for $b_i \geq 0$, 
    and it is contained in each region $\Lambda_j$ 
    where $j$ is an index for which $b_j$ is largest. 
%In particular, 
%$\Lt(\ToricIdeals) 
%    \subset \Lambda_1\cup \cdots \cup\Lambda_{b}$.  
%*} 

\subsection{Compatibility of the polyhedra}\label{sec:VerifyingCompat} %{* 

Here we verify that the residue method is applicable for the fan $\Sigma$ 
    to make use of Theorem~\ref{thm:hzfformula}. 
    %\note{add matrices from NB40, p. 15} % TODO uncomment
Let $\Pi$ be a polyhedron 
    and let $H$ be 
    a collection of $r$ affine hyperplanes in $\cs_\CC$ 
    defined by equations $f_1-is_1,\ldots,f_r-is_r$ 
    for real linear maps $f_1,\ldots,f_r \colon \cs \to \RR$ 
    and constants $s_1,\ldots,s_r$ 
    with positive real parts. 
The associated Jacobian is 
\begin{equation} %{*
    J(H,\Pi)
    =
    \begin{bmatrix}
        f_{1}(v_{1})&f_{1}(v_{2})&\cdots&f_1(v_{r})\\
        f_{2}(v_{1})&f_{2}(v_{2})&\cdots&f_2(v_{r})\\
        \vdots&\vdots&\ddots&\vdots\\
        f_{r}(v_{1})&f_{r}(v_{2})&\cdots&f_{r}(v_{r})\\
    \end{bmatrix}
\end{equation} %*}
where $(v_1,\ldots,v_r)$ is the defining basis of $\Pi$. 
We verify that every 
    every collection $H_1,\ldots,H_r$ 
    of linearly independent singular hyperplanes 
    of $\omega=\widehat{H_\infty}(\chi,-s)\,dm$ 
    is compatible with the polyhedron $\Pi_\sigma$ 
    for every $\sigma \in \Sigma_D(r)$. 
Such a collection corresponds to 
    a choice $E = \{e_1,\ldots,e_r\}$ 
    of $r$ forms in $\Sigma_w(1)$ 
    whose restrictions $\pi E$ to $\cs$ 
    are linearly independent. 

The maximal cones in $\Sigma_D(r)$ 
    are $\sigma_{(j)} = -\Lambda_j$ for $1 \leq j \leq r$, 
    and the polyhedra to be integrated along are of the form 
    $\Pi_j = \cs + i \sigma_{(j)}^\vee$ 
    for $1 \leq j \leq r$. 
%We verify that 
%    %the residue formula is applicable 
%    %meaning that 
%    every collection $H_1,\ldots,H_r$ 
%    of linearly independent singular hyperplanes 
%    of $\omega=\widehat{H_\infty}(\chi,-s)\,dm$ 
%    is $\Pi_j$-compatible. 
%Since these conditions only involve the associated Jacobians, 
%    they only depend on the fan $\Sigma$ 
%    and not the $s$ parameters. 
%The iterated residue of $\omega$ along the flag $\gamma$ 
%    associated to $E$ will vanish identically 
%    if $E$ is not $\Sigma$-convex 
%    (cf.~\eqref{eqn:ResidualFunctionGeneral}), 
%    so it suffices to check compatibility for 
%    the $\Sigma$-convex singular flags in $Z$. 
Any proper subset of $\Sigma(1)$ is $\Sigma$-convex, 
    so any such collection $E$ will project 
    to the set $\pi E = \sigma(1)$ 
    of generators for some 
    $\sigma = \sigma_{(j)} \in \Sigma_\Gamma(r)$ 
    (Lemma~\ref{lemma:subfan}). 
One can check that the Jacobian $J(\sigma,\Pi_j)$ 
is compatible for every $\sigma \in \Sigma_\Gamma(r)$ and 
$1 \leq j \leq r$. 
They also satisfy the conjecture of \cite[Remark~3]{residue}. 
The Jacobians $J(\sigma,\Pi_j)$ follow the following pattern, 
    e.g. for $r = 4$: 
\begin{equation} %{*
J(\sigma_{(1)},\Pi_1)=
\begin{bmatrix}
1 & 0 & 0 & 0 \\
0 & 1 & 0 & 0 \\
0 & 0 & 1 & 0 \\
0 & 0 & 0 & 1
\end{bmatrix}
\qquad
J(\sigma_{(1)},\Pi_2)=
\begin{bmatrix}
1 & 0 & 0 & 0 \\
-1 & -1 & -1 & -1 \\
0 & 0 & 1 & 0 \\
0 & 0 & 0 & 1
\end{bmatrix}
\end{equation} %*}
\begin{equation} %{*
J(\sigma_{(1)},\Pi_3)=
\begin{bmatrix}
1 & 0 & 0 & 0 \\
0 & 0 & 1 & 0 \\
-1 & -1 & -1 & -1 \\
0 & 0 & 0 & 1
\end{bmatrix}
\qquad
J(\sigma_{(1)},\Pi_4)=
\begin{bmatrix}
1 & 0 & 0 & 0 \\
0 & 0 & 1 & 0 \\
0 & 0 & 0 & 1 \\
-1 & -1 & -1 & -1
\end{bmatrix}
\end{equation} %*}
\begin{equation} %{*
J(\sigma_{(2)},\Pi_1)=
\begin{bmatrix}
1 & 0 & 0 & 0 \\
-1 & -1 & -1 & -1 \\
0 & 0 & 1 & 0 \\
0 & 0 & 0 & 1
\end{bmatrix}
\qquad
J(\sigma_{(2)},\Pi_2)=
\begin{bmatrix}
1 & 0 & 0 & 0 \\
0 & 1 & 0 & 0 \\
0 & 0 & 1 & 0 \\
0 & 0 & 0 & 1
\end{bmatrix}
\end{equation} %*}
\begin{equation} %{*
J(\sigma_{(2)},\Pi_3)=
\begin{bmatrix}
1 & 0 & 0 & 0 \\
0 & 1 & 0 & 0 \\
-1 & -1 & -1 & -1 \\
0 & 0 & 0 & 1
\end{bmatrix}
\qquad
J(\sigma_{(2)},\Pi_4)=
\begin{bmatrix}
1 & 0 & 0 & 0 \\
0 & 1 & 0 & 0 \\
0 & 0 & 0 & 1 \\
-1 & -1 & -1 & -1
\end{bmatrix}
\end{equation} %*}
\begin{equation} %{*
J(\sigma_{(\infty)},\Pi_1)=
\begin{bmatrix}
-1 & -1 & -1 & -1 \\
0 & 1 & 0 & 0 \\
0 & 0 & 1 & 0 \\
0 & 0 & 0 & 1
\end{bmatrix}
\qquad
J(\sigma_{(\infty)},\Pi_2)=
\begin{bmatrix}
0 & 1 & 0 & 0 \\
-1 & -1 & -1 & -1 \\
0 & 0 & 1 & 0 \\
0 & 0 & 0 & 1
\end{bmatrix}
\end{equation} %*}
\begin{equation} %{*
J(\sigma_{(\infty)},\Pi_3)=
\begin{bmatrix}
0 & 1 & 0 & 0 \\
0 & 0 & 1 & 0 \\
-1 & -1 & -1 & -1 \\
0 & 0 & 0 & 1
\end{bmatrix}
\qquad
J(\sigma_{(\infty)},\Pi_4)=
\begin{bmatrix}
0 & 1 & 0 & 0 \\
0 & 0 & 1 & 0 \\
0 & 0 & 0 & 1 \\
-1 & -1 & -1 & -1
\end{bmatrix}.
\end{equation} %*}
    %which implies that each terminal point of a flag in $Z_\ast$ 
    %has a \emph{unique} flag in $Z_\ast$. 
%\begin{equation} %{*
%    J(\sigma_{(\infty)},\Pi_3)
%    =
%    \begin{bmatrix}
%        \phantom{-}0&\phantom{-}1&\phantom{-}0&\phantom{-}0&\phantom{-}0&\phantom{-}0\\
%        &&\phantom{-}1&\phantom{-}0&\phantom{-}0&\phantom{-}0\\
%        -1&-1&-1&-1&-1&-1\\
%        &&\phantom{-}&\phantom{-}1&\phantom{-}0&\phantom{-}0\\
%        &&\phantom{-}&\phantom{-}&\phantom{-}1&\phantom{-}0\\
%        &&\phantom{-}&\phantom{-}&\phantom{-}&\phantom{-}1\\
%    \end{bmatrix}.
%\end{equation} %*}

%*} 

\subsection{Finding the poles} %{* 

Having covered $-\Lt(\ToricIdeals)$ with rational cones in $\Sigma_D$, 
    we still need to compute $n^{-(s-s_\gamma)}$ 
    for each $\gamma \in Z_\ast$ in order 
    to determine the poles of the multiple Dirichlet series 
    appearing in the formula for $Z(s)$. 

\begin{proposition}\label{prop:HelpingForMainThm} %{*
Let $s \in (\CC^{\Sigma(1)})^\Gamma$ be any element 
    with corresponding $\Sigma$-linear map 
    $\varphi\colon \ccs_E \to \CC$. 
The flags in $Z_{-\Lambda_j}$ have a common terminal point 
    which is the unique linear map $m_{-\Lambda_j} \in \cs_\CC$ 
    agreeing with $(2\pi i)^{-1}\varphi$ on $-\Lambda_j$. 
Each $\Gamma$-invariant $\Sigma$-linear function 
    on $\ccs_E$ is determined by its values on 
    $e_0,f_1,\ldots,f_r$. 
The function $\varphi-2\pi im_{-\Lambda_j}$ 
    (corresponding to $s - s_\gamma$) 
    takes zero on each of these elements  
    except for $f_j$ 
    where it has the value 
    $-(s_0+\cdots+s_n)$. 
    %\note{need to explain that $s_\gamma$ only depends on $\Lambda_j$} % TODO uncomment
\end{proposition}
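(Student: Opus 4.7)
The plan is to combine Lemma~\ref{lemma:VariationOfPoles} with the strong convexity of the Galois action on $\Sigma$ (as exploited in the proof of Proposition~\ref{prop:nonvanishingR}) for the first assertion, and then perform a direct computation on the generators $e_0, f_1, \ldots, f_r$ of $\ccs$ for the second.

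First I would observe that the fan $\Sigma$ of $\PP/G$ has the property that every maximal cone omits exactly one element of $\Sigma(1) = \{e_0\} \cup \{e_\chi : \chi \in G^\vee\}$, so that \emph{every} proper subset of $\Sigma(1)$ is $\Sigma$-convex. This immediately yields strong convexity of the Galois action in the sense of \S\ref{sec:residual}: Galois orbits and their proper unions are $\Sigma$-convex. Given a flag $\gamma \in Z_{-\Lambda_j}$ with defining $\Sigma$-convex subset $E_\gamma \subset \Sigma_w(1)$, Lemma~\ref{lemma:VariationOfPoles} identifies its terminal point with $(2\pi i)^{-1}\mu_{E_\gamma}(\varphi|_\tau)$ for a cone $\tau \in \Sigma_w(r_w)$ containing $E_\gamma$, and the strong-convexity argument from the proof of Proposition~\ref{prop:nonvanishingR} simplifies this to $(2\pi i)^{-1}\varphi|_\sigma$, where $\sigma = \langle \pi E_\gamma\rangle \in \Sigma_\Gamma(r)$ and $\varphi|_\sigma$ is the unique linear extension of $\varphi|_\sigma$ to $\cs_\CC$. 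The Jacobian computations in \S\ref{sec:VerifyingCompat} show that among all $\sigma \in \Sigma_\Gamma(r)$ only the diagonal Jacobian $J(\sigma_{(j)},\Pi_j) = I$ is stable, so $\Pi_{-\Lambda_j}$-stability forces $\sigma = \sigma_{(j)} = -\Lambda_j$. The common terminal point of every flag in $Z_{-\Lambda_j}$ is therefore $m_{-\Lambda_j} = (2\pi i)^{-1}\varphi|_{-\Lambda_j}$.

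For the second assertion, a $\Gamma$-invariant $\Sigma$-linear function $\varphi$ is constant on each Galois orbit in $\Sigma(1)$, and since each orbit $O_i$ is $\Sigma$-convex we have $\varphi(f_i) = \sum_{e \in O_i}\varphi(e) = -\sum_{e\in O_i}s_e$; together with $\varphi(e_0) = -s_0$ this determines $\varphi$. Set $\psi = \varphi - 2\pi i m_{-\Lambda_j}$. Since $m_{-\Lambda_j}$ is the unique linear extension of $(2\pi i)^{-1}\varphi|_{-\Lambda_j}$ and the elements $e_0, f_i$ for $i\neq j$ lie in $-\Lambda_j = \RR_{\ge 0}\langle e_0, f_i : i \neq j\rangle$, we get $\psi(e_0)=0$ and $\psi(f_i)=0$ for $i\neq j$. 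To evaluate $\psi(f_j)$, I would use the relation $e_0 + \sum_{i=1}^r f_i = 0$, which follows from $v_0 = -\sum_{\chi \in G^\vee}v_\chi$ in \S\ref{sec:DiophProblem} together with $f_i = \sum_{e \in O_i}e$. Writing $f_j = -e_0 - \sum_{i\neq j}f_i$ and applying linearity of $m_{-\Lambda_j}$ gives $2\pi i\, m_{-\Lambda_j}(f_j) = -\varphi(e_0)-\sum_{i\neq j}\varphi(f_i) = \sum_{e\in\Sigma(1)\setminus O_j}s_e$, so subtracting from $\varphi(f_j) = -\sum_{e\in O_j}s_e$ yields $\psi(f_j) = -\sum_{e\in\Sigma(1)}s_e = -(s_0+s_1+\cdots+s_n)$.

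The main obstacle lies in the first assertion, specifically the need to show that $\Pi_{-\Lambda_j}$-stability of a flag pins down the projected cone $\langle \pi E_\gamma\rangle$ to be precisely $-\Lambda_j$; this is exactly what the explicit Jacobian computations of \S\ref{sec:VerifyingCompat} establish by inspection of the three patterns $J(\sigma_{(j)},\Pi_j), J(\sigma_{(i)},\Pi_j)$ ($i \neq j$), and $J(\sigma_{(\infty)},\Pi_j)$. The remainder of the argument is a direct linear algebra exercise using only the single defining relation $e_0 + \sum_i f_i = 0$ among the generators.
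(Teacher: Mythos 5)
Your proposal is essentially correct and its computational core (the second and third assertions) matches the paper's own proof, which in fact \emph{only} addresses the computation: it starts directly from ``since $\varphi - 2\pi i m_{-\Lambda_j}$ vanishes identically on $-\Lambda_j$'' and runs the same calculation you do, namely writing $f_j = -e_0 - \sum_{k\neq j}f_k$, evaluating $2\pi i m_{-\Lambda_j}$ on the right-hand side using that it agrees with $\varphi$ on the generators of $-\Lambda_j$, invoking convexity to get $\varphi(f_k) = \sum_{e\in O_k}\varphi(e)$, and summing to $-(s_0+\cdots+s_n)$. Your treatment of the second assertion (that a $\Gamma$-invariant $\Sigma$-linear function is determined by its values on $e_0,f_1,\ldots,f_r$, because $\Gamma$-invariance fixes the value on each $O_i$ from $\varphi(f_i)=|O_i|\varphi(e)$) is also right and is more explicit than what the paper writes.

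Where you go beyond the paper is in the first assertion. The paper's proof simply assumes the terminal point of every flag in $Z_{-\Lambda_j}$ is $m_{-\Lambda_j}$, implicitly leaning on the discussion around Lemma~\ref{lemma:VariationOfPoles} and the strong-convexity argument in the proof of the main theorem. Your route — Lemma~\ref{lemma:VariationOfPoles} plus strong convexity to reduce the terminal point to $(2\pi i)^{-1}\varphi|_{\langle \pi E_\gamma\rangle}$, then the observation that every proper subset of $\Sigma(1)$ is $\Sigma$-convex for the projective-space fan — is the right machinery. One caveat: your step ``the Jacobian computations in \S\ref{sec:VerifyingCompat} show that only $J(\sigma_{(j)},\Pi_j)=I$ is stable, so $\Pi_{-\Lambda_j}$-stability forces $\sigma = \sigma_{(j)}$'' uses the tables, which exhibit only the canonical ordering of each $\sigma(1)$, whereas a flag may impose an arbitrary ordering on the projected set $\pi E_\gamma$; so strictly speaking one needs to rule out stability for \emph{every} row permutation of $J(\sigma,\Pi_j)$ with $\sigma\neq\sigma_{(j)}$, not merely the displayed one. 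This is a gap the paper also leaves implicit, and it does appear to hold for this fan, but you should flag it rather than treat the printed tables as a complete proof.
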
 %*}

\begin{proof} %{*
Since $\varphi-2\pi im_{-\Lambda_j}$ 
    vanishes identically on $-\Lambda_j$, 
    it follows that $\varphi-2\pi im_{-\Lambda_j}$ 
    is zero on $e_0$ and every $f_k$ with $k \neq j$. 
%This means that 
%\begin{equation} %{*
%    (\varphi-2\pi im_{-\Lambda_j})(f_k)
%    ={-\sum_{e \in O_k}}s_e
%    -2\pi im_{-\Lambda_j}(f_k) = 0 
%\end{equation} %*}
%and similarly for $e_0$. 
Meanwhile 
\begin{equation} %{*
2\pi im_{-\Lambda_j}(f_j) = 
    2\pi im_{-\Lambda_j}\left(-e_0-\sum_{k \neq j} f_k\right) 
    =-\varphi(e_0) - \sum_{\substack{e \not \in O_j\\ e \neq e_0}} \varphi(e)
\end{equation} %*}
and 
$\varphi(f_j) = 
    \sum_{e \in O_j}
    \varphi(e)$ 
by $\Sigma$-linearity and 
    convexity of the Galois action on $\Sigma$. 
Thus 
\begin{equation} %{*
(\varphi-2\pi im_{-\Lambda_j})(f_j)=
    \varphi(f_j)-2\pi im_{-\Lambda_i}(f_j)
= -(s_0+\cdots+s_n).
\end{equation} %*}
\end{proof} %*}

%{* 

Recall the formula \eqref{eqn:HFin} for the finite part of 
the Fourier transform of the height function: 
\begin{equation}%\label{eqn:HFin} %{*
    \widehat{H}_{\mathrm{fin}}(\chi,-s,D) 
    =
    \prod_{p \in \Places{\QQ}^{\mathrm{fin}}}
    \sum_{n \in T(\QQ_p)/\IntegralCompact{p}}
    \chi_p(n)^{-1} 
    \charfn^D_v(n)
    p^{\frac{1}{e_p}\varphi(\lt_w(n))} 
    =
    %\tfrac{1}{[\MaximalCompact{}:\IntegralCompact{}]}
    \sum_{n \in \ToricIdeals}
    \chi(n)^{-1}
    n^{-s}.
\end{equation} %*}
To deduce the location of the right-most pole of $Z(s)$ 
    from Theorem~\ref{thm:hzfformula}, 
    we first need a more precise description of 
    the Euler factors of $\widehat{H}_{\mathrm{fin}}$. 
For regular fans this is accomplished
    by \cite[Prop.~2.2.3]{IMRN}, 
    but for our application to polynomials 
    we must consider simplicial fans. 
When a cone in the fan is not regular, 
    the local Euler factors are modified 
    by an additional factor $Q_\Delta$ 
    which generally contributes a pole 
    to the Dirichlet series. 
However, we will show such poles occur to the left 
    of the right-most pole. 
%    when $n > 3$; 
%    when $n = 3$ this pole occurs precisely at the right-most pole 
%    and increases the multiplicity by one, 
%    which accounts for the logarithm in 
%    the main term of Theorem~\ref{thm:mainthm}. 
%    \note{verify these claims}
%    \note{what about when $n = 2$?}
%Let $e_1,\ldots,e_n \in \Sigma(1)$ be the standard basis vectors. 
Let 
    $\Delta = \{\sum_{\chi \in G^\vee} a_\chi e_\chi 
    : 0\leq a_\chi <1 \}$. 
    %be a fundamental domain for the sublattice of $N_E$ 
    %generated by $\sigma_+(1)$ 
    %where $\sigma_+ = \mathbb R_{\geq 0} \mathscr A$. 
%    \note{clean this up, define $E_1,\ldots,E_r$ notation}
%Let $\mathcal A = \QQ[G]$ and let $O_{\mathcal A}$ 
%    be its ring of integers. 
%The Galois orbits in $e_1,\ldots,e_n$ are in bijection 
%    with the simple factors of $\mathcal A$. 
%For each prime $p$, 
%    we may identify 
%    the $\Gamma$-set of primes of $\mathcal A$ lying over 
%    $p$ with the set of orbits $\{e_1,\ldots,e_n\}/\Gamma_p$, 
%    and thus 
%    with the generators of $\RR_{\geq 0} \mathscr A \cap N_p$. 
For each Galois orbit $O_j \subset \Sigma(1) - \{e_\infty\}$, 
    let $T_j=R^{E_j}_\QQ \GG_m$ be 
    the torus over $\QQ$ determined by 
    the Galois permutation action on $O_j$. 
There are natural homomorphisms $T_j \to T$ 
    of algebraic tori induced by the Galois equivariant 
    lattice maps $\ZZ O_j \to \ccl_E$. 

\begin{proposition}\label{prop:localEulerFactor} %{*
Let $p$ be a finite place and 
    let $w$ be a place of $E$ lying over $p$. 
Let $s \in (\CC_+^{\Sigma(1)})^\Gamma$ 
    and let $\chi$ be a character of $T(\QQ_p)$ 
    which is trivial on $\IntegralCompact{p}$. 
Let $O_1^w,\ldots,O_{r_w}^w$ be the $\Gamma_w$-orbits in 
    $\{e_\chi\}_{\chi \in G^\vee}$. 
Then 
\begin{equation} %{*
\widehat{H_p}(\chi,-s,D)
    =
    Q_p(\chi,s)
    \prod_{j=1}^{r_w}
    \left(1-\chi(O_j^w)^{-1}p^{\frac{1}{e_p}\varphi(O_j^w)}\right)^{-1}
    =
    Q_p(\chi,s)
    \prod_{j=1}^{r}
    L_{p}(\chi_j,s_{O_j})
\end{equation} %*}
where $\varphi(O_j^w) = \sum_{e \in O_j^w} \varphi(e)$, 
    $\chi(O_j^w) = \prod_{e \in O_j^w} \chi(e)$, 
    $s_{O_j} = -\varphi(e)$ for any $e \in O_j$, 
    $\chi_j$ is the pullback of $\chi$ along 
    $T_j(\QQ_p) \to T(\QQ_p)$, 
    $L_p$ is the Euler factor at $p$ of 
    the Hecke $L$-function of $\chi_j$, 
    and 
\begin{equation} %{*
    Q_{p}(\chi,s)
    =
    \sum_{n \in \im \lt_w \cap \Delta }
    \chi(n)^{-1}p^{\frac{1}{e_p}\varphi(n)} . 
\end{equation} %*}
In particular, 
    the multiple Dirichlet series 
    $\widehat{H}_{\mathrm{fin}}(\chi,-s,D)$ 
    for an automorphic character $\chi$ of level 
    $\IntegralCompact{}$ is equal to 
\begin{equation}\label{eqn:HFin2} %{*
    Q(\chi,s) 
    \prod_{j=1}^r
    L(\chi_j,s_{O_j})
\end{equation} %*}
where $Q(\chi,s) = \prod_{p \in \Places{\QQ}^{\mathrm{fin}}}
    Q_p(\chi,s)$. 
Set $o_k = |O_k|$. 
The multiple Dirichlet series $Q(\chi,s)$ 
    converges absolutely on the half-space 
    $\{\frac12\mathrm{Re}(o_2s_{O_2}+\cdots+o_rs_{O_r})>1\} 
    \subset (\CC_+^{\Sigma(1)})^\Gamma$, 
and the multiple Dirichlet series 
    $\widehat{H}_{\mathrm{fin}}(\chi,-s,D)$ 
    converges absolutely on the polyhedral region 
\begin{equation} %{*
\{\tfrac12\mathrm{Re}(o_2s_{O_2}+\cdots+o_rs_{O_r})>1\}
    \cap \{\mathrm{Re}(s_{O_2})>1\} \cap \cdots \cap 
    \{\mathrm{Re}(s_{O_r}) > 1\}.
\end{equation} %*}
If $\chi = \xi \psi \chi_m$ and $\xi \psi \neq 1$ then 
    the multiple Dirichlet series $Q(\chi,s)$ 
    converges conditionally on the half-space 
    $\{\mathrm{Re}(o_2s_{O_2}+\cdots+o_rs_{O_r})>0\} 
    \subset (\CC_+^{\Sigma(1)})^\Gamma$, 
and the multiple Dirichlet series 
    $\widehat{H}_{\mathrm{fin}}(\chi,-s,D)$ 
    converges conditionally on the polyhedral region 
\begin{equation} %{*
    \{\mathrm{Re}(s_{O_2})>0\} \cap \cdots \cap 
    \{\mathrm{Re}(s_{O_r}) > 0\}.
\end{equation} %*}
Finally, there is a constant $c > 1$ such that 
for any $u \in \CC$ with positive real part 
\begin{equation}\label{eqn:Qp} %{*
    Q_{p}(1,us_{AC}) = 
    \begin{cases}
        1 + \phi(n) p^{-\frac{n-1}{2}u}
        + O(p^{-\frac{c(n-1)}{2}u})
            &\text{if $p$ split in $E/\QQ$,}\\
        1 + O(p^{-\frac{c(n-1)}{2e_p}u})
        & \text{otherwise}\\
    \end{cases}
\end{equation} %*}
where $\phi$ is the Euler totient function, 
$\omega= \frac 1n(e_2+2e_3+\cdots+(n-1)e_n)$, 
and the implied constant is independent of $p$. 
%and $O(p^{\frac{c}{e_p}\varphi(\pi\omega)})$ 
%    denotes a polynomial in 
%    $p^{\frac{c}{e_p}\varphi(\pi\omega)}$ 
%    with no constant term 
%    which is independent of $p$. 
If $G$ has prime order, then this simplifies to 
\begin{equation} %{*
    Q_{p}(1,s) = 
    \begin{cases}
        1 + (n-1) p^{-\frac{n-1}{2}u}
            &\text{if $p$ split in $E/\QQ$,}\\
        1& \text{otherwise}.
    \end{cases}
\end{equation} %*}
%If $p$ is not split in $E/\QQ$ then 
%    $\im \lt_w \cap \Delta = \{0\}$ and $Q_p(\chi,s) = 1$. 
%If $p$ is split in $E/\QQ$ then the set $\im \lt_w \cap \Delta$ 
%consists of $0 \in \ccl_E$ and the Galois orbit of 
%    $\omega = \frac 1n(e_2+2e_3+\cdots+(n-1)e_n)$. 

\end{proposition}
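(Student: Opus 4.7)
Proof proposal. The plan is to compute the local factor $\widehat{H_p}(\chi,-s,D)$ from Lemma~\ref{lemma:finiteheightfouriertrfm} and factor the result via the fundamental-parallelepiped decomposition of the simplicial cone dual to $U = Y - D$. Since $\IntegralCompact{p} = \MaximalCompact{p}$ in our setting, Proposition~\ref{prop:basicfacts} identifies $T(\QQ_p)/\IntegralCompact{p}$ with $\im \lt_w \subseteq \ccl_w$, and $D$-integrality of $t$ amounts to $\lt_w(t) \in \sigma_0 := \RR_{\geq 0}\langle e_\chi : \chi \in G^\vee\rangle$, the unique maximal cone of $\Sigma$ not containing $e_0$.

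The main step is to write every $n \in \sigma_0 \cap \im \lt_w$ uniquely as $n = n_0 + m$ where $n_0 \in \Delta$ has each $e_\chi$-coordinate in $[0,1)$ and $m$ is a nonnegative integer combination of the $e_\chi$. By uniqueness both components lie in $\im \lt_w$, so $m$ is in $(L \cap \im \lt_w) \cap \sigma_0$ for $L = \ZZ\langle e_\chi : \chi \in G^\vee\rangle$. For unramified $p$ this intersection equals $L^{\Gamma_w}$, whose intersection with $\sigma_0$ is the set of nonnegative integer combinations of the $\Gamma_w$-orbit sums $f_j^w = \sum_{e \in O_j^w} e$, so summing the geometric series in each $f_j^w$ gives $\widehat{H_p} = Q_p(\chi,s)\prod_j (1 - \chi(O_j^w)^{-1} p^{\varphi(O_j^w)/e_p})^{-1}$. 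Under the bijection between $\Gamma_w$-orbits in $O_j$ and primes $\mathfrak{p}$ of $E_j$ above $p$, each factor matches the local Euler factor $L_\mathfrak{p}(\chi_j, s_{O_j})$; taking the Euler product yields the global identity \eqref{eqn:HFin2}. The ramified case is analogous after bookkeeping the finite-index inclusion $\im \lt_w \subsetneq \ccl_w$ against the local invariants $e_\mathfrak{p}, f_\mathfrak{p}$ of each $E_j$.

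For the convergence regions, the crucial observation is that every nonzero $n_0 \in \Delta \cap \im \lt_w$ has zero $e_1$-coefficient, since the generators $\omega_j$ of $\ccl_E$ over $L$ satisfy $\ell(\chi(c_j^{-1})) = 0$ when $\chi$ is trivial. Therefore $\varphi(n_0)$ depends only on $s_{O_2},\ldots,s_{O_r}$, and combining a uniform lower bound on the coordinate sum of nonzero $n_0$ with $|\Delta \cap \ccl_w| \leq |G|$ gives $Q_p(\chi,s) - 1 = O(p^{-\mathrm{Re}(o_2 s_{O_2}+\cdots+o_r s_{O_r})/2})$ uniformly in $p$. This yields the claimed absolute-convergence half-space for $\prod_p Q_p$; combined with the absolute-convergence region of each $L(\chi_j, s_{O_j})$ we obtain the stated polyhedron. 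The conditional-convergence region for $\xi\psi \neq 1$ follows from partial summation applied to the oscillation of $\chi(n_0)^{-1}$ over the finitely many $n_0 \in \Delta \cap \im \lt_w$.

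The explicit form of $Q_p(1, us_{AC})$ in the cyclic case is obtained by enumerating $\Delta \cap \ccl_E = \{k\omega \bmod L : 0 \leq k < n\}$, where $\omega = \sum_\chi \ell(\chi(c^{-1})) e_\chi$; a direct computation (using the bijectivity of $j \mapsto kj \bmod n$ on the residues with a fixed value of $\gcd$ with $n$) shows the coordinate sum of $k\omega \bmod L$ equals $(n-\gcd(k,n))/2$. For split $p$, $\im \lt_w = \ccl_E$ and the elements with $\gcd(k,n) = 1$ contribute exactly $\phi(n) p^{-(n-1)u/2}$; for prime $n$ this is the entire nontrivial contribution, giving the stated simplification. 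For non-split $p$ only those $k\omega \bmod L$ fixed by $\Gamma_w$ survive, and the surviving weights $(n-d)/2$ are scaled by $1/e_p$ in the exponent of $p$. The main technical obstacle is stating the asymptotic cleanly for composite $n$: then intermediate divisors $1 < d < n$ contribute terms of size $p^{-(n - n/d)u/2}$ that can exceed the $\phi(n)$-term, so either the expansion must be refined by divisor or the error term must be reinterpreted, and this refinement is what ultimately feeds into the $\sum_{d \mid n} \kappa_d$ structure of Theorem~\ref{thm:mainthm}.
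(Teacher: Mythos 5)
Your approach — decomposing every $n\in \sigma_0\cap\im\lt_w$ as $n_0 + m$ with $n_0$ in the half-open fundamental parallelepiped $\Delta$ and $m$ a nonnegative integer combination of the generators of the restricted cone — is exactly the content of the generating-function formula for lattice points in a simplicial cone that the paper cites, so the proofs of the factorization and the Euler-factor identification are the same in substance. (Minor quibble: "by uniqueness both components lie in $\im\lt_w$" should really be "since the floor operation commutes with the $\Gamma_w$-permutation action, $m$ is $\Gamma_w$-fixed and hence in $\ccl_w$; thus so is $n_0$", but the conclusion is right.) The observation that every nonzero $n_0\in\Delta\cap\ccl_E$ has zero $e_1$-coefficient is the right reason the exponent $\varphi(n_0)$ involves only $s_{O_2},\ldots,s_{O_r}$, and your treatment of the convergence regions matches the paper's.

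Your closing paragraph raises a point that is more than a stylistic obstacle: the asymptotic \eqref{eqn:Qp} as stated is incorrect for composite $n$, and the paper's own proof contains the error. The element $k\omega \bmod L$ with $d=\gcd(k,n)$ has coordinate sum $(n-d)/2$, so the minimum over nonzero $k$ is attained at $d=n/p$ for $p$ the smallest prime factor of $n$ — not on the Galois orbit of $\omega$ ($d=1$) — giving exponent $(n-n/p)/2 < (n-1)/2$ whenever $n$ is composite. Concretely, for $n=6$ and $p$ split one gets $Q_p(1,us_{AC}) = 1 + p^{-3u/2} + 2p^{-2u} + 2p^{-5u/2}$, where the claimed formula would put the larger terms $p^{-3u/2}$ and $2p^{-2u}$ inside $O(p^{-5cu/2})$ with $c>1$, which is impossible. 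So \eqref{eqn:Qp} as written is valid only when $n$ is prime, and the step in the paper's proof asserting that the minimum of $\mathrm{Re}(\varphi(n_0))$ is achieved on the Galois orbit of $\omega$ is false for composite $n$. However, your final sentence misattributes the needed fix: the $\sum_{d\mid n}\kappa_d$ structure in Theorem~\ref{thm:mainthm} comes from the sum over the maximal cones $\sigma_{(d)}$ of the subfan $\Sigma_D$ (indexed by the Galois orbits of size $\varphi(d)$ in $C_n^\vee$), not from a divisor-refined expansion of $Q_p$; the composite-$n$ issue in $Q_p$ instead affects the identification of the constant $\kappa_0$ and the choice of $Q_1$ in \S\ref{sec:ParamToricId}.
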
 %*}

\begin{proof} %{*
    The first formula is obtained by applying 
    a known formula for the exponential generating function 
    of integer points in polyhedra \cite[p.~110]{zbMATH05322358}. 
The series $Q(\chi,s)$ is dominated by 
\begin{equation} %{*
    \prod_p
    \sum_{n \in \ccl_E \cap \Delta }
    p^{\frac{1}{e_p}\varphi(n)} . 
\end{equation} %*}
The convergence of this series is determined by 
    those nonzero elements of $\ccl_E \cap \Delta$ 
    for which $\mathrm{Re}(\varphi(n))$ is smallest; 
these minima occur when $n$ is in the Galois orbit of $\omega$. 
We have that 
\begin{align} %{*
    \pi \omega 
    &= \frac{1}{|\Gamma|}
        \sum_{\chi \in G^\vee} \ell(\chi(c^{-1})) 
        \sum_\gamma
        \gamma e_\chi \\
    &=
    \sum_{j=2}^b \frac{1}{o_j}
        f_j
        \sum_{e_\chi \in O_j} 
        \ell(\chi(c^{-1})) \\
    &=
    \sum_{j=2}^b \frac{1}{o_j}
        f_j
        \frac{o_j}{2} 
        \qquad (n \geq 3)\\
    &= \frac12(f_2+\cdots+f_r).
\end{align} %*}
Since the Galois action on $\Sigma$ is convex 
    we know that $\varphi(\omega') = \varphi(\pi \omega)$ 
    for any $\omega'$ in the Galois orbit of $\omega$, 
    and thus 
    $\varphi(\omega') = \frac12(\varphi(O_2)+\cdots+
    \varphi(O_r)) = -\frac12(o_2s_{O_2}+\cdots+o_rs_{O_r})$. 
The claimed regions of convergence for $Q$ and 
    $\widehat{H}_{\mathrm{fin}}$ now follow from 
    the well-known regions of convergence for Hecke $L$-functions. 

Now we prove the last claim. 
The formula for $Q_p$ follows from summing over 
    $n\in N_E \cap \Delta$ ordered by 
    $\mathrm{Re}(\varphi(n))$. 
The minimal value of 
    $\frac12\mathrm{Re}(o_2s_{O_2}+\cdots+o_rs_{O_r})$ 
    is $\frac{n-1}{2}\mathrm{Re}(u)$ 
    which is achieved on the Galois orbit of $\omega$ 
    which has size $[E:\QQ]=\phi(n)$. 
The set $\im \lt_w \cap \Delta$ 
    is contained in $\ccl_E \cap \Delta$ 
    which has $[\ccl_E: \ccl_E'] = n$ elements 
    where $\ccl_E' = \ZZ \langle e_1,\ldots,e_n\rangle$. 
If $G$ has prime order then the orbit of $\omega$ 
    under $\Gamma=\mathrm{Gal}(\QQ(\zeta_n)/\QQ)$ 
    has size $n-1$. 
Thus if $p$ is split then $\im \lt_w \cap \Delta$ 
    entirely consists of $0 \in \ccl_E$ 
    and the Galois orbit of $\omega$. 
As $\omega$ is moved by any nontrivial element of $\Gamma$, 
    the set $\im \lt_w \cap \Delta$ is 
    $\{0\}$ if $p$ is not split. 
\end{proof} %*}

%*} 

%*} 

\subsection{Parametrizing toric ideals}\label{sec:ParamToricId} %{*

We will use the above description of $\widehat{H_p}$ 
    to describe toric ideals in terms of ideals 
    of number fields. 
To ease the analysis, we will restrict attention to 
    those toric ideals which are needed to 
    determine the asymptotic rate of growth 
    of the sum of Dirichlet coefficients 
    of the height zeta function $Z(s)$. 
Let $Sp_E$ denote the set of square-free 
    positive integers which are only divisible 
    by primes that are split in $E/\QQ$. 
By the description of the Euler factor \eqref{eqn:Qp},  
    we see that primes which are not split in $E/\QQ$ 
    may be ignored in the sense that 
\begin{equation} %{*
    \frac{\prod_{p\in Sp_E}(1+\phi(n) p^{-\frac{n-1}{2}u})}{Q(1,us_{AC})}
\end{equation} %*}
is regular and nonvanishing at $u=(\frac{n-1}{2})^{-1}$. 
We set 
\begin{equation} %{*
    Q_1(u) =
    \prod_{p\in Sp_E}(1+\phi(n) p^{-\frac{n-1}{2}u})
    =
    \sum_{M \in Sp_E}
        \phi(n)^{\omega(M)}
        M^{-\frac12(n-1)u}
\end{equation} %*}
where $\omega(M)$ is the number of distinct prime factors of $M$. 
Let $\ToricIdeals_1 \subset \ToricIdeals$ 
    denote the subset of toric ideals in bijection 
    with $Sp_E \times \mathrm{Ideals}(O_{E_1}) 
    \times \cdots \times \mathrm{Ideals}(O_{E_r})$. 
We will regard elements of $\ToricIdeals_1$ 
    as weighted by the factor $\phi(n)^{\omega(M)}$. 
For any $n \in \ToricIdeals_1$ we have that 
\begin{equation}\label{eqn:nFormula} %{*
    n^{-s}=
    M^{-\frac12(o_2s_{O_2}+\cdots+o_rs_{O_r})}
    N(I_1)^{-s_{O_1}}
    \cdots
    N(I_r)^{-s_{O_r}}
    %=
    %(M^{\frac12(n-1)}
    %N(I_1)
    %\cdots
    %N(I_r))^{-u}
\end{equation} %*}
and 
\begin{equation}\label{eqn:HfinToricIdealsExplicit} %{*
    %\widehat{H}_{\mathrm{fin}}(\chi,-s,D) 
    %=
    %\sum_{n \in \ToricIdeals}
    %    n^{-s}
    %\approx 
    \sum_{n \in \ToricIdeals_1}
        n^{-s}
    =
    \sum_{M,I_1,\ldots,I_r}
        \phi(n)^{\omega(M)}
    M^{-\frac12(o_2s_{O_2}+\cdots+o_rs_{O_r})}
    N(I_1)^{-s_{O_1}}
    \cdots
    N(I_r)^{-s_{O_r}}
\end{equation} %*}
where $I_1\times \cdots \times I_r$ 
    is an ideal of $O_{E_1} \times \cdots \times O_{E_r}$. 
(The formula for $n^{-s}$ in terms of norms of ideals 
is explained at the end of Example~\ref{example:Restriction2}.) 

%*}

\subsection{Proof of the main theorem on polynomials} %{*

We begin by restricting $Z(s)$ to the complex line 
    spanned by the anticanonical line bundle 
    $[-K] \in \Pic(Y)$. 
Let $s_{AC} \in (\ZZ^{\Sigma(1)})^\Gamma$ 
    denote the element with all coordinates equal to $1$. 
It is well-known that $s_{AC}$ maps to $[-K]$ 
    under the canonical map 
    $(\ZZ^{\Sigma(1)})^\Gamma \to \Pic(Y)$. 
Let $s = u s_{AC}$ for $u \in \CC$ 
    with sufficiently large real part. 
We write $Z(us_{AC}) = \sum_N a_N N^{-(n+1)u}$ 
    and let $A(X) = \sum_{N < X^{\frac{1}{n+1}}} a_N$. 

By the Poisson formula, 
\begin{equation} %{*
    Z(us_{AC}) = 
    \frac{|\ToricROU|}{|C||R|P}
    \sum_{\xi \in R^\vee} 
    \sum_{\psi \in C^\vee}
    \int_{\cs}
    \widehat{H}_{\mathrm{fin}}(\xi\psi\chi_m,-s,D)
    \widehat{H}_{\infty}(\xi\chi_m ,-s)\, dm
    =\sum_{\xi,\psi}
    Z_{\xi\psi}(u)
\end{equation} %*}
where 
\begin{equation} %{*
    Z_{\xi\psi}(u) = 
    \frac{|\ToricROU|}{|C||R|P}
    \int_{\cs}
    \widehat{H}_{\mathrm{fin}}(\xi\psi\chi_m,-s,D)
    \widehat{H}_{\infty}(\xi\chi_m ,-s)\, dm.
    %=\sum_{N}a_N(\xi\psi)N^{-u}
\end{equation} %*}
%and let $A_{\xi\psi}(X)=\sum_{N< X}a_N(\xi\psi)$. 
We evaluate $Z_{\xi\psi}$ 
    by distributing the integral over $\cs$ 
    into the sum over toric ideals 
    and then applying the residue formula 
    (Theorem~\ref{thm:ResidueFormula}). 
(The hypotheses for the residue formula 
    were verified in \S\ref{sec:VerifyingCompat}.) 
This results in the $\xi \psi$-summand 
    of the formula for $Z(s)$ 
    in Theorem~\ref{thm:hzfformula}, which is  
\begin{equation}\label{eqn:ZxipsiPoisson} %{*
    Z_{\xi\psi}(u)=
    \frac{|\ToricROU|}{|C||R|P}
    \sum_{\tau \in \Sigma_D}
    \sum_{\gamma \in Z_{\sigma_\tau}}
    L(s+2\pi im_{\gamma \xi},(\xi\psi)^{-1},-\tau^\circ,D)
    R_\gamma(us_{AC}+2\pi im_\xi).
\end{equation} %*}

To determine the main term of $A(X)$ 
    it suffices to consider maximal cones 
    $\sigma$ in the sum over cones $\tau \in \Sigma_D$. 
Since the difference between a cone $\sigma$ 
    and $\sigma^\circ$ is formed from lower-dimensional cones, 
    to compute the main term we may 
    use either $\sigma$ or $\sigma^\circ$. 
So we turn to the Dirichlet series 
    $L(s+2\pi im_{\gamma \xi},
        (\xi\psi)^{-1},-\sigma,D)$ 
    where $\sigma$ is a maximal cone of $\Sigma_D$. 
The quantity $s+2\pi im_{\gamma \xi}$ may be evaluated 
    with the help of Lemma~\ref{lemma:VariationOfPoles} 
    which shows that 
    $2\pi im_{\gamma\xi}=\mu_{E_{\gamma}}(\varphi|_\tau)
    -2\pi i \mu_{E_\gamma} m_\xi$ 
    where $E_\gamma \subset \Sigma_w(1)$ 
    is any defining set of generators for $\gamma$. 
Since the Galois action is convex we have 
    $\mu_{E_{\gamma}}(\varphi|_\tau) = \varphi|_{\pi E_{\gamma}}$ 
    where $\varphi|_{\pi E_{\gamma}}\in \cs\otimes \CC$ 
    is the unique linear map agreeing with $\varphi$ 
    on the basis $\pi E_{\gamma}$. 
Since the Galois action is strongly convex, 
    $\varphi|_{\pi E_{\gamma}}=\varphi|_{\sigma}$ 
    where $\sigma=\langle\pi E_{\gamma}\rangle$ 
    is a maximal cone of $\Sigma_D$. 
Let $s_\gamma \in (\CC^{\Sigma(1)})^\Gamma$ 
    correspond to $\varphi|_{\sigma}$ 
    so $s+2\pi im_\gamma = s-s_\gamma$. 
We conclude that $s+2\pi im_{\gamma \xi} 
= s-s_\sigma - 2\pi i \mu_{E_\gamma} m_\xi$ 
where $s_\sigma=s_\gamma$ 
only depends on $\sigma$. 
    
The maximal cones of $\Sigma_D$ were described in 
    \S\ref{sec:SubfanDescrip}: 
    they have the form 
    $\sigma_{(j)} = -\Lambda_j$ for $1 \leq j \leq r$ 
    where $\Lambda_j$ consists of 
    those elements $b_1f_1+\cdots+b_rf_r \in \ccs$ 
    with $b_i \geq 0$ and $b_j$ maximal. 
%To determine the asymptotics for 
%    the sum of the coefficients of this series, 
%    we will need a description of $\Lt(n)$. 
%Although this Dirichlet series does not admit an Euler product, 
%    we will describe its Dirichlet coefficients well enough 
%    that we can directly determine its contribution 
%    to the main term of 
%    $A_{\xi\psi}(X)=\sum_{N< X}a_N(\xi\psi)$. 
%As explained in \S\ref{sec:ParamToricId}, 
We may deduce the main term up to a constant from 
    the part of 
    $L(s-s_\sigma-2\pi i \mu_{E_\gamma}m_\xi,
        (\xi\psi)^{-1},-\sigma,D)$ 
    summed over the subset 
    $\ToricIdeals_1 \subset \ToricIdeals$. 
Namely, if $c$ is the constant of the main term 
    of the summatory function of 
    the Dirichlet series 
    $L(s-s_\sigma-2\pi i \mu_{E_\gamma}m_\xi,
        (\xi\psi)^{-1},-\sigma,D)$ 
    summed over the subset 
    $\ToricIdeals_1 \subset \ToricIdeals$, 
    then the constant of the main term 
    for the summatory function of 
    $L(s-s_\sigma-2\pi i \mu_{E_\gamma}m_\xi,
        (\xi\psi)^{-1},-\sigma,D)$ 
    without restricting to 
    $\ToricIdeals_1 \subset \ToricIdeals$ 
    may be obtained by multiplying $c$ by 
    the value of $Q(1,us_{AC})/Q_1(u)$ 
    at the abscissa of convergence 
    (cf.~\S\ref{sec:ParamToricId}). 
We saw in \S\ref{sec:ParamToricId} 
    that each toric ideal $n\in \ToricIdeals_1$ 
    corresponds to a tuple $(M,I_1,\ldots,I_r)$ 
    where $M \in Sp_E$ and 
    $I_1\times \cdots \times I_r$ 
    is an ideal of $O_{E_1} \times \cdots \times O_{E_r}$. 
The formulas from \S\ref{sec:ParamToricId} 
    imply that 
\begin{equation}\label{eqn:LnExpression} %{*
    \Lt(n)=
    %\pi \log n = 
    f_1 \log N(I_1)
    +\frac{1}{o_2}
    f_2 \left( \log N(I_2) + \frac12 o_2\log M \right)
    +\cdots +\frac{1}{o_r}
    f_r \left(\log N(I_r) + \frac12 o_r\log M \right). 
\end{equation} %*}
%We also define 
%$$\log n=
%    \sum_{v\text{ finite}}
%    n_v \frac{1}{e_v}\log q_v
%    =
%e_1 \log m_1 + \omega \log m_2 + 
%e_3 \log m_3 + \cdots + e_n \log m_n
%$$
%where $m_1,\ldots,m_n \in \RR_{\geq 1}$ 
%and 
%\begin{equation} %{*
%    \omega = 
%    \sum_{\chi \in G^\vee}
%    \ell(\chi(c^{-1})) 
%    e_\chi
%    =\frac 1n (e_2+2e_3+\cdots+(n-1)e_n).
%\end{equation} %*}
%This defines a bijection 
%    $\ToricIdeals \to \ZZ_{\geq 1}^n : n \mapsto (m_1,\ldots,m_n)$. 

First we consider $\Lambda_1$. 
From \eqref{eqn:LnExpression} we see that 
$\Lt( n) \in \Lambda_1$ if and only if 
%$m_2 \leq m_1^2$, 
%$N_2' + \frac12 o_2 \log m_2 \leq o_2 \log m_1$, and  
%$N_k + \frac12 o_k \log m_2 \leq o_k \log m_1$ for all 
%$3 \leq k \leq r$. 
\begin{equation} %{*
    N(I_k) \sqrt{M}^{o_k}\leq N(I_1)^{o_k}
    \quad
    \text{for all $k \neq 1$}. 
\end{equation} %*}
Similarly $\Lt(n) \in \Lambda_\ell$ for $2 \leq \ell \leq r$ 
if and only if 
\begin{equation} %{*
    N(I_1)^{o_\ell} \leq N(I_\ell)\sqrt{M}^{o_\ell},\quad
    N(I_k)^{\frac{o_\ell}{o_k}} \leq N(I_\ell)
    \quad
    \text{for all $k \neq 1$}. 
\end{equation} %*}
We sum over tuples $(M,I_1\ldots,I_r)$ 
    satisfying one of the above systems of inequalities 
    to determine the Dirichlet coefficients. 
    %$L(s-s_\sigma-2\pi i \mu_{E_\gamma}m_\xi,
    %    (\xi\psi)^{-1},-\sigma,D)$. 
First note that 
   $$L(s-s_\sigma-2\pi i \mu_{E_\gamma}m_\xi,
        (\xi\psi)^{-1},-\sigma,D) 
    =L(s-s_\sigma,
        (\xi\psi\chi)^{-1},-\sigma,D)$$ 
        where $\chi(n) = n^{-2\pi i \mu_{E_\gamma}m_\xi}$ 
        is a Hecke character of level $\IntegralCompact{}$. 
For the maximal cone $\sigma=-\Lambda_1$ 
we use \eqref{eqn:nFormula} and 
Proposition~\ref{prop:HelpingForMainThm} 
%and \eqref{eqn:LnExpression} 
to see that 
\begin{equation}\label{eqn:nForMaximalCone1} %{*
    n^{-(s-s_{\sigma})} = 
        N^{-t}, 
    \quad N = N(I_1),
    \quad t=s_0+\cdots+s_n=(n+1)u.
\end{equation} %*}
From the inequalities defining $\Lambda_1$ we find that 
the Dirichlet coefficient of $N^{-t}$ is 
\begin{equation}\label{eqn:LeadingDirCoeffSummed} %{*
    \sum_{M=1}^{N^2}
        1_{Sp_E}(M)
        \phi(n)^{\omega(M)}
    \prod_{k=2}^r
    \left(
    \sum_{\substack{I_k\subset O_{E_k}\\1 \leq N(I_k)\leq(N/\sqrt{M})^{o_k}}}
        (\xi\psi\chi)_k(I_k)^{-1}
    \right).
\end{equation} %*}
Let $\delta_{(\xi\psi)_k} = 1$ if $(\xi\psi)_k = 1$ 
and otherwise $0$. 
We also set $\beta_k\coloneqq 1-\frac{2}{1+o_k}$. 
By known zero-free regions of Hecke $L$-functions this is 
\begin{equation} %{*
    \sum_{M=1}^{N^2}
        1_{Sp_E}(M)
        \phi(n)^{\omega(M)}
        \prod_{k=2}^r
        (\res[\zeta_{E_k}]\delta_{(\xi\psi)_k}
        (N/\sqrt{M})^{o_k}+O((N/\sqrt{M})^{o_k\beta_k})).
\end{equation} %*}
This is largest when $(\xi\psi)_k = 1$ for all $k$, 
    which occurs if and only if 
    $\xi\psi \in \ker \gamma^\ast$ 
    where $\gamma$ is 
    the surjective homomorphism $P \to T$ 
    from \S\ref{sec:weak}. 
Since $\ker \gamma^\ast$ is torsion 
    and $R^\vee$ is free we must have 
    $\xi = 1$ if $\xi \psi \in \ker \gamma^\ast$. 
For any such $\xi\psi=\psi\in \ker \gamma^\ast$ 
the Dirichlet coefficient is 
\begin{equation}\label{eqn:LargestDirCoeff} %{*
    \res[\zeta_{E_2}]\cdots \res[\zeta_{E_r}]
    \sum_{M=1}^{N^2}
        1_{Sp_E}(M)
        \phi(n)^{\omega(M)}
        (N/\sqrt{M})^{n-1}
        ((1+
        O((N/\sqrt{M})^{\sum_k o_k \beta_k - (n-1)})).
    % Attempt at general formula when n is prime:
    %&\begin{cases}
    %\note{
    %\res_1[\zeta_{E_2}]
    %\prod_{p \in Sp_E}
    %\left(1+2p^{-??}\right)
    %    N^3\log N(1+o(N))}&\text{if $n = 3$},\\
    %\res_1[\zeta_{E_2}]
    %\prod_{p \in Sp_E}
    %\left(1+(n-1)p^{-\frac{n-1}{2}}\right)
    %N^n(1+o(N))&\text{if $n \geq 5$}.\\
    %\end{cases}
\end{equation} %*}
%where $\delta_2 = 1-2/(1+[E_2:\QQ])=1-\frac{2}{n}$. 
Observe that 
\begin{equation} %{*
    \sum_{M=1}^{N^2}
        1_{Sp_E}(M)
        \phi(n)^{\omega(M)}
        (N/\sqrt{M})^{n-1}
        =
    N^{n-1}
    \prod_{p \in Sp_E}
    \left(1+\phi(n)p^{-\frac{n-1}{2}}\right)
\end{equation} %*}
and 
\begin{multline} %{*
    \sum_{M=1}^{N^2}
        1_{Sp_E}(M)
        \phi(n)^{\omega(M)}
        (N/\sqrt{M})^{n-1}
        O((N/\sqrt{M})^{\sum_k o_k \beta_k - (n-1)})\\
    =
    N^{\sum_k o_k \beta_k}
    \sum_{M=1}^{N^2}
        1_{Sp_E}(M)
        \phi(n)^{\omega(M)}
        O((1/\sqrt{M})^{\sum_k o_k \beta_k }).
    %&=
    %    \prod_{p \in Sp_E}
    %    \left(1+\phi(n)p^{-\frac{n-1}{2}}\right)
    %    N^n
\end{multline} %*}
We have that 
$\sum_k o_k \beta_k =n-1-2\sum_{k|n,k>1}(1+\phi(k))^{-1}>n-1-2\sum_{k|n,k>1}1/k\geq 2$, 
so this secondary term is bounded by 
    $O_\varepsilon(N^{\sum_k o_k \beta_k+\varepsilon})
    =O_\delta(N^{n-1-\delta})$ for some $\delta>0$. 
In conclusion, the coefficient of $N^{-t}$ 
    in the multiple Dirichlet series 
    $L(s-s_\sigma,(\xi\psi\chi)^{-1},-\sigma,D)$ 
    restricted to $\ToricIdeals_1$ 
    if $\xi=1$ and $\psi \in \ker \gamma^\ast$ is 
\begin{equation} %{*
    \res[\zeta_{E_2}]\cdots \res[\zeta_{E_r}]
    \prod_{p \in Sp_E}
    \left(1+\phi(n)p^{-\frac{n-1}{2}}\right)
    N^{n-1}(1+O_\delta(N^{-1-\delta})),
\end{equation} %*}
the coefficient of $N^{-t}$ 
    for $L(s-s_\sigma,(\xi\psi\chi)^{-1},-\sigma,D)$ 
    if $\xi=1$ and $\psi \in \ker \gamma^\ast$ is 
\begin{equation}\label{eqn:Coefficient2} %{*
    \res[\zeta_{E_2}]\cdots \res[\zeta_{E_r}]
    \left(
    \prod_{p}
    \sum_{n \in \im \lt_w \cap \Delta }
    p^{\frac1{e_p} \varphi_{AC}(n)}
    \right)
    N^{n-1}(1+O_\delta(N^{-1-\delta})),
\end{equation} %*}
while the coefficient 
    for $L(s-s_\sigma,(\xi\psi\chi)^{-1},-\sigma,D)$ 
    if $\xi\psi \not \in \ker \gamma^\ast$ is 
\begin{equation} %{*
    \res[\zeta_{E_2}]\cdots \res[\zeta_{E_r}]
    \left(
    \prod_{p}
    \sum_{n \in \im \lt_w \cap \Delta }
    p^{\frac1{e_p} \varphi_{AC}(n)}
    \right)
    N^{n-1}O_\delta(N^{-1-\delta}).
\end{equation} %*}

The other regions may be handled similarly. 
For $\sigma=-\Lambda_\ell$, $2 \leq \ell \leq r$, we find 
\begin{equation} %{*
    n^{-(s-s_\sigma)} = N^{-t}, 
    \quad N = N(I_\ell)^{1/o_\ell}\sqrt{M}.
\end{equation} %*}
Once again the coefficient of $N^{-t}$ is 
    largest if and only if 
    $\xi\psi \in \ker \gamma^\ast$, 
    in which case the coefficient is equal to 
\begin{align} %{*
    &\sum_{M=1}^{N^2}
        1_{Sp_E}(M)
        \phi(n)^{\omega(M)}
    \left(
    \sum_{\substack{I_1\subset O_{E_1}\\1 \leq N(I_1)\leq N}}
        1
    \right)
    \left(
    \sum_{\substack{I_\ell\subset O_{E_\ell}\\N(I_\ell)= (N/\sqrt{M})^{o_\ell}}}
        1
        \right)
    \prod_{\substack{2 \leq k \leq r\\k \neq \ell}}
    \left(
    \sum_{\substack{I_k\subset O_{E_k}\\1 \leq N(I_k)\leq (N/\sqrt{M})^{o_k}}}
    1
    \right).
\end{align} %*}
This is bounded by 
\begin{align} %{*
&\sum_{M=1}^{N^2}
    1_{Sp_E}(M)
    \phi(n)^{\omega(M)}
    N^{1+\varepsilon}
    \prod_{\substack{2 \leq k \leq r\\k \neq \ell}}
    ((N/\sqrt{M})^{o_k}+O((N/\sqrt{M})^{o_k\beta_k}))\\
    =&
\sum_{M=1}^{N^2}
    1_{Sp_E}(M)
    \phi(n)^{\omega(M)}
    N^{1+\varepsilon}
    (N/\sqrt{M})^{n-1-o_\ell}
    (1+O((N/\sqrt{M})^{\sum_{k\neq \ell,1} o_k \beta_k - (n-1-o_\ell)}))\\
    =&
    N^{n-o_\ell+\varepsilon}
\sum_{M=1}^{N^2}
    1_{Sp_E}(M)
    \phi(n)^{\omega(M)}
    (1/\sqrt{M})^{n-1-o_\ell}
    (1+O((N/\sqrt{M})^{\sum_{k\neq \ell,1} o_k \beta_k - (n-1-o_\ell)})).
    % Attempt at general formula when n is prime:
    %&\begin{cases}
    %\note{
    %\res_1[\zeta_{E_2}]
    %\prod_{p \in Sp_E}
    %\left(1+2p^{-??}\right)
    %    N^3\log N(1+o(N))}&\text{if $n = 3$},\\
    %\res_1[\zeta_{E_2}]
    %\prod_{p \in Sp_E}
    %\left(1+(n-1)p^{-\frac{n-1}{2}}\right)
    %N^n(1+o(N))&\text{if $n \geq 5$}.\\
    %\end{cases}
\end{align} %*}
We have that 
\begin{align} %{*
\sum_{M=1}^{N^2}
    1_{Sp_E}(M)
    \phi(n)^{\omega(M)}
    (1/\sqrt{M})^{n-1-o_\ell}
    =O_\varepsilon(N^{2+\varepsilon})
\end{align} %*}
for any $\varepsilon>0$ and 
\begin{align} %{*
O(N^{\sum_{k\neq \ell,1} o_k \beta_k - (n-1-o_\ell)})
\sum_{M=1}^{N^2}
    1_{Sp_E}(M)
    \phi(n)^{\omega(M)}
    =O_\delta(N^{2-\delta})
\end{align} %*}
for some $\delta>0$. 
Thus the coefficient of $N^{-t}$ in the multiple Dirichlet series 
    for $\Lambda_\ell$ with $\ell \neq 1$ is 
    $O_\delta(N^{4-\delta})$ for some $\delta>0$. 

Now we determine the main term of $A(X)$. 
The summands of \eqref{eqn:ZxipsiPoisson} 
    for non-maximal cones are bounded by 
    $O(N^{n-1})$ 
    which is smaller than the secondary terms 
    $O_\delta(N^{n-\delta})$ 
    of the summands for maximal cones. 
    %so we may directly compare the coefficients of $Z_{\xi\psi}$ 
    %with the summands in \eqref{eqn:InclusionExclusionCones} 
    %for maximal cones. 
When $\xi\psi \in \ker \gamma^\ast$, 
    the estimate \eqref{eqn:Coefficient2} 
    shows the abscissa of absolute convergence 
    of the Dirichlet series 
    $L(s-s_\sigma,(\xi\psi\chi)^{-1},-\sigma,D)$ 
    is at $u = \frac{n}{n+1}$. 
    %converges absolutely for $u<\frac{n}{n+1}$ 
    %and becomes singular as $u$ tends toward $\frac{n}{n+1}$. 
Since $R_\gamma(us_{AC})$ is regular and nonvanishing 
    (Proposition~\ref{prop:nonvanishingR}) 
    at $u = \frac{n}{n+1}$, 
    the leading coefficient of $Z(u)$ in 
        the Laurent expansion at 
        $u = \frac{n}{n+1}$ 
    is also the leading coefficient of 
\begin{equation}\label{eqn:LeadingCoefficient} %{*
    \frac{|\ToricROU|}{|C||R|P}
    \sum_{\sigma \in \Sigma_D(r)}
    \left(
    \sum_{\gamma \in Z_{\sigma}}
    R_\gamma(\tfrac{n}{n+1}s_{AC})
    \right)
    \sum_{\psi \in \ker \gamma^\ast}
    L(s-s_\sigma,
        \psi^{-1},-\sigma,D)
\end{equation} %*}
    in the asymptotic expansion 
    as $u$ tends to $\frac{n}{n+1}$ from above. 
    %\footnote{ 
    %We have not demonstrated meromorphic continuation of 
    %$L(s-s_\sigma,\psi^{-1},-\sigma,D)$ 
    %to $\mathrm{Re}(u)\leq \frac{n}{n+1}$ 
    %so we are not assuming the existence of a Laurent expansion.}
For maximal cones 
    $\sigma \in \Sigma_D(r)$ and $\tau \in \Sigma_w(r_w)$ 
    let $\binom{\tau}{\sigma} \in \ZZ_{\geq 0}$ 
    denote the number of ordered 
    $r$-subsets 
    $E \subset \tau(1)$ such that $\pi E = \sigma(1)$ 
    (as unordered sets) 
    and the flag $\gamma$ defined by $E$ is in $Z_{\sigma}$.\footnote{Since the Jacobians of $\Sigma$ satisfy 
    the conjecture of \cite[Remark~3]{residue} 
    (cf.~\S\ref{sec:VerifyingCompat}), 
    the integer $\binom{\tau}{\sigma}$ 
    can more simply be defined as 
    the number of unordered $r$-subsets 
    $E \subset \tau(1)$ such that $\pi E = \sigma(1)$.} 
Say $\sigma = \sigma_{(j)}$. 
With the help of 
    Lemma~\ref{lemma:ResidualFunctionFormula} 
    and Proposition~\ref{prop:HelpingForMainThm} 
    one finds that 
\begin{equation} %{*
\sum_{\gamma \in Z_{\sigma}}
R_\gamma(\tfrac{n}{n+1}s_{AC})
=
    \left(\frac{o_j}{2n}\right)^{r_w-r}
    \sum_{\tau \in \Sigma_w(r_w)}
    \binom{\tau}{\sigma}
    \frac{[\ccl_w:\ZZ\tau(1)]}{[\ccl:\ZZ\sigma(1)]}.
\end{equation} %*}

The ratio of the Dirichlet series 
    \eqref{eqn:LeadingCoefficient} and $Z(u)$ 
    tends to $1$ as $u$ tends to $\frac{n}{n+1}$ from above, 
    so their summatory functions have the same main terms. 
Set 
\begin{equation} %{*
    \kappa_0=
    \res[\zeta_{E_2}]\cdots \res[\zeta_{E_r}]
    \prod_{p}
    \sum_{n \in \im \lt_w \cap \Delta }
    p^{\frac1{e_p} \varphi_{AC}(n)}
    \quad\text{and}\quad
    \kappa_\sigma=\left(\frac{o_j}{2n}\right)^{r_w-r}
    \sum_{\tau \in \Sigma_w(r_w)}
    \binom{\tau}{\sigma}
    \frac{[\ccl_w:\ZZ\tau(1)]}{[\ccl:\ZZ\sigma(1)]}.
\end{equation} %*}
From the asymptotic formula \eqref{eqn:Coefficient2} 
    for the coefficients of 
    $L(s-s_\sigma,\psi^{-1},-\sigma,D)$ 
    we conclude that the summatory function $A(X)$ 
    of $Z(us_{AC})$ satisfies 
\begin{equation} %{*
A(X)=
    \frac{|\ToricROU||C \cap \ker \gamma^\ast|}{|C||R|P}
    \kappa_0
    \left(
    \sum_{\sigma \in \Sigma_D(r)}
    \kappa_\sigma
    \right)
    \tfrac{1}{n}
    X^{\frac{n}{n+1}}
    (1+o(1)).
\end{equation} %*}

To complete the proof, we need to relate 
    $A(X)$ to the count $N(C_n,X)$ of normal $C_n$-polynomials. 
Recall the rational points of $T$ 
    are in bijection with equivalence classes 
    of pairs $(K/\QQ,x)$ 
    where $K = \mathcal O(T)$ is 
    the ring of functions on a $G$-torsor over $\Spec \QQ$ 
    and $x\in K$ is a normal element 
    (Proposition~\ref{prop:orbitParam}). 
The number of twists of $(K/\QQ,x)$ for a $C_n$-field $K$ 
    by an outer automorphism of $C_n$ is equal to $\varphi(n)$, 
    so there are $\varphi(n)$ such rational points 
    of $T$ giving rise to any $C_n$-polynomial 
    (Corollary~\ref{cor:cyclicTwists}). 

We bound the number of reducible rational points on $T$. 
Recall $\G$ is a product of simple tori 
    indexed by the $\QQ$-irreducible representations 
    of $C_n$. 
In particular, $\G = \GG_m \times \G_1$ 
where $\GG_m$ corresponds to the trivial representation. 
The monodromy group of 
    a rational point $(K/\QQ,x)$ 
    is contained in a given subgroup $D \subsetneq C_n$ 
    if and only if $(K/\QQ,x)$ is 
    the image of a rational point on 
    $\G/D$ via the natural quotient map $\G/D \to \G/G=T$. 
Let $\Upsilon=\Upsilon_D \subset T(\QQ)_{\text{$D$-integral}}
    =T(\QQ) \cap U(\ZZ)$ 
    be the subset of such points which are $D$-integral. 
By expressing this quotient map as 
$\GG_m \times \G_1/D \to \GG_m \times T_1$ 
where $T_1 = \G_1/G$ 
we see that the projection of $\Upsilon$ 
    to the $T_1$ factor of $T$ is itself 
    a thin subset $\Upsilon_{1} \subset T_1$ of type II. 
We make use of ideas from 
    \cite{browning-loughran-2019} 
    to show $\Upsilon$ has vanishing density. 
It suffices to show that it  
    has a vanishing contribution to the main term 
    \eqref{eqn:LargestDirCoeff} 
    of the Dirichlet coefficients of $Z(us_{AC})$. 
For a finite place $v$ let 
    $\Phi_v \colon T(\QQ_v)_{\text{$D$ integral}} 
    \to \{0,1\}$ 
    be the characteristic function defined by 
\begin{equation} %{*
    \Phi_{v}(x_v)
    =
    \begin{cases}
        1&\text{$x_v \in \Upsilon \Mod {p_v}$},\\
        0&\text{otherwise.}
    \end{cases}
\end{equation} %*}
Set $\Phi =\Phi_\infty \times \prod_{v \text{ finite}} \Phi_v$ 
where $\Phi_\infty$ is identically one on $T(\RR)$. 
The same computation we carried out for $Z(us_{AC})=Z(us_{AC},1_D)$ 
goes through for 
$Z(us_{AC},\Phi 1_D) 
= \sum_{t \in T(\QQ)}\frac{\Phi(t)1_D(t)}{H(t,s)}$ 
since $\Phi$ is also $\IntegralCompact{}$-invariant, 
so the Poisson formula for $Z(us_{AC},\Phi 1_D)$ 
involves the same integral over level $\IntegralCompact{}$ 
automorphic characters. 
From \eqref{eqn:LeadingDirCoeffSummed} 
we see the contribution to the leading term in 
    the Dirichlet coefficients of $Z(us_{AC})$ is 
\begin{equation}%\label{eqn:LeadingDirCoeffSummed} %{*
    \sum_{M=1}^{N^2}
        1_{Sp_E}(M)
        \phi(n)^{\omega(M)}
    \prod_{k=2}^r
    \left(
    \sum_{\substack{I_k\subset O_{E_k}\\1 \leq N(I_k)\leq(N/\sqrt{M})^{o_k}}}
        \Phi(I_k)
    \right)
\end{equation} %*}
where $\Phi(I_k)$ is evaluated using the bijection 
between $n \in \ToricIdeals_1$ and 
    $(M,I_1,\ldots,I_r) \in Sp_E \times \mathrm{Ideals}(O_{E_1}) 
    \times \cdots \times \mathrm{Ideals}(O_{E_r})$. 
%Recall that this is the Dirichlet coefficient 
%    of a conical Dirichlet series for the maximal cone 
%    $\sigma_{(1)} = -\Lambda_1$. 
%Thus $N(I_1)=N$ is fixed (cf.~\eqref{eqn:nForMaximalCone1}) 
Note that \eqref{eqn:LeadingDirCoeffSummed} 
    only depends on the projection $\pi(n)$, 
    so we are really counting points in 
    the thin set $\Upsilon_1$ in $T_1$. 
We use the strategy from 
    the proof of \cite[Theorem 1.2]{browning-loughran-2019}. 
There is a constant $0<c<1$ and an extension $L/\QQ$ 
    only depending on $\Upsilon_1$ 
such that for all primes $p$ which split in $L$ 
    the number of points in $\Upsilon_1 \pmod p$ 
    is at most $cp^{\dim T_1} = cp^{n-1}$ 
    by \cite[Lemma~3.8]{browning-loughran-2019}. 
Thus \eqref{eqn:LeadingDirCoeffSummed} is bounded by 
\begin{equation}%\label{eqn:LeadingDirCoeffSummed} %{*
    \sum_{M=1}^{N^2}
        1_{Sp_E}(M)
        \phi(n)^{\omega(M)}
        c^{\omega_L(N(I_2)\cdots N(I_r))}
        (N/\sqrt{M})^{n-1}
\end{equation} %*}
where $\omega_L(N)$ is 
    the number of distinct prime divisors of $N$ 
    which are split in $L$. 
Since $0<c<1$ this is $o(N^{n-1})$ which shows 
     these points have vanishing density. 
%Now consider those polynomials with Galois group $C_n$ 
%    whose roots are not normal. 
%Such polynomials are the image under 
%    the characteristic map $\chp$ 
%    of rational points on $A/G$ 
%    (Proposition~\ref{prop:orbitParam}). 
%The closed subset $A/G-\G/G$ has positive codimension 
%    so such polynomials are the image under $\chp$ 
%    of a type I thin set of rational points on $A/G$. 
%
%First consider the $D$-integral rational points 
%    in the boundary locus $Y-T$. 
%This locus is a union of toric divisors which are themselves 
%    toric varieties. 
%Consider a toric divisor $D' \subset Y-T$ defined over $\QQ$. 
%Since we are considering $D$-integral points, 
%    we may take $D' \neq D$. 
%The fan associated with $D'$ is ...

\begin{remark}\label{rmk:GeneralPolynms}
We expect the asymptotic formula of 
    Theorem~\ref{thm:mainthm} is also 
    the correct formula for the count of 
    all (monic integral) $C_n$-polynomials 
    since non-normal polynomials 
    form a thin set of $\PP/C_n$. 
Non-normal $C_n$-polynomials 
    such as $t^3 - 3t - 1$ 
    are the images of rational points 
    on the boundary locus $\PP/C_n - D$. 
This closed set is a union of toric divisors defined over $\QQ$ 
    so the distribution of its rational points is 
    amenable to toric methods in principle, 
    however these toric divisors may be geometrically reducible 
    so they are not themselves toric varieties. 
The geometrically irreducible toric divisors in $\PP/C_n - D$ 
    are toric varieties but defined over cyclotomic fields. 
We expect the methods of this paper, 
    suitably extended to more general ground fields, 
    would verify that these thin sets have vanishing density. 
We similarly expect that an analysis of 
    the height zeta function for 
    the toric orbit parametrization 
    from \S\ref{sec:DiophProblem} 
    would produce an asymptotic formula for $G$-polynomials 
    of bounded root height for any finite abelian group $G$. 
\end{remark}

%*}

%*}

\bibliography{draft}
\bibliographystyle{abbrv}

\end{document}